\theoremstyle{plain}
    \newtheorem{theorem}{Theorem}[section]
    \newtheorem{lemma}[theorem]{Lemma}
    \newtheorem{corollary}[theorem]{Corollary}
    \newtheorem{proposition}[theorem]{Proposition}
 \theoremstyle{definition}
    \newtheorem{definition}[theorem]{Definition}
    \newtheorem{example}[theorem]{Example}
    \newtheorem{remark}[theorem]{Remark}
\theoremstyle{remark}
\numberwithin{equation}{section}
\DeclareMathOperator{\Ad}{Ad}
\DeclareMathOperator{\ind}{index}
\DeclareMathOperator{\ext}{ext}
\DeclareMathOperator{\End}{End}
\DeclareMathOperator{\sgn}{sgn}
\DeclareMathOperator{\ch}{ch}
\DeclareMathOperator{\Todd}{Todd}
\DeclareMathOperator{\AS}{AS}
\DeclareMathOperator{\Lie}{Lie}
\DeclareMathOperator{\Spin}{Spin}
\DeclareMathOperator{\SO}{SO}
\DeclareMathOperator{\GL}{GL}
\DeclareMathOperator{\U}{U}
 \DeclareMathOperator{\Tr}{Tr}
 \DeclareMathOperator{\Int}{int}
 \DeclareMathOperator{\ev}{ev}
         \DeclareMathOperator{\supp}{supp}
\DeclareMathOperator{\pt}{pt}
 \DeclareMathOperator{\even}{even}
  \DeclareMathOperator{\odd}{odd}
\begin{document}   

\newtoggle{long}
\togglefalse{long}


\newcommand{\myemph}{\emph}

\newcommand{\Spinc}{\Spin^c}

    \newcommand{\R}{\mathbb{R}}
    \newcommand{\C}{\mathbb{C}} 
    \newcommand{\N}{\mathbb{N}}
    \newcommand{\Z}{\mathbb{Z}} 
    \newcommand{\Q}{\mathbb{Q}}
    \newcommand{\bT}{\mathbb{T}}
    \newcommand{\bP}{\mathbb{P}}

\newcommand{\g}{\mathfrak{g}}
\newcommand{\h}{\mathfrak{h}}
\newcommand{\p}{\mathfrak{p}}
\newcommand{\kg}{\mathfrak{g}} 
\newcommand{\kt}{\mathfrak{t}}
\newcommand{\kA}{\mathfrak{A}}
\newcommand{\XX}{\mathfrak{X}}
\newcommand{\kh}{\mathfrak{h}} 
\newcommand{\kp}{\mathfrak{p}}
\newcommand{\kk}{\mathfrak{k}}

\newcommand{\cE}{\mathcal{E}}
\newcommand{\cS}{\mathcal{S}}
\newcommand{\cL}{\mathcal{L}}
\newcommand{\cH}{\mathcal{H}}
\newcommand{\cO}{\mathcal{O}}
\newcommand{\cB}{\mathcal{B}}
\newcommand{\cK}{\mathcal{K}}
\newcommand{\cP}{\mathcal{P}}
\newcommand{\cD}{\mathcal{D}}
\newcommand{\cF}{\mathcal{F}}
\newcommand{\cX}{\mathcal{X}}
\newcommand{\cM}{\mathcal{M}}

\newcommand{\Sj}{ \sum_{j = 1}^{\dim M}}

\newcommand{\cSM}{\cS}
\newcommand{\PM}{P}
\newcommand{\DM}{D}
\newcommand{\LM}{L}
\newcommand{\vM}{v}

\newcommand{\sigDg}{\sigma^D_g}

\newcommand{\Bigwedge}{\textstyle{\bigwedge}}

\newcommand{\ii}{\sqrt{-1}}

\newcommand{\Ubar}{\overline{U}}

\newcommand{\beq}[1]{\begin{equation} \label{#1}}
\newcommand{\eeq}{\end{equation}}
   \newcommand{\Todo}{\textbf{To do}}

\newcommand{\Wedge}{\Lambda}

\newcommand{\specialin}{\hspace{-1mm} \in \hspace{1mm} }

\title{A fixed point theorem on noncompact manifolds}

\author{Peter Hochs\footnote{University of Adelaide, \texttt{peter.hochs@adelaide.edu.au}}  \hspace{1.3mm}and Hang Wang\footnote{University of Adelaide, \texttt{hang.wang01@adelaide.edu.au}}}

\date{\today}

\maketitle

\abstract{We generalise the Atiyah--Segal--Singer fixed point theorem to noncompact manifolds. 
Using $KK$-theory, we extend the equivariant index to the noncompact setting, and obtain a fixed point formula for it.  The fixed point formula is the explicit cohomological expression from Atiyah--Segal--Singer's result. In the noncompact case, however, we show in examples that this expression yields characters of infinite-dimensional representations. 
In one example, we realise characters of discrete series representations on the regular elements of a maximal torus, in terms of the index we define. Further results are a fixed point formula for the index pairing between equivariant $K$-theory and $K$-homology, and a non-localised expression for the index we use, in terms of deformations of principal symbols. The latter result is one of several links we find to
indices of deformed symbols and operators studied by various authors.
}

\tableofcontents

\section{Introduction}

In the second part \cite{AS} of the \emph{Index of elliptic operators} series, Atiyah and Segal proved a fixed point formula for compact groups and manifolds. This allows one to compute the equivariant  index of an elliptic operator (or an elliptic complex) in terms of data on the fixed point sets of the group elements. In  \cite{AS3}, a cohomological version of this formula was obtained, which we will call the Atiyah--Segal--Singer fixed point theorem. It has applications for example in representation theory. Indeed, in \cite{AB2}, Atiyah and Bott used a fixed point formula (which equals the Atiyah--Segal--Singer theorem in the case considered) to prove the Weyl character formula.

Our goals in this paper are to generalise the Atiyah--Segal--Singer theorem to noncompact manifolds, and to apply this generalisation in relevant situations. 

\subsection*{The main result and some applications}

We define an index on possibly noncompact manifolds, that generalises the equivariant index for compact groups and manifolds (see Definition~\ref{g-index}). Assuming the fixed point set of a group element $g$ is compact, we show that this index is given by exactly the same cohomological expression as in the Atiyah--Segal--Singer theorem. This is our main result, Theorem~\ref{thm fixed pt}. We also obtain a fixed point formula for the index pairing between equivariant $K$-theory and $K$-homology in Theorem \ref{thm index pair}. In the non-equivariant setting, very general expressions for this pairing were given in \cite{CGRS}; Theorem \ref{thm index pair} is
 an equivariant version of these results for the operators considered here.

While the cohomological expression for the index is the same as in the compact case,  in noncompact examples
 we see that it gives rise to
 characters of \emph{infinite-dimensional} representations.  These can never occur as indices of elliptic operators on compact manifolds, so that the theory really gives us something new. 
For example,
we use the fixed point theorem in Subsection~\ref{sec ds} to express the character of a representation in the discrete series of a semisimple Lie group in terms of our index, on the regular elements of a maximal torus. Other examples and applications are:
\begin{itemize}
\item a holomorphic linearisation theorem, related to {\cite[Chapter 4]{GGK}} and {\cite[Theorem 7.2]{Braverman}};
\item explicit computations for actions by the circle on the plane and the two-sphere;
\item a relation with kernels of Fredholm operators, in particular Callias-type Dirac operators \cite{Anghel, BrShi, Bunke, Callias, Kucerovsky};
\item a relation with Braverman's index of deformed Dirac operators~\cite{Braverman};
\item a relative index theorem, in the spirit of {\cite[Theorem 4.18]{GL}};
\item some geometric consequences in the cases of the Hodge-Dirac and $\Spin$-Dirac operators.
\end{itemize}

In all cases we consider, we find that the index can be expressed explicitly in terms of the kernel of a deformation of the operator in question. (In the discrete series example, the operator does not even have to be deformed.) On noncompact manifolds, one can often obtain a well-defined index of a Dirac operator by applying a deformation, with suitable growth behaviour. See for example \cite{Anghel, Braverman, Braverman14, Bunke, Callias, HM, Kucerovsky, MZ, Paradan02}. This index then depends on the deformation used. While we do not use a deformation to define our index, we see in examples  that it equals an index defined via a deformation. One could speculate that this means that the index we use implicitly includes a canonical choice of such a deformation. For the Callias-type operators studied in \cite{Anghel, BrShi, Bunke, Callias, Kucerovsky}, their equivariant indices can be expressed as the index we define, plus a term representing the dependence on the deformation used,  in terms of its behaviour ``at infinity". (Previously, Callias-type operators were not studied in combination with group actions, so only non-equivariant indices were computed.)

The relation to index theory of deformed Dirac operators is strengthened in the last section of this paper, which is independent of the fixed point formula. There we give an expression for the index of elliptic operators involving deformations of their principal symbols. 

Other generalised fixed point theorems include \cite[Main Theorem 1]{BV1} and \cite[Theorem 20]{BV2} (for transversally elliptic operators), \cite[Theorem 7.5]{Braverman} (for deformed Dirac operators on noncompact manifolds), the results in \cite{DEM} (for \emph{correspondences}, generalising self-maps on manifolds), \cite[Theorem 2.7]{Emerson} (for groupoids) and \cite[Theorem 6.1]{WW} (for orbifolds).

\subsection*{Idea of the proof}

Let us sketch some technical steps involved in defining the index and proving the fixed point formula. We consider a Riemannian manifold $M$, and an elliptic operator $D$ on a vector bundle $E\to M$. Let $G$ be a compact Lie group acting on $E$, preserving $D$. Under assumptions about grading and self-adjointness, we have a class $[D]$ in the equivariant $K$-homology group $K_0^G(M)$ of $M$. Let $g\in G$. Then we may replace $G$ by the compact Abelian group generated by $g$, and still retain all information about the action by the element $g$. A localisation theorem in $K$-homology allows us to construct the \emph{$g$-index} map
\[
\ind_g\colon K_0^G(M) \to \C.
\]
The $g$-index of the operator $D$ is defined as the $g$-index of its class $[D]$ in $K_0^G(M)$.
If $M$ is compact, this is the usual equivariant index of $D$, evaluated at~$g$.

If $M$ is compact, the principal symbol $\sigma_D$ of $D$ defines a class in the equivariant topological $K$-theory group $K^0_G(TM)$. In our setting, $M$ may be noncompact. Then we have a class
\[
[\sigma_D] \in KK_G(M, TM)
\]
in the equivariant $KK$-theory of the pair $(C_0(M), C_0(TM))$. The Dolbeault--Dirac operator on $TM$ defines a class
\[
[D_{TM}] \in KK_G(TM, \pt).
\]
An index theorem by Kasparov implies that, with respect to the Kasparov product $\otimes_{TM}$ over $C_0(TM)$, we have
\[
[D] = [\sigma_D] \otimes_{TM} [D_{TM}] \quad \in KK_G(M, \pt) = K_0^G(M).
\]
This generalises the Atiyah--Singer index theorem.

The proof of the fixed point formula for the $g$-index of $D$ is a $KK$-theoretic generalisation of the proof by Atiyah and Segal for the compact case in \cite{AS}. 
This generalisation involves  Kasparov's index theorem, localisation theorems  in $KK$-theory, and $KK$-theoretic versions of the Gysin wrong-way maps in $K$-theory. Another ingredient is a class
\beq{eq sigma D g intro}
\sigma^D_g \in K^0_G(TM)_g
\eeq
associated to $\sigma_D$, 
in the equivariant topological $K$-theory of~$TM$, localised (in the algebraic sense) at $g$. 
Using these techniques, and keeping track of what happens in both entries in $KK$-theory, allows us to obtain an expression for the $g$-index of $D$ in terms of data on the fixed point set of $g$. While all constructions in the proof are $KK$-theoretic in nature, the end result is a purely cohomological expression. 
An explicit description of the class \eqref{eq sigma D g intro} in terms of 
 a deformation of the symbol $\sigma_D$ allows us to prove a non-localised expression for the $g$-index, independent of the fixed point formula.

\subsection*{Outline}

The $g$-index is introduced in Section \ref{sec fixed pt formula}. It allows us to state the fixed point formula in Theorem~\ref{thm fixed pt}. 

In Section~\ref{sec loc thm}, we prove the localisation results which imply that the $g$-index is well-defined. In Section \ref{sec Kasparov}, we review an index theorem by Kasparov. This result, and related techniques, are used
in the proof of the fixed point theorem in Section~\ref{sec pf fixed pt}.

The applications and examples mentioned above are discussed in Section~\ref{sec examples}. In Section~\ref{sec non-loc}, we obtain a
non-localised expression for the $g$-index of an elliptic operator.

\subsection*{Acknowledgements}

The authors are grateful to Gert Heckman, Bram Mesland, Yanli Song  and Mich\`ele Vergne for helpful discussions and comments. Special thanks go to Gennadi Kasparov for his explanations of the material in Section \ref{sec Kasparov}. This first author was supported by the European Union, through Marie Curie fellowship PIOF-GA-2011-299300.

\subsection*{Notation}

If $A$ is a subset of a set $B$, then we will denote the inclusion map $A\hookrightarrow B$ by $j^B_A$. We denote the one-point set by $\pt$. For any set $A$, we will write $p^A$ for the map from $A$ to $\pt$.

If $U$ is an open subset of a locally compact Hausdorff space $X$, then we denote by $k^X_U$ the inclusion map $C_0(U) \hookrightarrow C_0(X)$ defined by extending functions by zero outside $U$. If $Y$ is another locally compact Hausdorff space, we will write
\[
KK(X, Y) := KK(C_0(X), C_0(Y)),
\]
and similarly for equivariant $KK$-theory. The Kasparov product $\otimes_{C_0(X)}$ over $C_0(X)$ will also be denoted by $\otimes_X$. If $X$ has a Borel measure, and $E\to X$ is a Hermitian vector bundle, then the $*$-homomorphism $\pi_X\colon C_0(X)\to \cB(L^2(E))$ is given by the pointwise multiplication on $L^2$-sections of $E$. If $G$ is a locally compact group acting on $X$, and $H<G$ is a subgroup, we will write $G\times_H X$ for the quotient of $G\times X$ by the action by $H$ given by
\[
h\cdot (g, x) = (gh^{-1}, hx),
\]
for $h\in H$, $g\in G$ and $x\in X$.

If $M$ is a manifold, its tangent bundle projection $TM\to M$ is denoted by $\tau_M$. 
If a Riemannian metric is given, we will often tacitly use it to  identify the tangent bundle of $M$ with the cotangent bundle. 
The complexification of a vector space or vector bundle is denoted by a subscript $\C$.


\section{The fixed point formula} \label{sec fixed pt formula}

Our goal in this paper is to generalise the Atiyah--Segal--Singer fixed point theorem ({\cite[Theorem 3.9]{AS3}}, based on {\cite[Theorem 2.12]{AS}}) to noncompact manifolds, and to find interesting applications of this generalisation. This leads us to define the $g$-index in Subsection \ref{sec g index}. 
\iftoggle{long}{
This definition involves $KK$-theory, of which we give an overview in Subsection \ref{sec KK}. 
}{}
The key to defining the $g$-index is a localisation theorem, which is stated in Subsection~\ref{sec loc}. The main result of this paper is the fixed point formula in Theorem~\ref{thm fixed pt}, stated in Subsection~\ref{sec fixed pt}. This formula is entirely cohomological, and does not involve $KK$-theory. Some properties of the $g$-index are given in Subsection \ref{sec prop}. 

Throughout this paper, $M$ will be a Riemannian manifold. We consider  an isometric diffeomorphism $g$ from $M$ to itself. Suppose the closure of the powers of $g$ in the isometry group (with respect to the compact-open topology) is a compact group $G$. 
Equivalently, suppose $g$ is an element of a compact group $H$ acting isometrically on $M$, and let $G<H$ be the closed subgroup generated by $g$.
In any case, $G$ is Abelian. Let $M^g$ be the fixed point set of $g$. 

Let $E = E^+ \oplus E^- \to M$ be a $\Z_2$-graded, Hermitian vector bundle. Let $D$ be an odd, essentially self-adjoint, elliptic differential operator, with principal symbol $\sigma_D$. (For example, $D$ can be a Dirac-type operator on a complete manifold.) We will also write $D$ for the self-adjoint closure of $D$.
\iftoggle{long}
{

\subsection{$KK$-theory} \label{sec KK}

The $g$-index will be defined in terms of $KK$-theory. However, we hope that the explicit expression for this index in Theorem \ref{thm fixed pt}, and the examples in Section~\ref{sec examples}, will help to make it interesting also to readers who are less familiar with $KK$-theory. For those readers, we give a brief overview of $KK$-theory here. We omit many details, for which we refer to {\cite[Chapter~VIII]{Blackadar}}.

Let $A$ and $B$ be two $C^*$-algebras. 
The theory simplifies somewhat if $A$ is separable and $B$ is $\sigma$-unital, which we assume.
In the rest of this paper, all $C^*$-algebras we need are of the form $C_0(X)$, the algebra of continuous functions on a locally compact Hausdorff space $X$ that vanish at infinity. Such algebras are separable and $\sigma$-unital.
\begin{definition}
A \emph{Hilbert $B$-module} is defined in the same way as a complex Hilbert space, but with the complex numbers replaced by $B$. The algebra of bounded, adjointable operators on a Hilbert $B$-module $\cE$ will be denoted by $\cB(\cE)$.

An (even) \emph{Kasparov $(A, B)$-cycle} is a triple $(\cE, F, \pi)$, where
\begin{itemize}
\item $\cE$ is a $\Z_2$-graded Hilbert $B$-module;
\item $F\in \cB(\cE)$ is an odd operator;
\item $\pi\colon A\to \cB(\cE)$ is a $*$-homomorphism into the even operators,
\end{itemize}
such that for all $a\in A$, the operators $\pi(a)(F^2 - 1)$, $\pi(a)(F^*-F)$ and $[F, \pi(a)]$  are compact.

If $A = \C$, then $\pi$ has to be given by scalar multiplication. We then omit $\pi$ from the notation.

The Abelian group $KK(A, B)$ consists of classes, denoted by $[\cE, F, \pi]$, of Kasparov $(A, B)$-cycles of the form $(\cE, F, \pi)$, with respect to a homotopy relation. Addition is induced by the direct sum.

If $G$ is a locally compact group acting on $A$ and $B$ in a suitable way, then there is also an equivariant version $KK_G(A, B)$. Furthermore, there is an odd version of $KK$-theory, where there are no gradings on Hilbert modules.
\end{definition}

Some important properties of $KK$-theory are the following.
\begin{enumerate}
\item If $A=B=\C$, then $KK(\C, \C) = \Z$. If $G$ is a compact group, then $KK_G(\C, \C)=R(G)$, the representation ring of $G$.
\item If $A=\C$ and $B=C_0(X)$, for a locally compact Hausdorff space $X$, then $KK(\C, C_0(X)) = K^0(X)$, the topological $K$-theory of $X$. This is also true for equivariant $KK$-theory and $K$-theory.
\item The elliptic operator $D$ introduced at the start of this section defines an element 
\beq{eq Khom D}
[D] := \Bigl[L^2(E), \frac{D}{\sqrt{D^2 +1}}, \pi_M\Bigr]
\eeq
of the equivariant \emph{$K$-homology} group $KK_G(C_0(M), \C)$. Here $\pi_M\colon C_0(M)\to \cB(L^2(E))$ is given by pointwise multiplication.
\item If $\varphi\colon A_1 \to A_2$ and $\psi\colon B_1\to B_2$ are $*$-homomorphisms between $C^*$-algebras, then these induce
\[
\begin{split}
\varphi^*\colon &KK(A_2, B)\to KK(A_1, B); \\
\psi_*\colon &KK(A, B_1)\to KK(A, B_2). \\
\end{split}
\]
This construction is functorial, and extends to the equivariant case.
\item If $A_1, A_2, A_3$ and $B_1, B_2$ are $C^*$-algebras, then there are products
\[
\begin{split}
KK(A_1, B_1) \times KK(A_2, B_2) &\xrightarrow{\otimes} KK(A_1\otimes A_2, B_1 \otimes B_2);\\
KK(A_1, A_2) \times KK(A_2, A_3) &\xrightarrow{\otimes_{A_2}} KK(A_1,A_3),\\
\end{split}
\]
called the exterior and interior Kasparov products, respectively.
These products are associative, and compatible with functorially induced maps in all natural ways.
\end{enumerate}

If $A=C_0(X)$ and $B = C_0(Y)$ for locally compact Hausdorff spaces $X$ and $Y$, then we will write
\[
KK(X, Y) := KK(C_0(X), C_0(Y)),
\]
and denote the product $\otimes_{C_0(X)}$ by $\otimes_X$. We use analogous notations in the equivariant case. Note that, since $C_0$ is a contravariant functor, continuous, proper maps $\eta\colon X_1\to X_2$ and $\theta\colon Y_1\to Y_2$ between locally compact Hausdorff spaces induce
\[
\begin{split}
\eta_*\colon &KK(X_1, Y)\to KK(X_2, Y); \\
\theta^*\colon &KK(X, Y_2)\to KK(X, Y_1). \\
\end{split}
\]
}
{
Then we have the  element 
\beq{eq Khom D}
[D] := \Bigl[L^2(E), \frac{D}{\sqrt{D^2 +1}}, \pi_M\Bigr]
\eeq
of the equivariant \emph{$K$-homology} group $KK_G(M, \pt) := KK_G(C_0(M), \C)$. Here $\pi_M\colon C_0(M)\to \cB(L^2(E))$ is given by pointwise multiplication. For background material on $KK$-theory, see \cite[Chapter~VIII]{Blackadar}.
}

\subsection{Localisation}\label{sec loc}

%

Let $R(G)_g := R(G)_{I_g}$ be the localisation of the representation ring $R(G)$ at 
the prime ideal
\[
I_g := \{\chi \in R(G); \chi(g) = 0\}.
\]
For any module $\cM$ over $R(G)$, we write $\cM_g := \cM_{I_g}$ for the corresponding localised module over $R(G)_g$. Similarly, if $m\in \cM$, and $\varphi\colon \cM \to \cM'$ is a module homomorphism to another such module, we write $m_g \in \cM_g$ and 
\[
\varphi_g\colon \cM_g \to \cM'_g
\]
for the corresponding localised versions.

For any two $G$-$C^*$ algebras $A$ and $B$, the group $KK_G(A, B)$ is a module over the  ring $R(G) = KK_G(\C, \C)$, via the exterior Kasparov product. Fix a $G$-$C^*$ algebra $A$.
The inclusion map 
\[
j_{M^g}^M\colon M^g \hookrightarrow M
\]
 induces
\[
(j_{M^g}^M)^*_g\colon KK_G(A, C_0(M))_g \to KK_G(A, C_0(M^g))_g.
\]
\begin{theorem} \label{thm loc 1} If $A$ is separable, 
the map $(j_{M^g}^M)^*_g$  is an isomorphism of Abelian groups. This is still true if $M\setminus M^g$ is a manifold, rather than all of $M$.
\end{theorem}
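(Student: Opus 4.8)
The plan is to mimic the classical Atiyah--Segal localisation theorem in $K$-theory, but transported to the bivariant $KK$-setting and applied in the second variable. The starting observation is that $R(G)$-localisation is exact, so it suffices to show that localising the restriction map at the prime ideal $I_g$ kills both its kernel and cokernel; equivalently, for every $G$-$C^*$ algebra complement, the relative groups supported away from $M^g$ vanish after localising at $g$. More precisely, I would set $U = M \setminus M^g$, which by hypothesis is a $G$-invariant manifold (or all of $M$), and consider the six-term exact sequence in $KK_G(A, -)$ associated to the ideal $C_0(U) \triangleleft C_0(M)$ with quotient $C_0(M^g)$. Localisation at $I_g$ preserves exactness, so the theorem reduces to the claim that $KK_G(A, C_0(U))_g = 0$, in both even and odd degree.

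To prove that vanishing, the key point is that $G$ acts on $U = M \setminus M^g$ with \emph{no global fixed points of $g$}: for every $x \in U$, the stabiliser $G_x$ is a proper closed subgroup of $G$ not containing $g$ (here I use that $G$ is the closed abelian group generated by $g$, so $g \notin G_x$ exactly says $G_x \neq G$). I would cover $U$ by $G$-invariant open sets $V$ each of which is $G$-equivariantly of the form $G \times_{G_x} W$ for a slice $W$ — using the slice theorem for the compact group $G$ acting on the manifold $U$ — and then argue by a Mayer--Vietoris / inductive limit argument over such a cover (or a partition-of-unity / direct-limit decomposition of $C_0(U)$) that it is enough to treat one induced piece. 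For an induced space, $KK_G(A, C_0(G \times_{G_x} W))$ is computed, via the induction isomorphism in $KK$-theory, by $KK_{G_x}(A, C_0(W))$, which as an $R(G)$-module is acted on through the restriction ring homomorphism $R(G) \to R(G_x)$. Since $G_x$ does not contain $g$, there is a character $\chi \in R(G)$ with $\chi(g) = 0$ but whose restriction to $G_x$ is invertible (e.g.\ using that $I_g$ maps into a unit ideal of $R(G_x)$ because $g \notin G_x$); hence multiplication by this element of $R(G)\setminus I_g$ is already invertible on $KK_{G_x}(A, C_0(W))$, forcing its localisation at $I_g$ to coincide with itself, while a second such element can be chosen to act as zero — giving $KK_{G_x}(A, C_0(W))_g = 0$. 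Assembling over the cover via exactness of localisation yields $KK_G(A, C_0(U))_g = 0$, and then the six-term sequence gives the isomorphism.

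Two bookkeeping issues must be handled carefully. First, $U$ need not be cocompact, so the cover is infinite and one must pass to a countable increasing union of $G$-invariant relatively-compact opens $U_n$ with $U = \bigcup U_n$; then $C_0(U) = \varinjlim C_0(U_n)$ and one uses continuity of $KK_G(A, -)$ in the second variable for $A$ separable (this is where the separability hypothesis on $A$ is used) together with continuity of localisation, reducing to each $\overline{U_n}$, which is cocompact and admits a finite invariant cover by induced slices. Second, the induction isomorphism $KK_G(A, C_0(G\times_{G_x} W)) \cong KK_{G_x}(A, C_0(W))$ and its compatibility with the $R(G)$-module structures (the former acts through $R(G) \to R(G_x)$) must be invoked in the equivariant bivariant form; this is standard but should be cited.

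I expect the main obstacle to be the slice/covering step in the noncompact case: making the reduction to finitely many induced pieces rigorous when $M^g$ is only assumed compact (so $U$ is noncompact), and in particular getting the continuity of $KK_G(A,-)$ under the direct limit $C_0(U) = \varinjlim C_0(U_n)$ to interact correctly with localisation. The purely algebraic heart — that $KK_{G_x}(A, C_0(W))$ is annihilated by localisation at $I_g$ when $g \notin G_x$ — is essentially the same linear-algebra fact that powers the classical Atiyah--Segal theorem, and should go through with minor changes.
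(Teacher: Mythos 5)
Your proposal follows essentially the same route as the paper's proof: the exact triangle in $KK_G(A,-)$ coming from $0 \to C_0(M\setminus M^g)\to C_0(M)\to C_0(M^g)\to 0$, exactness of localisation, reduction to the vanishing of $KK_G(A, C_0(M\setminus M^g))_g$, and then a slice-theorem cover, an inductive Mayer--Vietoris assembly over compact invariant pieces, and an exhaustion/direct-limit step (this is indeed where separability of $A$ enters, both for the existence of the long exact sequences in the second variable and for compatibility with direct limits). The one genuine difference is how you extract the $R(G_x)$-structure on each piece: you invoke the induction/adjunction isomorphism $KK_G(A, C_0(G\times_{G_x}W))\cong KK_{G_x}(A, C_0(W))$, whereas the paper (Proposition \ref{prop module}, via Lemmas \ref{lem KK vb} and \ref{lem ring homom}) directly equips $KK_G(A, C_0(Y))$ with a unital $R(H)$-module structure for \emph{any} $Y$ admitting an equivariant map to $G/H$, using classes of associated vector bundles in $KK_G(Y,Y)$; this is slightly more flexible in the Mayer--Vietoris step, since intersections and differences of pieces need not be literally induced spaces but still map to $G/H_j$, and it avoids having to verify the module compatibility of the induction isomorphism. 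One correction to your algebraic step: as written it is backwards --- an element $\chi$ with $\chi(g)=0$ lies in $I_g$, and its restriction being invertible proves nothing; what kills the localisation (and is the content of {\cite[Corollary 1.3]{AS}}, $R(G_x)_g=0$) is a character $\chi$ with $\chi(g)\neq 0$, hence $\chi\notin I_g$, whose restriction to $G_x$ vanishes, so that $\chi$ annihilates any unital $R(G_x)$-module; your parenthetical ``a second such element acts as zero'' is the correct mechanism, and the preceding sentence should simply be deleted.
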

\begin{remark}
If $A = \C$, then this reduces to {\cite[Theorem 1.1]{AS}}. We need this more general statement, because in the noncompact case, principal symbols define classes in $KK_G(C_0(M), C_0(TM))$ as in \eqref{eq sigma D}, rather than in $KK_G(\C, C_0(TM))$ when $M$ is compact.
\end{remark}

We will also use an analogue of Theorem \ref{thm loc 1} for the first entry in $KK$-theory. Its formulation is slightly more subtle.
\begin{theorem} \label{thm loc 2}
Suppose that $M^g$ is compact and that $A$ is $\sigma$-unital. Let $U, V \subset M$ be two $G$-invariant, relatively compact open neighbourhoods of $M^g$, such that $\Ubar \subset V$. Then the map
\[
\bigl((j^V_{\Ubar})_*\bigr)_g\colon KK_G(C_0(\Ubar), A)_g \to KK_G( C_0(V), A)_g
\]
is an isomorphism of Abelian groups. This is still true if $M\setminus M^g$ is a manifold, rather than all of $M$.
\end{theorem}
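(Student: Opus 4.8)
The plan is to follow the proof of the Atiyah--Segal localisation theorem {\cite[Theorem 1.1]{AS}}, carried out in the \emph{first} variable of $KK_G$ and with a general coefficient algebra $A$.

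\textbf{Reduction to a vanishing statement.} Since $\Ubar$ is compact it is closed in $V$, so $W:=V\setminus\Ubar$ is a $G$-invariant open subset of $V$, and there is a $G$-equivariant extension of $C^*$-algebras
\[
0\longrightarrow C_0(W)\longrightarrow C_0(V)\xrightarrow{\ \operatorname{restr}\ }C_0(\Ubar)\longrightarrow 0 .
\]
It is $G$-equivariantly completely positively split: take a neighbourhood $V'$ of $\Ubar$ with $\overline{V'}\subset V$, compose a Dugundji linear extension operator $C(\Ubar)\to C_b(V')$ with multiplication by a fixed $G$-invariant cut-off supported in $V'$, and average over $G$. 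Hence $KK_G(-,A)$ turns it into a six-term exact sequence in which the map induced by restriction is exactly $(j^V_{\Ubar})_*$. All terms are $R(G)$-modules via the exterior Kasparov product, the sequence is $R(G)$-linear, and localisation at $I_g$ is exact; so it suffices to prove $KK_G(C_0(W),A)_g=0$ in both parities. Here $W$ is $G$-invariant, relatively compact, and contained in $M\setminus M^g$ (as $M^g\subset U$); and since $g$ topologically generates $G$, no point of $W$ is fixed by $g$, i.e.\ every $G$-stabiliser of a point of $W$ is a proper closed subgroup not containing $g$.

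\textbf{Vanishing on equivariant tubes.} Let $H<G$ be a closed subgroup with $g\notin H$. As $G$ is compact abelian, Pontryagin duality provides $\psi\in\widehat G$ with $\psi|_H=1$ and $\psi(g)\neq 1$. Since $\psi$ is trivial on $H$ it descends to a character $\bar\psi$ of $G/H$, and the nowhere-zero function $\bar\psi$ defines a $G$-equivariant trivialisation of the line bundle $(G/H)\times\C_\psi\to G/H$ (on which $G$ acts by $g'\cdot(xH,z)=(g'xH,\psi(g')z)$), hence of its pullback to any $G$-space over $G/H$, in particular to an equivariant tube $G\times_HS$ for an orthogonal $H$-representation $S$. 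As the $R(G)$-action on $KK_G(C_0(G\times_HS),A)$ factors through pullback of $G$-line bundles along $G\times_HS\to G/H\to\pt$, the class $[\psi]\in R(G)$ acts as the identity, so $[\psi]-1$ annihilates this module; but $([\psi]-1)(g)=\psi(g)-1\neq 0$, i.e.\ $[\psi]-1\notin I_g$. Since $[\psi]-1$ becomes invertible in $R(G)_g$ we conclude $KK_G(C_0(G\times_HS),A)_g=0$.

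\textbf{Assembly and the main obstacle.} By the slice theorem (here $G$ is a compact Lie group, being a closed subgroup of the isometry group, and acts smoothly on the manifold $M\setminus M^g$), $W$ is covered by $G$-invariant open sets of the form $G\times_HS$ as above, and by relative compactness finitely many suffice. The Mayer--Vietoris extensions $0\to C_0(Y_1\cap Y_2)\to C_0(Y_1)\oplus C_0(Y_2)\to C_0(Y_1\cup Y_2)\to 0$ are split by $\ast$-homomorphisms (extension by zero), hence give $R(G)$-linear six-term sequences; localising at $I_g$ and inducting over the finite cover as in {\cite[\S1]{AS}}, one obtains $KK_G(C_0(W),A)_g=0$ from the tube case, which together with the reduction step proves the theorem. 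Only the manifold structure on $W$ and on the tubes---all contained in $M\setminus M^g$---is used, so the statement persists if merely $M\setminus M^g$ is assumed to be a manifold. The delicate point is this last Mayer--Vietoris induction: it must be organised so that every intersection term that arises is again covered by tubes to which the tube case (or the inductive hypothesis) applies, while all sequences stay compatibly $R(G)$-linear so that localisation acts termwise; this is exactly where relative compactness of $W$, and hence finiteness of the cover, is essential. A secondary technical point is the existence of the first-variable six-term sequences, which relies on the semisplittings noted above (and thus on metrisability of $M$ for Dugundji's theorem). Granting these, the argument is a routine transcription of {\cite[\S1]{AS}}.
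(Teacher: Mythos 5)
Your overall strategy coincides with the paper's: the semisplit extension $0\to C_0(V\setminus\Ubar)\to C_0(V)\to C(\Ubar)\to 0$ reduces the theorem to the vanishing of $KK_G(C_0(V\setminus\Ubar),A)_g$, and that vanishing is attacked via the slice theorem and annihilation of the localised modules by representation-theoretic elements. Your tube lemma is a legitimate shortcut: where the paper uses the unital $R(H)$-module structure of Proposition~\ref{prop module} together with $R(H)_g=0$ from \cite[Corollary~1.3]{AS} (Proposition~\ref{prop loc}), you exploit that $G$ is abelian to produce a character $\psi$ with $\psi|_H=1$ and $\psi(g)\neq 1$ and show that $[\psi]-1$ annihilates the module; this is the same mechanism (classes of equivariant bundles acting through the map to $G/H$, cf.\ Lemma~\ref{lem KK vb}), and it is valid here because $G$ is topologically cyclic. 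Note that your argument applies verbatim to \emph{any} locally compact $G$-space admitting an equivariant map to $G/H$, not only to tubes; you will need this below.

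The genuine hole is in the covering induction, precisely at the point you flag as ``delicate'' but do not resolve. Finitely many tubes can be chosen to cover the compact set $\overline{V\setminus\Ubar}$, but their union is then strictly larger than $W:=V\setminus\Ubar$, and vanishing for $C_0$ of the union does not transfer to the open subset $W$; conversely, tubes contained in $W$ do cover $W$, but relative compactness of $W$ gives no finite subcover. So ``by relative compactness finitely many suffice \dots one obtains $KK_G(C_0(W),A)_g=0$'' does not go through as written. Two standard repairs: (i) run your Mayer--Vietoris induction on the sets $T_i\cap W$, for tubes $T_i$ covering $\overline{W}$ --- these sets and all the intersections arising in the induction are $G$-spaces over $G/H_i$, so your generalised tube lemma kills every term and the union is exactly $W$; or (ii) follow the paper, which avoids open covers altogether: it first proves vanishing for arbitrary compact invariant subsets by an induction using restriction (not Mayer--Vietoris) sequences over the closed slices $\overline{U_j}\cong G\times_{H_j}\overline{S_j}$ (Lemma~\ref{lem loc cpt}), and then deduces the open case from the extension $0\to C_0(W)\to C(\overline{W})\to C(\partial W)\to 0$ (Proposition~\ref{prop loc zero}). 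Finally, your parenthetical claim that the Mayer--Vietoris extensions are ``split by $*$-homomorphisms (extension by zero)'' is incorrect: extension by zero does not define a section of the quotient map, and a $*$-homomorphism splitting need not exist. What is needed, and what is true, is an equivariant completely positive contractive splitting, which follows from nuclearity of these commutative $C^*$-algebras (or an explicit partition-of-unity map) together with averaging over the compact group, exactly as in your Dugundji step.
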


Theorems \ref{thm loc 1} and \ref{thm loc 2} will be proved in Section \ref{sec loc thm}, for graded $KK$-theory, i.e.\ the combination of even and odd $KK$-theory. We will only apply the even versions, however. The cases where only $M \setminus M^g$ is a manifold were included because we will also apply Theorem \ref{thm loc 2} to one-point compactifications of manifolds.


\subsection{The $g$-index} \label{sec g index}

Suppose the fixed point set $M^g$ is compact. Let $U, V$ be as in Theorem~\ref{thm loc 2}. Consider the proper map $p^{\Ubar}\colon \Ubar \to \pt$, and the inclusion map $k_V^M\colon C_0(V) \to C_0(M)$ given by extending function by zero outside $V$. Let $A$ be a $\sigma$-unital $G$-$C^*$ algebra.
By  Theorem~\ref{thm loc 2}, we have the maps
\begin{multline} \label{eq ind g}
KK_G(C_0(M), A)_g \xrightarrow{(k^M_V)^*_g} KK_G(C_0(V), A)_g \xrightarrow{((j^V_{\Ubar})_*)_g^{-1}} KK_G(C(\Ubar), A)_g \\\xrightarrow{(p^{\Ubar}_*)_g} KK_G(\C, A)_g.
\end{multline}
\begin{lemma} \label{lem U V}
The composition \eqref{eq ind g} is independent of the sets $U$ and $V$.
\end{lemma}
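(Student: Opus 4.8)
The plan is to reduce the independence statement to two separate assertions: independence of the choice of $U$ (for a fixed $V$), and independence of the choice of $V$ (for a fixed $U$ with $\Ubar\subset V$). Since any two admissible pairs $(U_1,V_1)$ and $(U_2,V_2)$ can be connected through a pair $(U_3,V_3)$ with $U_3$ small enough that $\Ubar_3\subset U_1\cap U_2$ and $V_3$ large enough that $\Ubar_1\cup\Ubar_2\subset V_3$ (so that both $V_1,V_2\subset V_3$ after possibly enlarging), it suffices to prove monotonicity in each variable. Concretely, I would show that if $U'\subset U$ are both as in the theorem (relative to the same $V$), then the two composites agree, and similarly if $V\subset V'$ (relative to the same $U$).

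For the $V$-variable, the key point is naturality of the localisation isomorphism in Theorem~\ref{thm loc 2} together with functoriality of the induced maps. Given $U$ with $\Ubar\subset V\subset V'$, one has the commuting triangle of inclusions $\Ubar\hookrightarrow V\hookrightarrow V'$ and hence $(j^{V'}_{\Ubar})_* = (j^{V'}_V)_*\circ(j^V_{\Ubar})_*$ on $KK_G$. Applying Theorem~\ref{thm loc 2} to $(U,V)$, to $(U,V')$, and also to the pair $(V, V')$ — which is legitimate since $\Ubar\subset V$ gives $\overline{V}$... here one must be slightly careful: Theorem~\ref{thm loc 2} requires $\overline{V}\subset V'$, so I would instead choose an intermediate $W$ with $\Ubar\subset V\subset\Wbar... $ — in fact the cleanest route is to observe that $(k^M_V)^* = (k^{M}_{V'})^*$ composed with the restriction $C_0(V')\to C_0(V)$, i.e. $(k^M_V)^* = (k^{V'}_V)^* \circ (k^M_{V'})^*$, and then check that the square relating $((j^V_{\Ubar})_*)^{-1}$ and $((j^{V'}_{\Ubar})_*)^{-1}$ via $(k^{V'}_V)^*$ commutes after localisation. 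This square commutes before localisation because $(j^{V'}_{\Ubar})_* $ factors as a $KK$-map through $C_0(V)$ only up to the extension-by-zero map, and unwinding the definitions shows the relevant diagram of $*$-homomorphisms $C_0(\Ubar)\to C_0(V)\to C_0(V')$ commutes on the nose; then the final map to $KK_G(\C,A)$ through $p^{\Ubar}$ is unchanged. The $U$-variable is handled symmetrically: for $U'\subset U$ with $\Ubar, \Ubar'\subset V$, one uses that $(j^V_{\overline{U'}})_* = (j^V_{\Ubar})_*\circ (j^{\Ubar}_{\overline{U'}})_*$ (noting $\overline{U'}\subset U\subset\Ubar$), that $(j^{\Ubar}_{\overline{U'}})_*$ becomes invertible after localising by Theorem~\ref{thm loc 2} applied with $U',U$ in place of $U,V$, and that $p^{\overline{U'}} = p^{\Ubar}\circ j^{\Ubar}_{\overline{U'}}$, so the composites through $U'$ and through $U$ coincide.

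The one genuine subtlety — and the step I expect to cost the most care — is making sure every pair appearing in the argument actually satisfies the hypotheses of Theorem~\ref{thm loc 2}, i.e. that the inner set has compact closure contained in the outer open set, and that the inclusion map is proper so that the functorial pushforward $(j^\bullet_\bullet)_*$ on $KK_G$ is defined. Since all the $U$'s and $V$'s are relatively compact $G$-invariant open neighbourhoods of the compact set $M^g$, such inclusions of closures into opens exist by a routine partition-of-unity / normality argument, but one must interleave the given sets with auxiliary ones to get a chain in which each consecutive pair has the required closure-containment. Once that bookkeeping is set up, the proof is a diagram chase: every square is a square of functorially induced $KK_G$-maps coming from commuting diagrams of extension-by-zero $*$-homomorphisms between the algebras $C_0$ of the various open sets, and localisation at $I_g$ is an exact functor, so it preserves commutativity and turns the relevant $(j^\bullet_\bullet)_*$ into isomorphisms; the maps $p^\bullet_*$ at the end are compatible because they all factor through the common point.
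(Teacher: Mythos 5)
Your proof is correct and follows essentially the same route as the paper's: independence in $U$ and in $V$ is handled separately, each by a commuting diagram of maps induced by extension-by-zero and restriction $*$-homomorphisms (the triangles $(j^V_{\overline{U'}})_* = (j^V_{\Ubar})_*\circ (j^{\Ubar}_{\overline{U'}})_*$, $p^{\overline{U'}}_* = p^{\Ubar}_*\circ (j^{\Ubar}_{\overline{U'}})_*$ and $(k^M_V)^* = (k^{V'}_V)^*\circ(k^M_{V'})^*$, $(j^V_{\Ubar})_* = (k^{V'}_V)^*\circ(j^{V'}_{\Ubar})_*$), localised at $g$, with arbitrary pairs compared through a common smaller $U'$ and larger $V'$. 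The only imprecision is your appeal to Theorem~\ref{thm loc 2} for the pair $(U',U)$ to invert $((j^{\Ubar}_{\overline{U'}})_*)_g$ — that instance of the theorem concerns the map into $KK_G(C_0(U),A)_g$ rather than into $KK_G(C(\Ubar),A)_g$ — but this is harmless, since the desired identity (and the invertibility, if one wants it) already follows from the two triangles together with the localisation isomorphisms for the pairs $(U',V)$ and $(U,V)$.
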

\begin{proof}
To prove independence of $U$, let $U'$ be a $G$-invariant, relatively compact neighbourhood of $M^g$ such that $\overline{U'}\subset U$. Then we have the commutative diagram
\[
\xymatrix{
 & KK_G(C(\overline{U'}), A) \ar[dr]^-{p^{\overline{U'}}_*} \ar[dl]_-{(j^V_{\overline{U'}})_*}  \ar[d]^-{(j^{\Ubar}_{\overline{U'}})_*}& \\
 KK_G(C_0(V), A)  & KK_G(C(\Ubar), A) \ar[r]_-{p^{\Ubar}_*} \ar[l]^-{(j^V_{\Ubar})_*} & KK_G(\C, A).
}
\]
Commutativity of this diagram implies that
\[
 (p^{\overline{U'}}_*)_g\circ \bigl((j^V_{\overline{U'}})_*\bigr)_g^{-1} = (p^{\Ubar}_*)_g\circ \bigl((j^V_{\Ubar})_*\bigr)_g^{-1}. 
\]
So \eqref{eq ind g} is indeed independent of $U$.

To prove independence of $V$, let $V'$ be a $G$-invariant, relatively compact open subset of $M$ containing $V$. Then the following diagram commutes:
\[
\xymatrix{
& KK_G(C_0(V'), A) \ar[d]^-{(k^{V'}_V)^*}& \\
KK_G(C_0(M), A) \ar[r]_-{(k^M_V)^*} \ar[ur]^-{(k^M_{V'})^*}  & KK_G(C_0(V), A)  & KK_G(C(\Ubar), A). \ar[l]^-{(j^V_{\Ubar})_*} \ar[ul]_-{(j^{V'}_{\Ubar})_*} 
}
\]
Therefore, we have
\[
\bigl((j^{V'}_{\Ubar})_*\bigr)_g^{-1} \circ (k^M_{V'})^*_g = \bigl((j^V_{\Ubar})_*\bigr)_g^{-1} \circ (k^M_V)^*_g,
\]
so that \eqref{eq ind g} is independent of $V$.
\end{proof}

To define the $g$-index, we only need the case of Lemma \ref{lem U V} where $A = \C$. Later we will also use the general case, however.

Let 
\[
\ev_g\colon R(G) \to \C
\]
be defined by evaluating characters at $g$, i.e.\  $\ev_g(\chi) := \chi(g)$, for $\chi \in R(G)$. 
In view of Lemma~\ref{lem U V}, we obtain a well-defined index as follows.
\begin{definition}
\label{g-index}
The \emph{$g$-index} is the map
\[
\ind_g\colon KK_G(M, \pt) \to \C
\]
defined as the composition
\beq{def:g-index}
KK_G(M, \pt) \hookrightarrow KK_G(M, \pt)_g \xrightarrow{(p^{\Ubar}_*)_g \circ ((j^V_{\Ubar})_*)_g^{-1} \circ (k^M_V)^*_g} 
KK_G(\pt, \pt)_g \cong
R(G)_g \xrightarrow{(\ev_g)_g}\C.
\eeq
We will write
\[
\ind_g(D) := \ind_g[D], 
\]
where
$[D] \in KK_G(M, \pt)$
is the class \eqref{eq Khom D}.
\end{definition}
Note that $(k^M_V)^*_g[D]_g$ is simply the localisation at $g$ of the $K$-homology class of the restriction of $D$ to $V$.

\begin{remark}
The $g$-index of $D$ could also have been called the $D$-Lefschetz number of $g$.
\end{remark}

\subsection{Properties of the $g$-index} \label{sec prop}

If $M$ is \emph{compact}, then we may take $U = V = M$ in Definition \ref{g-index}. Furthermore, the map $p^M\colon M\to \pt$ is proper. In that case, the composition \eqref{eq ind g} simply equals the map
\[
(p^M_*)_g\colon KK_G(C_0(M), A)_g \to KK_G(\C, A)_g.
\]
If $A = \C$, then it follows that for compact $M$, the $g$-index of $D$ equals 
\begin{equation} \label{eq ind g cpt}
\ind_g(D) = \ind_G(D)(g),
\end{equation}
 the usual equivariant index of $D$ evaluated at $g$. Note that on the right hand side of \eqref{eq ind g cpt}, $G$ can be any compact Lie group acting isometrically on $M$, if the action lifts to $E$, commutes with $D$, and contains $g$.

In general, however, the $g$-indices on noncompact manifolds give us something more general than the equivariant index in the compact case. In the examples in Section \ref{sec examples}, we will see that the $g$-index can be used to describe characters of infinite-dimensional representations. These cannot be realised as indices on compact manifolds. And even on compact manifolds, an equivariant index can be decomposed into $g$-indices which individually correspond to infinite-dimensional representations. See Subsection~\ref{sec T S2}.


The $g$-index has an excision property.
\begin{lemma} \label{lem excision}
Let $V$ be a $G$-invariant, relatively compact, open neighbourhood of $M^g$. Suppose there is a $G$-equivariant open embedding $V\hookrightarrow \tilde M$ into a  $G$-manifold $\tilde M$. Suppose the action by $G$ on $\tilde M$ has no fixed points outside $V$.
Suppose there is a Hermitian, $\Z_2$-graded $G$-vector bundle $\tilde E \to \tilde M$ and an odd, self-adjoint, elliptic differential operator $\tilde D$ on $\tilde E$ such that $\tilde E|_V = E|_V$ and $\tilde D|_V = D|_V$. 
Then 
\[
\ind_g(D) = \ind_g(\tilde D).
\]
\end{lemma}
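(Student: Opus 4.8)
The plan is to reduce the equality $\ind_g(D) = \ind_g(\tilde D)$ to the fact, already essentially built into Definition~\ref{g-index}, that the $g$-index depends only on the $K$-homology class of the operator restricted to an arbitrarily small $G$-invariant relatively compact neighbourhood of the fixed point set. First I would invoke Lemma~\ref{lem U V}: both $\ind_g(D)$ and $\ind_g(\tilde D)$ may be computed using \emph{any} pair of nested $G$-invariant relatively compact open neighbourhoods $U \subset \Ubar \subset V$ of the relevant fixed point sets. The hypotheses give us that $M^g \subset V$ and that $\tilde M^g \subset V$ as well (since $G$ has no fixed points on $\tilde M$ outside the image of the embedding $V \hookrightarrow \tilde M$, and inside $V$ the two actions agree), so a single $G$-invariant pair $U \subset \Ubar \subset V$ works simultaneously for both manifolds, after shrinking $V$ if necessary so that $\Ubar \subset V$ and $V$ is relatively compact in both $M$ and $\tilde M$.

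Next I would compare the two composite maps in \eqref{eq ind g} term by term. The key point is that the inclusion $k^M_V\colon C_0(V) \to C_0(M)$ (extension by zero) and the corresponding inclusion $k^{\tilde M}_V\colon C_0(V) \to C_0(\tilde M)$ have the property that $(k^M_V)^*[D] = (k^{\tilde M}_V)^*[\tilde D]$ in $KK_G(C_0(V), \pt)$. This is because $\tilde E|_V = E|_V$ and $\tilde D|_V = D|_V$, so the restricted $K$-homology classes of $D$ and $\tilde D$ to the open set $V$ literally coincide: as noted in the remark after Definition~\ref{g-index}, $(k^M_V)^*_g[D]_g$ is the localisation of the $K$-homology class of $D|_V$, and likewise for $\tilde D$, and these are the same class. (One should be slightly careful that "restriction of $[D]$ to the open set $V$" is well-defined; this is the standard localisation of a $K$-homology class to an open subset, represented by $[L^2(E|_V), D|_V/\sqrt{(D|_V)^2+1}, \pi_V]$ with appropriate care, and it agrees for $D$ and $\tilde D$ precisely because the operators and bundles agree over $V$.) Having matched the first arrow, the remaining two arrows $((j^V_{\Ubar})_*)_g^{-1}$ and $(p^{\Ubar}_*)_g$ in \eqref{eq ind g}, followed by the identification with $R(G)_g$ and $(\ev_g)_g$, depend only on $U$, $V$ and the ambient group $G$ — not on $M$ or $\tilde M$. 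Hence the two composites agree, giving $\ind_g(D) = \ind_g(\tilde D)$.

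The main obstacle, and the step that requires the most care, is the claim that $(k^M_V)^*[D] = (k^{\tilde M}_V)^*[\tilde D]$ as elements of $KK_G(C_0(V),\pt)$. One has to check that the pullback along the open inclusion $C_0(V)\hookrightarrow C_0(M)$ of the cycle $\bigl(L^2(E), D/\sqrt{D^2+1}, \pi_M\bigr)$ is homotopic, as a Kasparov cycle, to the analogous construction performed on $V$ itself (the point being that $L^2(E)$ is larger than $L^2(E|_V)$, but $C_0(V)$ acts through $\pi_M$ by multiplication by functions supported in $V$, so the part of $L^2(E)$ over $M\setminus V$ is acted on trivially and can be split off up to operator homotopy; the operator $D/\sqrt{D^2+1}$ does not respect the decomposition, but its off-diagonal and "$D^2+1$-regularising" behaviour makes the relevant commutators and $F^2-1$ terms compact after multiplying by elements of $C_0(V)$). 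Since the operators, bundles, and metrics all agree over $V$, this reduction produces the \emph{same} cycle for $D$ and for $\tilde D$. This is a standard locality property of analytic $K$-homology, so I would state it cleanly and cite the background (e.g.\ \cite{Blackadar}) rather than reprove it in detail; everything else in the argument is formal manipulation with the maps already set up in Subsection~\ref{sec g index}.

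\begin{proof}
Since the action of $G$ on $\tilde M$ has no fixed points outside the image of $V$, and this action agrees with that on $M$ over $V$, we have $M^g \subset V$ and (identifying $V$ with its image) $\tilde M^g \subset V$. Shrinking $V$ if necessary, we may assume $V$ is relatively compact in both $M$ and $\tilde M$, and choose a $G$-invariant relatively compact open neighbourhood $U$ of $M^g$ with $\Ubar \subset V$; then $U \subset \Ubar \subset V$ is an admissible pair in the sense of Theorem~\ref{thm loc 2} for both $M$ and $\tilde M$. By Lemma~\ref{lem U V}, $\ind_g(D)$ is computed by the composition \eqref{def:g-index} using this pair and the manifold $M$, and $\ind_g(\tilde D)$ by the analogous composition using the same pair and $\tilde M$.

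The only step in \eqref{def:g-index} that refers to the ambient manifold is the first arrow, the pullback $(k^M_V)^*_g$ applied to $[D]_g$, respectively $(k^{\tilde M}_V)^*_g$ applied to $[\tilde D]_g$. By the locality of analytic $K$-homology, the class $(k^M_V)^*[D] \in KK_G(C_0(V), \pt)$ is represented by the cycle built from $D|_V$ on $L^2(E|_V)$, and likewise $(k^{\tilde M}_V)^*[\tilde D]$ is represented by the cycle built from $\tilde D|_V$ on $L^2(\tilde E|_V)$. Since $\tilde E|_V = E|_V$ and $\tilde D|_V = D|_V$ (with the same metrics), these are literally the same cycle, so
\[
(k^M_V)^*[D] = (k^{\tilde M}_V)^*[\tilde D] \quad \in KK_G(C_0(V), \pt),
\]
and hence the same after localisation at $g$. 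The subsequent arrows $\bigl((j^V_{\Ubar})_*\bigr)_g^{-1}$, $(p^{\Ubar}_*)_g$, the identification $KK_G(\pt,\pt)_g \cong R(G)_g$ and $(\ev_g)_g$ depend only on $U$, $V$ and $G$. Therefore the two compositions give the same number, i.e.\ $\ind_g(D) = \ind_g(\tilde D)$.
\end{proof}
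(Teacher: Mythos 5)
Your proposal is correct and follows essentially the same route as the paper: the whole argument reduces to the locality identity $(k^{M}_V)^*[D]=(k^{\tilde M}_V)^*[\tilde D]$ in $KK_G(V,\pt)$, which the paper obtains by citing Proposition 10.8.8 of \cite{HR}, after which the remaining maps in \eqref{def:g-index} depend only on $U$, $V$ and $G$. Your extra care about shrinking $V$ and matching the neighbourhoods via Lemma~\ref{lem U V} is fine but not needed beyond what the paper's one-line proof already uses.
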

\begin{proof}
By Proposition 10.8.8 in \cite{HR}, we have
\[
 (k^{M}_V)^*[D]=(k^{\tilde M}_V)^*[\tilde D]\in KK_G(V, \pt).
 \]
This implies the claim.
\end{proof}

\begin{example}
Suppose $M$ has a $G$-equivariant $\Spin$-structure, and let $D$ be the $\Spin$-Dirac operator. Let $M\hookrightarrow \tilde M$ be a $G$-equivariant open embeddng into a compact $G$-manifold $\tilde M$ with a $G$-equivariant $\Spin$-structure. If $G$ is connected and $\ind_g(D) \not=0$, then $g$ must have a fixed point in $\tilde M\setminus M$. Indeed, Atiyah and Hirzebruch \cite{AH} showed that the $g$-index of the $\Spin$-Dirac operator on $\tilde M$ is zero in this case. So the claim follows by Lemma \ref{lem excision}.
\end{example}

Another property of the $g$-index is multiplicativity. Let $D_1$ and $D_2$ be operators like $D$ on manifolds $M_1$ and $M_2$ respectively, and consider the product operator 
\[
D_1 \times D_2 := D_1 \otimes 1 + 1\otimes D_2
\]
on $M_1 \times M_2$ (where graded tensor products are used). Then functoriality of the Kasparov product implies that
\[
\ind_g(D_1 \times D_2) = \ind_g(D_1)\ind_g(D_2).
\]
In the index theory of deformed Dirac operators developed in \cite{Braverman}, the deformation used means that an analogous multiplicativity property is highly nontrivial, see \cite{HS, MZ, Paradan02}.

\subsection{Fixed points} \label{sec fixed pt}

Having defined the $g$-index, we can state the main result of this paper. We will use the fact that the connected components of the fixed point set $M^g$ are smooth submanifolds of $M$, possibly of different dimensions.

Since $M^g$ is compact, the restriction to $TM^g$ of the principal symbol $\sigma_D$ of $D$ defines a class
\beq{eq sigma D g}
[\sigma_D|_{TM^g}] \in KK_G(\pt, TM^g)
\eeq
 Let $N\to M^g$ be the union of the normal bundles to each of the components of $M^g$. 
Consider the topological $K$-theory class 
\beq{eq wedge N}
\bigl[\Bigwedge N_{\C}\bigr] := \left[ \bigoplus_{j} {\Bigwedge}^{2j} N \otimes \C \right] - \left[ \bigoplus_{j} {\Bigwedge}^{2j+1} N \otimes \C \right] \in KK_G(\pt, M^g).
\eeq

For any trivial $G$-space $X$, we have
\[
KK_G(\pt, X) \cong KK(\pt, X)\otimes R(G).
\]
We can evaluate  the factor in $R(G)$ of any class $a\in KK_G(\pt, X)$ at $g$, to obtain
 $a(g) \in KK(\pt, X)\otimes \C$. In this way,
evaluating 
 the classes \eqref{eq sigma D g} and \eqref{eq wedge N} at $g$ gives the classes
\[
[\sigma_D|_{TM^g}] (g) \in KK(\pt, TM^g) \otimes \C
\]
and
\beq{eq wedge N g}
\bigl[\Bigwedge N_{\C}\bigr](g)  \in KK(\pt, M^g) \otimes \C,
\eeq
respectively. 

Consider the Chern characters 
\[
\begin{split}
\ch \colon& KK(\pt, TM^g)\to H^*(TM^g);\\
\ch \colon& KK(\pt, M^g)\to H^*(M^g),
\end{split}
\]
defined on each smooth component of $M^g$ separately. By {\cite[Lemma~2.7]{AS}}, the $K$-theory class \eqref{eq wedge N g} is invertible, hence so is its Chern character. An explicit expression for the inverse 
\[
 \frac{1}{\ch\bigl(\bigl[\Bigwedge N_{\C}\bigr](g)\bigr)} \quad \in H^*(M^g)\otimes \C
\]
of this element is given in~{\cite[eq.~(3.8)]{AS3}}.
The cohomology group $H^*(M^g)$ acts on $H^*(TM^g)$ via the pullback along the tangent bundle projection~$\tau_{M^g}$. Let $\Todd(TM^g \otimes \C)$ be the cohomology class on $M^g$ obtained by putting together the Todd-classes of the complexified tangent bundles to all components of $M^g$.
\begin{theorem}[Fixed point formula]\label{thm fixed pt}
The $g$-index of $D$ equals
\begin{equation}
\label{eq:main.thm.coho.formula}
\ind_g(D)=\int_{TM^g} \frac{\ch \bigl( [\sigma_D|_{TM^g}](g)\bigr) \Todd(TM^g\otimes\C)}{\ch\bigl(\bigl[\Bigwedge N_{\C}\bigr](g)\bigr)}.
\end{equation}
\end{theorem}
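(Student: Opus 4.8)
The plan is to reduce the computation of $\ind_g(D)$ to the compact Atiyah--Segal--Singer theorem via Kasparov's index theorem and the localisation results already established. The essential point is that $[D] = [\sigma_D]\otimes_{TM}[D_{TM}]$ in $KK_G(M,\pt)$, so the $g$-index factors through the first entry. First I would take $U,V$ as in Theorem~\ref{thm loc 2} and, using the excision property (Lemma~\ref{lem excision}) together with a tubular neighbourhood of $M^g$, replace $M$ by the total space of the normal bundle $N\to M^g$ — or more precisely work with a $G$-invariant open embedding of $V$ into $N$ (identified with a neighbourhood of the zero section), on which the fixed point set is exactly $M^g$. Thus $\ind_g(D)$ only depends on the restriction $\sigma_D|_V$, hence on its germ near $M^g$.

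Next I would unwind the definition of $\ind_g$ in terms of the first entry of $KK$-theory. Writing $[\sigma_D]\in KK_G(M,TM)$ for the symbol class and $[D_{TM}]\in KK_G(TM,\pt)$ for the Dolbeault--Dirac class, Kasparov's index theorem gives $[D]=[\sigma_D]\otimes_{TM}[D_{TM}]$. Applying $(k^M_V)^*_g$, then $((j^V_{\Ubar})_*)_g^{-1}$, then $(p^{\Ubar}_*)_g$, and using compatibility of the Kasparov product with the functorial maps and with localisation, I would push the whole computation over to $KK_G(TM,\pt)$-land and identify the relevant localised symbol class with the class $\sigmaDg\in K^0_G(TM)_g$ of \eqref{eq sigma D g intro}. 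Here I would invoke Theorem~\ref{thm loc 1} (for the second entry, with $A=C_0(M)$ and also with $A=\C$) to restrict symbol classes from $TM$ to $TM^g$, getting that $\ind_g(D)$ equals the $g$-index (now in the usual compact sense) of the element of $K^0_G(TM^g)$ obtained from $\sigma_D|_{TM^g}$ by the localisation procedure. This is the step where I keep careful track of both entries in $KK$-theory simultaneously, following the structure of Atiyah--Segal's argument in \cite{AS} but now $KK$-theoretically.

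Once everything has been localised to $M^g$ and $TM^g$, the argument becomes the classical one: the $K$-theoretic Thom isomorphism / Gysin map for the inclusion $M^g\hookrightarrow M$ (or rather for the normal bundle $N$) introduces the factor $\bigl[\Bigwedge N_\C\bigr]$, which after evaluation at $g$ and passing to Chern characters is invertible by \cite[Lemma~2.7]{AS}, with explicit inverse from \cite[eq.~(3.8)]{AS3}. Combining the Chern character of the localised symbol class, the Todd class of $TM^g\otimes\C$ coming from the Atiyah--Singer index theorem on the components of $M^g$, and the inverse Euler-type class from the normal bundle, and integrating over $TM^g$, yields exactly \eqref{eq:main.thm.coho.formula}. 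The bookkeeping here is standard once the reduction is done.

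The main obstacle I expect is the second step: carefully pushing the Kasparov-product factorisation $[D]=[\sigma_D]\otimes_{TM}[D_{TM}]$ through the localised Gysin maps in \emph{both} variables at once, and checking that the localisation at $g$ commutes with all the functorially induced maps and with the Kasparov product, so that the $KK$-theoretic manipulations legitimately reduce to the compact statement. In particular, verifying that the class $\sigmaDg$ one extracts really is the restriction to $TM^g$ of the $g$-localised symbol, and that the Thom-class contribution is precisely $\bigl[\Bigwedge N_\C\bigr](g)$ and not some twist of it, requires matching conventions between Kasparov's index theorem, the $KK$-theoretic wrong-way maps, and the topological formulas in \cite{AS3}. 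The analytic inputs (ellipticity, self-adjointness, the $K$-homology class being well-defined on the complete manifold) are comparatively routine given the hypotheses, as is the final cohomological assembly.
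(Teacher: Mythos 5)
Your outline follows essentially the same route as the paper's proof: Kasparov's index theorem $[D]=[\sigma_D]\otimes_{TM}[D_{TM}]$, localisation in both $KK$-variables via Theorems~\ref{thm loc 1} and~\ref{thm loc 2}, $KK$-theoretic Gysin/Thom maps whose restriction to $TM^g$ produces the factor $\bigl[\Bigwedge N_{\C}\bigr](g)$ (invertible after localisation by Atiyah--Segal), identification of the restricted $g$-symbol class with $[\sigma_D|_{TM^g}]$ (the paper's Proposition~\ref{prop:symbol.M^g} and Lemma~\ref{lem res Bott}), and finally the Atiyah--Singer cohomological index on $TM^g$ with the Todd class. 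The only deviation is your opening excision to the total space of the normal bundle, which the paper does not use and which would require justifying an extension of $D$ to $N$ as demanded by Lemma~\ref{lem excision}; it can simply be dropped, since by definition $\ind_g(D)$ already depends only on $(k^M_V)^*[D]$.
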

The integral in this expression is the sum of the integrals over all connected components of $TM^g$ of the integrand corresponding to each component.

If $M$ is compact, then 
 \eqref{eq ind g cpt} implies that Theorem \ref{thm fixed pt} reduces to the Atiyah--Segal--Singer fixed point formula \cite[Theorem 3.9]{AS3}.

\subsection{The index pairing}\label{sec index pair}

In the course of the proof of Theorem \ref{thm fixed pt}, we will also find a fixed point formula for the index pairing
 (i.e.\ the Kasparov product)
\[
KK_G(\pt, M) \times KK_G(M, \pt) \to KK_G(\pt, \pt).
\]
Note that any element of the equivariant topological $K$-theory group $KK_G(\pt, M)$ can be represented by a formal difference
$[F_0] - [F_1]$, for two $G$-equivariant vector bundles $F_0, F_1\to M$ that are equal outside a compact set. We will write $F := F_0 \oplus F_1$, with the 
 $\Z_2$-grading where $F_0$ is the even part and $F_1$ the odd part, and $[F] := [F_0] - [F_1] \in KK_G(\pt, M)$.
\begin{theorem}[Fixed point formula for the index pairing] \label{thm index pair}
We have
\[
([F] \otimes_M [D])(g) = \int_{TM^g} \frac{\ch \bigl([F|_{M^g}](g) \bigr)\ch \bigl( [\sigma_D|_{TM^g}](g)\bigr) \Todd(TM^g\otimes\C)}{\ch\bigl(\bigl[\Bigwedge N_{\C}\bigr](g)\bigr)}.
\]
\end{theorem}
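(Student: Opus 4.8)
The plan is to deduce Theorem~\ref{thm index pair} from Theorem~\ref{thm fixed pt} by a reduction to the case of the operator $D_F := D$ twisted by the bundle $F$, together with the naturality and functoriality of the Kasparov product under the localisation maps in \eqref{eq ind g}. First I would observe that, for the class $[F] = [F_0] - [F_1] \in KK_G(\pt, M)$ represented by two $G$-vector bundles agreeing outside a compact set, the Kasparov product $[F] \otimes_M [D]$ is again a $K$-homology class of the form $[D_F]$, where $D_F$ is the operator $D$ acting on sections of $E \otimes F$ (more precisely on the graded tensor product $E \mathbin{\hat\otimes} F$); its principal symbol is $\sigma_{D_F} = \sigma_D \otimes \mathrm{id}_F$ (pulled back to $TM$). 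Hence $([F]\otimes_M[D])(g) = \ind_g(D_F)$, and applying Theorem~\ref{thm fixed pt} to $D_F$ gives
\[
\ind_g(D_F) = \int_{TM^g} \frac{\ch\bigl([\sigma_{D_F}|_{TM^g}](g)\bigr)\,\Todd(TM^g\otimes\C)}{\ch\bigl(\bigl[\Bigwedge N_\C\bigr](g)\bigr)}.
\]
It then remains to identify $\ch\bigl([\sigma_{D_F}|_{TM^g}](g)\bigr)$ with $\ch\bigl([F|_{M^g}](g)\bigr)\,\ch\bigl([\sigma_D|_{TM^g}](g)\bigr)$, which follows from multiplicativity of the Chern character together with the compatibility of the restriction-to-$TM^g$ map and evaluation-at-$g$ with the product structure: restricting $\sigma_D \otimes \mathrm{id}_F$ to $TM^g$ gives the exterior/interior product of $\sigma_D|_{TM^g} \in KK_G(\pt, TM^g)$ with $\tau_{M^g}^*[F|_{M^g}] \in KK_G(TM^g, TM^g)$, and $\ch$ is a ring homomorphism compatible with this pairing and with the $H^*(M^g)$-action on $H^*(TM^g)$ via $\tau_{M^g}^*$.

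There is an alternative, more self-contained route that avoids twisting: track the product $[F]\otimes_M[D]$ directly through the maps of \eqref{eq ind g}. Pulling $[F]$ back along $k^M_V$ and $j^V_{\Ubar}$, the associativity of the Kasparov product gives
\[
([F]\otimes_M[D])_g = (p^{\Ubar}_*)_g\Bigl( (k^M_V)^*_g[F]_g \,\otimes_{\Ubar}\, ((j^V_{\Ubar})_*)_g^{-1}(k^M_V)^*_g[D]_g \Bigr),
\]
using that the localisation isomorphisms of Theorem~\ref{thm loc 2} (and Theorem~\ref{thm loc 1}) are module maps intertwining the product. Then one runs the proof of Theorem~\ref{thm fixed pt} from Section~\ref{sec pf fixed pt} with the extra bundle $F$ carried along in the first $KK$-variable throughout — through Kasparov's index theorem $[D] = [\sigma_D]\otimes_{TM}[D_{TM}]$, the localised symbol class $\sigma^D_g$, and the Gysin maps to $TM^g$ — and observes that $F$ contributes exactly the factor $\ch([F|_{M^g}](g))$ at the end. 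Since the proof of Theorem~\ref{thm fixed pt} is already $KK$-theoretic and keeps track of both entries, adding a $K$-theory class in the left entry is a formal modification.

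I expect the main obstacle to be bookkeeping rather than a genuine difficulty: one must check carefully that the twisted operator $D_F$ still satisfies the hypotheses of Section~\ref{sec fixed pt formula} (odd, essentially self-adjoint, elliptic, with a $G$-action on $E\otimes F$ commuting with it), that the identification $[F]\otimes_M[D] = [D_F]$ holds at the level of $K$-homology classes (standard, but it requires the usual argument that the Kasparov product of a vector-bundle class with an elliptic-operator class is the twisted elliptic operator — cf.\ the connection/perturbation description of the product), and that all the restriction, evaluation-at-$g$, and Chern-character maps are multiplicative in the precise sense needed. The only subtle point is the compatibility of $\ch$ with the module structure of $H^*(TM^g)$ over $H^*(M^g)$ and with evaluation at $g$ on the $R(G)$-factor; once these are in place, the formula in Theorem~\ref{thm index pair} drops out of Theorem~\ref{thm fixed pt} applied to $D_F$. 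I would present the twisting argument as the main proof and mention the direct $KK$-theoretic route as a remark.
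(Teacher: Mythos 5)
Your central step is a gap, not bookkeeping. The product $[F]\otimes_M[D]$ of $[F]\in KK_G(\pt,M)$ and $[D]\in KK_G(M,\pt)$ lives in $KK_G(\pt,\pt)=R(G)$; it is not a $K$-homology class on $M$, and it is not literally the class of the naively twisted operator $D_F$ on $E\otimes F$ (that class lives in $KK_G(M,\pt)$ and, on noncompact $M$, cannot be pushed forward to a point since $p^M$ is not proper). So the assertion ``hence $([F]\otimes_M[D])(g)=\ind_g(D_F)$'' is exactly the nontrivial content of the theorem rather than a consequence of standard facts: the left-hand side is a global quantity, represented by a Callias-type Fredholm operator built from $D$, $F_0\oplus F_1$ \emph{and} the identification $F_0\cong F_1$ outside a compact set, while the right-hand side is defined by localisation near $M^g$. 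Equating them requires showing that the localisation procedure recovers the global pairing, i.e.\ that there is no contribution ``at infinity''. In the paper's language this is Proposition \ref{prop Fredholm} together with a vanishing argument for $\ind_g^{\infty}$ of the Fredholm representative (as sketched in the example at the end of Subsection \ref{sec Fredholm}, relying on the degeneracy of the restricted cycle as in \eqref{eq inf index DA}); none of this appears in your proposal. Your second route has the same problem in a different guise: the displayed identity pushing $[F]\otimes_M[D]$ through the maps of \eqref{eq ind g} is asserted, not proved (and $(k^M_V)^*[F]$ does not typecheck, since $k^M_V$ acts on the first $KK$-entry while $[F]$ sits in the second); that identity \emph{is} the localisation statement one has to establish.

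For comparison, the paper's proof avoids the twisted operator and avoids localisation in the first entry altogether. It first proves Proposition \ref{prop loc Khom D},
\[
[D]_g = (j^{TM}_{TM^g})^*_g[\sigma_D]_g \otimes_{TM^g} \bigl[\tau_{M^g}^*\Bigwedge N_{\C}\bigr]_g^{-1} \otimes_{TM^g} [D_{TM^g}]_g,
\]
using Kasparov's index theorem and the Gysin/Bott-element machinery of Section \ref{sec pf fixed pt}, and then pairs this with $[F]_g$, using compatibility of $\ev_g$ with Kasparov products and Lemma \ref{lem ind top cpt} to land on the cohomological formula. Your final point about multiplicativity of the Chern character and the $H^*(M^g)$-module structure on $H^*(TM^g)$ is fine and matches the paper, but it only becomes relevant after the localisation identity above is in place. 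To salvage your twisting approach you would need to prove $\ev_g\bigl([F]\otimes_M[D]\bigr)=\ind_g(D_F)$ honestly (Fredholm representative, $\ind_g^{\infty}=0$, and comparison of the Callias-type operator with the naive twist near $M^g$), which is more work than the paper's direct argument.
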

Recall that $M^g$ was assumed to be compact, and that we use the action by the cohomology of $M^g$ on the cohomology of $TM^g$ via the pullback along $\tau_{M^g}$.

Theorem 3.33 in \cite{CGRS} is a non-equivariant index formula for the index pairing in a more general context. Theorem \ref{thm index pair} is an equivariant version of this result, for operators like $D$.

The proof of Theorem \ref{thm index pair} is simpler than that of Theorem \ref{thm fixed pt}, because it does not involve localisation in the first entry of $KK$-theory. Theorem \ref{thm fixed pt} is needed for the examples and applications in Section \ref{sec examples}, such as the relation with characters of dicrete series representations. The reason for this is that Theorem \ref{thm fixed pt} provides an expression for an index of the operator $D$ itself, without the need to twist it by a $K$-theory class.


\section{Localisation} \label{sec loc thm}

We now turn to a proof of Theorems \ref{thm loc 1} and \ref{thm loc 2}. This involves certain module structures discussed in Subsection \ref{sec mod}, which are used to prove vanishing results in Subsection \ref{sec van}

In this section, we will consider \emph{graded} $KK$-theory, i.e.\ the direct sum of even and odd $KK$-theory.

\subsection{Module structures}\label{sec mod}

In this subsection only, let $G$ be any locally compact group.

\begin{proposition} \label{prop module}
Let $H<G$  be a compact subgroup. Let $Y$ be a locally compact, Hausdorff, proper $G$-space for which there is an equivariant, continuous  map $Y \to G/H$. Then for any $G$-$C^*$-algebra $A$, the groups
\[
KK_G(A, C_0(Y)) \quad \text{and} \quad KK_G(C_0(Y), A) 
\]
have  structures of a unital $R(H)$-modules.
\end{proposition}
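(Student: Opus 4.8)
The plan is to build a unital ring homomorphism $\rho\colon R(H)\to KK_G(C_0(Y),C_0(Y))$ and then let $R(H)$ act on $KK_G(A,C_0(Y))$ by $x\cdot v := x\otimes_Y\rho(v)$ and on $KK_G(C_0(Y),A)$ by $v\cdot x := \rho(v)\otimes_Y x$, for $v\in R(H)$. Associativity and bilinearity of the interior Kasparov product, together with multiplicativity and unitality of $\rho$, then give exactly the module axioms, the class $1\in R(H)$ acting as the identity because $\rho(1)=1_{C_0(Y)}\in KK_G(C_0(Y),C_0(Y))$. So the whole task reduces to constructing $\rho$.

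To construct $\rho$, use the equivariant map $q\colon Y\to G/H$. For a finite-dimensional unitary representation $V$ of $H$, the associated $G$-equivariant Hermitian vector bundle $G\times_H V\to G/H$ pulls back along $q$ to a finite-rank $G$-equivariant Hermitian vector bundle $E_V := q^*(G\times_H V)\to Y$. Its module $\Gamma_0(E_V)$ of continuous sections vanishing at infinity is a $G$-equivariant Hilbert $C_0(Y)$-module (trivially $\Z_2$-graded), carrying also a left action $m_Y$ of $C_0(Y)$ by pointwise multiplication. Since $\cK(\Gamma_0(E_V))\cong\Gamma_0(\End(E_V))$ and $m_Y(a)=a\cdot\mathrm{id}_{E_V}$ lies in $\Gamma_0(\End(E_V))$ for every $a\in C_0(Y)$, the triple $(\Gamma_0(E_V),0,m_Y)$ is a $G$-equivariant Kasparov $(C_0(Y),C_0(Y))$-cycle, and we set $\rho([V]):=[\Gamma_0(E_V),0,m_Y]$.

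It remains to check that $\rho$ is a well-defined unital ring homomorphism. Complete reducibility of representations of the compact group $H$ shows that all relations in $R(H)$ come from direct sums $V\cong V'\oplus V''$, which induce $E_V\cong E_{V'}\oplus E_{V''}$ and hence the corresponding additive relation among Kasparov classes; so $\rho$ is well defined and additive. For multiplicativity one has $E_{V\otimes W}\cong E_V\otimes E_W$ as $G$-bundles, while the interior Kasparov product of two cycles whose operators vanish is the internal tensor product of the Hilbert modules equipped again with the zero operator (whose cycle conditions hold as above); since $\Gamma_0(E_V)\otimes_{C_0(Y)}\Gamma_0(E_W)\cong\Gamma_0(E_V\otimes E_W)$, this gives $\rho([V])\otimes_Y\rho([W])=\rho([V][W])$. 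The trivial representation yields $E_{\C}=Y\times\C$, so $\rho(1)=[C_0(Y),0,m_Y]=1_{C_0(Y)}$ and $\rho$ is unital. Equivalently, writing $Y_H:=q^{-1}(eH)$ one has a $G$-homeomorphism $Y\cong G\times_H Y_H$ and $C_0(Y)\cong\Ind_H^G C_0(Y_H)$, and $\rho$ can be obtained as the composite of $R(H)=KK_H(\C,\C)\xrightarrow{\,\cdot\otimes 1\,}KK_H(C_0(Y_H),C_0(Y_H))$ with Kasparov's induction homomorphism $\Ind_H^G$.

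The only genuine work is in the ring-homomorphism step: identifying the interior Kasparov product of the bundle cycles $(\Gamma_0(E_V),0,m_Y)$ — i.e.\ the product of two Kasparov cycles with vanishing operators — with the bundle cycle of the tensor product, and, on the alternative route, checking that $\Ind_H^G$ is compatible with the interior Kasparov product; both facts are standard but deserve a careful citation. The hypotheses that $Y$ is a proper $G$-space admitting an equivariant map to $G/H$ are used only to guarantee that the pullback bundles $E_V$ (equivalently the identification $Y\cong G\times_H Y_H$) exist $G$-equivariantly; everything else is a formal manipulation of Kasparov products.
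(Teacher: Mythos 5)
Your proposal is correct and follows essentially the same route as the paper: pull back the associated bundles $G\times_H V$ along the equivariant map $Y\to G/H$, represent them by zero-operator Kasparov $(C_0(Y),C_0(Y))$-cycles, check that $[V]\mapsto[E_V]$ is a unital ring homomorphism into $KK_G(Y,Y)$ because tensor products of bundles match Kasparov products of these cycles, and then act by the interior Kasparov product on either entry. The only cosmetic difference is that you verify compactness of the left multiplication action via $\cK(\Gamma_0(E_V))\cong\Gamma_0(\End(E_V))$, whereas the paper does this by hand with local orthonormal frames and a partition of unity.
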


Proposition \ref{prop module} follows from the fact that vector bundles, even on noncompact spaces,  define classes in $KK$-theory in the following way. This is probably well-known, but we include a proof for completeness' sake.

Let $X$ be a locally compact Hausdorff space, on which a locally compact group $G$ acts properly. Let $E\to X$ be a Hermitian $G$-vector bundle. The space $\Gamma_0(E)$ of continuous sections of $E$ vanishing at infinity is a right Hilbert $C_0(X)$-module by pointwise multiplication and inner products. Let $\pi_X\colon C_0(X) \to \cB(\Gamma_0(E))$ be given by pointwise multiplication.
\begin{lemma}\label{lem KK vb}
The triple
\begin{equation} \label{eq KK vb}
(\Gamma_0(E), 0, \pi_X)
\end{equation}
is a $G$-equivariant Kasparov $(C_0(X), C_0(X))$-cycle.
\end{lemma}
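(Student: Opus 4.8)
The statement is that $(\Gamma_0(E), 0, \pi_X)$ is a $G$-equivariant Kasparov $(C_0(X), C_0(X))$-cycle. So I need to verify: (a) $\Gamma_0(E)$ is a genuine (countably generated, $\Z_2$-graded — here trivially graded) Hilbert $C_0(X)$-module; (b) the operator $F = 0$ lies in $\cB(\Gamma_0(E))$ and is self-adjoint; (c) $\pi_X$ is a $*$-homomorphism $C_0(X) \to \cB(\Gamma_0(E))$; (d) the compactness conditions: for every $a \in C_0(X)$, the operators $\pi_X(a)(F^2-1) = -\pi_X(a)$, $\pi_X(a)(F^*-F) = 0$, and $[F,\pi_X(a)] = 0$ are all in $\cK(\Gamma_0(E))$; and (e) everything is suitably $G$-equivariant. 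Items (b), (c) and the vanishing of two of the three operators in (d) are immediate, so the content is really that $\pi_X(a)$ is a compact operator on $\Gamma_0(E)$ for each $a\in C_0(X)$, plus the equivariance bookkeeping.

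The first step is to make $\Gamma_0(E)$ into a Hilbert module precisely: for $s,t\in\Gamma_0(E)$ define $\langle s,t\rangle(x) = \langle s(x),t(x)\rangle_{E_x} \in \C = C_0(X)$ pointwise, and the module action $(s\cdot f)(x) = s(x)f(x)$. One checks this is $C_0(X)$-valued (the pointwise inner product of two sections vanishing at infinity again vanishes at infinity), positive, and that $\Gamma_0(E)$ is complete in the induced norm $\|s\| = \sup_x \|s(x)\|_{E_x}$ — which is just the sup-norm, so completeness is standard. Countable generation holds because $E$ is a vector bundle over a (second countable, or at least $\sigma$-compact) locally compact space: locally $E$ is trivial, and a countable locally finite cover together with a subordinate partition of unity exhibits a countable generating set; alternatively one invokes that $\Gamma_0(E)$ is complemented in a free module $C_0(X)^n$ when $E$ is a direct summand of a trivial bundle. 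This last observation is in fact the cleanest route to compactness.

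The key step — the main obstacle — is showing $\pi_X(a) \in \cK(\Gamma_0(E))$ for $a \in C_0(X)$. Here is how I would do it. First reduce to the case where $a$ has compact support, using that such $a$ are dense in $C_0(X)$ and $\cK(\Gamma_0(E))$ is closed. If $\supp(a) =: K$ is compact, cover $K$ by finitely many open sets over which $E$ is trivial, take a partition of unity $\{\chi_i\}$ subordinate to this cover with $\sum \chi_i = 1$ near $K$, and write $\pi_X(a) = \sum_i \pi_X(a\chi_i^{1/2})\pi_X(\chi_i^{1/2})$ (choosing $\chi_i \ge 0$). On each trivialising patch $U_i$, $\Gamma_0(E|_{U_i}) \cong C_0(U_i)^{n_i}$, and a section of the form $a\chi_i^{1/2}\, s$ is supported in $U_i$; the operator $s \mapsto \pi_X(a\chi_i^{1/2})\, \langle \xi_i, s\rangle$ for $\xi_i$ a suitable finite collection of sections of $E|_{U_i}$ extended by zero realises $\pi_X(a\chi_i^{1/2}\chi_i^{1/2})$ as a finite-rank operator $\sum \theta_{u_j, v_j}$ in terms of the rank-one operators $\theta_{u,v}(s) = u\langle v, s\rangle$. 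Summing over $i$ gives a finite-rank operator, hence $\pi_X(a) \in \cK(\Gamma_0(E))$. (The only subtlety is keeping track that the $\chi_i$-pieces glue back to $a$ on $K$ and the operator vanishes off $K$; this is routine.)

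Finally, equivariance: $G$ acts on $\Gamma_0(E)$ by $(g\cdot s)(x) = g\cdot(s(g^{-1}x))$ using the $G$-structure on $E$, and this action is compatible with the $G$-action on the coefficient algebra $C_0(X)$ and with $\pi_X$ in the sense required for a $G$-equivariant Kasparov cycle (i.e.\ $g\cdot(s\cdot f) = (g\cdot s)\cdot(g\cdot f)$, $\langle g\cdot s, g\cdot t\rangle = g\cdot\langle s,t\rangle$, $\pi_X(g\cdot f) = g\,\pi_X(f)\,g^{-1}$); since $F = 0$, the operator is trivially $G$-invariant and the residual conditions involving $g\mapsto \pi_X(a)(g^{-1}\cdot F\cdot g - F)$ etc.\ are vacuous. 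All of these identities follow directly from the definitions, so I would simply state them. This completes the verification that \eqref{eq KK vb} is a $G$-equivariant Kasparov cycle.
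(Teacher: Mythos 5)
Your proposal is correct and follows essentially the same route as the paper: both reduce to showing that $\pi_X(a)$ is compact for $a\in C_0(X)$, then establish this by covering the (compact) support of $a$ with finitely many trivialising neighbourhoods, using a partition of unity and local frames to exhibit $\pi_X(a)$ as a finite-rank operator, and finally invoking density of $C_c(X)$ in $C_0(X)$ together with closedness of $\cK(\Gamma_0(E))$. The extra bookkeeping you include (Hilbert-module axioms, countable generation, equivariance) is treated as routine in the paper but is consistent with its argument.
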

We will denote the class in $KK_G(X, X)$ defined by \eqref{eq KK vb} by $[E]$.
\begin{proof}
We will show that for all $f\in C_0(X)$, the operator $\pi_X(f)$ on $\Gamma_0(E)$ is compact. This implies the claim.

Let $U\subset X$ be a relatively compact open subset admitting an orthonormal frame $\{e_1, \ldots, e_r\}$ of $E|_U$. 
Let $s\in \Gamma_0(E)$.
Then
\[
s|_U = \sum_{j=1}^r (e_j, s)_E e_j.
\]
Here $(\relbar, \relbar)_E$ is the metric on $E$.
So if $f\in C_0(X)$ is supported inside $U$, then 
\[
\pi_X(f)s = \sum_{j=1}^r (e_j, fs) e_j = \sum_{j=1}^r (\bar fe_j, s) e_j.
\]
By extending the sections $e_j$ outside $U$ to elements of $\Gamma_0(E)$, we find that
 $\pi_X(f)$ is a finite-rank operator. 

For a general $f\in C_c(X)$, there is a finite open cover $\{U_j\}_{j=1}^n$ of $\supp(f)$ such that every set $U_j$ admits a local orthonormal frame for $E$. Let $\{\varphi_j\}_{j=1}^n$ be functions such that $\supp(\varphi_j)\subset U_j$, and $\sum_{j=1}^n \varphi_j$ equals one on $\supp(f)$. Then, by the preceding argument,
\[
\pi_X(f) = \sum_{j=1}^n\pi_X(\varphi_j f)
\] 
is a finite-rank operator. Hence for all $f \in C_0(X)$,  the operator $\pi_X(f)$ on $\Gamma_0(E)$ is indeed compact.
\end{proof}


Now consider the situation of Proposition \ref{prop module}.
Let $p\colon Y \to G/H$ be an equivariant, continuous map.
Let $V$ be a finite-dimensional representation space of $H$. We have the $G$-vector bundles
\[
G\times_H V \to G/H
\]
and 
\[
E_V := p^*(G\times_H V) \to Y.
\]
By Lemma \ref{lem KK vb}, this vector bundle defines a class
\[
[E_V] \in KK_G(Y, Y).
\]
\begin{lemma} \label{lem ring homom}
The map from $R(H)$ to $KK_G(Y, Y)$ given by 
\[
[V] \mapsto [E_V]
\]
as above, is a ring homomorphism.
\end{lemma}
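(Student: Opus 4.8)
The plan is to verify the two ring operations separately: that the map $[V]\mapsto[E_V]$ respects direct sums (hence addition in $R(H)$ and in $KK_G(Y,Y)$) and that it respects tensor products (multiplication in $R(H)$, and the interior Kasparov product $\otimes_Y$ in $KK_G(Y,Y)$), and finally that the trivial one-dimensional representation $\C$ goes to the unit $1_{C_0(Y)}=[C_0(Y)]$. Additivity is the easy part: $G\times_H(V\oplus W)\cong (G\times_H V)\oplus(G\times_H W)$ canonically, pullback along $p$ preserves direct sums, and $\Gamma_0(E_V\oplus E_W)\cong\Gamma_0(E_V)\oplus\Gamma_0(E_W)$ as Hilbert $C_0(Y)$-modules with the operators $\pi_Y$ acting diagonally, so $[E_{V\oplus W}]=[E_V]+[E_W]$ directly from the definition of the sum in $KK$-theory. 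The unit claim is equally direct: $\C$ (trivial $H$-rep) gives $G\times_H\C\cong G/H\times\C$, so $E_\C$ is the trivial line bundle, $\Gamma_0(E_\C)=C_0(Y)$ as a Hilbert module over itself with the standard left multiplication, and the cycle $(C_0(Y),0,\mathrm{id})$ represents $1\in KK_G(Y,Y)$.

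The main work is multiplicativity. First I would record the bundle-level fact that $G\times_H(V\otimes W)\cong(G\times_H V)\otimes_{G/H}(G\times_H W)$ and hence, applying $p^*$, that $E_{V\otimes W}\cong E_V\otimes_Y E_W$ as Hermitian $G$-vector bundles over $Y$, where $\otimes_Y$ denotes the fibrewise tensor product. So it suffices to prove that for two Hermitian $G$-vector bundles $E,F\to Y$ one has $[E\otimes_Y F]=[E]\otimes_Y[F]$ in $KK_G(Y,Y)$, i.e. that the class $[E]$ of Lemma~\ref{lem KK vb} is multiplicative under fibrewise tensoring and interior Kasparov product. The natural way to see this is to compute the Kasparov product $[E]\otimes_Y[F]$ explicitly. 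The cycle for $[F]$ is $(\Gamma_0(F),0,\pi_Y)$, viewed as a $(C_0(Y),C_0(Y))$-bimodule; the interior product with $[E]=(\Gamma_0(E),0,\pi_Y)$ is represented by the internal tensor product Hilbert module $\Gamma_0(E)\otimes_{C_0(Y)}\Gamma_0(F)$ with zero operator (both operators being zero, the product operator is visibly zero and all the Kasparov-product axioms are trivially met), and the left action of $C_0(Y)$ is $\pi_Y\otimes 1$. The remaining point is the identification of Hilbert $C_0(Y)$-modules
\[
\Gamma_0(E)\otimes_{C_0(Y)}\Gamma_0(F)\;\cong\;\Gamma_0(E\otimes_Y F),
\]
compatibly with the left $C_0(Y)$-actions and the $G$-actions, which sends $s\otimes t$ to the section $y\mapsto s(y)\otimes t(y)$; this is an isometry for the inner products because $(s\otimes t, s'\otimes t')=(t,\pi_Y((s,s')_E)t')_F$ corresponds pointwise to $(s(y),s'(y))_E(t(y),t'(y))_F=(s(y)\otimes t(y),s'(y)\otimes t'(y))_{E\otimes F}$, and it has dense range (it contains all sections supported in trivialising charts, by the local frame argument used in the proof of Lemma~\ref{lem KK vb}). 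Under this identification $\pi_Y\otimes 1$ on the left becomes $\pi_Y$ on $\Gamma_0(E\otimes_Y F)$, so the product cycle is exactly $(\Gamma_0(E\otimes_Y F),0,\pi_Y)$, which represents $[E\otimes_Y F]$.

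The step I expect to be the main obstacle is the careful verification of the internal-tensor-product identification $\Gamma_0(E)\otimes_{C_0(Y)}\Gamma_0(F)\cong\Gamma_0(E\otimes_Y F)$: one must check that the algebraic tensor product map is well-defined and inner-product preserving on the pre-completion (so that it descends to the Hilbert-module completion, killing the length-zero vectors that appear in the internal tensor product construction), that its image is dense, and that it intertwines the left actions and the $G$-actions. This is essentially a partition-of-unity computation, local on $Y$ where all bundles trivialise, and it is routine but needs to be done with the correct handling of the $C_0(Y)$-valued inner products; once it is in place, the statement follows, and its compatibility with the associativity of $\otimes_Y$ on the $KK$ side (which is part of the standard package of properties quoted earlier) then gives that $[V]\mapsto[E_V]$ is a genuine ring homomorphism. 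I would also remark that $R(H)$ being unital and the map sending $1$ to $1$ makes it a unital ring homomorphism, which is what Proposition~\ref{prop module} needs.
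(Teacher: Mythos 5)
Your proposal is correct and follows essentially the same route as the paper: the paper's proof consists precisely of the observation that $[E]\otimes_Y[E'] = [E\otimes E']$ for Hermitian $G$-vector bundles in the setting of Lemma~\ref{lem KK vb}, which is the multiplicativity step you verify in detail via the identification $\Gamma_0(E)\otimes_{C_0(Y)}\Gamma_0(F)\cong\Gamma_0(E\otimes_Y F)$ and the zero-operator Kasparov product. Your additional checks (additivity, the unit, and the bundle isomorphism $E_{V\otimes W}\cong E_V\otimes_Y E_W$) are details the paper leaves implicit, and they are carried out correctly.
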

\begin{proof}
This follows from the fact that, in the setting of Lemma \ref{lem KK vb}, for any two Hermitian $G$-vector bundles $E, E' \to X$, one has
\[
[E]\otimes_{X}[E'] = [E\otimes E'].
\]
\end{proof}

The ring homomorphism of Lemma \ref{lem ring homom} defines the module structures sought in Proposition \ref{prop module}, which has therefore been proved. If $A = \C$ and $Y$ is compact, the $R(H)$-module structure on $KK_G(\C, C_0(Y))$ defined in this way  is the one used in \cite{AS}.

\subsection{Vanishing results} \label{sec van}

We will prove  Theorems \ref{thm loc 1} and \ref{thm loc 2} by generalising  Atiyah and Segal's proof of~{\cite[Theorem~1.1]{AS}}. 
An important  step is the following generalisation of~{\cite[Corollary 1.4]{AS}}.
\begin{proposition} \label{prop loc}
Let $H<G$ be a closed subgroup such that $g \not\in H$. Let $Y$ be a compact $G$-space for which there is an equivariant map $Y \to G/H$ and $A$ a $G$-$C^*$-algebra. Then
\[
KK_G(A, C_0(Y))_g = KK_G(C_0(Y), A)_g = 0.
\]
\end{proposition}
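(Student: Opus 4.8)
The plan is to mimic Atiyah--Segal's argument for \cite[Corollary 1.4]{AS}, but carried out in bivariant $KK$-theory and for an arbitrary coefficient algebra $A$, using the module structure furnished by Proposition~\ref{prop module}. First I would observe that since $Y$ is a compact $G$-space admitting an equivariant map $Y\to G/H$ with $H<G$ closed, Proposition~\ref{prop module} (applied with the compact subgroup $H$, noting $Y$ is automatically a proper $G$-space after restricting along $Y\to G/H$, or rather using that $G$ acts with compact stabilisers conjugate to $H$) equips both $KK_G(A,C_0(Y))$ and $KK_G(C_0(Y),A)$ with unital $R(H)$-module structures, compatible with the $R(G)$-module structure via the restriction homomorphism $R(G)\to R(H)$. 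So it suffices to show that these modules vanish after localising at the prime ideal $I_g\subset R(G)$.

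The key algebraic point is the following: because $g\notin H$, there is a character $\chi\in R(G)$ (e.g.\ a matrix coefficient of a finite-dimensional representation separating $g$ from the conjugacy-invariant set $H$, or more simply using Peter--Weyl on the compact group generated by $g$ together with $H$) with $\chi(g)\neq 0$ but whose restriction $\chi|_H$ lies in the augmentation-type ideal, i.e.\ $\chi|_H = (\dim \text{of the rep})\cdot 1$ in $R(H)$ in a way that makes $\chi - \chi(g)\cdot[\text{something}]$ act as zero; concretely, following \cite[proof of Cor.~1.4]{AS}, one finds $\chi\in R(G)$ with $\chi\notin I_g$ whose image in $R(H)$ annihilates the $R(H)$-module in question. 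The cleanest route: for the unital $R(H)$-module $\cM = KK_G(A,C_0(Y))$ (or the other one), every element is killed by an element of $R(G)$ that acts invertibly in $R(G)_g$. Indeed if $V$ is any representation of $H$, then $[V]$ acts on $\cM$ through $[E_V]\in KK_G(Y,Y)$ by Lemma~\ref{lem ring homom}, and one shows there is a virtual representation of $G$ restricting to $[V]$ minus something, whose value at $g$ is a nonzero scalar. Then localising at $I_g$ makes these scalars invertible, forcing $\cM_g=0$.

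Concretely, the steps in order: (1) invoke Proposition~\ref{prop module} to get the unital $R(H)$-module structures on $KK_G(A,C_0(Y))$ and $KK_G(C_0(Y),A)$, compatible with $R(G)$ acting through $R(G)\to R(H)$; (2) recall Atiyah--Segal's lemma that, since $g\notin H$, the localised ring $R(G)_g$ satisfies $R(G)_g\otimes_{R(G)} R(H) = 0$ — equivalently, the composite $R(G)\to R(H)$ sends $R(G)\setminus I_g$ to a set that generates the unit ideal in $R(H)$ once one localises, because $\{\chi\in R(G): \chi(g)=0\}$ together with the image of $R(G)$ in $R(H)$ can only generate all of $R(H)$; (3) conclude that any unital $R(H)$-module, viewed as an $R(G)$-module via restriction, has vanishing localisation at $I_g$; (4) apply this to both modules to get the claim.

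The main obstacle is step (2): establishing that a unital $R(H)$-module, regarded as an $R(G)$-module through the restriction map $\Res^G_H\colon R(G)\to R(H)$, becomes zero upon localising at $I_g$ when $g\notin H$. This is exactly the content of \cite[Corollary 1.4]{AS} (or its proof) — the point being that $1\in R(H)$ is a sum $\sum_i \Res^G_H(\chi_i)\cdot r_i$ with $r_i\in R(H)$ and $\chi_i\in R(G)$ chosen so that each $\chi_i(g)=0$; equivalently, $1$ lies in the ideal of $R(H)$ generated by $\Res^G_H(I_g)$, which holds precisely because no $G$-representation can distinguish all elements of the conjugacy-invariant closed set $\overline{\{g^n\}}$ from $H$ while the restrictions do vanish appropriately — this uses compactness of $Y$ only indirectly, through the module being unital. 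Once this ring-theoretic fact is in hand, the topological/$C^*$-algebraic content is minimal: everything reduces to Lemma~\ref{lem ring homom} and formal localisation. I would cite \cite[Corollary 1.4]{AS} for step (2) and present the rest as the straightforward bookkeeping it is.
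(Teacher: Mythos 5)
Your proposal is essentially the paper's own proof: both rest on the unital $R(H)$-module structures from Proposition~\ref{prop module} (via Lemma~\ref{lem ring homom}), combined with Atiyah--Segal's ring-theoretic fact that, since $g\notin H$, such a module has vanishing localisation at $I_g$, the paper citing $R(H)_g=0$ from \cite[Corollary~1.3]{AS} and the argument immediately following it (your reference to Corollary~1.4 is off by one, but its proof is exactly this argument). The middle paragraph's ad hoc construction of a character $\chi$ is muddled and unnecessary, but since you ultimately defer to Atiyah--Segal for that step, the argument stands as is.
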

\begin{proof}
By {\cite[Corollary~1.3]{AS}}, we have $R(H)_g = 0$. As Atiyah and Segal argued below that corollary, it is therefore enough to show that $KK_G(A, C_0(Y))$ and $KK_G( C_0(Y), A)$ are unital $R(H)$-modules. Hence the claim follows from Proposition \ref{prop module}.
\end{proof}

We will deduce Theorems \ref{thm loc 1} and \ref{thm loc 2} from the following special cases.
\begin{proposition} \label{prop loc zero}
In the setting of Theorem \ref{thm loc 1}, suppose $g$ has no fixed points in $M$. Then, if $A$ is separable, we have
\begin{equation} \label{eq loc zero 1}
KK_G(A, C_0(M))_g = 0. 
\end{equation}
If $A$ is $\sigma$-unital, then for all $G$-invariant, relatively compact open subsets $U\subset M$,
\begin{equation} \label{eq loc zero 2}
KK_G(C_0(U), A)_g = 0.
\end{equation}
\end{proposition}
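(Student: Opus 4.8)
The plan is to reduce both vanishing statements to Proposition \ref{prop loc} by a partition of unity / Mayer--Vietoris argument on $M$, exploiting that $g$ acts freely (i.e.\ has no fixed points) on $M$. First I would use properness of the $G$-action together with the hypothesis $M^g = \emptyset$ to produce, around each point $x\in M$, a $G$-invariant relatively compact open set $W_x$ admitting an equivariant map $W_x \to G/H_x$ for some closed subgroup $H_x < G$ with $g\notin H_x$: since $G$ is compact Abelian and $x$ is not fixed by $g$, the stabiliser $H_x = G_x$ omits $g$, and a slice around the orbit $Gx$ gives such a neighbourhood (this is exactly how \cite[\S1]{AS} sets things up, and it is the reason for the hypothesis that $M\setminus M^g$ be a manifold in the more general version). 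Thus $M$ is covered by $G$-invariant open sets on each of which Proposition \ref{prop loc} (applied with $Y$ the closure, which is a compact $G$-space mapping equivariantly to $G/H_x$) forces the localised $KK_G$-groups to vanish.

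For \eqref{eq loc zero 2}, since $U$ is relatively compact it is covered by finitely many such $W_{x_1},\dots,W_{x_n}$; I would then run an induction on $n$ using the Mayer--Vietoris exact sequence in $KK$-theory in the first variable, together with the fact that $C_0(\Ubar)$ is $\sigma$-unital so the relevant six-term sequences apply. At each stage, the groups $KK_G(C_0(U\cap W_{x_i}), A)_g$ vanish by the base case (Proposition \ref{prop loc}, restricted to the relatively compact open set $U\cap W_{x_i}$ whose closure maps equivariantly to $G/H_{x_i}$), and exactness forces the group for the union to vanish; since localisation at the prime ideal $I_g$ is exact, it commutes with the Mayer--Vietoris sequence. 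This gives \eqref{eq loc zero 2}.

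For \eqref{eq loc zero 1}, the second variable $C_0(M)$ need not be covered by finitely many pieces, so I would instead write $C_0(M) = \varinjlim C_0(U_\alpha)$ as a direct limit over the $G$-invariant relatively compact open subsets $U_\alpha$ (directed by inclusion), and use continuity of $KK_G(A,-)$ in the second variable for $A$ separable: $KK_G(A, C_0(M)) = \varinjlim_\alpha KK_G(A, C_0(U_\alpha))$. Each $KK_G(A, C_0(U_\alpha))_g$ vanishes: cover $U_\alpha$ by finitely many $W_{x_i}$ and run the Mayer--Vietoris induction in the \emph{second} variable this time, with base case again Proposition \ref{prop loc}. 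Since localisation commutes with direct limits, \eqref{eq loc zero 1} follows.

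The main obstacle I anticipate is the verification that the Mayer--Vietoris sequence in equivariant $KK$-theory is available in the form needed here — i.e.\ that for $G$-invariant open sets $U_1, U_2$ in a locally compact $G$-space one has a six-term exact sequence relating $KK_G$ of $C_0(U_1\cap U_2)$, $C_0(U_1)\oplus C_0(U_2)$ and $C_0(U_1\cup U_2)$ in both variables, and that it is natural enough to localise. This rests on the semisplit exact sequence $0\to C_0(U_1\cap U_2)\to C_0(U_1)\oplus C_0(U_2)\to C_0(U_1\cup U_2)\to 0$ and the long exact sequences for $KK_G$ in each variable (valid since all algebras here are $\sigma$-unital, indeed commutative). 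The rest — the slice construction, the passage to closures, the direct-limit argument, and the exactness of localisation — is routine, though one must be slightly careful that closures of the $W_{x_i}$ are genuinely compact $G$-spaces with equivariant maps to $G/H_{x_i}$, which is where relative compactness of $U$ (resp.\ $U_\alpha$) is used.
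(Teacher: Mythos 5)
Your argument is correct in outline and rests on the same pillars as the paper's proof (Palais slices giving tubes $G\times_{H}S$ with $g\notin H$, Proposition \ref{prop loc}, six-term exact sequences, exactness of localisation at $I_g$, and the direct limit over relatively compact invariant opens for \eqref{eq loc zero 1}), but the decomposition is organised differently. The paper first proves a separate statement (Lemma \ref{lem loc cpt}): for every $G$-invariant \emph{compact} subset $X\subset M$ one has $KK_G(A, C_0(X))_g = KK_G(C_0(X), A)_g = 0$, by induction over a finite subcover $\{U_j\}$ from the slice theorem, using the exact triangles of the closed-subset extensions $0\to C_0(X_{n+1}\setminus Y_n)\to C_0(X_{n+1})\to C_0(X_{n+1}\cap Y_n)\to 0$ with $X_j := \overline{U_j}\cap X$ and $Y_n := X_1\cup\cdots\cup X_n$; the open set $U$ is only reached at the very end via $0\to C_0(U)\to C_0(\Ubar)\to C_0(\partial U)\to 0$, and \eqref{eq loc zero 1} follows by the same direct-limit argument you propose. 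Your Mayer--Vietoris induction over open tubes is a legitimate substitute: all algebras involved are separable commutative, hence nuclear, so the extensions are semisplit and the six-term sequences in either variable are available under your separability/$\sigma$-unitality hypotheses, and localisation is exact. What the paper's route buys is that the base case is always applied to compact spaces, where Proposition \ref{prop loc} applies as stated; what your route buys is that one avoids introducing the auxiliary compact sets and boundaries until the end.

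The one point you must repair is the base case. Proposition \ref{prop loc} is stated for \emph{compact} $Y$, whereas the pieces entering your Mayer--Vietoris sequences are the open sets $U\cap W_{x_i}$ (and their intersections with partial unions); knowing that the \emph{closure} maps equivariantly to $G/H_{x_i}$ only gives vanishing for $C_0$ of the closure, not for the algebra you actually feed into the sequence. There are two immediate fixes, both within your toolkit: either note that the proof of Proposition \ref{prop loc} uses only the unital $R(H)$-module structure from Proposition \ref{prop module}, which is stated for locally compact proper $G$-spaces with an equivariant map to $G/H$, so the vanishing holds verbatim for the open tubes and their invariant open subsets; or insert, for each relatively compact invariant open piece $W'$, the extension $0\to C_0(W')\to C_0(\overline{W'})\to C_0(\partial W')\to 0$ and apply the compact case to $\overline{W'}$ and $\partial W'$ --- which is precisely the step the paper performs for $U$ itself. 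With that amendment your proof goes through.
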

If $A = \C$, then  \eqref{eq loc zero 1} is precisely {\cite[Proposition~1.5]{AS}}. By a  generalisation of the arguments in {\cite[Section~1]{AS}}, we will
 deduce Proposition~\ref{prop loc zero} from Proposition~\ref{prop loc}.

By Palais' slice theorem {\cite[Proposition 2.2.2]{Palais}}, there is an open cover $\{U_j\}_{j=1}^{\infty}$ of $M$ by $G$-invariant open sets such that for all $j$,
\[
\overline{U_j} \cong G\times_{H_j} \overline{S_j}
\]
(via the action map),
for the stabiliser $H_j < G$ of a point in $U_j$, and an $H_j$-invariant submanifold $S_j \subset M$. Suppose that $g$ has no fixed points. Then it does not lie in any of the stabilisers $H_j$. Therefore, Proposition \ref{prop loc} implies that
\[
KK_G(A, C_0(\overline{U_j}))_g = KK_G(C_0(\overline{U_j}), A)_g = 0.
\]

Let $X\subset M$ be any $G$-invariant, compact subset. The proof of Proposition~\ref{prop loc zero} is based on the following fact.
\begin{lemma} \label{lem loc cpt}
If $A$ is separable, then
\begin{equation} \label{eq loc cpt 1}
KK_G(A, C_0(X))_g= 0.
\end{equation}
If $A$ is $\sigma$-unital, then
\beq{eq loc cpt 2}
KK_G(C_0(X), A)_g = 0.
\eeq
\end{lemma}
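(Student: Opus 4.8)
The plan is to reduce the statement for an arbitrary $G$-invariant compact subset $X$ to the already-established vanishing on the "slice pieces" $\overline{U_j}$, by a Mayer--Vietoris / finite-cover induction. Since $X$ is compact and $G$-invariant, it is covered by finitely many of the $G$-invariant open sets $U_1,\dots,U_n$ of Palais' slice theorem, with $\overline{U_j}\cong G\times_{H_j}\overline{S_j}$ and $g\notin H_j$; by Proposition~\ref{prop loc}, $KK_G(A, C_0(\overline{U_j}))_g = KK_G(C_0(\overline{U_j}), A)_g = 0$, and the same holds for any finite union of the $\overline{U_j}$ once we know such unions also admit an equivariant map to some $G/H$ with $g\notin H$ — which they do, by gluing the slice data, or more robustly by applying the argument componentwise through Mayer--Vietoris.

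The key step is the Mayer--Vietoris sequence in $KK$-theory for a decomposition $X = X_1 \cup X_2$ of a compact $G$-space into closed $G$-invariant subsets with intersection $X_0 = X_1 \cap X_2$. For the first entry this takes the form of a long exact sequence relating $KK_G(C_0(X), A)$, $KK_G(C_0(X_1), A)\oplus KK_G(C_0(X_2), A)$ and $KK_G(C_0(X_0), A)$ (coming from the pullback square of $C^*$-algebras $C(X) \to C(X_1)\oplus C(X_2) \rightrightarrows C(X_0)$, which is a Milnor square; dually for the second entry one uses the analogous excision/Mayer--Vietoris sequence for $KK_G(A, C_0(-))$, valid when $A$ is separable). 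Localisation at $I_g$ is exact, so it preserves these long exact sequences. Then one inducts on the number of pieces in the finite cover: writing $X = (X\cap \overline{U_1})\cup\cdots\cup(X\cap\overline{U_n})$, the two-piece Mayer--Vietoris sequence together with the inductive hypothesis (the vanishing for unions of $\le n-1$ closed pieces, each admitting an equivariant map to a $G/H_j$ with $g\notin H_j$, and their intersections, which are again of this form) forces $KK_G(A, C_0(X))_g = 0$ and $KK_G(C_0(X), A)_g = 0$. The base case is Proposition~\ref{prop loc} applied to a single $X\cap\overline{U_j}$, which is a closed $G$-invariant subset of $\overline{U_j}\cong G\times_{H_j}\overline{S_j}$ and hence still maps equivariantly to $G/H_j$.

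From Lemma~\ref{lem loc cpt} one then obtains Proposition~\ref{prop loc zero}: for \eqref{eq loc zero 1}, $C_0(M) = \varinjlim C_0(\Int X)$ over $G$-invariant compact $X$ (equivalently $C_0(M)$ is a direct limit of ideals $C_0(U)$ with $U$ relatively compact, whose closures $\overline{U}$ are compact), and $KK_G(A, -)$ commutes with such limits when $A$ is separable, while localisation commutes with direct limits; applying Lemma~\ref{lem loc cpt} term by term gives $KK_G(A, C_0(M))_g = 0$. For \eqref{eq loc zero 2}, a relatively compact open $U$ has $\overline{U}$ compact and $G$-invariant, and the inclusion $C_0(U)\hookrightarrow C(\overline{U})$ fits into the exact sequence with quotient $C(\partial U)$ (or one simply uses that $KK_G(C_0(U), A)$ receives a map from $KK_G(C(\overline U),A)$ and the boundary $\partial U$ is again compact $G$-invariant with no $g$-fixed points); localising, Lemma~\ref{lem loc cpt} kills $KK_G(C(\overline U), A)_g$ and $KK_G(C(\partial U), A)_g$, hence $KK_G(C_0(U), A)_g = 0$.

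**Main obstacle.** The delicate point is setting up the equivariant Mayer--Vietoris sequences correctly for \emph{both} variances of $KK_G$ with general coefficient algebra $A$ — in particular checking that a $G$-CW-type decomposition of the compact space $X$ into closed $G$-invariant pieces genuinely yields a Milnor square of $C^*$-algebras and hence the six-term (graded) exact sequence, and that separability of $A$ is exactly what is needed to have $KK_G(A, -)$ send the relevant short exact sequences of $C^*$-algebras to long exact sequences (semisplit-ness of the extensions $0\to C_0(U)\to C(\overline U)\to C(\partial U)\to 0$ should follow from the existence of $G$-equivariant continuous extensions of functions, e.g. via averaging a Tietze extension). Once exactness of these sequences is in hand, everything else is a routine induction, since localisation at the prime $I_g$ is flat and commutes with the direct limits involved.
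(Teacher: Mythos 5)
Your proposal is correct and follows the same overall strategy as the paper's proof: cover the compact $G$-invariant set $X$ by finitely many of the Palais slice neighbourhoods, apply Proposition~\ref{prop loc} to the individual pieces, use that localisation at $I_g$ preserves exactness, and induct on the number of pieces. The only real difference is the exact-sequence machinery. You invoke a Mayer--Vietoris sequence for the closed cover $Y_{n+1}=Y_n\cup X_{n+1}$ (and rightly flag its justification as the main obstacle), whereas the paper never sets up Mayer--Vietoris: writing $X_j:=\overline{U_j}\cap X$ and $Y_n:=X_1\cup\cdots\cup X_n$, it uses two ordinary excision triangles linked by the set identity $Y_{n+1}\setminus Y_n=X_{n+1}\setminus(X_{n+1}\cap Y_n)$. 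The triangle coming from $0\to C_0(X_{n+1}\setminus Y_n)\to C_0(X_{n+1})\to C_0(X_{n+1}\cap Y_n)\to 0$ together with Proposition~\ref{prop loc} kills $KK_G(A,C_0(X_{n+1}\setminus Y_n))_g$, and the triangle for $0\to C_0(Y_{n+1}\setminus Y_n)\to C_0(Y_{n+1})\to C_0(Y_n)\to 0$ then gives $KK_G(A,C_0(Y_{n+1}))_g\cong KK_G(A,C_0(Y_n))_g$; the first-entry statement is proved the same way with the exact sequences in the other variable. This sidesteps the point you worry about, since the only input is the existence of the six-term sequences for these (commutative, hence nuclear, hence semisplit) extensions under the separability/$\sigma$-unitality hypotheses on $A$ --- which is also exactly what you would need to derive your Mayer--Vietoris sequence from the Milnor square, so the two routes are essentially equivalent, the paper's being slightly more economical. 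One small slip: your parenthetical claim that a finite union of the $\overline{U_j}$ itself admits an equivariant map to a single $G/H$ with $g\notin H$ ``by gluing the slice data'' is false in general (e.g.\ for $G$ a two-torus topologically generated by $g$ and two tubes with stabilisers the two coordinate circles, any common target $G/H$ forces $H=G\ni g$); fortunately your induction never needs it, since each piece $X_{n+1}$ and each intersection $Y_n\cap X_{n+1}$ lies inside a single $\overline{U_{n+1}}$, so Proposition~\ref{prop loc} applies to it directly.
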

\begin{proof}
We will use an induction argument based on exact sequences in $KK$-theory. We work out the details for \eqref{eq loc cpt 1}. Then \eqref{eq loc cpt 2} can be proved in the same way, with exact sequences in the second entry in $KK$-theory replaced by the corresponding exact sequences in the first entry. The conditions that $A$ is separable or $\sigma$-unital imply that these exact sequences exist.

For $j, n\in \N$, write $X_j := \overline{U_j} \cap X$, and $Y_n := X_1 \cup\cdots \cup X_n$. Fix $n\in \N$, and consider the exact sequence of $C^*$-algebras
\[
0 \to C_0(X_{n+1}\setminus Y_n) \to C_0(X_{n+1}) \to C_0(X_{n+1}\cap Y_n) \to 0.
\]
It induces the exact triangle 
\[
\xymatrix{
KK_G(A, C_0(X_{n+1})) \ar[r] & KK_G(A, C_0(X_{n+1}\cap Y_n)) \ar[d]^-{\partial} \\
	& KK_G(A, C_0(  X_{n+1}\setminus Y_n )). \ar[ul]
}
\]
(See e.g.\ {\cite[Theorem 19.5.7]{Blackadar}}.) By Proposition \ref{prop loc}, we have
\[
KK_G(A, C_0(X_{n+1}))_g = KK_G(A, C_0(X_{n+1}\cap Y_n))_g = 0.
\]
Since localisation at $g$ preserves exactness, we find that
\begin{equation} \label{eq KK Xn}
 KK_G(A, C_0(  X_{n+1}\setminus Y_n ))_g = 0.
\end{equation}

Using the exact sequence
\[
0 \to C_0(Y_{n+1}\setminus Y_n) \to C_0(Y_{n+1}) \to C_0(Y_n) \to 0
\]
in a similar way, we obtain the exact triangle
\[
\xymatrix{
KK_G(A, C_0(Y_{n+1}))_g \ar[r] & KK_G(A, C_0( Y_n))_g \ar[d]^-{\partial} \\
	& KK_G(A, C_0(  Y_{n+1}\setminus Y_n ))_g. \ar[ul]
}
\]
Since $Y_{n+1}\setminus Y_n = X_{n+1}\setminus Y_n$, the vanishing of \eqref{eq KK Xn} implies that
\[
KK_G(A, C_0(Y_{n+1}))_g = KK_G(A, C_0(Y_n))_g. 
\]
Because $Y_1 = X_1$, Proposition \ref{prop loc} implies that
\[
KK_G(A, C_0(Y_1))_g = 0. 
\]
Since $X$ is compact, it can be covered by finitely many of the sets $X_j$.
Hence the claim follows by induction on $n$.
\end{proof}

\noindent\emph{Proof of Proposition \ref{prop loc zero}.}
Let $U\subset M$ be a $G$-invariant, relatively compact open subset. Consider the exact sequence
\[
0 \to C_0(U) \to C_0(\Ubar) \to C_0(\partial U) \to 0.
\]
If $A$ is $\sigma$-unital, this induces the localised exact triangle
\[
\xymatrix{
KK_G(C_0(\Ubar), A)_g  \ar[dr]& KK_G( C_0(\partial U), A)_g   \ar[l] \\
	& KK_G(C_0(U), A)_g. \ar[u]_-{\partial}
}
\]
Lemma \ref{lem loc cpt} implies that
\[
KK_G(C_0(\Ubar), A)_g =
KK_G(C_0(\partial{U}), A)_g  = 0.
\]
So we find that
$
 KK_G(C_0(U), A)_g = 0.
$

Similarly, If $A$ is separable, we have the  exact triangle
\[
\xymatrix{
KK_G(A, C_0(\Ubar))_g \ar[r] & KK_G(A, C_0(\partial U))_g \ar[d]^-{\partial} \\
	& KK_G(A, C_0(U))_g. \ar[ul]
}
\]
Applying Lemma \ref{lem loc cpt} in the same way, we find that
$
KK_G(A, C_0(U))_g  = 0.
$
The equality \eqref{eq loc zero 1}  follows, because $M$ is the direct limit of sets $U$ as above, and because $KK$-theory commutes with direct limits in the second entry. 
\hfill $\square$

\begin{remark}
The reason why \eqref{eq loc zero 2} does not hold if $U$ is replaced by $M$, and hence why Theorem \ref{thm loc 2} has to be stated more subtly than Theorem \ref{thm loc 1}, is that $KK$-theory does not commute with direct limits in the first entry. For example, the domain of the analytic assembly map in the Baum--Connes conjecture \cite{BCH} is the \emph{representable $K$-homology} group
\[
RK_*^G(X) := \varinjlim_{Y\subset X; \text{ $Y/G$ cpt}} KK_G(C_0(Y), \C),
\]
for a locally compact Hausdorff space $X$ on which a locally compact group $G$ acts properly. This does not equal the usual $K$-homology group $KK_G(C_0(X), \C)$ in general.
\end{remark}

\subsection{Proofs of localisation results} \label{sec pf loc}

\noindent \emph{Proof of Theorem \ref{thm loc 1}.}
Consider the exact sequence
\[
0 \to C_0(M \setminus M^g) \to C_0(M) \xrightarrow{(j^M_{M^g})^*} C_0(M^g) \to 0.
\]
It induces the exact triangle 
\[
\xymatrix{
KK_G(A, C_0(M)) \ar[r]^{(j^M_{M^g})^*} & KK_G(A, C_0(M^g)) \ar[d]^-{\partial} \\
	& KK_G(A, C_0(M\setminus M^g)). \ar[ul]
}
\]
After localisation at $g$, the first part of
Proposition \ref{prop loc zero} yields the exact triangle
\begin{samepage}
\[
\xymatrix{
KK_G(A, C_0(M))_g \ar[r]^-{(j^M_{M^g})^*_g} & KK_G(A, C_0(M^g))_g \ar[d]^-{\partial} \\
	& 0. \ar[ul]
}
\]
\hfill $\square$
\end{samepage}

\medskip
\noindent \emph{Proof of Theorem \ref{thm loc 2}.}
Let $U$ and $V$ be as in Theorem~\ref{thm loc 2}. Similarly to the proof of Theorem \ref{thm loc 1}, we have an exact triangle
\[
\xymatrix{
KK_G(C_0(V), A)_g  \ar[dr] & KK_G( C_0(\Ubar), A)_g   \ar[l]_-{((j^V_{\Ubar})_*)_g}  \\
	& KK_G(C_0(V\setminus \Ubar), A)_g. \ar[u]_-{\partial}	
}
\]
Because 
 $V\setminus \Ubar$ is a relatively compact subset of $M\setminus M^g$, the second part of Proposition \ref{prop loc zero} implies that the bottom localised $KK$-group in this triangle equals zero.
\hfill $\square$



\section{Kasparov's index theorem} \label{sec Kasparov}

In the proof of the Atiyah--Segal--Singer fixed point theorem, the Atiyah--Singer index theorem is used to relate topological and analytical indices to each other. In the noncompact case discussed in this paper, a roughly similar role is played by an index theorem of Kasparov. We state Kasparov's index theorem in Subsection~\ref{sec index Kas}.   In Subsection~\ref{sec Bott elt}, we discuss the fibrewise Bott element for the normal bundle of a submanifold in $KK$-theory, which is dual to the class of the Dolbeault--Dirac operator in a sense. This Bott element will play an important role in the proof of Theorem \ref{thm fixed pt}. In Subsection~\ref{sec:AS.index}, we show how the Bott element can be used to deduce the Atiyah--Singer index theorem from Kasparov's index theorem in the compact case. (The main step in the argument used there will be used in the proof of Theorem \ref{thm fixed pt}.)

Most of the material in this section is based on \cite{AS3, Kas13}, and explanations to the authors by Kasparov. Although the results here are not ours, we found it worthwhile to include the details, because they have not appeared in print yet.


\subsection{The index theorem} \label{sec index Kas}

To state the theorem, we recall the definition of the Dolbeault operator class
\beq{eq DTM}
[D_{TM}] \in KK_G(TM, \C)
\eeq
in~\cite[Definition 2.8]{Kas13}. The tangent bundle $T(TM)$ of $TM$ has a natural almost complex structure $J$. For $m\in M$ and $v\in T_mM$, we have
\[
T_v(TM) = T_mM \oplus T_v(T_mM) = T_mM \oplus T_mM.
\]
With respect to this decomposition, the almost complex structure $J$ is given by the matrix $\left[ \begin{array}{cc} 0 & 1\\-1 & 0\end{array} \right]$. 
Let  $\bar \partial + \bar\partial^*$ be the Dolbeault--Dirac operator on smooth sections of the vector bundle $\Bigwedge^{0,*}T^*(TM)\to TM$, for this almost complex structure. We will identify this vector bundle with $\tau_M^*\Bigwedge TM_{\C}\rightarrow TM$.
The class \eqref{eq DTM} is the class of this operator, as in \eqref{eq Khom D}. In our arguments however, it will be more convenient to use the $\Spinc$-Dirac operator $D_{TM}$, on the same vector bundle.  This defines the same $K$-homology class as $\bar \partial + \bar\partial^*$.

\iftoggle{long}{
\begin{example}
If $M=\R^2$, then the $\Spinc$-Dirac operator on $T\R^2$ is
\begin{equation}
\label{eq:Dolbeault.TR^2}
D_{T\R^2}=e_1\frac{\partial}{\partial \xi_1}+e_2\frac{\partial}{\partial \xi_2}+\ii\epsilon_1\frac{\partial}{\partial x_1}+\ii\epsilon_2\frac{\partial}{\partial x_2}
\end{equation}
Here $(x_1, x_2; \xi_1, \xi_2)$ are the standard coordinates on $T\R^2 = \R^2\times \R^2$, and for $j=1,2$,
\[
e_j :=\ext(d\xi_j)-\Int(d\xi_j)\qquad \epsilon_j :=\ext(d\xi_j)+\Int(d\xi_j).
\]
(Here $\ext$ and $\Int$ denote exterior multiplication and interior contraction, respectively.)
We have
\[
D_{T\R^2}
=\begin{bmatrix}0 & D_{T\R^2}^{-} \\ D_{T\R^2}^{+} & 0\end{bmatrix},
\]
where $D_{T\R^2}^{-}=(D_{T\R^2}^{+})^*$ and, with $z_j :=\xi_j+\ii x_j$,
\begin{equation*}
D_{T\R^2}^{+}
=\begin{bmatrix}-\frac{\partial}{\partial z_1} & -\frac{\partial}{\partial\overline{z_2}} \\ -\frac{\partial}{\partial z_2} & \frac{\partial}{\partial\overline{z_1}} \end{bmatrix}.
\end{equation*}
This agrees with the fact that the $\Spinc$-Dirac operator equals the Dolbeault--Dirac operator on K\"ahler manifolds.
\end{example}
}{}

\begin{definition}
The \emph{topological index} is the map
\[
\ind_t\colon KK_G(M, TM) \to KK_G(M, \pt)
\]
given by the Kasparov product with $[D_{TM}]$.
\end{definition}


Consider the principal symbol
$\tilde \sigma_D := \frac{\sigma_D}{\sqrt{\sigma_D^2+1}}$
of the operator $\frac{D}{\sqrt{D^2 + 1}}$. 
For $f\in C_0(M)$, we have for all $m\in M$ and $v\in T_mM$,
%
%
%
\[
f(m)\bigl(1-\tilde \sigma_D(v)^2\bigr)= f(m) \bigl(\sigma_D(v)^2+1 \bigr)^{-1}.
\]
Since the operator $D$ is elliptic and of positive order, this expression tends to zero as $m$ or $v$ tends to infinity. It therefore defines a compact operator on the Hilbert $C_0(TM)$-module $\Gamma_0(\tau_{M}^*E)$, analogously to the proof of Lemma \ref{lem KK vb}. Therefore, the triple
\begin{equation}
\label{eq:symbol.KK-cycle}
\bigl(\Gamma_0(\tau_{M}^*E), \tilde \sigma_D, \pi_{TM} \circ \tau_M^* \bigr)
\end{equation}
is a $G$-equivariant Kasparov $(C_0(M), C_0(TM))$-cycle. 
Here $\pi_{TM}\colon C_b(TM) \to \cB(\Gamma_0(\tau_M^*E))$ is given by pointwise multiplication.
Denote by
\beq{eq sigma D}
[\sigma_D] \in KK_G(M, TM).
\eeq
the class of~(\ref{eq:symbol.KK-cycle}).
In view of the following lemma, this symbol class is a natural generalisation of the $K$-theory symbol class defined in~\cite{AS1} when $M$ is compact.

\begin{lemma} \label{lem norm sigma D}
If $M$ is compact, consider the map $p^M$ from $M$ to a point. The image  
\[
p^M_*[\sigma_D] \in K^*_G(TM)
\]
is the usual symbol class.
\end{lemma}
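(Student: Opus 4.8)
The plan is to unwind the definition of the symbol class $[\sigma_D]\in KK_G(M, TM)$ from \eqref{eq:symbol.KK-cycle}, push it forward along $p^M\colon M\to \pt$, and recognise the result as the class in $K^0_G(TM)=KK_G(\pt, TM)$ built from the principal symbol in the classical way of \cite{AS1}. So the statement is really a compatibility check between two descriptions of ``the symbol class'', one $KK$-theoretic in the first variable and one topological.

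First I would recall how $p^M_*$ acts on a Kasparov $(C_0(M), C_0(TM))$-cycle. For compact $M$, $p^M\colon M\to\pt$ is proper and $C_0(\pt)=\C\to C(M)$ is the unital inclusion; the induced map $(p^M)^*\colon KK_G(M,TM)\to KK_G(\pt,TM)$ simply restricts the representation $\pi$ of $C(M)$ to the subalgebra $\C\cdot 1$, i.e.\ forgets the module structure over $C(M)$ and remembers only the Hilbert $C_0(TM)$-module, grading, and the odd operator. Applying this to \eqref{eq:symbol.KK-cycle} gives the cycle $\bigl(\Gamma_0(\tau_M^*E), \tilde\sigma_D\bigr)$ over $(\C, C_0(TM))$, where now $\Gamma_0(\tau_M^*E)$ is just viewed as a Hilbert $C(TM)$-module, $\tilde\sigma_D=\sigma_D(\sigma_D^2+1)^{-1/2}$ acts fibrewise, and scalar multiplication is the (suppressed) representation of $\C$.

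Next I would identify this with the standard $K$-theory symbol class. Recall that for compact $M$, the principal symbol $\sigma_D\colon \tau_M^* E^+\to \tau_M^* E^-$ of an elliptic operator is an isomorphism off the zero section, and the associated class in $K^0_G(TM)$ is, by definition (or by one of the standard equivalent definitions, e.g.\ \cite[\S 2]{AS1} or the Kasparov-module description in \cite{Blackadar}), exactly the class of the cycle $\bigl(\Gamma_0(\tau_M^* E), F\bigr)$ where $F$ is any odd, self-adjoint bundle endomorphism equal to $\begin{bmatrix}0 & \sigma_D^* \\ \sigma_D & 0\end{bmatrix}$-type normalisation with $F^2-1$ compactly supported (equivalently vanishing at infinity on $TM$). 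Since $\tilde\sigma_D$ is precisely such a normalisation of $\sigma_D$ --- it is odd, self-adjoint, and $1-\tilde\sigma_D^2=(\sigma_D^2+1)^{-1}\to 0$ fibrewise at infinity --- the cycle $(\Gamma_0(\tau_M^*E), \tilde\sigma_D)$ represents the usual symbol class. A homotopy between the chosen normalisation used in \cite{AS1} and $\tilde\sigma_D$, via $t\mapsto \sigma_D(\sigma_D^2+t)^{-1/2}$ or a straight-line homotopy of symbols, shows they define the same $K^0_G(TM)$ class; this uses that all these operators differ by compacts so the Kasparov-cycle axioms persist along the homotopy.

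The main obstacle, and the only place requiring care, is bookkeeping: making sure the various conventions line up --- that $(p^M)^*$ on the first variable is genuinely ``restrict $\pi$ to scalars'' (immediate from functoriality of $KK$ and the fact that $C_0$ sends $p^M$ to the unital inclusion $\C\hookrightarrow C(M)$), and that the normalisation convention for the topological symbol class in \cite{AS1} agrees up to homotopy with $x\mapsto x(x^2+1)^{-1/2}$. Both are standard, so the proof is short; I would phrase it as: apply $(p^M)^*$ to \eqref{eq:symbol.KK-cycle}, observe the resulting cycle over $(\C, C_0(TM))$ has underlying data $\bigl(\Gamma_0(\tau_M^*E), \tilde\sigma_D\bigr)$, and note this is manifestly (a normalised representative of) the symbol class of \cite{AS1}, invoking the homotopy invariance of $KK_G(\pt, TM)=K^0_G(TM)$ to absorb any difference of normalisation.
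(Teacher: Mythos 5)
Your proposal matches the paper's proof in essence: the paper likewise observes that $p^M_*$ turns the representation $\pi_{TM}\circ\tau_M^*$ into scalar multiplication, so $p^M_*[\sigma_D]=\bigl[\Gamma_0(\tau_M^*E),\tilde\sigma_D\bigr]$, and then identifies this normalised cycle with the classical symbol class (the paper does this via the ball-bundle description of \cite{LM}, which is the same normalisation/homotopy point you make). No gap; this is essentially the paper's argument.
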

\begin{proof}
Since $\pi_{TM} \circ \tau_M^* \circ (p^M)^*$ is the representation of $\C$ in $\Gamma_0(\tau^*_{M}E)$ by scalar multiplication, we have
\[
p^M_*[\sigma_D] = \bigl[\Gamma_0(\tau^*_{M}E), \tilde \sigma_D \bigr] \quad \in KK_G(\pt, TM).
\]
This is corresponds to the class 
\[
\bigl[\sigma_{D^{+}}\colon \tau^*_{M}E^+ \to \tau^*_{M} E^- \bigr] \in K^0_G(TM)
\]
in the sense of~{\cite[Ch.~III eq.~(1.7)]{LM}}, where $TM$ is identified with the open unit ball bundle $BM$ over $M$. (Restricting $\sigma_{D^{+}}$ to $BM$ and then identifying $BM \cong TM$
 has the same effect as normalising $\sigma_{D^{+}}$.) 
The lemma is then proved.
\end{proof}

We conclude this subsection by stating Kasparov's index theorem, which will be used to obtain a cohomological formula for the $g$-index.

\begin{theorem}[Kasparov's index theorem {\cite[Theorem 4.2]{Kas13}}] \label{thm index}
The $K$-homology class of the elliptic operator in~(\ref{eq Khom D}) is equal to the topological index of its symbol class~(\ref{eq sigma D}), i.e.,
\begin{equation} \label{eq index Kas}
[D] = \ind_t[\sigma_D] \in KK_G(M, \C).
\end{equation}
\end{theorem}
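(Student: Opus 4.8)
The statement to prove is Kasparov's index theorem (Theorem~\ref{thm index}): that the $K$-homology class $[D]$ of the elliptic operator equals the topological index $\ind_t[\sigma_D] = [\sigma_D] \otimes_{TM} [D_{TM}]$ in $KK_G(M,\pt)$. The plan is to verify this by computing the Kasparov product on the right-hand side explicitly, using the concrete cycles written down in~\eqref{eq Khom D}, \eqref{eq:symbol.KK-cycle} and~\eqref{eq DTM}.

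First I would reduce to the compact case. The statement is local on $M$ in an appropriate sense: both sides are compatible with the restriction maps $(k^M_V)^*$ to relatively compact open subsets, and the $K$-homology group $KK_G(M,\pt)$ is not a direct limit so this reduction needs a little care, but one can argue using a $G$-invariant exhaustion of $M$ together with the fact that $[D]$ and $[\sigma_D]$ are determined by their restrictions to small $G$-invariant charts (via Palais' slice theorem, as in Section~\ref{sec loc thm}), and that the product $\otimes_{TM}[D_{TM}]$ is local over $TM$. Alternatively, and more honestly, I would simply cite~\cite[Theorem~4.2]{Kas13} directly — the excerpt explicitly flags this as a known result of Kasparov that is being recalled rather than reproved — and present the proof only to the extent of explaining the mechanism, since Section~\ref{sec Kasparov} states that "the results here are not ours". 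So the realistic plan: state the theorem, attribute it to Kasparov, and sketch the argument.

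The sketch itself proceeds as follows. On a single $G$-chart $U \cong G\times_H S$ with $S$ a Euclidean slice, the symbol class $[\sigma_D]$ restricted there is, up to lower-order terms, the product of a fixed "Bott-type" symbol class with a trivial piece, and $[D_{TM}]$ restricted to $T S \cong \R^{2\dim S}$ is the Bott-dual Dolbeault class. The key computational input is the Bott periodicity statement: the Kasparov product of the fibrewise Bott element for a (trivial) Euclidean bundle with the Dolbeault--Dirac class of that bundle is the identity — this is precisely the content of Subsection~\ref{sec Bott elt} and Subsection~\ref{sec:AS.index} as announced in the excerpt. Granting that, one checks that $[\sigma_D]\otimes_{TM}[D_{TM}]$ has the same symbol (principal symbol of $D$) and the same leading behaviour as $[D]$, and then invokes the uniqueness part of the Kasparov product (two cycles with the same symbol class and appropriate compactness properties are homotopic) to conclude equality in $KK_G(M,\pt)$. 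Concretely: exhibit an explicit operator homotopy between a representative of $\frac{D}{\sqrt{D^2+1}}$ acting on $L^2(E)$ and a representative of the product cycle, built from the model computation in the flat case $M = \R^n$ (the $D_{T\R^2}$ example in the excerpt is the prototype).

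The main obstacle is the Kasparov product computation itself: forming $[\sigma_D]\otimes_{TM}[D_{TM}]$ requires choosing an $F$-connection on the internal tensor product of the Hilbert modules $\Gamma_0(\tau_M^*E)$ and $L^2(\tau_M^*\Lambda TM_\C)$ and verifying Kasparov's technical positivity (the "connection condition" and the positivity-modulo-compacts condition), which is where all the analytic work lies. This is exactly the step that Kasparov carries out in~\cite{Kas13}, and reproducing it would be lengthy; the sensible course in this paper is to quote it. A secondary subtlety is the reduction from noncompact to local-model $M$, since $KK$-theory in the first variable does not commute with direct limits (as the Remark after Proposition~\ref{prop loc zero} emphasises) — but here this is harmless because $[D]$ and $[\sigma_D]$ are genuine classes on all of $M$, not limits, and Kasparov's theorem is proved directly for such $M$, so no limiting argument is actually needed.
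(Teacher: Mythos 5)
Your eventual strategy---attribute the result to Kasparov and quote \cite[Theorem 4.2]{Kas13} rather than reprove it---is exactly what the paper does: Section~\ref{sec Kasparov} contains no proof of Theorem~\ref{thm index}, and your preliminary discussion of localising to slices, exhausting $M$, or building an explicit operator homotopy is not needed (you correctly discard it yourself; the paper never reduces to a compact or local model for this statement). However, there is one concrete verification that the paper does carry out and that your proposal omits: \cite[Theorem 4.2]{Kas13} is stated for operators that are \emph{properly supported}, and the bounded transform $\frac{D}{\sqrt{D^2+1}}$ appearing in the cycle \eqref{eq Khom D} is in general \emph{not} properly supported when $M$ is noncompact, since functions of $D$ have non-local kernels. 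Citing the theorem ``directly'' therefore does not literally apply to the cycle \eqref{eq Khom D}, and this hypothesis mismatch is the only substantive point the paper has to address.

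The paper closes this gap as follows. Choose a sequence $\{\chi_j\}_{j=1}^{\infty}$ of $G$-invariant, compactly supported functions with $\sum_{j=1}^{\infty}\chi_j^2=1$ (such a $G$-invariant partition of unity exists because $G$ is compact), and replace $\frac{D}{\sqrt{D^2+1}}$ by $\sum_{j=1}^{\infty}\chi_j \frac{D}{\sqrt{D^2+1}}\chi_j$. This operator is properly supported, satisfies the remaining assumptions of \cite[Theorem 4.2]{Kas13}, and defines the same $K$-homology class as $\frac{D}{\sqrt{D^2+1}}$; hence Kasparov's theorem applies to $[D]$ as defined in \eqref{eq Khom D}. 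Your write-up should include some such hypothesis-matching step; with it, your proposal coincides with the paper's treatment.
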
  

\begin{remark}
In \cite[Theorem 4.2]{Kas13}, the operator in question is assumed to be properly supported, which is not true for the operator $\frac{D}{\sqrt{1+D^2}}$ in general. However, let $\{\chi_j\}_{j=1}^{\infty}$ be sequence of  $G$-invariant, compactly supported functions, such that $\{\chi_j^2\}_{j=1}^{\infty}$ is a partition of unity. (This exists since $G$ is compact.) Then the operator
\[
\sum_{j=1}^{\infty} \chi_j \frac{D}{\sqrt{1+D^2}} \chi_j
\]
is properly supported, and also satisfies the other assumptions of \cite[Theorem 4.2]{Kas13}. Since this operator defines the same $K$-homology class as $\frac{D}{\sqrt{1+D^2}}$, we can apply  \cite[Theorem 4.2]{Kas13} to the class of the latter operator in this way.
\end{remark}

\iftoggle{long}
{
\begin{example}
\label{ex:Kas.ind}
Consider $M=\R^2$ and $G=\bT^1$, acting on $M$ by rotations. (The action by $G$ is not very important in this example. It is included here because it will be used in Subsection \ref{sec:S^1.on.R^2}.) Let $E=\R^2\times\C^2$ be the spinor bundle, where the fibre at every point $x = (x_1, x_2)\in\R^2$ is graded by $E_{x}=E_{x}^+\oplus E_{x}^{-}\cong\C\oplus\C.$ 
Consider the $\bT^1$-equivariant Dolbeault--Dirac operator 
\begin{equation}
\label{eq:Dolbeault.R^2}
D=\begin{bmatrix}0 & -\frac{\partial}{\partial x_1}+\ii\frac{\partial}{\partial x_2} \\ \frac{\partial}{\partial x_1}+\ii\frac{\partial}{\partial x_2} & 0 \end{bmatrix}\colon \Gamma_0(\R^2, \C^2)\rightarrow\Gamma_0(\R^2, \C^2),
\end{equation}
with symbol
\begin{equation}
\label{eq:symbol.Dol.R^2}
\sigma_D(x, \xi)=\begin{bmatrix}0 & -\ii\xi_1-\xi_2 \\ \ii\xi_1-\xi_2 & 0\end{bmatrix},
\end{equation}
for $(x, \xi)  \in T\R^{2}$.
Let $\beta$ be the Bott generator of $K^0(\R^2)$, 
which we now view as an element of 
$KK_{\bT^1}(\pt, \R^2)$. 
Then $[\sigma_D]$ is the image of $\beta$ under the map
\[
\tau_{C_0(\R^2)}\colon KK_{\bT^1}(\pt, \R^2)\rightarrow KK_{\bT^1}(\R^2, \R^4)
\] 
define by taking tensor products with $C_0(\R^2)$, as in
 {\cite[17.8.5]{Blackadar}.}
The Dolbeault class $[D_{T\R^2}]\in KK_{\bT^1}(\R^4, \pt)$ in~(\ref{eq:Dolbeault.TR^2}) is the exterior Kasparov product of $[D]\in KK_{\bT^1}(\R^2, \pt)$ 
given by~(\ref{eq:Dolbeault.R^2}) and the Dirac operator $[D_{\xi}]\in KK_{\bT^1}(\R^2, \pt)$ on tangent spaces, given by 
\[
D_{\xi}=\begin{bmatrix}0 & -\frac{\partial}{\partial \xi_1}+\ii\frac{\partial}{\partial \xi_2} \\ \frac{\partial}{\partial \xi_1}+\ii\frac{\partial}{\partial \xi_2} & 0 \end{bmatrix}.
\]
By the definition of the exterior Kasparov product, we have 
\[
[D_{T\R^2}]=(\tau_{C_0(\R^2)}[D_{\xi}])\otimes_{\R^2}[D].
\] 
By the associativity of the Kasparov product and {\cite[Proposition~18.9.1(c)]{Blackadar}}, the right hand side of~(\ref{eq index Kas}) is therefore equal to 
\[
[\sigma_D]\otimes_{\R^4}[D_{T\R^2}]=\bigl(\tau_{C_0(\R^2)}(\beta\otimes_{\R^2}[D_{\xi}]) \bigr)\otimes_{\R^2}[D].
\]
The product $\beta\otimes_{\R^2}[D_{\xi}]$ is represented by a harmonic oscillator which is a Fredholm operator on $\R^2$ with index $1$. 
Therefore, $[\sigma_D]\otimes_{\R^4}[D_{T\R^2}]=[D]$, verifying Theorem~\ref{thm index} in this example.
\end{example}
}{}

%

\subsection{The Bott element}\label{sec Bott elt}

If $S$ is a closed (as a topological subspace, i.e.\ not necessarily compact), $G$-invariant submanifold of $M$, then the Dolbeault operator classes on $TS$ and on a tubular neighbourhood of $TS$ in $TM$ are related by a \emph{(fibrewise) Bott element}. This is a technical tool that will be used several times in the paper. The material here is analogous to Definition 2.6 and Theorem 2.7 in \cite{Kas13}.

Consider the tangent bundle projections
\[
\begin{split}
\tau_S\colon & TS \to S;\\
\tau_N\colon& TN \to N.
\end{split}
\]
Denote by $\pi\colon N\to S$ the normal bundle of $S$ in $M$. 
Let $T\pi\colon TN\to TS$ be the tangent map of $\pi$. It again defines a vector bundle.
The following diagram commutes:
\beq{eq TN to S}
\xymatrix{
TN \ar[r]^-{\tau_N}\ar[d]^-{T\pi} & N \ar[d]^-{\pi}\\
TS \ar[r]^-{\tau_S} & S.
}
\eeq
This defines a vector bundle $TN \to S$. 
Consider the vector bundle
\[
\Bigwedge \widetilde{N}_{\C} := T\pi^* \bigl(\tau_S^* \Bigwedge N \otimes \C\bigr) \to TN.
\]

Let $s\in S$. Then
\[
\begin{split}
(TN)_s &:= T\pi^{-1}(\tau_S^{-1}(s)) \\
 &= \tau_N^{-1}(N_s) \\
 &= T_sS \times N_s \times N_s.
\end{split}
\]
Let $w\in (TN)_s$, and let $(\eta, \zeta) \in N_s \times N_s$ be the projection of $w$ according to this decomposition. Note that 
\[
\bigl(\Bigwedge \widetilde{N}_{\C}\bigr)_{w} = \Bigwedge N_s \otimes \C. 
\]
We define the vector bundle endomorphism $B$ of $\Bigwedge \widetilde{N}_{\C}$ by
\[
B_{w} = \ext(\zeta+\ii\eta)-\Int(\zeta+\ii\eta),
\]
for all $s$, $w$, $\eta$ and $\zeta$ as above.
Here  $\ext$ denotes the wedge product, and $\Int$ denotes contraction. With respect to the grading 
of $
 \Bigwedge \widetilde{N}_{\C}
$
by even and odd exterior powers, the operator $B$ is odd.

As $B$ is fibrewise selfadjoint, we have the bounded operator 
$B(1+B^2)^{-\frac12}$ on $\Gamma_0(TN, \Bigwedge \widetilde{N}_{\C})$.
The space $\Gamma_0(TN, \Bigwedge \widetilde{N}_{\C})$ is a right Hilbert $C_0(TN)$-module in the usual way, with respect to pointwise multiplication by functions and the pointwise inner product. Consider the representation
\[
\tilde \pi_{TS} := \pi_{TN} \circ T\pi^*\colon C_0(TS)\to \cB\bigl(\Gamma_0(TN, \Bigwedge\widetilde{N}_{\C})\bigr),
\]
where $\pi_{TN}$ is given by pointwise multiplication by functions in $C_b(TN)$.
\begin{lemma}\label{lem:fiberwise.Bott.ele}
The triple
\beq{eq def beta}
\left( \Gamma_0(TN, \Bigwedge\widetilde{N}_{\C}), B(1+B^2)^{-\frac12}, \tilde \pi_{TS}\right)
\eeq
is a $G$-equivariant Kasparov $(C_0(TS), C_0(TN))$-cycle.
\end{lemma}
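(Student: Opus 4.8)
The plan is to verify the three defining conditions of a Kasparov $(C_0(TS), C_0(TN))$-cycle for the triple \eqref{eq def beta}: that the Hilbert module is $\Z_2$-graded, that the operator $B(1+B^2)^{-\frac12}$ is odd and adjointable, and that for every $f \in C_0(TS)$ the operators $\tilde\pi_{TS}(f)\bigl((B(1+B^2)^{-\frac12})^2 - 1\bigr)$, $\tilde\pi_{TS}(f)\bigl((B(1+B^2)^{-\frac12})^* - B(1+B^2)^{-\frac12}\bigr)$ and $[B(1+B^2)^{-\frac12}, \tilde\pi_{TS}(f)]$ are compact operators on the module. I would also need to check $G$-equivariance, but since all the structure ($B$, $T\pi$, the metrics) is built $G$-equivariantly from $G$-equivariant data, this is routine and I would dispatch it with one sentence.

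First I would record the easy points: $\Gamma_0(TN, \Bigwedge\widetilde{N}_{\C})$ is a right Hilbert $C_0(TN)$-module in the standard way, graded by even/odd exterior powers of $N$; $B$ is fibrewise self-adjoint (being of the form $\ext(v) - \Int(v)$ for $v = \zeta + \ii\eta$, so $B_w^* = B_w$ fibrewise) and odd for this grading, as noted just before the lemma; hence by functional calculus $B(1+B^2)^{-\frac12}$ is a bounded, self-adjoint, odd adjointable operator. This immediately kills the second condition ($F = F^*$ exactly, so $\tilde\pi_{TS}(f)(F^*-F) = 0$). Next I would compute $1 - B_w^2$ fibrewise: since $B_w = \ext(v) - \Int(v)$ with $v = \zeta + \ii\eta$, the Clifford-type relation gives $B_w^2 = -\|v\|^2 = -(\|\zeta\|^2 + \|\eta\|^2)$ on $\Bigwedge N_s \otimes \C$ (using that $v$ and its conjugate $\bar v = \zeta - \ii\eta$ combine so that $(\ext(v)-\Int(v))^2$ acts as the negative of the Hermitian norm squared — this is the standard Bott/Dirac computation, cf.\ the analogous statement for $\tilde\sigma_D$). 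Therefore $(B(1+B^2)^{-\frac12})^2 - 1 = -(1+B^2)^{-1}$, and fibrewise this is multiplication by $-(1 + \|\zeta\|^2 + \|\eta\|^2)^{-1}$, a function on $TN$ that tends to zero as $w \to \infty$ along the fibre directions $\eta, \zeta$.

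The main obstacle is the compactness of $\tilde\pi_{TS}(f)\bigl((B(1+B^2)^{-\frac12})^2 - 1\bigr)$ and of the commutator. The subtlety is exactly the one already flagged in the proof of Lemma~\ref{lem KK vb} and in the discussion of \eqref{eq:symbol.KK-cycle}: the function $-(1+\|\zeta\|^2+\|\eta\|^2)^{-1}$ vanishes at infinity only in the directions $\eta, \zeta \in N_s$ and in the $TS$-directions covered by $\tau_S$, but \emph{not} along the base $S$ if $S$ is noncompact — this is why we multiply by $\tilde\pi_{TS}(f) = \pi_{TN}(T\pi^*f)$ for $f \in C_0(TS)$. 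I would argue that $T\pi^*f \cdot (1+B^2)^{-1}$ is then a section of $\End(\Bigwedge\widetilde N_{\C})$ over $TN$ that genuinely vanishes at infinity in all directions: $f$ controls the $TS$ and $S$ directions, and $(1+B^2)^{-1}$ controls the remaining fibre directions $N_s \times N_s$. An operator of pointwise multiplication by a norm-continuous, vanishing-at-infinity endomorphism-valued section on a Hilbert module of the form $\Gamma_0$ of a vector bundle is compact — this is the same local-frame argument as in Lemma~\ref{lem KK vb}, applied locally over $TN$ and patched with a partition of unity — so the first condition holds. For the commutator $[B(1+B^2)^{-\frac12}, \tilde\pi_{TS}(f)]$, I would first note that $\tilde\pi_{TS}(f)$ commutes exactly with $B$, since $B_w$ depends only on the fibre data $(\eta,\zeta)$ while $T\pi^*f$ is a pullback of a scalar function, and multiplication by a scalar function is central among the module operators; hence $\tilde\pi_{TS}(f)$ commutes with every element of the $C^*$-algebra generated by $B$, in particular with $B(1+B^2)^{-\frac12}$, so the commutator vanishes identically. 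This reduces everything to the single compactness statement above, and the lemma follows.
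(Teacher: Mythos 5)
Your proposal is correct and follows essentially the same route as the paper: you reduce the commutator condition to the fact that $B(1+B^2)^{-\frac12}$ is a vector bundle endomorphism commuting with $\tilde\pi_{TS}(f)$, compute $\tilde\pi_{TS}(f)\bigl(1-[B(1+B^2)^{-\frac12}]^2\bigr)$ fibrewise as multiplication by $f(T\pi(w))\,(1+\|\eta\|^2+\|\zeta\|^2)^{-1}$, observe this vanishes at infinity on $TN$ because $f\in C_0(TS)$ controls the base directions while $(1+B^2)^{-1}$ controls the normal fibre directions, and conclude compactness by the local-frame argument of Lemma~\ref{lem KK vb}, with equivariance dispatched in a line --- exactly as in the paper. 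The one blemish is your parenthetical Clifford computation $B_w^2=-\|v\|^2$, which contradicts both your (and the paper's) assertion that $B$ is fibrewise self-adjoint and the formula $(1+B^2)^{-1}=(1+\|\zeta\|^2+\|\eta\|^2)^{-1}$ that you then use; with the conventions needed here one has $B_w^2=+(\|\zeta\|^2+\|\eta\|^2)$, and this sign is not cosmetic, since with the negative sign $(1+B^2)^{-\frac12}$ would fail to be defined on the unit sphere bundle.
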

\begin{proof}
Let  $f\in C_0(TS)$. Since 
$B(1+B^2)^{-\frac12}$ is a vector bundle endomorphism, it commutes with $\tilde \pi_{TS}(f)$.
Moreover, we have for all $w \in (TN)_s$ as above,
\[
\left( \tilde \pi_{TS}(f)\left(1-\left[B(1+B^2)^{-\frac12}\right]^2\right) \right)_{w}=\frac{f(v)}{1+\|\eta\|^2+\|\zeta\|^2},
\]
with $v := T\pi(w) \in T_sS$. 
This defines a function in $C_0(TN)$, and hence acts on the Hilbert $C_0(TN)$-module $\Gamma_0(TN, \Bigwedge \widetilde{N}_{\C})$ as a compact operator.
As $g$ preserves the metric  $TN$, the operator $B(1+B^2)^{-\frac12}$ is $G$-equivariant.
\end{proof}

\begin{definition}\label{def:fiberwise.Bott.ele}
The \emph{(fibrewise) Bott element} of the normal bundle $N\to S$ is the class
\[
\beta_N \in KK_G(TS, TN)
\] 
of the cycle \eqref{eq def beta}.
\end{definition}

\subsection{The Bott element and Dolbeault classes}

The Bott element is useful to us because of the following property. This was used in \cite[bottom paragraph on p.\ 30]{Kas13}; we work out some details of the proof in this subsection.
\begin{proposition}
\label{lem:pairing.B.D}
Under the Kasparov product $KK_G(TS, TN)\times KK_G(TN, \pt)\rightarrow KK_G(TS, \pt)$, one has
\[
\beta_N\otimes_{TN} [D_{TN}]=[D_{TS}].
\]
\end{proposition}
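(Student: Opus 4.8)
The plan is to reduce the statement to a multiplicativity property of the $\Spinc$-Dirac operator under the splitting $TN = TS \times_S (N \oplus N)$ coming from diagram \eqref{eq TN to S}, together with the standard fact that the $\Spinc$-Dirac operator on the fibres of a Hermitian vector bundle (a sum of two copies of a bundle, giving a complex structure) represents the Bott element. First I would observe that, over $S$, the bundle $TN \to S$ decomposes $G$-equivariantly as the pullback to $TN$ of $TS$ (via $T\pi$) direct sum with the pullback of $N \oplus N$, where the $N \oplus N$ summand carries the complex structure $\left[\begin{smallmatrix} 0 & 1 \\ -1 & 0\end{smallmatrix}\right]$ (exactly as in the definition of the almost complex structure on $T(TM)$ in Subsection \ref{sec index Kas}, restricted to the normal directions). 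Correspondingly the $\Spinc$-spinor bundle $\tau_N^* \Bigwedge TN_{\C} \to TN$ factors as a graded tensor product $(T\pi^* \tau_S^* \Bigwedge TS_\C) \,\widehat\otimes\, (\Bigwedge \widetilde N_\C)$, and the $\Spinc$-Dirac operator $D_{TN}$ is the graded sum of $D_{TS}$ (pulled back along $T\pi$) and the fibrewise operator $B$ built from Clifford multiplication in the $N \oplus N$ directions — which is precisely the operator appearing in the cycle \eqref{eq def beta} defining $\beta_N$.

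The key steps, in order, are: (1) make the $G$-equivariant identification of bundles and of Dirac operators above precise, so that as an unbounded cycle $[D_{TN}]$ is the "product" of $[D_{TS}]$ pulled back to $TN$ and the fibrewise Bott cycle; (2) recognise that this product is exactly the interior Kasparov product $\beta_N \otimes_{TN} [D_{TN}]$ — here I would use that the bounded transform and the unbounded-to-bounded passage are compatible with the product, and that the connection/commutator conditions in Kasparov's product are automatic because $B$ commutes with $T\pi^*$ of functions on $TS$ and anticommutes in the appropriate graded sense with the $TS$-directions (this is the content of Connes–Skandalis / Kucerovsky-type sufficient conditions for recognising the product); (3) conclude $\beta_N \otimes_{TN} [D_{TN}] = [D_{TS}]$ in $KK_G(TS, \pt)$. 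This is the same mechanism by which one shows, in the compact setting, that the Dolbeault operator on the total space of a Hermitian bundle is the product of the base Dolbeault operator with the fibrewise Bott class; the reference to \cite[bottom paragraph on p.\ 30]{Kas13} and the analogy with \cite[Definition 2.6, Theorem 2.7]{Kas13} indicate this is the intended route.

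The main obstacle I expect is step (2): verifying rigorously that the obvious geometric decomposition of $D_{TN}$ realises the Kasparov product $\beta_N \otimes_{TN} [D_{TN}]$ rather than merely resembling it. One must check that $B(1+B^2)^{-1/2}$ is an $F$-connection for the relevant module (equivalently, a Kucerovsky-type positivity and domain condition holds for the unbounded representatives), and that the $C_0(TS)$-action factors correctly through $T\pi^*$ so that the product lands in $KK_G(TS,\pt)$ and not just $KK_G(TN,\pt)$. A secondary technical point is that $S$, and hence $TN \to S$, may be noncompact, so the identifications must be made at the level of $C_0$-sections and one must take care that all the operators involved (in particular $D_{TS}$ and $B$) are essentially self-adjoint and define genuine Kasparov cycles on the noncompact spaces; but this is handled exactly as in Lemma \ref{lem:fiberwise.Bott.ele} and the construction of \eqref{eq:symbol.KK-cycle}, so it should not present a serious difficulty beyond bookkeeping.
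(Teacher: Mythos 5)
There is a genuine gap, and it comes from a misidentification at the heart of your step (1). The fibrewise part of the $\Spinc$-Dirac operator on $TN$ in the directions of the fibres of $T\pi\colon TN\to TS$ is a fibrewise \emph{differential} operator, namely $D_1=\sum_j e_j\partial/\partial\kappa_j+\ii\epsilon_j\partial/\partial\lambda_j$ as in \eqref{eq:D_1.cycle}, not the vector bundle endomorphism $B=\ext(\zeta+\ii\eta)-\Int(\zeta+\ii\eta)$ that appears in the cycle \eqref{eq def beta} defining $\beta_N$. So what the geometric splitting of $TN\to TS$ actually gives is a factorisation $[D_1]\otimes_{TS}[D_{TS}]=[D_{TN}]$ (this is Lemma \ref{lem D1 DTS}, proved by identifying $\cE_0\otimes_{C_0(TS)}L^2(TS,\tau_S^*\Bigwedge TS_\C)\cong L^2(TN,\tau_N^*\Bigwedge TN_\C)$ and checking the connection and positivity conditions), and \emph{not} a factorisation of $[D_{TN}]$ into $[D_{TS}]$ and "the fibrewise Bott cycle". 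The "standard fact" you invoke — that the fibrewise Dirac operator represents the Bott element — is exactly the duality that has to be proved here: it is the assertion that $\beta_N\otimes_{TN}[D_1]$ is the unit of $KK_G(TS,TS)$, and it cannot be obtained by verifying connection or Kucerovsky-type conditions on the decomposition you describe, because $B$ and $D_1$ are different operators sitting in classes on opposite sides of the pairing.

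That unit computation is the real analytic content of the proposition and it is absent from your outline. In the paper it is Lemma \ref{lem betaN D1}: the product $\beta_N\otimes_{TN}[D_1]$ is first represented by the operator $F_0$ of \eqref{eq def F0}; a rotation homotopy replaces $F_0$ by $(B+D_1)(1+B^2+D_1^2)^{-1/2}$, and on each fibre $T\pi^{-1}(v)\cong N_s\times N_s$ the square of $B+D_1$ is the harmonic oscillator $\sum_j\bigl(\kappa_j^2+\lambda_j^2-\partial^2/\partial\kappa_j^2-\partial^2/\partial\lambda_j^2\bigr)+2\deg-a$, whose kernel is one-dimensional, spanned by the Gaussian \eqref{eq ker DB}; hence the fibrewise index is $1$ and the product is the ring identity. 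Only then does associativity give $\beta_N\otimes_{TN}[D_{TN}]=(\beta_N\otimes_{TN}[D_1])\otimes_{TS}[D_{TS}]=[D_{TS}]$. Your steps (2)–(3) treat this Bott-periodicity-type index-one computation as automatic bookkeeping, which is where the argument fails; the rest of your outline (the $G$-equivariant bundle identifications, the $C_0$-section care on noncompact $S$) is consistent with what the paper does in Lemma \ref{lem D1 DTS}, but it only supplies one of the two factors of the proof.
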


To prove this proposition, one can use the part $D_1$ of the $\Spinc$-Dirac operator $D_{TN}$ acting in the fibre directions of $TN \to TS$. 
 For $s\in S$ and $v\in T_sS$, we have $T\pi^{-1}(v) = N_s \oplus T_vN$. Let $a$ be the rank of $N$, and let  $\{f_1, \ldots, f_a\}$ be a local orthonormal frame of $N\to S$. This defines coordinate functions
$\kappa_j$ and $\lambda_j$ on the parts $N_s$ and $T_vN$ of the fibres $T\pi^{-1}(v)$ of $TN\to TS$, respectively. 
For  $j = 1, \ldots, a$, consider the vector bundle endomorphisms
\[
e_j :=\ext(f_j)-\Int(f_j)\quad \text{and} \quad \epsilon_j :=\ext(f_j)+\Int(f_j)
\]
of $\Bigwedge N\otimes \C\to S$, pulled back along \eqref{eq TN to S} to endomorphisms of $\Bigwedge \widetilde{N}_{\C}\to TN$.
Then $D_1$ is the operator  
\[
D_1 := \sum_{j=1}^a e_j\frac{\partial}{\partial\kappa_j}+\ii\epsilon_j\frac{\partial}{\partial\lambda_j}
\]
on $\Gamma^{\infty}(TN, \Bigwedge \widetilde{N}_{\C})$. This can be viewed as a  family of operators  on the fibres of  $TN \to TS$.

It defines a class in $KK$-theory as follows. 
Let $\Gamma_c(TN, \Bigwedge\widetilde N_{\C})$ be the space of continuous compactly supported sections of $\Bigwedge\widetilde N_{\C}$. Let $\cE_0$ be the completion of this space  into a Hilbert $C_0(TS)$-module 
 with respect to the following $C_0(TS)$-valued inner product:
\beq{eq:C_0TS.inner.product}
\langle f, h\rangle (v):=\int_{T\pi^{-1}(v)}\overline{f(t)}h(t) \, dt, 
\eeq
for $f, h \in \Gamma_c(TN, \Bigwedge\widetilde N_{\C})$ and $v\in TS$. 
The  operator $D_1$   gives rise to the class
\beq{eq:D_1.cycle}
[D_1]:= \bigl[\cE_0, D_1(1+D_1^2)^{-\frac12}, \pi_{TN}\bigr] \in KK_G(TN, TS).
\eeq

\begin{lemma} \label{lem D1 DTS}
We have
\[
[D_1]\otimes_{TS} [D_{TS}] = [D_{TN}] \quad \in KK_G(TN, \pt).
\]
\end{lemma}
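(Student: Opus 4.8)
\textbf{Proof plan for Lemma \ref{lem D1 DTS}.}

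The plan is to verify that the Kasparov product $[D_1]\otimes_{TS}[D_{TS}]$ is computed by the obvious ``sum'' operator, and then recognise that sum as (a $K$-homology representative of) $D_{TN}$. The geometric picture is that $TN\to S$ is a vector bundle whose fibre over $s\in S$ is $T_sS\times N_s\times N_s$, and the $\Spin^c$-Dirac operator $D_{TN}$ splits, with respect to the fibration $T\pi\colon TN\to TS$, as a ``horizontal'' part pulled back from $D_{TS}$ plus the ``vertical'' family $D_1$ acting in the $N_s\times N_s$-directions. This is exactly the setup of a Kasparov product of a family of operators along the fibres of a submersion with the base operator.

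First I would set up the product explicitly. The module underlying $[D_1]\otimes_{TS}[D_{TS}]$ is $\cE_0\otimes_{C_0(TS)}L^2(\Lambda^{0,*}T^*(TS))$, which one identifies $G$-equivariantly and unitarily with $L^2(TN,\Lambda\widetilde N_\C \otimes \tau_N^*\text{(spinors of }TS\text{)})$, i.e.\ with the $L^2$-sections of the $\Spin^c$-spinor bundle of $TN$, using that the almost complex structure on $T(TN)$ restricts on the vertical subbundle to the one defining $D_1$ and on the horizontal subbundle to the pullback of the one defining $D_{TS}$. Under this identification $D_1$ acts as $D_1\otimes 1$. Then I would invoke the standard criterion for the Kasparov product (Connes--Skandalis; see e.g.\ \cite[Chapter~VIII]{Blackadar}, or Kucerovsky's theorem): it suffices to exhibit an $F\in\cB$ on this module that (i) is a connection for $D_{TS}(1+D_{TS}^2)^{-1/2}$ relative to the cycle $[D_1]$, (ii) satisfies the positivity condition $\pi(a)[F_1\otimes 1, F]\pi(a)^*\ge 0$ modulo compacts for $a\in C_0(TN)$, and (iii) together with $D_1\otimes 1$ forms a cycle for $[D_{TN}]$. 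The natural candidate is $F := D_{TN}(1+D_{TN}^2)^{-1/2}$ itself, which is legitimate because $D_{TN} = D_1 + D_2$ where $D_2$ is the horizontal Dirac operator, the two anticommute up to lower-order (zeroth-order) terms coming from the second fundamental form of the submersion, and $D_2$ is (up to such lower-order terms) the pullback $T\pi^*$ applied fibrewise to $D_{TS}$.

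The key steps in order: (1) identify the tensor-product module and the grading/Clifford module structure, reducing to $\Spin^c$-Clifford bundles of $TN$, $TS$, $N\oplus N$; (2) write $D_{TN}=D_1+D_2+(\text{order }0)$ with respect to a choice of horizontal distribution for $T\pi$, and check $D_2$ differentiates only in base directions so that it is a connection-type lift of $D_{TS}$; (3) verify the Connes--Skandalis conditions, the main points being that $[D_2, D_1]$ is bounded (it is order zero, being a curvature/second-fundamental-form term) and that the graph norm of $D_{TN}$ dominates those of $D_1$ and $D_2$, so that $F$ is indeed a connection and the positivity condition holds; (4) conclude $[D_1]\otimes_{TS}[D_{TS}]=[D_{TN}]$ in $KK_G(TN,\pt)$. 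Equivariance is automatic throughout since $G$ preserves all the metrics and the submersion structure.

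The main obstacle is step (3), specifically handling the failure of $D_1$ and $D_2$ to anticommute exactly: one must show the discrepancy is a bounded (order-zero) operator that does not affect the $KK$-class, and that $D_1$, $D_2$ are $D_{TN}$-bounded with relative bound allowing the connection/positivity estimates. This is a standard but somewhat technical ``product of a Dirac family along a submersion with a base Dirac operator'' computation; I would either carry it out via a local orthonormal frame adapted to the splitting $T(TN) = (\text{vertical})\oplus(\text{horizontal})$, or cite the analogous statement in \cite{Kas13} (the discussion around Definition~2.6 and Theorem~2.7 there, and the bottom paragraph of p.\ 30), since Proposition~\ref{lem:pairing.B.D} is quoted from exactly that source and Lemma~\ref{lem D1 DTS} is the technical heart of it.
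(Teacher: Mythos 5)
Your plan is essentially the paper's proof: identify $\cE_0\otimes_{C_0(TS)}L^2(TS,\tau_S^*\Bigwedge TS_\C)$ with $L^2(TN,\tau_N^*\Bigwedge TN_\C)$, write $D_{TN}$ as the (graded) sum of the fibrewise operator $D_1$ and the lift of $D_{TS}$, and verify the connection and positivity conditions of the Kasparov product criterion for the bounded transform of that sum. The only difference is that the paper takes the splitting $D_1\otimes 1+1\otimes D_{TS}=D_{TN}$ to be exact in this setting, so the order-zero correction terms you anticipate in step (3) do not arise; otherwise the argument is the same.
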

\begin{proof}
Regarding $N$ as an open subset of $M$, we identify their tangent bundles when restricted to $S$, i.e., $TN|_S=TM|_S.$
Therefore, as vector bundles over $TN$, we have
\begin{align*}
\Bigwedge\widetilde N_{\C}\otimes T\pi^*\tau_S^*\Bigwedge TS_{\C}
&=T\pi^*\tau_S^*\Bigwedge N_{\C}\otimes T\pi^*\tau_S^*\Bigwedge TS_{\C}\\
&=T\pi^*\tau_S^*\Bigwedge (N\oplus TS)_{\C}=T\pi^*\tau_S^*\Bigwedge (TM|_S)_{\C}\\
&=T\pi^*\tau_S^*\Bigwedge (TN|_S)_{\C}
=\tau_N^*\Bigwedge TN_{\C}.
\end{align*}
The last equality follows from commutativity of~(\ref{eq TN to S}).
Thus, as Hilbert spaces with representations of $C_0(TN)$, 
\beq{eq:identify.tensor}
\cE_0\otimes_{C_0(TS)}L^2(TS, \tau_S^{*}\Bigwedge TS_{\C})\cong L^{2}(TN, \tau_N^*\Bigwedge TN_{\C}).
\eeq 

Under this identification, we have
\[
D_1 \otimes 1 + 1\otimes D_{TS} = D_{TN}.
\]
(Here we use graded tensor products.)
Consider the bounded operator 
\begin{equation}
\label{eq:product}
F:=\frac{D_1\otimes1+1\otimes D_{TS}}{\sqrt{1+D_1^2\otimes 1+1\otimes D_{TS}^2}}
\end{equation}
on $\cE_0\otimes_{C_0(TS)}L^2(TS, \tau_S^{*}\Bigwedge TS_{\C})$. 
Write
\[
F_1 := \frac{D_1}{\sqrt{1+D_1^2}}; \qquad
F_{TS} := \frac{D_{TS}}{\sqrt{1+D_{TS}^2}}.
\]
We can verify that $F$ is an $1\otimes F_{TS}$-connection, and the graded commutator $[F_1\otimes 1, F]$ is positive modulo compact operators.  
Therefore, by~{\cite[Definition~18.4.1]{Blackadar}} the Kasparov product $[D_1]\otimes_{TS} [D_{TS}] $ is represented by the operator $F$ on  $L^{2}(TN, \tau_N^*\Bigwedge TN_{\C})$.
The lemma is then proved.
\end{proof}

\begin{lemma} \label{lem betaN D1}
The product
\[
\beta_N \otimes _{TN} [D_1] \quad \in KK_G(TS, TS).
\]
is the ring identity.
\end{lemma}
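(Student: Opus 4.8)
\textbf{Proof proposal for Lemma \ref{lem betaN D1}.}

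The plan is to compute the Kasparov product $\beta_N \otimes_{TN} [D_1] \in KK_G(TS, TS)$ directly and show that it equals the class $[C_0(TS)] = 1$ of the trivial rank-one module, i.e.\ the unit of the ring $KK_G(TS, TS)$. The key observation is that both factors live entirely in the fibre directions of $T\pi\colon TN \to TS$: the cycle \eqref{eq def beta} defining $\beta_N$ is built from the fibrewise Clifford multiplication operator $B$ on $\Bigwedge\widetilde N_{\C}$, and $[D_1]$ is the fibrewise Dolbeault--Dirac operator $D_1 = \sum_j e_j \partial/\partial\kappa_j + \ii\epsilon_j\partial/\partial\lambda_j$ on the same bundle. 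Thus the product should reduce, fibrewise over each $v \in TS$, to the statement that the Bott element of the vector space $N_s$ (with $s = \tau_S(v)$) paired with the Dolbeault operator on $N_s \oplus N_s \cong T_sS$-fibre copy of $N_s\times N_s$ gives the generator of $KK_G(\pt,\pt)$ localised appropriately — i.e.\ the classical Atiyah--Bott--Shapiro fact that the Bott element and the Dolbeault operator on $\R^{2a}$ are inverse to each other under Kasparov product.

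Concretely, first I would form the graded tensor product Hilbert module underlying the product: by construction $\beta_N$ has underlying module $\Gamma_0(TN, \Bigwedge\widetilde N_{\C})$ over $C_0(TN)$, and $[D_1]$ has underlying module $\cE_0$ over $C_0(TS)$ (completing $\Gamma_c(TN,\Bigwedge\widetilde N_{\C})$ in the inner product \eqref{eq:C_0TS.inner.product}). I would then recognise that the composite module is the completion of $\Gamma_c\bigl(TN, \Bigwedge\widetilde N_{\C}\otimes \Bigwedge\widetilde N_{\C}\bigr)$ over $C_0(TS)$, fibred over $TS$ with fibre $L^2$ of the $2a$-dimensional vector space $(TN)_s / T_sS = N_s\times N_s$, and that the product operator is an $(D_1$-connection$)$ for the operator $B \otimes 1 + 1 \otimes D_1$ in these fibre directions. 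The operator $B\hat\otimes 1 + 1\hat\otimes D_1$ (up to the bounded transform) is, fibrewise, precisely the harmonic-oscillator-type operator whose square is $D_1^2 + B^2 + (\text{zeroth order})$, with one-dimensional kernel spanned by a Gaussian; this is the standard computation showing $\beta\otimes[\bar\partial + \bar\partial^*] = 1$ on $\C^a$. Hence the product cycle is homotopic (via rescaling $B$, the usual Atiyah--Bott rotation homotopy) to $\bigl(\Gamma_0(TS)\otimes(\text{1-diml graded space}), 0, \pi_{TS}\bigr)$, which represents the identity of $KK_G(TS,TS)$. I would cite \cite[Theorem 2.7]{Kas13} or the corresponding statement in \cite{AS3}, and the analogous Bott-periodicity computations in \cite[Chapter VIII]{Blackadar}, rather than redoing the oscillator estimate from scratch; the $G$-equivariance is automatic since $G$ acts by isometries preserving all the structures.

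The main obstacle I anticipate is the bookkeeping for the module identification and the verification that the product operator is genuinely an $F_1$-connection with positive graded commutator — exactly as in the proof of Lemma \ref{lem D1 DTS}, but now with the roles of the two operators essentially swapped and with $B$ a bundle endomorphism rather than a differential operator. One must be careful that $B$ and $D_1$ act on \emph{different} tensor factors of $\Bigwedge\widetilde N_{\C}\otimes\Bigwedge\widetilde N_{\C}$ (the first a pullback of $\Bigwedge N_{\C}$ along $\tau_S$, twisted, the second along $\tau_N$), and that the fibrewise kernel computation is uniform in the base point $v\in TS$ so that it yields an honest operator homotopy of $C_0(TS)$-linear cycles, not merely a fibrewise statement. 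Once this uniformity is in place, the conclusion follows; in fact, combining this lemma with Lemma \ref{lem D1 DTS} gives immediately the re-derivation of Proposition \ref{lem:pairing.B.D}, since $\beta_N\otimes_{TN}[D_{TN}] = \beta_N\otimes_{TN}[D_1]\otimes_{TS}[D_{TS}] = [D_{TS}]$ by associativity of the Kasparov product.
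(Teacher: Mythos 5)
Your proposal matches the paper's argument essentially step for step: you identify the interior tensor product module with the completion of $\Gamma_c\bigl(TN, \Bigwedge\widetilde N_{\C}\otimes\Bigwedge\widetilde N_{\C}\bigr)$ over $C_0(TS)$, verify that the bounded transform of $B\otimes 1 + 1\otimes D_1$ is a connection with the required positivity, apply the rotation homotopy, and reduce to the fibrewise harmonic-oscillator computation (one-dimensional Gaussian kernel in degree zero, index $1$ uniformly over $TS$), exactly as in the paper's proof, which likewise invokes \cite[Theorem~2.7]{Kas13}. The caution you raise about uniformity in the base point and about the two tensor factors is precisely the bookkeeping the paper handles via the cycle $\bigl(\cE', F_0, \tilde\pi_{TS}\bigr)$, so there is no gap.
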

\begin{proof}
The idea is that in this product, we pair fibrewise Bott classes and Dolbeault classes, and thus obtain the trivial line bundle over $TS$. 
To see this, observe first the isomorphism 
\beq{eq:beta.D_1.module}
\Gamma_c(TN, \Bigwedge\widetilde N_{\C})\otimes_{C_c(TN)}\Gamma_c(TN, \Bigwedge\widetilde N_{\C})\cong\Gamma_c(TN, \Bigwedge\widetilde N_{\C}\otimes\Bigwedge\widetilde N_{\C})
\eeq
as $C_c(TS)$-modules.
Denote by $\cE'$ the completion of the right-hand side under the $C_0(TS)$-valued inner product defined in a similar way as~(\ref{eq:C_0TS.inner.product}).
It can be checked that 
\beq{eq def F0}
F_0:=\frac{B\otimes 1+1\otimes D_1}{\sqrt{1+B^2\otimes 1+1\otimes D_1^2}}
\eeq
 is a $1\otimes \frac{D_1}{\sqrt{1+D_1^2}}$-connection, and that for all $a$ in $C_0(TS)$, the operator 
 \[
 \tilde\pi_{TS}(a)\bigl[B(1+B^2)^{-\frac12}\otimes 1, F_0\bigr]\tilde\pi_{TS}(a)^*
 \]
 is positive modulo compact operators on $\cE'$. 
Hence, the Kasparov product of $\beta_N$, given by~(\ref{eq def beta}), and the class $[D_1]$, given by~(\ref{eq:D_1.cycle}), is equal to
\beq{eq cycle betaN D1}
\left[\cE', F_0, \tilde\pi_{TS}\right]\in KK_G(TS, TS).
\eeq
As in the proof of~{\cite[Theorem~2.7 (2)]{Kas13}}, we
apply the rotation homotopy 
\[
F_t := \frac{(B + \sin(t)D_1)\otimes 1+1\otimes \cos(t)D_1}{\sqrt{1+(B^2+ \sin(t)^2D_1^2)\otimes 1+1\otimes \cos(t)^2D_1^2}},
\]
for $t\in [0, \pi/2]$.
Then the operator $F_0$ in the cycle \eqref{eq cycle betaN D1} is transformed into $F_{\pi/2} = F'\otimes 1$, where 
\[
F':=(B+D_1)(1+B^2+D_1^2)^{-\frac12}.
\]
In summary, $\beta_N$ and $[D_1]$ are families of operators indexed by $TS$ whose Kasparov product is represented by $F'$.
At every $v\in T_sS$, the square of $B + D_1$ is the harmonic oscillator operator
\[
\sum_{j=1}^a\bigl(\kappa_j^2 +\lambda_j^2-\frac{\partial}{\partial \kappa_j^2} -\frac{\partial^2}{\partial \lambda_j^2}\bigr)+2\deg-a
\]
on $T\pi^{-1}(v) \cong N_s\times N_s$. (Here $\deg$ is the degree in $\Bigwedge N$.) It 
has a one dimensional kernel, concentrated in degree zero, generated by 
\beq{eq ker DB}
(\eta, \zeta) \mapsto e^{-\frac{\|\eta\|^2+\|\zeta\|^2}{2}} \quad \in C_0(N_s\times N_s).
\eeq
Thus, over each fibre, $F'$ is a Fredholm operator with index $1$, and $\beta_N\otimes_{TN}[D_1]$ is equal to the exterior product of this Fredholm operator in $KK_G(\C, \C)$ and $[C_0(TS), 0, \pi_{TS}]\in KK_G(TS, TS)$ both representing the respective ring identities. Hence the claim follows.
\end{proof}

\medskip\noindent\emph{Proof of Proposition \ref{lem:pairing.B.D}.}
Using Lemmas \ref{lem D1 DTS} and \ref{lem betaN D1}, and associativity of the Kasparov product, we find that
\[
\beta_N\otimes_{TN}[D_{TN}]=\bigl(\beta_N\otimes_{TN}[D_1] \bigr)\otimes_{TS}[D_{TS}]= [D_{TS}].
\]
This finishes the proof.
\hfill $\square$
\medskip

We will later need the restriction of the Bott element to $TS$. Consider the class
\[
\bigl[\tau_{S}^*\Bigwedge N_{\C} \bigr] := \left[ \bigoplus_{j} {\Bigwedge}^{2j} \tau_{S}^*N \otimes \C \right] - \left[ \bigoplus_{j} {\Bigwedge}^{2j+1} \tau_{S}^*N \otimes \C \right] \in KK_G(TS, TS),
\]
defined as in Lemma \ref{lem KK vb}.
\begin{lemma} \label{lem res Bott}
We have
\[
(j^{TN}_{TS})^*\beta_N = \bigl[\tau_{S}^*\Bigwedge N_{\C}\bigr] \quad \in KK_G(TS, TS).
\]
\end{lemma}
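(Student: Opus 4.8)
The plan is to compute $(j^{TN}_{TS})^*\beta_N$ directly from the cycle \eqref{eq def beta}. The key geometric observation is that $TS$ is embedded in $TN$ exactly as the zero section of the vector bundle $T\pi\colon TN\to TS$; equivalently, $j^{TN}_{TS}$ is the tangent map of the inclusion of $S$ as the zero section of $N\to S$. In particular $T\pi\circ j^{TN}_{TS}=\mathrm{id}_{TS}$. Since $(j^{TN}_{TS})^*$ on $KK_G(TS,-)$ is the map functorially induced by the restriction $*$-homomorphism $\rho\colon C_0(TN)\to C_0(TS)$, it sends the class of \eqref{eq def beta} to the class of the cycle obtained by applying $-\otimes_{\rho}C_0(TS)$ to each of its three pieces; concretely, this amounts to restricting the Hilbert module, the operator and the representation to $TS$.

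First I would identify the restricted Hilbert module. Because $\Bigwedge\widetilde N_{\C}=T\pi^*(\tau_S^*\Bigwedge N\otimes\C)$ and $T\pi$ restricts to the identity on $TS$, the restriction of $\Bigwedge\widetilde N_{\C}$ to $TS$ is $\tau_S^*\Bigwedge N\otimes\C$. Restriction of sections gives a $G$-equivariant, $C_0(TN)$-linear (via $\rho$) map $\Gamma_0(TN,\Bigwedge\widetilde N_{\C})\to\Gamma_0(TS,\tau_S^*\Bigwedge N\otimes\C)$ whose kernel is $\Gamma_0(TN,\Bigwedge\widetilde N_{\C})\cdot\ker\rho$; since $TS$ is closed in $TN$, sections over $TS$ extend, so this map induces a unitary isomorphism of Hilbert $C_0(TS)$-modules
\[
\Gamma_0\bigl(TN,\Bigwedge\widetilde N_{\C}\bigr)\otimes_{\rho}C_0(TS)\;\cong\;\Gamma_0\bigl(TS,\tau_S^*\Bigwedge N\otimes\C\bigr).
\]

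Next I would transport the remaining data. The representation $\tilde\pi_{TS}=\pi_{TN}\circ T\pi^*$ sends $f\in C_0(TS)$ to multiplication by $f\circ T\pi$; restricted to $TS$ this is multiplication by $f$, so it becomes $\pi_{TS}$. For the operator, note that a point $w$ lying on the zero section $TS\subset TN$ has both of its normal components $(\eta,\zeta)\in N_s\times N_s$ equal to zero, so $B_w=\ext(\zeta+\ii\eta)-\Int(\zeta+\ii\eta)=0$ and hence $B(1+B^2)^{-\frac12}$ restricts to $0$. Thus $(j^{TN}_{TS})^*\beta_N$ is the class of the cycle $\bigl(\Gamma_0(TS,\tau_S^*\Bigwedge N\otimes\C),0,\pi_{TS}\bigr)$, where $\tau_S^*\Bigwedge N\otimes\C$ carries the $\Z_2$-grading by even and odd exterior powers. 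Decomposing this graded bundle and using that a graded bundle with zero operator represents the difference of the classes of its even and odd parts (Lemma \ref{lem KK vb}), this equals $\bigl[\bigoplus_j\Bigwedge^{2j}\tau_S^*N\otimes\C\bigr]-\bigl[\bigoplus_j\Bigwedge^{2j+1}\tau_S^*N\otimes\C\bigr]=\bigl[\tau_S^*\Bigwedge N_{\C}\bigr]$, as claimed.

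The only step needing genuine care is the identification in the first two paragraphs, i.e.\ that pullback along $j^{TN}_{TS}$ really does come down to restricting everything to $TS$ — this is just the standard description of functoriality of $KK$ in its second variable, combined with the elementary fact that the restriction of $\Gamma_0$-sections to a closed subspace computes the interior tensor product over the restriction homomorphism. Once this is in place, the vanishing $B|_{TS}=0$ makes the rest immediate, so I do not anticipate any serious obstacle.
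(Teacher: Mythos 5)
Your argument is correct and is essentially the paper's own proof: the paper likewise notes that the Hilbert module of $(j^{TN}_{TS})^*\beta_N$ is $\Gamma_0(TS,\tau_S^*\Bigwedge N_{\C})$ and that $B|_{TS}=0$, so the class is that of the graded bundle $\tau_S^*\Bigwedge N_{\C}$ with zero operator. You have merely spelled out the standard identification of the interior tensor product over the restriction homomorphism, which the paper leaves implicit.
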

\begin{proof}
The Hilbert $C_0(TS)$-module in $(j^{TN}_{TS})^*\beta_N$ is $\Gamma_0(TS, \tau^*_S\Bigwedge N_{\C})$.
Because $B|_{TS}$ is the zero operator, the claim follows.
\end{proof}


\subsection{The Atiyah--Singer index theorem}
\label{sec:AS.index}

Suppose for now that  $M$ is compact and $G$ is trivial.
Then Kasparov's index theorem reduces to the Atiyah--Singer index theorem, see {\cite[Remark 4.5]{Kas13}}. We provide the details of this implication here, because these will used in the proof of Theorem~\ref{thm fixed pt}. 

Consider the Atiyah--Singer topological index map
\[
\ind_t^{\AS}\colon KK(\pt, TM) \to \Z,
\]
which maps a class $\sigma \in KK(\pt, TM)$ to
\begin{equation}
\label{eq:AS.top.index}
\int_{TM}\ch(\sigma)\Todd(TM\otimes\C).
\end{equation}
Note that we do not have the factor $(-1)^{\dim M}$ in~(\ref{eq:AS.top.index}) as in {\cite[Theorem 2.12]{AS3}}, because we use a different almost complex structure on $TM$ than in {\cite[p.~554]{AS3}}, giving the opposite orientation. 

\begin{lemma} \label{lem ind top cpt}
As a map 
\[
KK(\pt, TM) \to KK(\pt, \pt),
\]
right multiplication by $[D_{TM}]$ is the Atiyah--Singer topological index.
\end{lemma}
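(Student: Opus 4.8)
The plan is to compare two topological index maps: the Kasparov-style index, which is right multiplication by $[D_{TM}] \in KK(TM, \pt)$, and the Atiyah--Singer topological index $\ind_t^{\AS}$ given cohomologically by \eqref{eq:AS.top.index}. Both are group homomorphisms $KK(\pt, TM) = K^0(TM) \to \Z = KK(\pt, \pt)$, so it suffices to check they agree on a generating set, or — more robustly — to identify $[D_{TM}]$ with the class that implements the Atiyah--Singer construction. First I would recall the classical Atiyah--Singer recipe: embed $M$ into some $\R^n$, use the Thom isomorphism (Gysin map) $i_!\colon K^0(TM) \to K^0(T\R^n) = K^0(\R^{2n})$ induced by the normal bundle, and then compose with the inverse Bott periodicity isomorphism $K^0(\R^{2n}) \cong \Z$. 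The cohomological formula \eqref{eq:AS.top.index} is then a consequence of the Riemann--Roch/Thom-isomorphism computation in cohomology, exactly as in \cite{AS1} or \cite[Ch.~III]{LM}.

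The key steps, in order: (1) Interpret right multiplication by $[D_{TM}]$ as a wrong-way (Gysin) map. Using the almost complex structure $J$ on $T(TM)$ described in Subsection~\ref{sec index Kas}, the class $[D_{TM}]$ is the $\Spinc$-Dirac (equivalently Dolbeault--Dirac) class of $TM$, which is precisely the $K$-homology fundamental class underlying the wrong-way map $(\tau_M)_!$ or, after embedding, the composite of a Thom class and Bott periodicity. (2) Factor $[D_{TM}]$ through an embedding. Choose an embedding $M \hookrightarrow \R^n$; the normal bundle $\nu$ of $M$ in $\R^n$ gives a normal bundle of $TM$ in $T\R^n$, and the fibrewise Bott element machinery of Subsection~\ref{sec Bott elt} (Proposition~\ref{lem:pairing.B.D}, with $S = M$, the ambient manifold being $\R^n$) yields $\beta_\nu \otimes_{T\R^n} [D_{T\R^n}] = [D_{TM}]$. (3) Evaluate on $T\R^n = \R^{2n}$. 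Here $[D_{T\R^n}]$ is the Bott element / Dolbeault class of $\C^n$, and pairing with it is the inverse Bott isomorphism $K^0(\R^{2n}) \to \Z$; on the cohomological side this is the statement that $\int_{\R^{2n}} \ch(\sigma) \Todd(\C^n \otimes \C)$ recovers the rank at the origin — a direct computation, or citeable from \cite{AS1, LM}. (4) Match the cohomology. Combining steps (2) and (3) with the multiplicativity of Chern character and Todd class under the Thom isomorphism (the Riemann--Roch formula for the Gysin map, $\ch(i_! x) \Todd = i_!(\ch(x)\Todd)$ up to the relevant normal-bundle Todd factors) gives exactly $\int_{TM} \ch(\sigma)\Todd(TM \otimes \C)$, i.e.\ \eqref{eq:AS.top.index}. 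The remark already noted in the excerpt about the sign $(-1)^{\dim M}$ and the choice of almost complex structure is where one must be careful to get orientations consistent, but the paper has fixed conventions so that the sign works out without the extra factor.

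I expect the main obstacle to be bookkeeping rather than conceptual: making the identification of $[D_{TM}]$ with the Bott-periodicity / Thom-class composite precise and $KK$-theoretically rigorous, and keeping track of all the Todd-class normalisations and orientation conventions so that the final cohomological integrand is exactly \eqref{eq:AS.top.index} with the stated sign. An alternative, lighter route — if one is willing to cite the classical Atiyah--Singer theorem as a black box — is: by Kasparov's index theorem (Theorem~\ref{thm index}), $[\sigma_D] \otimes_{TM} [D_{TM}] = [D]$, whose ordinary index is $\ind_t^{\AS}[\sigma_D]$ by Atiyah--Singer; since every class in $K^0(TM)$ arises (up to sign and stabilisation) as a symbol class $[\sigma_D]$ of some elliptic $D$, the two maps $-\otimes [D_{TM}]$ and $\ind_t^{\AS}$ agree on a generating set of $KK(\pt, TM)$ and hence everywhere. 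I would present the Bott-element argument as the main proof (since its main step is reused in the proof of Theorem~\ref{thm fixed pt}, as the excerpt promises) and mention the surjectivity-of-symbol-classes shortcut as a remark.
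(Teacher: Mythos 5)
Your proposal is correct and follows essentially the same route as the paper: embed $M$ in $\R^n$, factor $[D_{TM}]$ through the fibrewise Bott element via Proposition~\ref{lem:pairing.B.D}, identify the Dolbeault class of $T\R^n$ with Bott periodicity (fundamental class under the Chern character), and finish with the Thom-isomorphism/Riemann--Roch bookkeeping in cohomology, where the paper makes your ``Todd factor'' step explicit by computing $\psi_{TN}^{-1}(\ch(\beta_N)) = \tau_M^*\Todd(TM\otimes\C)$ from Lemma~\ref{lem res Bott} and the Euler class.
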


Because of Lemma \ref{lem ind top cpt}, Theorem \ref{thm index} implies the Atiyah--Singer index theorem. Indeed,
since $M$ is compact, the map $p^M\colon M \to \pt$ is proper. By functoriality of the Kasparov product, Lemma \ref{lem ind top cpt} implies that
the following diagram commutes:
\[
\xymatrix{
KK(M, TM) \ar[rr]^-{\ind_t} \ar[d]^-{p^M_*}& & KK(M, \pt)  \ar[d]^-{p^M_*}\\
KK(\pt, TM) \ar[rr]^-{\ind_t^{\AS}}& & \Z = KK(\pt, \pt).
}
\]
By Lemma \ref{lem norm sigma D},
applying the map $p^M_*$ to both sides of \eqref{eq index Kas}, and using commutativity of the above diagram, one obtains the Atiyah--Singer index theorem.

\medskip
\noindent \emph{Proof of Lemma \ref{lem ind top cpt}.}
The proof is a reformulation of the arguments in~\cite{AS3}, using $KK$-theory.



There are embeddings $M\hookrightarrow \R^n$ with normal bundle $N$ of rank $a$, and $TM\hookrightarrow T\R^n = \C^n$ with normal bundle $TN$. As $N$ is homeomorphic to a tubular neighbourhood of $M$ in $\R^n$,  we can identify $TN$ with an open neighbourhood of $TM$ in $\C^n.$ (Note that here, the submanifold $S$ of $M$ in  Subsection \ref{sec Bott elt} is replaced by the submanifold $M$ of $\R^n$.) 

Denote by 
\[
\beta_{N} \in KK(TM, TN) 
\]
the fibrewise Bott element over $TM$ in $TN$, in the sense of Definition~\ref{def:fiberwise.Bott.ele}.
Then by Proposition~\ref{lem:pairing.B.D},
\beq{eq decomp DM}
[D_{TM}]=\beta_{N}\otimes[D_{TN}].
\eeq
The Chern character is compatible with the pairing of $K$-theory and $K$-homology. 
The Chern character of 
 the Bott generator $\beta$ of $K^0(\R^2)$ 
 \iftoggle{long}
 {
 in Example~\ref{ex:Kas.ind} 
 }{}
 is a generator of $H^2(\R^2)$. As the Dolbeault class $[D_{\R^2}]$ of $\R^2$ is dual to $\beta$, its Chern character is the Poincar\'e dual of $\ch(\beta)$. So $\ch[D_{\R^2}]$ is the fundamental class $[\R^2]$ of $\R^2$. 
Similarly, working with  the exterior Kasparov product of $n$ copies of $\beta$, we conclude that $\ch[D_{\R^{2n}}]=[\R^{2n}]$. 
Noting that $T\R^n=\R^{2n}$, then by functoriality of the Chern character, we have 
\beq{eq fund TN}
\ch[D_{TN}]=\ch \bigl( (k_{TN}^{T\R^n})_*[D_{T\R^n}] \bigr) =(k_{TN}^{T\R^n})_*\ch[D_{T\R^{n}}]=(k_{TN}^{T\R^n})_*[T\R^{n}]=[TN].
\eeq
Thus, the Chern character of $[D_{TN}]$ is the fundamental class $[TN] \in H_{2n}(TN)$.
Let $\sigma \in KK(\pt, TM)$ be given. Then \eqref{eq decomp DM} and \eqref{eq fund TN} imply that
\begin{equation}
\label{eq:sigma.D_TM}
\sigma \otimes_{TM} [D_{TM}]=\int_{TN}\ch(\sigma)\wedge\ch (\beta_{N}).
\end{equation}
The Thom isomorphism $\psi_{TN}\colon H^*(TM)\rightarrow H^*(TN)$ (mapping between compactly supported cohomologies) is an isomorphism of $H^*(TM)$-modules. So we can 
rewrite the integral~(\ref{eq:sigma.D_TM}) as
\begin{multline}
\label{eq:cal.int}
\int_{TN}\ch(\sigma) \wedge \ch(\beta_{N}) =\int_{TM}\psi_{TN}^{-1}(\ch(\sigma)\wedge\ch(\beta_{N}))\\
=\int_{TM}\ch(\sigma)\wedge\psi_{TN}^{-1}(\ch(\beta_{N})).
\end{multline}
To calculate $u:= \psi_{TN}^{-1}(\ch(\beta_{TN}))$, we make use of the following diagram: 
\[
\xymatrix{
K^{*}(TM)  \ar[r]^{\psi_{TN}} \ar[d]_{\ch}&  K^{*}(TN) \ar[r]^{(j^{TN}_{TM})^*} \ar[d]_{\ch} & K^{*}(TM) \ar[d]_{\ch}  \\
H^*(TM) \ar[r]^{\psi_{TN}} & H^*(TN) \ar[r]^{(j^{TN}_{TM})^*} & H^*(TM).}
\]
Note that in the second line, the composition is equal to the exterior product by the Euler class $e(TN).$
In the above diagram, we have by Lemma \ref{lem res Bott},
\[
\xymatrix{
&  \beta_{N} \ar@{|->}[r]^-{(j^{TN}_{TM})^*} \ar@{|->}[d]_{\ch} & \sum_j(-1)^{j}\Bigwedge^j TN \ar@{|->}[d]_{\ch}  \\
u\ar@{|->}[r]^-{\psi_{TN}} & \ch (\beta_{N}) \ar@{|->}[r]^-{(j^{TN}_{TM})^*} & u\cdot e(TN).}
\]
As the above square commutes by functoriality of the Chern character,  and since $TN=\tau_M^*N_{\C}$ and $N_{\C}\oplus(TM\otimes\C)=M\times \C^n$, we obtain 
\[
u=\frac{\ch\bigl( \sum_j(-1)^{j}\Bigwedge^j TN \bigr)}{e(TN)}=\tau_{M}^*\left(\frac{e(TM)}{\ch\bigl( \sum_j(-1)^{j}\Bigwedge^j TM)\bigr)}\right)=\tau_M^*\bigl(\Todd(TM\otimes\C)\bigr).
\]
Therefore, together with~(\ref{eq:sigma.D_TM}) and~(\ref{eq:cal.int}) one has
\[
\sigma_{TM}\otimes[D_{TM}]=\int_{TM}\ch(\sigma)\Todd(TM\otimes\C),
\]
and the lemma is proved.
\hfill $\square$


\section{Proof of the fixed point formula} \label{sec pf fixed pt}

After proving Theorems \ref{thm loc 1} and \ref{thm loc 2} and discussing Kasparov's index theorem, we are ready to prove Theorem \ref{thm fixed pt}.

We start in Subsection \ref{sec Gysin}, by generalising Gysin maps, or wrong-way functoriality maps in $K$-theory, that play a key role in \cite{AS1}. We use these generalised Gysin maps in Subsection \ref{sec Gysin loc} to set up the commutative diagrams we need. We discuss a map defined by evaluating characters at $g$ in Subsection \ref{sec eval}. Then we introduce a class in the topological $K$-theory of $TM$, localised at $g$, defined by the principal symbol of $D$. The properties of that class allow us to finish the proof of Theorem \ref{thm fixed pt}.

\subsection{Gysin maps}\label{sec Gysin}


Let $S \subset M$ be a $G$-invariant submanifold, with inclusion map $j^M_S\colon S \hookrightarrow M$. (In the applications of what follows, $S$ will be a connected component of the fixed point set $M^g$.)
Let $N \to S$ be the normal bundle of $S$ in $M$. 
The inclusion map $j^{TN}_{TS}\colon TS\hookrightarrow TN$  induces  a map 
\[
(j^{TN}_{TS})^*\colon C_0(TN)\rightarrow C_0(TS)
\] 
by restriction.
We identify $TN$ with an open neighbourhood of $TS$ in $TM$, via a $G$-equivariant embedding
$
TN \hookrightarrow TM.
$
 In this way, we have the  injective map
\[
k^{TM}_{TN}\colon C_0(TN) \hookrightarrow C_0(TM),
\] 
defined by extending functions by zero. 

\begin{definition}
Let $A$ be any $G$-$C^*$ algebra. The map $(j^{TM}_{TS})_{!}\colon KK_G(A, C_0(TS)) \to KK_G(A, C_0(TM))$ is the composition
\[
KK_G(A, C_0(TS)) \xrightarrow{\relbar \otimes_{C_0(TS)} \beta_N} KK_G(A, C_0(TN)) \xrightarrow{(k^{TM}_{TN})_*} KK_G(A, C_0(TM)).
\]
Here $\beta_N \in KK_G(TS, TN)$ is the Bott element, as in Definition~\ref{def:fiberwise.Bott.ele}.
\end{definition}

We also have the usual map
\[
(j^{TM}_{TS})^*\colon KK_G(A, C_0(TM)) \to KK_G(A, C_0(TS)).
\]
\begin{lemma} \label{lem pull push}
The map
\[
(j^{TM}_{TS})^* \circ(j^{TM}_{TS})_{!}\colon KK_G(A, C_0(TS)) \to KK_G(A, C_0(TS))
\]
is given by the Kasparov product from the right with
\[
(j^{TN}_{TS})^*\beta_N \in KK_G(C_0(TS), C_0(TS)).
\]
\end{lemma}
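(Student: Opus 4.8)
The plan is to unwind the definition of $(j^{TM}_{TS})_{!}$ and then use the compatibility of the Kasparov product with functorially induced maps. By definition, for $x \in KK_G(A, C_0(TS))$ we have
\[
(j^{TM}_{TS})_{!}(x) = (k^{TM}_{TN})_*(x \otimes_{C_0(TS)} \beta_N) \quad \in KK_G(A, C_0(TM)).
\]
Applying $(j^{TM}_{TS})^*$ amounts to composing, in the second variable of $KK$-theory, with the $*$-homomorphism $(j^{TM}_{TS})^* \colon C_0(TM) \to C_0(TS)$ obtained by restriction. The key point is that this restriction factors as $(j^{TM}_{TS})^* = (j^{TN}_{TS})^* \circ (j^{TM}_{TN})^*$, where $(j^{TM}_{TN})^*\colon C_0(TM) \to C_0(TN)$ restricts functions from $TM$ to the open neighbourhood $TN$, and $(j^{TN}_{TS})^*\colon C_0(TN) \to C_0(TS)$ restricts further to the closed submanifold $TS \subset TN$. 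Under this factorisation, the composite $(j^{TM}_{TN})^* \circ (k^{TM}_{TN})_*$ is the identity on $KK_G(A, C_0(TN))$, since restricting to $TN$ an extension-by-zero from $TN$ gives back the original class (more precisely, $(j^{TM}_{TN})^* \circ k^{TM}_{TN} = \mathrm{id}_{C_0(TN)}$ as $*$-homomorphisms, so the induced maps cancel by functoriality).

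Putting these together, for $x \in KK_G(A, C_0(TS))$ I compute
\[
(j^{TM}_{TS})^*\bigl((j^{TM}_{TS})_{!}(x)\bigr) = (j^{TN}_{TS})^*_* \bigl( (j^{TM}_{TN})^*_* (k^{TM}_{TN})_* (x \otimes_{C_0(TS)} \beta_N) \bigr) = (j^{TN}_{TS})^*_*\bigl(x \otimes_{C_0(TS)} \beta_N\bigr).
\]
Now I use that the functorially induced map $(j^{TN}_{TS})^*_*$ in the second variable is, by the general compatibility of Kasparov products with induced maps (property (5) in the $KK$-theory review, or {\cite[Section 18]{Blackadar}}), the same as taking the Kasparov product over $C_0(TS)$ with the class of the $*$-homomorphism $(j^{TN}_{TS})^* \colon C_0(TN) \to C_0(TS)$ in $KK_G(C_0(TN), C_0(TS))$. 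Equivalently, it is right Kasparov multiplication by $(j^{TN}_{TS})^* \beta_N$ where, precisely, $(j^{TN}_{TS})^* \colon KK_G(TS, TN) \to KK_G(TS, TS)$ is the map induced in the second variable, so that $x \otimes_{C_0(TS)} \beta_N$ followed by $(j^{TN}_{TS})^*_*$ equals $x \otimes_{C_0(TS)} \bigl((j^{TN}_{TS})^* \beta_N\bigr)$ by associativity of the Kasparov product. This yields exactly the claimed description.

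The only real subtlety — and the step I would be most careful about — is the bookkeeping of which induced maps act on which variable, and verifying cleanly that $(j^{TM}_{TN})^* \circ (k^{TM}_{TN})_* = \mathrm{id}$ on $KK_G(A, C_0(TN))$; this is where one must use that $TN$ is embedded as an \emph{open} subset of $TM$, so that restriction is a genuine left inverse to extension-by-zero at the level of $C^*$-algebra homomorphisms. Everything else is a formal consequence of functoriality and associativity of the Kasparov product, which hold equivariantly by the properties recalled earlier. I would present the argument as the short chain of equalities above, with a sentence justifying the open-embedding cancellation and a pointer to the compatibility of products with induced maps.
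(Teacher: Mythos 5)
Your overall strategy --- unwind the definition of $(j^{TM}_{TS})_{!}$, move the induced maps inside the Kasparov product by functoriality, and then identify the resulting composition of $*$-homomorphisms --- is exactly the paper's proof. The problem is in the step you yourself single out as the delicate one. You factor the restriction $(j^{TM}_{TS})^*\colon C_0(TM)\to C_0(TS)$ as $(j^{TN}_{TS})^*\circ (j^{TM}_{TN})^*$, with $(j^{TM}_{TN})^*\colon C_0(TM)\to C_0(TN)$ given by restriction to the open subset $TN$, and then cancel $(j^{TM}_{TN})^*\circ (k^{TM}_{TN})_*$. But restriction to an \emph{open} subset is not a $*$-homomorphism into $C_0$ of that subset: a function in $C_0(TM)$ restricted to $TN$ need not vanish near the boundary of $TN$ inside $TM$ (it lands in $C_b(TN)$, not $C_0(TN)$). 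So the intermediate map $(j^{TM}_{TN})^*$ does not exist as stated, and the assertion that ``restriction is a genuine left inverse to extension-by-zero at the level of $C^*$-algebra homomorphisms'' is precisely where the argument breaks: it is extension-by-zero followed by restriction \emph{to a closed subset} that behaves well, not restriction to an open subset on its own.

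The fix is to skip the factorisation and compute the single composition directly, which is what the paper does: by functoriality, $(j^{TM}_{TS})^*\circ (k^{TM}_{TN})_*$ is induced by the composed $*$-homomorphism $(j^{TM}_{TS})^*\circ k^{TM}_{TN}\colon C_0(TN)\to C_0(TS)$, i.e.\ extend $f\in C_0(TN)$ by zero to $TM$ and then restrict to $TS$. Since $TS$ is closed in $TN$ (and in $TM$), the result is simply $f|_{TS}$, so this composition equals the well-defined restriction homomorphism $(j^{TN}_{TS})^*\colon C_0(TN)\to C_0(TS)$; hence $(j^{TM}_{TS})^*\circ (k^{TM}_{TN})_* = (j^{TN}_{TS})^*$ on $KK$-groups. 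Combined with your (correct) use of $\psi_*\bigl(x\otimes_{C_0(TS)}\beta_N\bigr) = x\otimes_{C_0(TS)}\psi_*(\beta_N)$ for maps induced in the second variable, this yields $(j^{TM}_{TS})^*\circ (j^{TM}_{TS})_{!}(x) = x\otimes_{C_0(TS)}(j^{TN}_{TS})^*\beta_N$, as required. (One could try to rescue your factorisation by viewing restriction to $TN$ as a nondegenerate homomorphism into the multiplier algebra $C_b(TN)$, but that imports machinery the direct computation avoids.)
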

\begin{proof}
For all $x \in KK_G(A, C_0(TS))$, functoriality of the Kasparov product implies that
\[
\begin{split}
(j^{TM}_{TS})^* \circ(j^{TM}_{TS})_{!} (x) &= (j^{TM}_{TS})^* \circ (k^{TM}_{TN})_* (x \otimes_{C_0(TS)} \beta_N) \\
	&= x \otimes_{C_0(TS)} ((j^{TM}_{TS})^* \circ (k^{TM}_{TN})_* \beta_N).
\end{split}
\]
Since $(j^{TM}_{TS})^* \circ (k^{TM}_{TN})_* = (j^{TN}_{TS})^*$, the claim follows.
\end{proof}

\begin{lemma} \label{lem gysin index}
For any $G$-invariant closed subset $X\subset M$, and any $G$-invariant neighbourhood $V$ of $X$, the following diagram commutes:
\[
\xymatrix{
KK_G(X, TS) \ar[rr]^-{\relbar \otimes_{TS} [D_{TS}]} \ar[d]^-{(j_X^V)_*}& & KK_G(X, \pt) \ar[dd]^-{(j_X^V)_*} \\
KK_G(V, TS) \ar[d]^-{(j^{TM}_{TS})_{!} } & \\
KK_G(V, TM) \ar[rr]^-{\relbar \otimes_{TM} [D_{TM}]} & & KK_G(V, \pt). 
}
\]
\end{lemma}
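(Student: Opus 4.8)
The diagram to be checked has two squares: a top square relating the Dolbeault class $[D_{TS}]$ to the inclusion-induced map $(j^V_X)_*$, and a bottom rectangle relating the Gysin map $(j^{TM}_{TS})_!$, the inclusion $(j^V_X)_*$, and the Dolbeault class $[D_{TM}]$. The plan is to verify each piece separately using functoriality and associativity of the Kasparov product, since all arrows are either products with a fixed $KK$-class or functorially induced maps.

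\medskip
First I would dispose of the top square. Here $\relbar \otimes_{TS}[D_{TS}]$ is right multiplication by a fixed class $[D_{TS}]\in KK_G(TS,\pt)$, while $(j^V_X)_*$ is the map induced on the first variable by the proper inclusion $X\hookrightarrow V$. These commute for the general reason that functorially induced maps in the first entry of $KK$ commute with Kasparov product on the right: for $x\in KK_G(X,TS)$, both ways around give $(j^V_X)_*(x)\otimes_{TS}[D_{TS}]$, using the compatibility of the exterior/interior product with the $*$-homomorphism $C_0(V)\to C_0(X)$ (property (5) in the $KK$-theory review, or \cite[Chapter~VIII]{Blackadar}). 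This is immediate and requires no computation.

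\medskip
Next I would treat the bottom rectangle. Unwinding the definition of $(j^{TM}_{TS})_!$, the left-and-bottom composite sends $y\in KK_G(V,TS)$ to
\[
(k^{TM}_{TN})_*\bigl(y\otimes_{TS}\beta_N\bigr)\otimes_{TM}[D_{TM}]
= (k^{TM}_{TN})_*\Bigl( y\otimes_{TS}\beta_N\otimes_{TN}(k^{TM}_{TN})^*[D_{TM}]\Bigr),
\]
where I used functoriality of the product to move $(k^{TM}_{TN})_*$ across, and associativity to regroup. Now the key input is that $(k^{TM}_{TN})^*[D_{TM}] = [D_{TN}]$ in $KK_G(TN,\pt)$: the Dolbeault/$\Spinc$-Dirac operator on $TM$ restricts to the corresponding operator on the open subset $TN\subset TM$, and restriction of a $K$-homology class to an open subset is exactly the action of $(k^{TM}_{TN})^*$ (this is the same fact used implicitly, and the analogue of \cite[Prop.~10.8.8]{HR} for $K$-homology; alternatively one can appeal directly to \cite{Kas13}). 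Then Proposition~\ref{lem:pairing.B.D} gives $\beta_N\otimes_{TN}[D_{TN}]=[D_{TS}]$, so the composite becomes $(k^{TM}_{TN})_*(\text{nothing})$ applied to $y\otimes_{TS}[D_{TS}]$ — but $(k^{TM}_{TN})_*$ only affects the second variable, which is now $\pt$, and $C_0(TN)$-to-$C_0(TM)$ extension-by-zero on the second slot of $KK_G(V,\pt)$ plays no role; what remains on the right-hand edge is $(j^V_X)_*\colon KK_G(X,\pt)\to KK_G(V,\pt)$ composed with restriction along $X\subset V$. I need to be careful here: the right vertical arrow in the rectangle is $(j^V_X)_*$ starting from $KK_G(X,\pt)$, so I should instead read the rectangle as asserting that for $x\in KK_G(X,TS)$ we have $(j^{TM}_{TS})_!\,(j^V_X)_*\,x\otimes_{TM}[D_{TM}] = (j^V_X)_*\bigl(x\otimes_{TS}[D_{TS}]\bigr)$; the computation above, applied with $y=(j^V_X)_*x$ and once more using that $(k^{TM}_{TN})_*$ and $(j^V_X)_*$ act on different (resp.\ the same after collapse) variables and commute by bifunctoriality, yields exactly this.

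\medskip
The main obstacle I anticipate is bookkeeping rather than mathematics: making sure the open embeddings $TN\hookrightarrow TM$ and $X\hookrightarrow V$ interact correctly, i.e.\ that $(k^{TM}_{TN})_*$ (second variable) genuinely commutes with $(j^V_X)_*$ (first variable) and that after the product with $[D_{TM}]$ collapses the second variable to a point, the surviving map is the claimed $(j^V_X)_*$. All of this is bifunctoriality of $KK$ together with associativity of the Kasparov product, plus the two identities $(k^{TM}_{TN})^*[D_{TM}]=[D_{TN}]$ and Proposition~\ref{lem:pairing.B.D}. Assembling these gives commutativity of the full diagram.
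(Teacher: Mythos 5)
Your proposal is correct and follows essentially the same route as the paper's proof: unwind $(j^{TM}_{TS})_!$, use functoriality and associativity of the Kasparov product to turn $(k^{TM}_{TN})_*$ acting on $\beta_N$ into $(k^{TM}_{TN})^*$ acting on $[D_{TM}]$, invoke $(k^{TM}_{TN})^*[D_{TM}]=[D_{TN}]$ and Proposition~\ref{lem:pairing.B.D} to get $[D_{TS}]$, and commute $(j^V_X)_*$ past the product by functoriality. The only blemish is the intermediate line where $(k^{TM}_{TN})_*$ is written as acting on a class already in $KK_G(V,\pt)$ (where it is vacuous, as you note); the clean identity $(k^{TM}_{TN})_*(z)\otimes_{TM}[D_{TM}]=z\otimes_{TN}(k^{TM}_{TN})^*[D_{TM}]$ is exactly what the paper uses.
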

\begin{proof}
For all $a \in KK_G(X, TS)$, functoriality and associativity of the Kasparov product imply that
\[
\begin{split}
\bigl( (j^{TM}_{TS})_{!} \circ (j_X^V)_* (a) \bigr) \otimes_{TM} [D_{TM}] &= \bigl(k^{TM}_{TN})_* ((j_X^V)_*(a) \otimes_{TS} \beta_N \bigr) \otimes_{TM} [D_{TM}] \\
	&= (j_X^V)_*(a) \otimes_{TS} \bigl((k^{TM}_{TN})_*(\beta_N) \otimes_{TM} [D_{TM}] \bigr).
\end{split}
\]
Now $(k^{TM}_{TN})^*[D_{TM}] = [D_{TN}]$, so
\[
\begin{split}
(k^{TM}_{TN})_*(\beta_N) \otimes_{TM} [D_{TM}] &=\beta_N \otimes_{TN} (k^{TM}_{TN})^*[D_{TM}]  \\
	&= \beta_N \otimes_{TN} [D_{TN}] \\
	&= [D_{TS}],
\end{split}
\]
where the last equality was proved in Proposition~\ref{lem:pairing.B.D}.
\end{proof}

%

\subsection{Localisation and Gysin maps} \label{sec Gysin loc}

Let $U$ and $V$ be as in Theorem \ref{thm loc 2}.
Consider the diagram
\begin{equation} \label{eq diag loc 1}
\xymatrix{
KK_G(M, TM) \ar[rr]^-{\ind_t} \ar[d]_-{(k^M_V)^*}& & KK_G(M, \pt) \ar[d]^-{(k^M_V)^*} \\
KK_G(V, TM) \ar[rr]^-{\relbar \otimes_{TM }[D_{TM}]}  \ar[d]_-{(j^{TM}_{TM^g})^*} & & KK_G(V, \pt) \\
KK_G(V, TM^g)    & &  \\
KK_G(V, TM^g) \ar[u]^-{\relbar \otimes_{TM^g} (j^{TN}_{TM^g})^*\beta_N}  \ar@<-18pt>@/_2pc/[uu]_-{(j^{TM}_{TM^g})_!}&  & \\
KK_G(\Ubar, TM^g) \ar[rr]^-{\relbar \otimes_{TM^g }[D_{TM^g}]} \ar[u]^-{(j^V_{\Ubar})_*}& & KK_G(\Ubar, \pt). \ar[uuu]_-{(j^V_{\Ubar})_*} 
}
\end{equation}
The top part of this diagram commutes because of functoriality of the Kasparov product. The part with the product with $(j^{TM}_{TM^g})^*\beta_N$ in it commutes by Lemma \ref{lem pull push}, applied with $A = C_0(V)$, and $S$ running over the connected components of $M^g$. 
The remaining part of the diagram commutes by Lemma \ref{lem gysin index}, applied in a similar way with $S$ a connected component of $M^g$, and  $X = \Ubar$.

Diagram \eqref{eq diag loc 1} can be extended as follows.
\begin{equation} \label{eq diag loc 2}
\hspace{-1.2cm}
\xymatrix{
 & & KK_G(M, TM) \ar[rr]^-{\ind_t} \ar[d]_-{(k^M_V)^*}& & KK_G(M, \pt) \ar[d]^-{(k^M_V)^*} \\
 KK_G(\pt, TM)  \ar[d]_-{(j^{TM}_{TM^g})^*} & KK_G(\Ubar, TM)  \ar[d]_-{(j^{TM}_{TM^g})^*}   \ar[l]_-{p^{\Ubar}_*} \ar[r]^-{(j^V_{\Ubar})_*} & KK_G(V, TM) \ar[rr]^-{\relbar \otimes_{TM }[D_{TM}]} \ar[d]_-{(j^{TM}_{TM^g})^*}  & & KK_G(V, \pt) \\
%
%
KK_G(\pt, TM^g)   & KK_G(\Ubar, TM^g) \ar[l]_-{p^{\Ubar}_*} \ar[r]^-{(j^V_{\Ubar})_*} & KK_G(V, TM^g)   & &  \\
%
%
 KK_G(\pt, TM^g)   \ar[u]^-{\relbar \otimes_{TM^g} (j^{TN}_{TM^g})^*\beta_N}  \ar[d]_-{\relbar \otimes_{TM^g }[D_{TM^g}]}  & KK_G(\Ubar, TM^g) \ar[u]^-{\relbar \otimes_{TM^g} (j^{TN}_{TM^g})^*\beta_N}  \ar[l]_-{p^{\Ubar}_*} \ar[r]^-{(j^V_{\Ubar})_*}  \ar[d]_-{\relbar \otimes_{TM^g }[D_{TM^g}]}  & KK_G(V, TM^g)  \ar[u]^-{\relbar \otimes_{TM^g} (j^{TN}_{TM^g})^*\beta_N} \ar@<-18pt>@/_2pc/[uu]_-{(j^{TM}_{TM^g})_!}&  & \\
%
KK_G(\pt, \pt) & KK_G(\Ubar, \pt) \ar[l]^-{p^{\Ubar}_*}  \ar@<0pt>@/_4pc/[rrruuu]_{{(j^V_{\Ubar})_*}}& & &
}
\end{equation}

The right hand part of this diagram is diagram \eqref{eq diag loc 1}, and hence commutes. The other parts commute by functoriality of $KK$-theory and the Kasparov product.

Theorem \ref{thm loc 2} implies that the maps $(j^V_{\Ubar})_*$ become invertible after localisation at $g$. We will also use inverses of the localised classes
\begin{equation}\label{eq beta loc}
\bigl((j^{TN}_{TM^g})^*\beta_N\bigr)_g \in KK_G(TM^g, TM^g)_g.
\end{equation}
\begin{lemma}\label{lem beta invertible}
The element \eqref{eq beta loc} is invertible.
\end{lemma}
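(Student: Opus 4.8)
The plan is to identify the localised class $\bigl((j^{TN}_{TM^g})^*\beta_N\bigr)_g$ explicitly and then show the identified element is a unit in $KK_G(TM^g, TM^g)_g$. By Lemma~\ref{lem res Bott}, applied componentwise over the connected components of $M^g$, we have
\[
(j^{TN}_{TM^g})^*\beta_N = \bigl[\tau_{M^g}^*\Bigwedge N_{\C}\bigr] \quad \in KK_G(TM^g, TM^g),
\]
where $N$ is now the normal bundle of $M^g$ in $M$. So it suffices to show that the localisation at $g$ of the class $\bigl[\tau_{M^g}^*\Bigwedge N_{\C}\bigr]$ is invertible. Here I would use the multiplicativity of the assignment $[V]\mapsto [E_V]$ in Lemma~\ref{lem ring homom}: the class $\bigl[\tau_{M^g}^*\Bigwedge N_{\C}\bigr]$ is the image under a ring homomorphism (pullback of bundles from $M^g$ along $\tau_{M^g}$, then the vector-bundle-to-$KK$ map of Lemma~\ref{lem KK vb}) of the $K$-theory class $\bigl[\Bigwedge N_{\C}\bigr] = \sum_j (-1)^j [\Bigwedge^j N\otimes\C] \in KK_G(M^g, M^g)$.

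Next I would invoke the key algebraic fact, which is exactly {\cite[Lemma~2.7]{AS}} (already cited in the paper, in the discussion before Theorem~\ref{thm fixed pt}): since $M^g$ is the fixed point set of $g$, the action of $g$ on the normal bundle $N$ has no nonzero fixed vectors, and this forces the localisation at $g$ of $\bigl[\Bigwedge N_{\C}\bigr]$ to be invertible in $KK_G(M^g, M^g)_g$ — equivalently, after passing to a point of each component, $\sum_j(-1)^j[\Bigwedge^j N_{\C}]$ becomes a unit in $R(G)_g$ because $g$ acts on $N$ with all eigenvalues $\ne 1$, so $\det(1 - g|_{N_{\C}})$ is invertible after localising at $I_g$. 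Pushing this invertible class through the ring homomorphism of Lemma~\ref{lem ring homom} — which sends the identity to the identity and products to products, and hence units to units — and then localising at $g$, gives that $\bigl[\tau_{M^g}^*\Bigwedge N_{\C}\bigr]_g$ is invertible. Combining with the identification from Lemma~\ref{lem res Bott} yields the claim.

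The one technical point to be careful about is that $TM^g$ is noncompact (and has several components of possibly different dimensions), so one cannot directly invoke the compact-manifold version of {\cite[Lemma~2.7]{AS}}; instead the argument should be run fibrewise / componentwise, using that the $R(G)$-module structure on $KK_G(TM^g, TM^g)$ is induced by the ring homomorphism from $R(G)$ of Lemma~\ref{lem ring homom} (with $H = G$ and the constant map $TM^g \to G/G = \pt$), so that the element $\bigl[\tau_{M^g}^*\Bigwedge N_{\C}\bigr]$ actually lies in the image of $R(G) \to KK_G(TM^g, TM^g)$ up to the bundle-twisting, and invertibility in $R(G)_g$ transports to invertibility of its image. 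I expect this bookkeeping — relating the various ring-homomorphism and module structures and checking the $g$-action on $N$ has no fixed vectors on each component of $M^g$ — to be the main, though routine, obstacle; the essential nontrivial input is {\cite[Lemma~2.7]{AS}}, which is quoted rather than reproved.
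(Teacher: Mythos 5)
Your proposal is correct and follows essentially the same route as the paper's own proof: identify $(j^{TN}_{TM^g})^*\beta_N$ with $\bigl[\tau_{M^g}^*\Bigwedge N_{\C}\bigr]$ via Lemma~\ref{lem res Bott}, quote {\cite[Lemma~2.7]{AS}} for invertibility of $\bigl[\Bigwedge N_{\C}\bigr]$ after localisation at $g$, and transport the unit through the unital ring homomorphism given by pullback along $\tau_{M^g}$ (which is exactly how the paper handles the noncompactness of $TM^g$). The only cosmetic difference is that the paper states the Atiyah--Segal input in $K_G^0(M^g)_g$ rather than $KK_G(M^g,M^g)_g$; your extra remarks about eigenvalues and components are harmless but not needed once that lemma is cited.
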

\begin{proof}
By Lemma \ref{lem res Bott}, we have
\[
(j^{TN}_{TM^g})^*\beta_N = \bigl[\tau_{M^g}^*\Bigwedge N_{\C}\bigr].
\]
Atiyah and Segal showed in {\cite[Lemma~2.7]{AS}} that $\bigl[\Bigwedge N_{\C}\bigr]$ is invertible in $K_G^0(M^g)_g$.
The map
\[
\tau^*_{M^g}\colon K_G^0(M^g)\to KK_G(TM^g, TM^g) 
\]
sending a class  $[E] \in K_G^0(M^g)$ to $[\tau^*_{M^g}E]$, is a unital ring homomorphism. Hence so is its localisation at $g$. Therefore, the class
\[
\bigl[\tau_{M^g}^*\Bigwedge N_{\C}\bigr]_g = (\tau_{M^g}^*)_g \bigl[\Bigwedge N_{\C}\bigr]_g\quad \in KK_G(TM^g, TM^g)_g
\]
is invertible.
\end{proof}

\subsection{Evaluation}\label{sec eval}

Let $X$ and $Y$ be locally compact Hausdorff spaces with \emph{trivial} actions by a compact group $G$. Then
 the exterior Kasparov product
\[
KK(X, Y) \times KK_G(\pt, \pt) \to KK_G(X, Y)
\]
defines an isomorphism 
\beq{eq KK G triv}
KK(X,Y) \otimes R(G)\cong KK_G(X,Y).
\eeq

If $X$ is a point, this is a classical fact. We will also apply this isomorphism to the class $[D_{TM^g}] \in KK_G(TM^g, \pt)$. There it is trivial, since $G$ acts trivially on the Hilbert space in question.   
 In the only other case we will use the isomorphism \eqref{eq KK G triv}, we have $X = Y$, and this space has finitely many connected components. (To be precise, we will have $X = Y = TM^g$.)
Let us work out the isomorphism  explicitly in that case, for the cycles we will apply it to. These are $G$-equivariant Kasparov $(C_0(X), C_0(X))$-cycles of the form 
$(\Gamma_0(E), F, \pi)$, where $E\rightarrow X$ is a vector bundle (of finite rank). Let $a \in KK_G(X, X)$ be the class of a cycle of this form, and let $b\in KK(X, X)$ be the class defined by the same cycle, where the group action is ignored.
As $G$ acts trivially on $X$, each fibre of $E$  is a representation space of $G$. Suppose for simplicity that $X$ is connected; the general case follows by applying the arguments to its connected components. (This works since there are finitely many of them.)
Since $X$ is connected, the representations by $G$ on all fibres of $E$ are equivalent. Let $V$ be any one of these fibres, viewed as a representation space of $G$.
Denote by $1_G$ the ring identity of $R(G)$, i.e.\ the trivial representation of $G$.
Let 
 $E_0:=X\times V\rightarrow X$ be the trivial bundle with fibre $V$. 
 Consider the representations
 \[
 \begin{split}
 \pi_X^X\colon &C_0(X)\to \cB(C_0(X)); \\
 \pi_X^{E_0}\colon &C_0(X)\to \cB(\Gamma_0(E_0))
 \end{split}
 \]
 defined by pointwise multiplication. 
Then 
\beq{eq:rotation.KK}
\bigl(\bigl[C_0(X), 0, \pi_X^X\bigr]\otimes[V] \bigr) + (b\otimes 1_G)=\bigl( \bigl[\Gamma_0(E_0), 0, \pi_X^{E_0}\bigr]\otimes 1_G\bigr) + a \quad \in KK_G(X, X).
\eeq
In fact, both sides of~(\ref{eq:rotation.KK}) are represented by the cycle
\beq{eq:product.diff.action}
\bigl(\Gamma_0(E_0\oplus E), 0\oplus F, \pi_X^{E_0}\oplus\pi \bigr),
\eeq
but, initially,  with \emph{different} $G$-actions. Namely, for the left-hand side of~(\ref{eq:rotation.KK}), $G$ acts on the first summand $E_0$ in~(\ref{eq:product.diff.action}), while for the right-hand side of~(\ref{eq:rotation.KK}), $G$ acts on the second summand $E$ in~(\ref{eq:product.diff.action}). 
As $G$ acts trivially on $X$, representations of $G$ commute with those of $C_0(X)$. Since, in addition, $F$ is $G$-invariant, these two actions by  $G$  can be connected by a rotation homotopy, so \eqref{eq:rotation.KK} follows. 
In that equality, $a$ is represented as an element of $KK(X, Y)\otimes R(G).$

In general, using \eqref{eq KK G triv}, 
one can apply the evaluation $\ev_g = 1\otimes \ev_g$ as a map
\begin{equation} \label{eq ev KK}
\ev_g\colon KK_G(X,Y) \to KK(X,Y)\otimes \C.
\end{equation}
This map is compatible with localisation at $g$, in the sense that the following diagram commutes:
\[
\xymatrix{
KK_G(X,Y) \ar[r]^{\ev_g} \ar[d]& KK(X,Y) \otimes \C. \\
KK_G(X,Y)_g \ar[ur]_-{(\ev_g)_g} & 
}
\]
If $a\in KK_G(X,Y)$, we will also write 
\[
a(g) := \ev_g(a) \quad \in KK(X, Y) \otimes \C.
\]

The evaluation map \eqref{eq ev KK} is compatible with Kasparov products. This follows from the facts that the isomorphism \eqref{eq KK G triv} is compatible with the product, that Kasparov products in $R(G)$ coincide with tensor products of representations, and that the character of the tensor product of two finite-dimensional representations is the product of the characters of the individual representations.

Hence we can attach the following commutative diagram to the lower left hand side of \eqref{eq diag loc 2}:
\begin{equation} \label{eq diag loc 2a}
\xymatrix{
KK(\pt, TM^g)\otimes \C  & KK_G(\pt, TM^g) \ar[l]_-{\ev_g}    \\
KK(\pt, TM^g)\otimes \C 
\ar[d]_-{(\relbar \otimes_{TM^g } [D_{TM^g}])\otimes 1} 
\ar[u]^-{\bigl(\relbar \otimes_{TM^g}(j^{TM}_{TM^g})^*\beta_N (g) \bigr)\otimes 1}    & KK_G(\pt, TM^g)   \ar[u]^-{\relbar \otimes_{TM^g} (j^{TN}_{TM^g})^*\beta_N}  \ar[d]_-{\relbar \otimes_{TM^g }[D_{TM^g}]} \ar[l]_-{\ev_g}    \\
\C & KK_G(\pt, \pt) \ar[l]^-{\ev_g}  
}
\end{equation}
Here, $[D_{TM^g}]\in KK(TM^g, \pt)$ is identified with $[D_{TM^g}]\otimes 1\in KK(TM^g, \pt) \otimes R(G)$, so that $\ev_g([D_{TM^g}])=[D_{TM^g}]\otimes 1.$  In particular, when $M^g=\pt$, the vertical map on the lower left corner is the identity.

By Lemma \ref{lem ind top cpt} and compactness of $M^g$, the map 
\[
\relbar \otimes_{TM^g } [D_{TM^g}]\colon KK(\pt, TM^g)\to KK(\pt, \pt)
\]
is the Atiyah--Singer topological index map $\ind_t^{\AS}$.
We will use the same notation for  its extension to a map $KK(\pt, TM^g) \otimes \C \to \C$.

Using commutativity of \eqref{eq diag loc 2} and \eqref{eq diag loc 2a}, and invertibility of the localised maps
$((j^V_{\Ubar})_*)_g$
 and classes \eqref{eq beta loc},  we obtain the commutative diagram
\begin{equation} \label{eq diag loc 3}
\hspace{-2.5cm}
\xymatrix{
& KK_G(M, TM)_g \ar[rrrr]^-{(\ind_t)_g} \ar[d]^-{(p^{\Ubar}_*)_g \circ ((j^V_{\Ubar})_*)_g^{-1} \circ (k^M_V)^*_g} & & & &
KK_G(M, \pt)_g\ar[ddd]_-{((j^V_{\Ubar})_*)_g^{-1} \circ (k^M_V)^*_g} \\
%
& KK_G(\pt, TM)_g 
  \ar[d]^-{(j^{TM}_{TM^g})^*} && & &\\
 KK(\pt, TM^g)\otimes \C  
 \ar[d]_-{\ind_t^{\AS} \bigl( \relbar \otimes_{TM^g} ((j^{TN}_{TM^g})^*\beta_N)^{-1}(g)\bigr)}
 & KK_G(\pt, TM^g)_g \ar[l]_-{(\ev_g)_g}  \ar[d]^-{ \relbar \otimes_{TM^g} ((j^{TN}_{TM^g})^*\beta_N)_g^{-1} \otimes_{TM^g}[D_{TM^g}]_g}   & & & \\
\C & KK_G(\pt, \pt)_g  \ar[l]_-{(\ev_g)_g}  &&& & KK_G(\Ubar, \pt)_g. \ar[llll]^-{(p^{\Ubar}_*)_g}& 
}
\end{equation}

\subsection{The $g$-symbol class} \label{sec g symbol}

Recall that in~(\ref{eq sigma D}) we defined the class
\[
[\sigma_D] \in KK_G(M, TM).
\]
The last ingredient of the proof of Theorem \ref{thm fixed pt} is
 a class defined by $\sigma_D$ in the topological $K$-theory of $TM$, localised at $g$. In Section \ref{sec non-loc}, we will describe this class more explicitly, and use it to obtain another expression for the $g$-index.
\begin{definition}\label{def g symbol}
The \emph{$g$-symbol class} class of $D$ is the class
${\sigDg}$ in the localised topological $K$-theory of $TM$ defined by
\begin{equation}\label{eq:sigma}
{\sigDg} := (p^{\Ubar}_*)_g \circ ((j^V_{\Ubar})_*)_g^{-1} \circ (k^M_V)^*_g[\sigma_D]_g \quad \in KK_G(\pt, TM)_g.
\end{equation}
\end{definition}

The $g$-symbol class generalises the usual symbol class in the compact case.
\begin{lemma} \label{lem sigma cpt}
If $M$ is compact, then 
${\sigDg}$ is the localisation at $g$ of the usual class of $\sigma_D$ in $KK_G(\pt, TM)$.
\end{lemma}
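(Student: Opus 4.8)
The plan is to unwind Definition~\ref{def g symbol} in the case $M$ compact, using the same simplification already observed for the $g$-index in Subsection~\ref{sec prop}. Recall that when $M$ is compact we may take $U = V = M$ in Theorem~\ref{thm loc 2}, and the map $p^M \colon M \to \pt$ is proper. With these choices, the three maps appearing in~\eqref{eq:sigma} collapse: $(k^M_V)^*_g = (k^M_M)^*_g$ is the identity on $KK_G(C_0(M), C_0(TM))_g$, the localised inclusion $((j^V_{\Ubar})_*)_g^{-1} = ((j^M_M)_*)_g^{-1}$ is also the identity, and $(p^{\Ubar}_*)_g = (p^M_*)_g$. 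Hence
\[
{\sigDg} = (p^M_*)_g [\sigma_D]_g = \bigl( (p^M_*)[\sigma_D] \bigr)_g \quad \in KK_G(\pt, TM)_g,
\]
where the second equality is just the compatibility of functorially induced maps with localisation at~$g$.

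Next I would identify $(p^M_*)[\sigma_D]$ with the usual symbol class. This is exactly the content of Lemma~\ref{lem norm sigma D}: for compact $M$, the image $p^M_*[\sigma_D] \in K^*_G(TM) = KK_G(\pt, TM)$ is the classical symbol class of $D$ in the sense of Atiyah--Singer. Combining this with the displayed identity gives that ${\sigDg}$ is the localisation at $g$ of the usual class of $\sigma_D$ in $KK_G(\pt, TM)$, which is the claim.

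The only point requiring a word of care is the independence of the choice of $U$ and $V$: a priori ${\sigDg}$ is defined via the composition in~\eqref{eq ind g} with $A = C_0(TM)$, and we need to know that taking $U = V = M$ is a legitimate choice and yields the same answer as any other. This is precisely Lemma~\ref{lem U V}, which guarantees that the composition~\eqref{eq ind g} is independent of $U$ and $V$; since for compact $M$ the pair $U = V = M$ satisfies the hypotheses ($M$ itself is a relatively compact open $G$-invariant neighbourhood of $M^g$ with $\overline{M} = M \subset M$), we are free to use it. I do not anticipate a genuine obstacle here — the proof is a short bookkeeping argument — but the one step that carries all the content is the invocation of Lemma~\ref{lem norm sigma D}, so the proof should simply cite it after recording the collapse of~\eqref{eq:sigma}.

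\begin{proof}
Since $M$ is compact, we may take $U = V = M$ in Definition~\ref{g-index} and Definition~\ref{def g symbol}; this is permitted by Lemma~\ref{lem U V}, which shows that the composition~\eqref{eq ind g} (here with $A = C_0(TM)$) does not depend on the choice of $U$ and $V$. With this choice, $(k^M_M)^*$ and $(j^M_M)_*$ are the identity, and $p^M\colon M\to \pt$ is proper, so~\eqref{eq:sigma} becomes
\[
{\sigDg} = (p^M_*)_g [\sigma_D]_g = \bigl( p^M_*[\sigma_D] \bigr)_g \quad \in KK_G(\pt, TM)_g,
\]
the last equality being the compatibility of the functorially induced map $p^M_*$ with localisation at~$g$. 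By Lemma~\ref{lem norm sigma D}, $p^M_*[\sigma_D] \in K^*_G(TM) = KK_G(\pt, TM)$ is the usual symbol class of $D$. Hence ${\sigDg}$ is the localisation at $g$ of that class.
\end{proof}
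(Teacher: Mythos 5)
Your proof is correct and follows essentially the same route as the paper: take $U=V=M$ (legitimate since $M$ is compact and $p^M$ is proper), observe that \eqref{eq:sigma} collapses to $(p^M_*[\sigma_D])_g$, and conclude by Lemma~\ref{lem norm sigma D}. The extra remark about independence of the choice of $U,V$ via Lemma~\ref{lem U V} is a harmless elaboration of what the paper does tacitly.
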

\begin{proof}
If $M$ is compact, then we can choose $U = V = M$. Then, since the map $p^M\colon M\to \pt$ is proper, we have 
\[
{\sigDg} = (p^M_*[\sigma_D])_g,
\]
which is the usual symbol class by Lemma \ref{lem norm sigma D}.
%
%
\end{proof}

We now prove some properties of the $g$-symbol class that will be used in the proof of Theorem \ref{thm fixed pt}.
As before, we write
$
\tilde \sigma_D := \frac{\sigma_D}{\sqrt{\sigma_D^2+1}}.
$
\begin{lemma} \label{lem sigma D res V}
The class
\[
(k^M_V)^*_g[\sigma_D]_g \in KK_G(V, TM)_g
\]
is the localisation at $g$ of the class
\[
[\sigma_D|_V]_{TM} :=  \bigl[\Gamma_0(\tau_{V}^*(E|_V)), \tilde \sigma_D|_{TV}, \pi_V\bigr] \quad \in KK_G(V, TM).
\]
Here the $C_0(TM)$ valued inner product on $\Gamma_0(E|_V)$ is defined by the natural $C_0(TV)$-valued inner product, composed with the inclusion $k^{TM}_{TV}$.
\end{lemma}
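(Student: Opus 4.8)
The plan is to establish the stronger, un-localised identity $(k^M_V)^*[\sigma_D] = [\sigma_D|_V]_{TM}$ in $KK_G(V, TM)$, from which the stated equality in $KK_G(V, TM)_g$ follows at once by localising at $g$ (localisation commutes with functorially induced maps).

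First I would unwind $(k^M_V)^*$ at the level of Kasparov cycles: by definition it replaces the representation of the first algebra by its composition with $k^M_V\colon C_0(V)\hookrightarrow C_0(M)$, so applied to the cycle \eqref{eq:symbol.KK-cycle} representing $[\sigma_D]$ it produces the $G$-equivariant Kasparov $(C_0(V), C_0(TM))$-cycle $\bigl(\Gamma_0(\tau_M^* E),\ \tilde\sigma_D,\ \pi_{TM}\circ\tau_M^*\circ k^M_V\bigr)$. The next step is to observe that this cycle is concentrated over $TV$: for $f\in C_0(V)$ the function $\tau_M^*(k^M_V f)\in C_0(TM)$ vanishes outside $\tau_M^{-1}(V)=TV$, so $\pi_{TM}(\tau_M^*(k^M_V f))$ maps $\Gamma_0(\tau_M^* E)$ into the closed sub-$C_0(TM)$-module $\cE_0$ of sections vanishing outside $TV$. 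I would then identify $\cE_0$ with $\Gamma_0(\tau_V^*(E|_V))$ carrying precisely the Hilbert $C_0(TM)$-module structure described in the statement (the natural $C_0(TV)$-valued inner product composed with $k^{TM}_{TV}$), note that $\cE_0$ is invariant under the bundle endomorphism $\tilde\sigma_D$, that the representation restricts to $\pi_V$ on $\cE_0$, and that $\tilde\sigma_D|_{TV}$ is the normalised principal symbol of $D|_V$ since principal symbols are local (an identification analogous to the one in the proof of Lemma~\ref{lem norm sigma D}). Thus the restriction of our cycle to $\cE_0$ is exactly the cycle defining $[\sigma_D|_V]_{TM}$.

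The remaining, and in my view main, point is to argue that passing from the cycle on $\Gamma_0(\tau_M^* E)$ to its restriction to the submodule $\cE_0$ does not change the $KK$-class. This requires a little care, because $\cE_0$ is \emph{not} a complemented submodule of $\Gamma_0(\tau_M^* E)$: the candidate projection is multiplication by the (discontinuous) characteristic function of $TV$, and the inclusion $\cE_0\hookrightarrow\Gamma_0(\tau_M^* E)$ is not adjointable, so one cannot simply split off a degenerate direct summand. What saves the argument is that $\cE_0$ is the essential submodule of the representation, $\cE_0=\overline{\pi_{TM}(\tau_M^*(k^M_V C_0(V)))\,\Gamma_0(\tau_M^* E)\,C_0(TM)}$, and that it is preserved by the operator; the standard fact that a Kasparov cycle may be replaced by its restriction to the essential submodule of its representation (see \cite[Ch.~VIII]{Blackadar}) then yields $(k^M_V)^*[\sigma_D]=[\sigma_D|_V]_{TM}$ in $KK_G(V, TM)$, and localising at $g$ finishes the proof. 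I expect this essential-submodule reduction to be the only genuine obstacle; everything else is bookkeeping of Kasparov data together with the locality of principal symbols.
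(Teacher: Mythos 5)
Your proposal is correct and follows the same overall strategy as the paper: establish the un-localised identity at the level of Kasparov cycles, by noting that after composing the representation with $k^M_V$ the cycle is concentrated over $TV$, and then localise at $g$. The difference is in how the part supported away from $TV$ is discarded. The paper simply writes the cycle $\bigl(\Gamma_0(\tau_M^*E), \tilde\sigma_D, (k^M_V)^*\pi_M\bigr)$ as the direct sum of the cycle defining $[\sigma_D|_V]_{TM}$ and a cycle of sections over $M\setminus V$ carrying the zero representation of $C_0(V)$, and drops the latter as degenerate; you instead point out that the submodule $\cE_0$ of sections vanishing outside $TV$ is not orthogonally complemented (its orthogonal complement consists of sections vanishing on $\overline{TV}$, so the algebraic sum misses sections that are nonzero on $\partial(TV)$), and you invoke the reduction of a cycle to the essential submodule of its representation, which applies here because $\tilde\sigma_D$ is a module map commuting exactly with the representation and hence preserves $\cE_0$. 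Your extra care addresses a genuine, if standard, point: taken literally the paper's displayed decomposition is an extension of Hilbert modules rather than a direct sum, and your essential-submodule argument (or an explicit mapping-cylinder homotopy between the cycle and its restriction to $\cE_0$) is the clean way to justify the same conclusion. The one thing to verify is the reference: in \cite[Ch.~VIII]{Blackadar} the statement is usually phrased as the existence of representatives with nondegenerate representation, whereas you need the slightly sharper fact that a given cycle is equivalent to its restriction to the essential submodule when the operator preserves that submodule; this is true and easy in the present situation, but you should either prove it (a short homotopy argument) or cite it precisely.
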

\begin{proof}
The class
\[
(k^M_V)^*[\sigma_D]\in KK_G(V, TM)
\]
is represented by the Kasparov cycle
\begin{multline*}
\bigl(\Gamma_0(\tau_{M}^*E), \tilde \sigma_D, (k^M_V)^*\pi_M\bigr) = \bigl(\Gamma_0(\tau_{V}^*(E|_{V}), \tilde \sigma_D|_{TV}, \pi_V \bigr) \\ \oplus \bigl(\Gamma_0(\tau_{M\setminus V}^*(E|_{M\setminus V})), \tilde \sigma_D|_{TM\setminus TV}, 0\bigr).
\end{multline*}
The second term on the right hand side is a degenerate cycle, so the claim follows.
\end{proof}

Consider
the class
\[
_{\Ubar}[\sigma_D|_{TM^g}] := \bigl[\Gamma_0(\tau_{M^g}^* (E|_{M^g})), \tilde \sigma_D|_{TM^g}, (j^{\Ubar}_{M^g})_*\pi_{M_g} \bigr] \in KK_G(\Ubar, TM^g).
\]

\begin{lemma} \label{lem res sigma}
We have
\[
(j^V_{\Ubar})_* \bigl({_{\Ubar}}[\sigma_D|_{TM^g}] \bigr)= (j^{TM}_{TM^g})^*[\sigma_D|_V]_{TM} \quad \in KK_G(V, TM^g).
\]
\end{lemma}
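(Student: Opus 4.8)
The plan is to compute both sides of the claimed equality explicitly as $G$-equivariant Kasparov $(C_0(V), C_0(TM^g))$-cycles and to observe that these cycles coincide on the nose. The only non-formal ingredient will be a Hilbert-module identification. Since $M^g \subset V$ we have $TM^g \subset TV$, and $TM^g$ is a closed subset of $TV$ (indeed of $TM$); moreover $C_0(TV)$ sits inside $C_0(TM)$ as an ideal, and by construction the $C_0(TM)$-valued inner product on $\Gamma_0(\tau_V^*(E|_V))$ factors through this ideal. Hence, for the restriction $*$-homomorphism $(j^{TM}_{TM^g})^*\colon C_0(TM)\to C_0(TM^g)$, the Hilbert-module pushout will satisfy
\[
\Gamma_0(\tau_V^*(E|_V)) \otimes_{C_0(TM)} C_0(TM^g) \;\cong\; \Gamma_0(\tau_V^*(E|_V))|_{TM^g} \;\cong\; \Gamma_0\bigl(\tau_{M^g}^*(E|_{M^g})\bigr),
\]
the last identification using the base-change identity $\tau_V^*(E|_V)|_{TM^g} = \tau_{M^g}^*(E|_{M^g})$ of $G$-vector bundles over $TM^g$. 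This is the same mechanism as in the proof of Lemma~\ref{lem sigma D res V}: for a closed subset $Z\subset TV$, the quotient $C_0(TV)\otimes_{C_0(TM)}C_0(Z)$ is the image of $C_0(TV)$ under restriction to $Z$, which here is all of $C_0(TM^g)$ because $TM^g\subset TV$ is closed.

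Next I would unwind the right-hand side. The class $(j^{TM}_{TM^g})^*[\sigma_D|_V]_{TM}$ is represented by the pushout of the cycle $(\Gamma_0(\tau_V^*(E|_V)), \tilde\sigma_D|_{TV}, \pi_V)$ along $(j^{TM}_{TM^g})^*$. Using the identification above, its underlying module is $\Gamma_0(\tau_{M^g}^*(E|_{M^g}))$; the operator $\tilde\sigma_D|_{TV}$, being pointwise multiplication by the bundle endomorphism $v\mapsto\tilde\sigma_D(v)$, restricts to $\tilde\sigma_D|_{TM^g}$; and the representation $\pi_V$, being pointwise multiplication by $\tau_V^* f$ for $f\in C_0(V)$, becomes pointwise multiplication by $(\tau_V^* f)|_{TM^g}=\tau_{M^g}^*(f|_{M^g})$, i.e.\ it is $\pi_{M^g}$ precomposed with the restriction map $C_0(V)\to C_0(M^g)$.

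Then I would unwind the left-hand side. By definition, ${}_{\Ubar}[\sigma_D|_{TM^g}]$ is the cycle $(\Gamma_0(\tau_{M^g}^*(E|_{M^g})), \tilde\sigma_D|_{TM^g}, (j^{\Ubar}_{M^g})_*\pi_{M^g})$, where $(j^{\Ubar}_{M^g})_*\pi_{M^g}$ sends $f\in C_0(\Ubar)$ to pointwise multiplication by $\tau_{M^g}^*(f|_{M^g})$. The functorial map $(j^V_{\Ubar})_*$ leaves the module and operator unchanged and precomposes the representation with the restriction $C_0(V)\to C_0(\Ubar)$; since $M^g\subset\Ubar$, the resulting representation of $C_0(V)$ is again $f\mapsto$ pointwise multiplication by $\tau_{M^g}^*(f|_{M^g})$. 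Hence $(j^V_{\Ubar})_*\bigl({}_{\Ubar}[\sigma_D|_{TM^g}]\bigr)$ is represented by exactly the cycle obtained for $(j^{TM}_{TM^g})^*[\sigma_D|_V]_{TM}$, proving the equality in $KK_G(V, TM^g)$.

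I expect the only real obstacle to be the bookkeeping for the Hilbert-module pushout: one must verify that tensoring $\Gamma_0(\tau_V^*(E|_V))$ over $C_0(TM)$ with $C_0(TM^g)$ gives the restricted section module together with its natural $C_0(TM^g)$-structure, and not some proper quotient — this is precisely where closedness of $TM^g$ in $TV$ (surjectivity of $C_0(TV)\to C_0(TM^g)$) and the ideal property of $C_0(TV)$ in $C_0(TM)$ both enter. One could also sidestep part of this by first defining a ``pure'' symbol class $[\sigma_D|_{TM^g}]\in KK_G(M^g, TM^g)$ and invoking $j^V_{M^g}=j^V_{\Ubar}\circ j^{\Ubar}_{M^g}$, but the same module identification is still needed to describe $(j^{TM}_{TM^g})^*[\sigma_D|_V]_{TM}$. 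Everything else is a routine $G$-equivariant, grading-compatible unwinding of the functorially induced maps.
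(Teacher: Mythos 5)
Your proposal is correct and follows essentially the same route as the paper: both compute the pushout $\Gamma_0(\tau_V^*(E|_V)) \otimes_{j^{TM}_{TM^g}} C_0(TM^g)$, identify it with $\Gamma_0(\tau_{M^g}^*(E|_{M^g}))$ via $s\otimes\varphi\mapsto\varphi\, s|_{TM^g}$, and check that this identification intertwines the operators and the representations of $C_0(V)$, using $(j^{V}_{M^g})_*\pi_{M^g}=(j^V_{\Ubar})_*(j^{\Ubar}_{M^g})_*\pi_{M^g}$. Your extra remarks on surjectivity of restriction (closedness of $TM^g$ in $TV$) just spell out why the pushout map is an isomorphism, which the paper asserts directly.
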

\begin{proof}
By definition,
\[
(j^{TM}_{TM^g})^*[\sigma_D|_V]_{TM} = \bigl[  \Gamma_0(\tau_V^*(E|_V)) \otimes_{j^{TM}_{TM^g}} C_0(TM^g), \tilde \sigma_D|_V\otimes 1, \pi_V \otimes 1  \bigr].
\]
The map 
\[
 \Gamma_0(\tau_V^*(E|_V)) \otimes_{j^{TM}_{TM^g}} C_0(TM^g) \to \Gamma_0(\tau_{M^g}^*(E|_{M^g}))
\]
that maps $s\otimes \varphi$ to $\varphi s|_{TM^g}$, for $s\in  \Gamma_0(\tau_V^*(E|_V))$ and $\varphi \in C_0(TM^g)$, is an isomorphism of Hilbert $C_0(TM^g)$-modules. It intertwines the operators $\tilde \sigma_D|_V\otimes 1$ and $\tilde \sigma_D|_{TM^g}$, and the representations $\pi_V\otimes 1$ and 
\[
(j^{V}_{M^g})_*\pi_{M^g} =(j^V_{\Ubar})_* (j^{\Ubar}_{M^g})_*\pi_{M^g}.
\]
The lemma is then proved.
\end{proof}

\begin{proposition}
\label{prop:symbol.M^g}
The class
\[
(j^{TM}_{TM^g})^*_g {\sigDg}  \in KK_G(\pt, TM^g)_g
\]
is the localisation at $g$ of the usual class
$
 [\sigma_D|_{TM^g}]
$
in the equivariant topological $K$-theory of $TM^g$.
\end{proposition}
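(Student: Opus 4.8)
The plan is to run the defining composition \eqref{eq:sigma} for $\sigDg$ through the commutative diagram built from Lemmas~\ref{lem sigma D res V}, \ref{lem res sigma} together with the localisation isomorphism of Theorem~\ref{thm loc 2}, and read off that the outcome, after applying $(j^{TM}_{TM^g})^*_g$, is $[\sigma_D|_{TM^g}]_g$. Concretely, the diagram to write down is
\[
\xymatrix{
KK_G(M, TM)_g \ar[r]^-{(k^M_V)^*_g} \ar[d]_-{(j^{TM}_{TM^g})^*_g} & KK_G(V, TM)_g \ar[r]^-{((j^V_{\Ubar})_*)_g^{-1}} \ar[d]^-{(j^{TM}_{TM^g})^*_g} & KK_G(\Ubar, TM)_g \ar[r]^-{(p^{\Ubar}_*)_g} \ar[d]^-{(j^{TM}_{TM^g})^*_g} & KK_G(\pt, TM)_g \ar[d]^-{(j^{TM}_{TM^g})^*_g} \\
KK_G(M, TM^g)_g \ar[r]^-{(k^M_V)^*_g} & KK_G(V, TM^g)_g \ar[r]^-{((j^V_{\Ubar})_*)_g^{-1}} & KK_G(\Ubar, TM^g)_g \ar[r]^-{(p^{\Ubar}_*)_g} & KK_G(\pt, TM^g)_g.
}
\]
Each square commutes: the first and third because $(j^{TM}_{TM^g})^*$ (which is $(\theta^{TM}_{TM^g})^*$ applied in the second variable) commutes with functorially induced maps in the first variable by functoriality of the Kasparov product; the middle square commutes for the same reason, after one observes that $((j^V_{\Ubar})_*)_g$ is an isomorphism on both rows (Theorem~\ref{thm loc 2}, applied with $A = C_0(TM)$ and with $A = C_0(TM^g)$) so that passing the inverse through the square is legitimate. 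The bottom composition is, by definition, the localisation of $[\sigma_D|_{TM^g}]$-type construction, but applied to the class one feeds in from the top-left.

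So it remains to identify what goes in at the top-left of the bottom row. Starting from $[\sigma_D]_g \in KK_G(M, TM)_g$, Lemma~\ref{lem sigma D res V} identifies $(k^M_V)^*_g[\sigma_D]_g$ with $[\sigma_D|_V]_{TM,g}$. Applying $(j^{TM}_{TM^g})^*_g$ and then Lemma~\ref{lem res sigma} shows this maps to $(j^V_{\Ubar})_{*,g}\bigl({}_{\Ubar}[\sigma_D|_{TM^g}]\bigr)_g$, hence that $((j^V_{\Ubar})_*)_g^{-1}$ applied to it is ${}_{\Ubar}[\sigma_D|_{TM^g}]_g$. Finally $(p^{\Ubar}_*)_g$ applied to ${}_{\Ubar}[\sigma_D|_{TM^g}]$ is just $[\sigma_D|_{TM^g}]$: indeed $p^{\Ubar}_*$ composes the representation $(j^{\Ubar}_{M^g})_*\pi_{M^g}$ with the fact that $\pi_{TM^g}\circ\tau_{M^g}^*\circ (p^{\Ubar})^*$ is the scalar representation of $\C$, so $p^{\Ubar}_*\bigl({}_{\Ubar}[\sigma_D|_{TM^g}]\bigr) = [\Gamma_0(\tau_{M^g}^*(E|_{M^g})), \tilde\sigma_D|_{TM^g}]$, which is exactly the class $[\sigma_D|_{TM^g}]$ in $KK_G(\pt, TM^g)$ as in \eqref{eq sigma D} (restricted to $M^g$). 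Chasing $[\sigma_D]_g$ around the outside of the diagram therefore gives $(j^{TM}_{TM^g})^*_g\sigDg$ on one route and $[\sigma_D|_{TM^g}]_g$ on the other, proving the claim.

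The main obstacle, and the point requiring care rather than routine bookkeeping, is the middle square: one must make sure that $(j^{TM}_{TM^g})^*$ genuinely commutes with $(j^V_{\Ubar})_*$ and its localised inverse, i.e.\ that Theorem~\ref{thm loc 2} is applicable both with $A = C_0(TM)$ and with $A = C_0(TM^g)$ (both are $\sigma$-unital, so this is fine), and then that naturality of the localisation isomorphism lets one transport $((j^V_{\Ubar})_*)_g^{-1}$ across. Everything else is an application of the already-established Lemmas~\ref{lem sigma D res V} and \ref{lem res sigma} and the behaviour of degenerate cycles. One should also note explicitly that $\tilde\sigma_D|_{TM^g}$ really is the normalised principal symbol of the restricted operator, so that $[\sigma_D|_{TM^g}]$ here agrees with the class \eqref{eq sigma D g} appearing in the statement of Theorem~\ref{thm fixed pt} — this is immediate since restriction of an elliptic operator to a $G$-invariant submanifold restricts the principal symbol fibrewise.
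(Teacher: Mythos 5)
Your proposal is correct and follows essentially the same route as the paper: commutativity of the relevant part of diagram \eqref{eq diag loc 2} (bifunctoriality of $KK$, with Theorem~\ref{thm loc 2} supplying the localised inverses for the $\sigma$-unital algebras $C_0(TM)$ and $C_0(TM^g)$), followed by Lemmas~\ref{lem sigma D res V} and~\ref{lem res sigma}, and finally the identification $p^{\Ubar}_*\bigl({}_{\Ubar}[\sigma_D|_{TM^g}]\bigr) = [\sigma_D|_{TM^g}]$, which the paper obtains by citing Lemma~\ref{lem norm sigma D} and which you verify directly by the same scalar-representation argument.
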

\begin{proof}
By commutativity of (the top left part of) diagram  \eqref{eq diag loc 2}, we have
\[
(j^{TM}_{TM^g})^*_g {\sigDg} = (p^{\Ubar}_*)_g \circ ((j^V_{\Ubar})_*)_g^{-1}\circ (j^{TM}_{TM^g})^*_g 
\circ (k^M_V)^*_g [\sigma_D]_g.
\]
By Lemma \ref{lem sigma D res V}, we have
\[
(k^M_V)^*_g [\sigma_D]_g  = ([\sigma_D|_V]_{TM})_g. 
\]
By Lemma \ref{lem res sigma} we have
\[
((j^V_{\Ubar})_*)_g^{-1}\circ (j^{TM}_{TM^g})^*_g  ([\sigma_D|_V]_{TM})_g = _{\Ubar}[\sigma_D|_{TM^g}]_g. 
\]
By Lemma \ref{lem norm sigma D}, we have
\[
p^{\Ubar}_* \bigl({}_{\Ubar}[\sigma_D|_{TM^g}] \bigr)= [\sigma_D|_{TM^g}] \quad \in KK_G(\pt, TM^g).
\]
So the claim follows.
\end{proof}

We have now finished all preparation needed to prove Theorem \ref{thm fixed pt}.

\medskip
\noindent
\emph{Proof of Theorem \ref{thm fixed pt}.}
Using  Kasparov's index theorem, Theorem \ref{thm index}, and commutativity of \eqref{eq diag loc 3}, we find that
\[
\begin{split}
\ind_g(D) &=  (\ev_g)_g \circ (p^{\Ubar}_*)_g \circ ((j^V_{\Ubar})_*)_g^{-1} \circ (k^M_V)^*_g[D] \\
	&= (\ev_g)_g \circ (p^{\Ubar}_*)_g \circ ((j^V_{\Ubar})_*)_g^{-1} \circ (k^M_V)^*_g \circ  (\ind_t)_g[\sigma_D]_g \\
	&=  \ind_t^{\AS} \bigl( \bigl((j^{TM}_{TM^g})^*\sigma^D_g\bigr)(g) \otimes_{TM^g} \bigl(    (j^{TN}_{TM^g})^*\beta_N\bigr)^{-1}(g)\bigr).
\end{split}
\]
By Lemma \ref{lem res Bott} and Proposition~\ref{prop:symbol.M^g}, the latter expression equals
\[
\ind_t^{\AS} \bigl( [\sigma_{D}|_{TM^g}](g) \otimes_{TM^g} \bigl[\Bigwedge N_{\C}\bigr]^{-1}(g) \bigr).
\]
Furthermore,
\[
[\sigma_D|_{TM^g}](g) \otimes_{TM^g}  \bigl[\tau_{M^g}^*\Bigwedge N_{\C}\bigr]^{-1}(g)=[\sigma_D|_{TM^g}](g)\cdot\bigl[\Bigwedge N_{\C}\bigr]^{-1}(g),
\]
where the dot means the right $K^0_G(M^g)$-module structure of $K^0_G(TM^g)$. We conclude that
\[
\ind_g(D) = \ind_t^{\AS} \bigl( [\sigma_{D}|_{TM^g}](g) \cdot \bigl[\Bigwedge N_{\C}\bigr]^{-1}(g) \bigr).
\]
Theorem \ref{thm fixed pt} now follows from the definition of the topological index map~(\ref{eq:AS.top.index}), and multiplicativity of the Chern character.
\hfill $\square$

\subsection{The index pairing} \label{sec index pair pf}

The arguments used to prove Theorem \ref{thm fixed pt} also imply Theorem \ref{thm index pair} about the index pairing. In fact, the parts of the proof of Theorem \ref{thm fixed pt} about localisation in the first entry of $KK$-theory are not needed in the proof Theorem \ref{thm index pair}.

The key step is a localisation property of the $K$-homology class of $D$, localised at $g$.
\begin{proposition}\label{prop loc Khom D}
We have
\[
[D]_g = (j^{TM}_{TM^g})^*_g[\sigma_D]_g \otimes _{TM^g} [\tau_{M^g}^*\Bigwedge N_{\C}]_g^{-1} \otimes_{TM^g}[D_{TM^g}]_g \quad \in KK_G(M, \pt)_g.
\]
\end{proposition}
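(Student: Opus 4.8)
The plan is to trace the $K$-homology class $[D]$ through the same chain of localised maps used to define the $g$-index, and recognise the result via the commutative diagrams already assembled in Section~\ref{sec pf fixed pt}. Concretely, I would start from Kasparov's index theorem (Theorem~\ref{thm index}), which gives $[D] = \ind_t[\sigma_D] = [\sigma_D]\otimes_{TM}[D_{TM}]$ in $KK_G(M, \pt)$, and then localise at $g$, so that $[D]_g = (\ind_t)_g[\sigma_D]_g$. The goal is then to compute $(\ind_t)_g[\sigma_D]_g$ in terms of data on $TM^g$.

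Next I would invoke the commutativity of diagram~\eqref{eq diag loc 1} (equivalently the relevant portion of~\eqref{eq diag loc 2}), together with Theorems~\ref{thm loc 1} and~\ref{thm loc 2}, which make the maps $\bigl((j^V_{\Ubar})_*\bigr)_g$ invertible, and Lemma~\ref{lem beta invertible}, which makes $\bigl((j^{TN}_{TM^g})^*\beta_N\bigr)_g$ invertible. Chasing $[\sigma_D]_g$ down the left column of~\eqref{eq diag loc 1}: applying $((k^M_V)^*)_g$ and then $\bigl((j^V_{\Ubar})_*\bigr)_g^{-1}$, and using Lemmas~\ref{lem sigma D res V} and~\ref{lem res sigma}, identifies the image in $KK_G(\Ubar, TM^g)_g$ with $\bigl({}_{\Ubar}[\sigma_D|_{TM^g}]\bigr)_g$; then the Gysin/localisation square (commuting by Lemmas~\ref{lem pull push} and~\ref{lem gysin index}) shows that pushing this forward by $\bigl(\relbar\otimes_{TM^g}(j^{TN}_{TM^g})^*\beta_N\bigr)_g^{-1}$, then by $\bigl(\relbar\otimes_{TM^g}[D_{TM^g}]\bigr)_g$, and finally by $(p^{\Ubar}_*)_g$ and $\bigl((j^V_{\Ubar})_*\bigr)_g^{-1}\circ((k^M_V)^*)_g$ on the $\pt$-side, reproduces exactly $[D]_g$. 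Here I would use $(k^{TM}_{TN})^*[D_{TM}]=[D_{TN}]$ and Proposition~\ref{lem:pairing.B.D} ($\beta_N\otimes_{TN}[D_{TN}]=[D_{TS}]$, with $S$ running over the components of $M^g$), precisely as in Lemma~\ref{lem gysin index}.

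Finally I would repackage the outcome. By Proposition~\ref{prop:symbol.M^g}, $(j^{TM}_{TM^g})^*_g\sigma^D_g = [\sigma_D|_{TM^g}]_g$; but it is cleaner to state the result directly in terms of $(j^{TM}_{TM^g})^*_g[\sigma_D]_g$ as in the proposition, since $(j^{TM}_{TM^g})^*_g[\sigma_D]_g$ and $(j^{TM}_{TM^g})^*_g\sigma^D_g$ agree after the identifications in~\eqref{eq diag loc 1} (both equal the localisation of the restricted symbol class, via Lemmas~\ref{lem sigma D res V} and~\ref{lem res sigma}). Combining, $[D]_g$ equals $(j^{TM}_{TM^g})^*_g[\sigma_D]_g$ multiplied (via the Kasparov product over $C_0(TM^g)$) by $\bigl((j^{TN}_{TM^g})^*\beta_N\bigr)_g^{-1}$ and then $[D_{TM^g}]_g$; applying Lemma~\ref{lem res Bott} to rewrite $(j^{TN}_{TM^g})^*\beta_N = [\tau_{M^g}^*\Bigwedge N_{\C}]$ gives the stated formula. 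The main obstacle I anticipate is purely bookkeeping: one has to be careful that the various classes named in the intermediate groups $KK_G(\Ubar, TM^g)_g$, $KK_G(V, TM^g)_g$ etc.\ in~\eqref{eq diag loc 1} genuinely match those used in the definitions of $\sigma^D_g$ and in Lemmas~\ref{lem sigma D res V}--\ref{lem res sigma}, and that the identification $KK_G(\pt,\pt)_g\cong R(G)_g$ is applied consistently; no genuinely new technical input beyond what is already proved should be needed, which is why this proposition is stated as a corollary of the machinery built for Theorem~\ref{thm fixed pt}.
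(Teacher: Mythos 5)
There is a genuine gap: your chase takes place in the wrong groups. Proposition~\ref{prop loc Khom D} is an identity in $KK_G(M,\pt)_g$, with the first entry $C_0(M)$ left untouched, whereas the route you describe pushes $[\sigma_D]_g$ and $[D]_g$ through the first-entry maps $(k^M_V)^*_g$, $\bigl((j^V_{\Ubar})_*\bigr)_g^{-1}$ and $(p^{\Ubar}_*)_g$ of diagrams \eqref{eq diag loc 1}--\eqref{eq diag loc 2}. What that chase establishes is an equality of the \emph{images} of the two sides in $KK_G(\pt,\pt)_g$ (essentially re-deriving the computation behind Theorem~\ref{thm fixed pt}); it cannot ``reproduce exactly $[D]_g$'', since after applying those maps you are no longer in $KK_G(M,\pt)_g$. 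To upgrade such an equality of images to the stated identity you would need the composition $(p^{\Ubar}_*)_g \circ ((j^V_{\Ubar})_*)_g^{-1}\circ (k^M_V)^*_g$ to be injective on $KK_G(M,\pt)_g$, and no such statement is available: Theorem~\ref{thm loc 2} only treats relatively compact $U\subset V$, and the remark after Proposition~\ref{prop loc zero} explains precisely why first-entry localisation does not extend over all of a noncompact $M$. The same conflation appears at the end, where you identify $(j^{TM}_{TM^g})^*_g[\sigma_D]_g$ with $(j^{TM}_{TM^g})^*_g\sigma^D_g$: these lie in different groups, $KK_G(M,TM^g)_g$ versus $KK_G(\pt,TM^g)_g$, and agree only after applying the first-entry maps.

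The correct argument keeps $C_0(M)$ fixed in the first entry throughout, and in fact needs none of the first-entry machinery ($U$, $V$, $\sigma^D_g$, Lemmas~\ref{lem sigma D res V}--\ref{lem res sigma}, Proposition~\ref{prop:symbol.M^g}); this is exactly what the paper means when it says that localisation in the first entry is not needed here. Apply Lemmas~\ref{lem pull push} and~\ref{lem gysin index} with $A = C_0(M)$ to obtain a commuting diagram entirely inside $KK_G(M,\cdot)$, relating $\relbar\otimes_{TM}[D_{TM}]$, $(j^{TM}_{TM^g})^*$, $(j^{TM}_{TM^g})_!$, $\relbar\otimes_{TM^g}(j^{TN}_{TM^g})^*\beta_N$ and $\relbar\otimes_{TM^g}[D_{TM^g}]$. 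Then localise at $g$: Theorem~\ref{thm loc 1}, applied with the manifold $TM$ in the second entry and $A=C_0(M)$ (this is the very reason the theorem is stated for general separable $A$), makes $(j^{TM}_{TM^g})^*_g$ invertible, and Lemma~\ref{lem beta invertible} makes $\bigl((j^{TN}_{TM^g})^*\beta_N\bigr)_g$ invertible, so $\bigl((j^{TM}_{TM^g})_!\bigr)_g$ is invertible with inverse $\bigl(\relbar\otimes_{TM^g}((j^{TN}_{TM^g})^*\beta_N)_g^{-1}\bigr)\circ (j^{TM}_{TM^g})^*_g$. Combining this with Kasparov's theorem $[D]_g=[\sigma_D]_g\otimes_{TM}[D_{TM}]_g$ (Theorem~\ref{thm index}) and Lemma~\ref{lem res Bott} to rewrite $(j^{TN}_{TM^g})^*\beta_N=\bigl[\tau_{M^g}^*\Bigwedge N_{\C}\bigr]$ yields the identity directly in $KK_G(M,\pt)_g$. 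Your proposal does correctly list most of these ingredients, but embedding them in the localised-first-entry diagrams defeats the purpose and leaves the claimed identity unproved.
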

\begin{proof}
Lemmas \ref{lem pull push} and \ref{lem gysin index} imply that the following diagram commutes:
\[
\xymatrix{
KK_G(M, TM) \ar[rr]^-{\relbar \otimes_{TM} [D_{TM}]} \ar[d]_-{(j^{TM}_{TM^g})^*} & & KK_G(M, \pt) \\
KK_G(M, TM^g) & & KK_G(M, TM^g). \ar[ull]_-{(j^{TM}_{TM^g})_!} \ar[u]_-{\relbar \otimes_{TM^g} [D_{TM^g}]}
 \ar[ll]^-{\relbar \otimes_{TM^g} (j^{TN}_{TM^g})^*\beta_N} 
}
\]
Therefore, the claim follows from Lemmas \ref{lem res Bott} and \ref{lem beta invertible}, and Theorem \ref{thm index}.
\end{proof}

\noindent\emph{Proof of Theorem \ref{thm index pair}.}
Let  $[F] \in KK_G(\pt, M)$ be as in Subsection \ref{sec index pair}. By compatibility of the Kasparov product with localisation and evaluation, Proposition \ref{prop loc Khom D} implies that
\[
\begin{split}
([F]\otimes_M[D])(g) &= ([F]_g\otimes_M[D]_g)(g) \\
	&= ([F]_g\otimes_M (j^{TM}_{TM^g})_g^*[\sigma_D]_g)(g) \otimes _{TM^g} [\tau_{M^g}^*\Bigwedge N_{\C}](g)^{-1} \otimes_{TM^g}[D_{TM^g}](g).
\end{split}
\]
Now
\[
 ([F]_g\otimes_M (j^{TM}_{TM^g})_g^*[\sigma_D]_g)(g) =  \bigl[\tau_{M^g}^*(F|_{M^g}) \bigr](g)\otimes  [\sigma_D|_{TM^g}](g) \quad \in KK(\pt, TM^g) \otimes \C,
\]
where on the right hand side, the tensor product denotes the ring structure on the topological $K$-theory of $TM^g$. Therefore, and because
 $[D_{TM^g}](g) = [D_{TM^g}] \otimes 1 \in KK(TM^g, \pt) \otimes \C$,
the claim follows from Lemma \ref{lem ind top cpt}. 
\hfill $\square$


\section{Examples and applications}\label{sec examples}

The $g$-index was defined in terms of $KK$-theory, but Theorem \ref{thm fixed pt} allows us to express it entirely in cohomological terms. Using this theorem, we can compute the $g$-index explicitly in examples, and show how it is related to other indices.

For finite fixed point sets, Theorem \ref{thm fixed pt} has a simpler form, as discussed in Subsection \ref{sec finite}.
In Subsection \ref{sec hol lin}, we give a linearisation theorem for the $g$-index of a twisted Dolbeault--Dirac operator on a complex manifold, in the case of a finite fixed point set.
 We then work out the example of the Dolbeault--Dirac operator on the complex plane, acted on by the circle, in Subsection \ref{sec:S^1.on.R^2}.
 An illustration of the linearisation theorem is given in Subsection \ref{sec T S2}, where we apply it to the two-sphere, to decompose the usual equivariant index.
  In Subsection \ref{sec ds}, we realise characters of discrete series representations of semisimple Lie groups on regular points of a maximal torus, in terms of the $g$-index. 
 For Fredholm operators, and in particular Callias-type deformations of Dirac operators, we describe the relation between the $g$-index and the character of the action by $g$ on the kernel of such an operator, in Subsection \ref{sec Fredholm}. 
  We then give a relation with an index studied by Braverman in Subsection \ref{sec Braverman}, and a relative index theorem along the lines of work by Gromov and Lawson in Subsection \ref{sec rel index}. In Subsection \ref{sec Hodge Spin}, we mention some geometric consequences of the vanishing or nonvanishing of the $g$-index of a Hodge-Dirac or $\Spin$-Dirac operator.

\subsection{Finite fixed point sets} \label{sec finite}

If the fixed point set $M^g$ is $0$-dimensional, then $TM^g=M^g$, $\tau_{M^g}$ is the identity map, $\Todd(TM^g\otimes\C)$ is trivial and 
\[
\ch \bigl( [\sigma_D|_{TM^g}](g) \bigr)=\Tr(g|_{E^+})-\Tr(g|_{E^-}).
\] 
Furthermore, since $M^g$ only consists of  isolated points, we have
\[
K^0(M^g)=\bigoplus_{m\in M^g}\Z = H^*(M^g),
\]
and the Chern character is the identity map. So we now have, at a fixed point $m\in M^g$,
\[
\ch\bigl(\bigl[\Bigwedge N_{\C}\bigr](g) \bigr)_m
 =\ch\bigl(\bigl[\Bigwedge TM_{\C}|_{M^g}\bigr](g) \bigr)_m
= \det_{\R}(1-g|_{T_mM}).
\]
The last equality is obtained by evaluating the virtual character of $\Bigwedge T_mM_{\C}$ at $g$, so one obtains
\[
\Tr_{\C}(g|_{\bigwedge^{\even}T_mM_{\C}})- \Tr_{\C}(g|_{\bigwedge^{\odd}T_mM_{\C}}).
\]
Therefore, Theorem \ref{thm fixed pt} implies the following generalisation of Atiyah--Bott's fixed point theorem \cite[Theorem A]{AB2} to noncompact manifolds, but for compact~$G$. 
\begin{corollary}
When $M^g$ is a finite set of points, 
\begin{equation} \label{eq finite fixed pt}
\ind_g(D)=\sum_{m\in M^g}\frac{\Tr(g|_{E_m^+}) - \Tr(g|_{E_m^-})}{\det_{\R}(1-g^{-1}|_{T_mM})}.
\end{equation}
\end{corollary}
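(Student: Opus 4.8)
The plan is to obtain \eqref{eq finite fixed pt} as the specialisation of the cohomological formula \eqref{eq:main.thm.coho.formula} to the case $\dim M^g = 0$, unwinding each factor of the integrand under that hypothesis. Write $M^g = \{m_1, \dots, m_k\}$. Then $TM^g = M^g$ (all tangent spaces vanish), the bundle projection $\tau_{M^g}$ is the identity, and $\int_{TM^g}$ collapses to the sum over $i$ of the value at $m_i$ of the integrand. Over a zero-dimensional space one has $H^*(M^g) = K^0(M^g) = \bigoplus_i \Z$ with the Chern character equal to the identity map, and $\Todd(TM^g\otimes\C) = 1$; so no characteristic-class corrections survive, and it remains to identify the two $K$-theory classes \eqref{eq sigma D g} and \eqref{eq wedge N} at each $m_i$.

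For the numerator: restricting the symbol class $[\sigma_D]$ over $M^g$ amounts to restricting $\sigma_D$ to the zero covector at each $m_i$, where $\sigma_D(m_i,0)=0$, so the resulting class in $K^0_G(\pt) = R(G)$ is simply $[E^+_{m_i}] - [E^-_{m_i}]$; evaluating at $g$ and applying $\ch = \mathrm{id}$ gives $\Tr(g|_{E^+_{m_i}}) - \Tr(g|_{E^-_{m_i}})$. For the denominator: since $m_i$ is an isolated fixed point, $T_{m_i}M^g = 0$, so the normal space is the whole tangent space, $N_{m_i} = T_{m_i}M$, and $\bigl[\Bigwedge N_{\C}\bigr](g)$ at $m_i$ equals $\sum_j (-1)^j \Tr_{\C}\bigl(g\mid {\Bigwedge}^j T_{m_i}M_{\C}\bigr)$. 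I would then invoke the elementary identity $\sum_j (-1)^j \Tr_{\C}(g\mid {\Bigwedge}^j V_{\C}) = \det_{\C}(1 - g|_{V_{\C}}) = \det_{\R}(1 - g|_V)$, valid for any finite-dimensional real representation $V$, applied with $V = T_{m_i}M$. Assembling the pieces yields $\ind_g(D) = \sum_i \bigl(\Tr(g|_{E^+_{m_i}}) - \Tr(g|_{E^-_{m_i}})\bigr)/\det_{\R}(1-g|_{T_{m_i}M})$.

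The only remaining point is to rewrite this in the form displayed in \eqref{eq finite fixed pt}, with $g^{-1}$ in the denominator. Because $m_i$ is isolated, $1$ is not an eigenvalue of $g$ on $T_{m_i}M$; decomposing this orthogonal representation over $\R$ into two-dimensional rotation blocks and $(-1)$-eigenlines, one checks that $\det_{\R}(g|_{T_{m_i}M}) = (-1)^{\dim T_{m_i}M}$, whence $\det_{\R}(1-g^{-1}|_{T_{m_i}M}) = \det_{\R}(g^{-1})\,(-1)^{\dim T_{m_i}M}\,\det_{\R}(1-g|_{T_{m_i}M}) = \det_{\R}(1-g|_{T_{m_i}M})$. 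This gives exactly \eqref{eq finite fixed pt}, and shows that when $M$ is compact it reduces to the Atiyah--Bott fixed point formula \cite[Theorem A]{AB2}.

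There is no serious obstacle here: the argument is bookkeeping once Theorem \ref{thm fixed pt} is available, and the only steps that genuinely need a word of justification are the vanishing of the characteristic-class factors in the zero-dimensional case and the $g \leftrightarrow g^{-1}$ symmetry of the real determinant at an isolated fixed point.
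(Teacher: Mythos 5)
Your proposal is correct and follows essentially the same route as the paper: specialise the cohomological formula of Theorem \ref{thm fixed pt} to a zero-dimensional fixed point set, identify $[\sigma_D|_{TM^g}](g)$ with $\Tr(g|_{E^+_m})-\Tr(g|_{E^-_m})$ at the zero covector, and evaluate $\bigl[\Bigwedge N_\C\bigr](g)$ as the alternating character sum $\det_\R(1-g|_{T_mM})$. Your final determinant manipulation showing $\det_\R(1-g^{-1}|_{T_mM})=\det_\R(1-g|_{T_mM})$ is a correct (slightly more laborious) version of the paper's observation that this follows immediately from the orthogonality of $g$ on $T_mM$.
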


\begin{remark}
In the statement of Atiyah--Bott fixed point theorem, the denominator is $|\det_{\R}(1-g|_{T_{m}M})|$. In our case, $g$ is contained in a compact group $G$, so the real eigenvalues of $g$ are $1$ or $-1$. Thus $\det_{\R}(1-g^{-1}|_{T_{m}M})$ is always positive. See also page 186 in~\cite{BGV}. Also, the fact that $g$ acts orthogonally on $T_{m}M$ implies that $\det_{\R}(1-g^{-1}|_{T_{m}M}) = \det_{\R}(1-g|_{T_{m}M})$
\end{remark}

Now suppose $M$ is a complex manifold, and suppose $g$ is holomorphic. Let $F\to M$ be a holomorphic vector bundle, and consider the Dolbeault--Dirac operator $\bar \partial_F + \bar\partial_F^*$ on $M$, coupled to $F$.
\begin{corollary} \label{cor fin fixed pt hol}
If $M^g$ is a finite set of points, then
\begin{equation} \label{eq fin fixed pt hol}
\ind_g(\bar \partial_F + \bar\partial_F^*)=\sum_{m\in M^g}\frac{\Tr_{\C}(g|_{F_m})}{\det_{\C}(1-g^{-1}|_{T_{m}M})}.
\end{equation}
\end{corollary}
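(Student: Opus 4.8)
The plan is to derive Corollary~\ref{cor fin fixed pt hol} as the special case of the preceding corollary \eqref{eq finite fixed pt} where $D$ is the Dolbeault--Dirac operator $\bar\partial_F + \bar\partial_F^*$ coupled to a holomorphic vector bundle $F$, exactly as in the classical holomorphic Lefschetz fixed point formula. The entire content is a purely local computation at each isolated fixed point $m \in M^g$, so no new global input beyond \eqref{eq finite fixed pt} is needed; everything reduces to identifying the numerator and denominator of the summand in complex (rather than real) terms.

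First I would recall that for the Dolbeault--Dirac operator coupled to $F$, the $\Z_2$-graded bundle is $E = E^+ \oplus E^-$ with $E^{\pm} = \bigwedge^{\even/\odd} T^{0,1*}M \otimes F \cong \bigwedge^{\even/\odd}\overline{T^{1,0}M}^{\,*}\otimes F$. Hence at a fixed point $m$, since $g$ acts holomorphically and therefore commutes with the complex structure on $T_mM$,
\[
\Tr(g|_{E_m^+}) - \Tr(g|_{E_m^-}) = \Tr_{\C}\bigl(g|_{\textstyle\bigwedge^{\even}\overline{T_m^{1,0}M}^{\,*}}\bigr)\Tr_{\C}(g|_{F_m}) - \Tr_{\C}\bigl(g|_{\textstyle\bigwedge^{\odd}\overline{T_m^{1,0}M}^{\,*}}\bigr)\Tr_{\C}(g|_{F_m}),
\]
which factors as $\Tr_{\C}(g|_{F_m}) \cdot \det_{\C}\bigl(1 - g|_{\overline{T_m^{1,0}M}^{\,*}}\bigr)$, using the standard identity $\sum_j (-1)^j \Tr_{\C}(g|_{\bigwedge^j W}) = \det_{\C}(1 - g|_W)$ for any finite-dimensional $g$-representation $W$. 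Writing the eigenvalues of $g$ on $T_m^{1,0}M$ as $\lambda_1, \dots, \lambda_d$ (where $d = \dim_\C M$), the dual conjugate space $\overline{T_m^{1,0}M}^{\,*}$ has eigenvalues $\lambda_k$ again (conjugation and dualisation each invert, and since $g$ lies in a compact group $|\lambda_k| = 1$, so $\overline{\lambda_k}^{-1} = \lambda_k$), so this determinant equals $\prod_k (1 - \lambda_k) = \det_{\C}(1 - g|_{T_m^{1,0}M})$.

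Next I would handle the denominator: $T_mM$ as a real vector space is $T_m^{1,0}M$ with eigenvalues $\lambda_1,\overline{\lambda_1},\dots,\lambda_d,\overline{\lambda_d}$ viewed complex-linearly, so
\[
\det_{\R}(1 - g^{-1}|_{T_mM}) = \prod_{k=1}^d (1 - \overline{\lambda_k})(1 - \lambda_k) = \overline{\det_{\C}(1 - g|_{T_m^{1,0}M})}\cdot \det_{\C}(1 - g|_{T_m^{1,0}M}),
\]
again using $\overline{\lambda_k} = \lambda_k^{-1}$ to rewrite $1 - \lambda_k^{-1} = -\lambda_k^{-1}(1-\lambda_k)$, whence after collecting the unit-modulus factors $\prod_k(-\lambda_k^{-1})$ (whose product with its conjugate is $1$) one gets $|\det_{\C}(1-g|_{T_m^{1,0}M})|^2$. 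Substituting the numerator and denominator into \eqref{eq finite fixed pt} and cancelling one factor of $\det_{\C}(1-g|_{T_m^{1,0}M})$ against the conjugate in the denominator — equivalently, cancelling directly against $\det_{\C}(1 - g^{-1}|_{T_mM})$ after the rewriting $1 - \lambda_k^{-1}$ — yields
\[
\ind_g(\bar\partial_F + \bar\partial_F^*) = \sum_{m\in M^g} \frac{\Tr_{\C}(g|_{F_m})}{\det_{\C}(1 - g^{-1}|_{T_mM})},
\]
which is \eqref{eq fin fixed pt hol}. The only mild subtlety — and the one place to be careful — is the orientation and complex-structure bookkeeping: one must make sure that the almost complex structure on $TM$ used to define $[D_{TM}]$ in Section~\ref{sec Kasparov} is consistent with the convention under which $\bar\partial + \bar\partial^*$ is the $\Spinc$-Dirac operator, so that the sign conventions in \eqref{eq finite fixed pt} (which already absorbed the $(-1)^{\dim M}$ discussed after \eqref{eq:AS.top.index}) propagate correctly; with that checked, the identity $\det_{\C}(1 - g^{-1}|_{T_mM}) = \det_{\C}(1-g|_{T_m^{1,0}M})/\lambda_1\cdots\lambda_d$ type manipulations are routine and the result follows.
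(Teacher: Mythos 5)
Your proposal is correct and follows essentially the same route as the paper: specialise the finite-fixed-point formula \eqref{eq finite fixed pt} to the Dolbeault case and cancel the supertrace of $g$ on $\Bigwedge^* T^{0,1}_mM$ against one complex factor of $\det_{\R}(1-g^{-1}|_{T_mM})=\det_{\C}(1-g^{-1}|_{T^{1,0}_mM})\det_{\C}(1-g^{-1}|_{T^{0,1}_mM})$. The only difference is that the paper delegates this local eigenvalue computation to \cite[Theorem~4.12]{AB2} (and \cite{BGV}), whereas you carry it out explicitly, which is fine.
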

For equivalent expressions, note that 
\[
{\det}_{\C}(1-g^{-1}|_{T^{1, 0}_{m}M}) = {\det}_{\C}(1-g^{-1}|_{T_{m}M}) = {\det}_{\C}(1-g|_{T^{0,1}_{m}M}) 
\]
in \eqref{eq fin fixed pt hol}.
\begin{proof}
In Theorem~4.12 of~\cite{AB2}, it is shown that in this situation, the right hand side of \eqref{eq finite fixed pt} equals the right hand side of \eqref{eq fin fixed pt hol}. 
The key observation is that the supertrace of $g|_{\bigwedge^*(T^{0, 1}M)}$ is cancelled by the second factor in 
\[
{\det}_{\R}(1-g^{-1}|_{T_{m}M})={\det}_{\C}(1-g^{-1}|_{T_{m}^{1, 0}M}){\det}_{\C}(1-g^{-1}|_{T_{m}^{0, 1}M}).
\]
(See also {\cite[Corollary~6.8]{BGV}}.)
\end{proof}

\subsection{A holomorphic linearisation theorem} \label{sec hol lin}

A tool used in some index problems is a \emph{linearisation theorem}, relating an index to indices on vector spaces. See for example Chapter~4 of \cite{GGK} and Theorem~7.2 in \cite{Braverman}. A version for Callias-type operators can be decuced from Theorem 2.16 in \cite{BrShi}. In those references, cobordism arguments are used to prove linearisation theorems. We will use the excision property of the $g$-index to obtain an analogous result. (So we do not use Theorem~\ref{thm fixed pt} here.) We will state and prove this result in the setting of Corollary~\ref{cor fin fixed pt hol}, where $M$ is a complex manifold, $D$ is the Dolbeault--Dirac operator coupled to a holomorphic vector bundle $F\to M$, and $M^g$ is finite. A more general statement, where $M^g$ is not finite or $D$ is not a Dolbeault--Dirac operator, is possible, but would be less explicit.

Under these assumptions, for any $m\in M^g$, let $\bar\partial^{T_mM}$ be the Dolbeault operator on the complex vector space $T_mM$.
\begin{corollary}[Holomorphic linearisation theorem]\label{cor lin}
We have
\[
\ind_g(\bar \partial_F + \bar\partial_F^*)=\sum_{m\in M^g} {\Tr_{\C}(g|_{F_m})} \ind_g \bigl( \bar\partial^{T_mM} + (\bar\partial^{T_mM})^*\bigr).
\]
\end{corollary}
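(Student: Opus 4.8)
The plan is to combine Corollary~\ref{cor fin fixed pt hol} with the excision property of the $g$-index, Lemma~\ref{lem excision}, applied separately near each fixed point. First I would fix a point $m\in M^g$. Since $G$ is a compact Abelian group acting holomorphically on $M$ and fixing $m$, the action on a neighbourhood of $m$ can be linearised: by the holomorphic (equivariant) Bochner linearisation theorem, there is a $G$-invariant open neighbourhood $V_m$ of $m$ in $M$, a $G$-invariant open neighbourhood $W_m$ of $0$ in $T_mM$, and a $G$-equivariant biholomorphism $\phi_m\colon V_m \xrightarrow{\sim} W_m$ with $\phi_m(m)=0$ and $d\phi_m|_m = \mathrm{id}$. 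Shrinking the $V_m$, we may assume they are relatively compact, mutually disjoint, and that $\bigcup_m V_m$ contains no fixed points of $g$ other than the points of $M^g$ (possible since $M^g$ is finite, hence the $V_m\setminus\{m\}$ can be taken fixed-point-free). Because $F$ is a holomorphic $G$-vector bundle and $m$ is fixed, after further shrinking $V_m$ we may $G$-equivariantly and holomorphically trivialise $F|_{V_m}\cong V_m\times F_m$, where $F_m$ carries the isotropy representation of $G$.

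Next I would set up the comparison. On the one hand, $\ind_g(\bar\partial_F+\bar\partial_F^*)$ is computed on $M$ itself, and its value depends only on the restriction of the operator to $V:=\bigcup_m V_m$, by the excision discussion preceding Definition~\ref{g-index} (indeed $(k^M_V)^*[D]$ determines $\ind_g$, and $V$ is a $G$-invariant relatively compact neighbourhood of $M^g$ with no other fixed points). On the other hand, for each $m$ consider the Dolbeault--Dirac operator $\bar\partial^{T_mM}+(\bar\partial^{T_mM})^*$ on $T_mM$, coupled to the trivial bundle with fibre the representation $F_m$; since $\mathrm{rk}\,F_m = \dim_{\C}F_m$ copies of the scalar Dolbeault operator give, by multiplicativity of the $g$-index under direct sums and by the way the isotropy representation enters the $K$-homology class, the contribution $\Tr_{\C}(g|_{F_m})\,\ind_g(\bar\partial^{T_mM}+(\bar\partial^{T_mM})^*)$. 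Via $\phi_m$, the operator $\bar\partial_F+\bar\partial_F^*$ restricted to $V_m$ is $G$-equivariantly intertwined with the $F_m$-coupled Dolbeault--Dirac operator on $T_mM$ restricted to the relatively compact $G$-invariant neighbourhood $W_m$ of $0$. (Here one uses that the Dolbeault--Dirac operator is a natural object attached to the complex structure and the bundle, so a $G$-equivariant biholomorphism carrying $F|_{V_m}$ to the trivial bundle carries one operator to the other up to a $G$-equivariant isomorphism of the relevant Clifford module bundles, which does not change the $K$-homology class.) Applying Lemma~\ref{lem excision} with $\tilde M = T_mM$, $\tilde E$ the $\Spin^c$-spinor bundle twisted by $F_m$, $\tilde D = \bar\partial^{T_mM}+(\bar\partial^{T_mM})^*$ coupled to $F_m$, and with $V_m\hookrightarrow T_mM$ the embedding $\phi_m$ (noting $T_mM$ has $0$ as its only fixed point of $g$, which lies in $W_m$), gives
\[
\ind_g\bigl((\bar\partial_F+\bar\partial_F^*)|_{V_m}\bigr) = \Tr_{\C}(g|_{F_m})\,\ind_g\bigl(\bar\partial^{T_mM}+(\bar\partial^{T_mM})^*\bigr).
\]
Since $\ind_g$ of the operator on $M$ equals the sum over $m$ of the $g$-indices of the restrictions to the $V_m$ (the $K$-homology class on $V=\bigsqcup_m V_m$ is the direct sum of the classes on the $V_m$, and $\ind_g$ is additive), summing over $m\in M^g$ yields the claim.

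The main obstacle I expect is the equivariant holomorphic linearisation step and making precise that the Dolbeault--Dirac operator transports correctly under the linearising biholomorphism. One must check that $\phi_m$ can be taken $G$-equivariant and holomorphic simultaneously with $d\phi_m|_m=\mathrm{id}$ — this is the holomorphic Bochner linearisation theorem for actions of compact groups, applicable since $G$ is compact — and that the twisted $\Spin^c$-spinor bundles on $V_m$ and on $W_m\subset T_mM$ are $G$-equivariantly isomorphic in a way intertwining the two Dolbeault--Dirac operators up to lower-order terms that do not affect the $K$-homology class (the principal symbols agree, and bounded perturbations of Dirac-type operators give the same class in $K$-homology). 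Once these identifications are in place, everything else — excision, additivity and multiplicativity of $\ind_g$, and extracting the factor $\Tr_{\C}(g|_{F_m})$ from the isotropy representation — is routine, and the consistency of the final formula with Corollary~\ref{cor fin fixed pt hol} provides a useful sanity check, since applying Corollary~\ref{cor fin fixed pt hol} to each $\ind_g(\bar\partial^{T_mM}+(\bar\partial^{T_mM})^*)$ (whose only fixed point is $0$) recovers $1/\det_{\C}(1-g^{-1}|_{T_mM})$.
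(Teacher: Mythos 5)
Your proposal is correct and follows essentially the same route as the paper: excision (Lemma~\ref{lem excision}) to replace the operator by the Dolbeault--Dirac operator on the disjoint union of the tangent spaces $T_mM$ coupled trivially to $F_m$, additivity of $\ind_g$ over the components, and extraction of the factor $\Tr_{\C}(g|_{F_m})$ from the $R(G)$-module structure. The only difference is that you spell out the equivariant holomorphic linearisation and the transport of the operator under it, which the paper leaves implicit when invoking Lemma~\ref{lem excision}.
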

\begin{proof}
By Lemma \ref{lem excision}, the $g$-index of $\bar \partial_F + \bar\partial_F^*$ equals the $g$-index of the Dolbeault--Dirac operator on the union over $m\in M^g$ of the tangent spaces $T_mM$, coupled to the vector bundle which on every space $T_mM$ is trivial with fibre $F_m$. It follows directly from the definition that the $g$-index is additive with respect to disjoint unions. Hence
\iftoggle{long}
{
\[
\ind_g(\bar \partial_F + \bar\partial_F^*)=\sum_{m\in M^g} \ind_g \bigl( \bar\partial^{T_mM}\otimes 1_{F_m} + (\bar\partial^{T_mM})^*\otimes 1_{F_m}\bigr).
\]

Now, in terms of the $R(G)$-module structure of $KK_G(M, \pt)$, we have for all $m \in M^g$,
\[
\bigl[ \bar\partial^{T_mM}\otimes 1_{F_m} + (\bar\partial^{T_mM})^*\otimes 1_{F_m}] = \bigl[ \bar\partial^{T_mM}  + (\bar\partial^{T_mM})^* \bigr] \otimes [F_m]\quad \in KK_G(M, \pt).
\]
It follows that 
\begin{multline*}
\ind_g \bigl( \bar\partial^{T_mM}\otimes 1_{F_m} + (\bar\partial^{T_mM})^*\otimes 1_{F_m}\bigr) \\=  {\Tr_{\C}(g|_{F_m})} \ind_g \bigl( \bar\partial^{T_mM} + (\bar\partial^{T_mM})^*\bigr).
\end{multline*}
}
{
\[
\begin{split}
\ind_g(\bar \partial_F + \bar\partial_F^*)&=\sum_{m\in M^g} \ind_g \bigl( \bar\partial^{T_mM}\otimes 1_{F_m} + (\bar\partial^{T_mM})^*\otimes 1_{F_m}\bigr) \\
&=\sum_{m\in M^g} {\Tr_{\C}(g|_{F_m})} \ind_g \bigl( \bar\partial^{T_mM} + (\bar\partial^{T_mM})^*\bigr).
\end{split}
\]
}
\end{proof}

An example on computing and explicitly realising an index of the form
\[
 \ind_g \bigl( \bar\partial^{T_mM} + (\bar\partial^{T_mM})^*\bigr)
\]
as in Corollary \ref{cor lin}, is given in the next subsection. An example showing that the linearisation theorem gives a natural result if $M$ is compact is given in Subsection \ref{sec T S2}.

\subsection{The circle acting on the plane}
\label{sec:S^1.on.R^2}

Consider the usual action by the circle $\bT^1 = \U(1)$ on the complex plane $\C$, and the (untwisted) Dolbeault--Dirac operator $\bar \partial + \bar\partial^*$ on $\C$. We will compute the distribution $\Theta$ on $\bT^1$ given by the function
\begin{equation} \label{eq dist ind}
g\mapsto \ind_g(\bar \partial + \bar\partial^*). 
\end{equation}
This function is defined on the set of elements $g\in \bT^1$ with dense powers, i.e.\ the elements of the form $g = e^{\ii\alpha}$, where $\alpha \in \R\setminus 2\pi \Q$. So the function is defined almost everywhere.

\iftoggle{long}
{
\begin{lemma} \label{lem circle plane}
The sum of functions
\begin{equation} \label{eq sum char}
 \sum_{k=0}^{\infty} \bigl( g\mapsto  g^{-k} \bigr).
\end{equation}
converges as a distribution on $\bT^1$ to  $\Theta$.
\end{lemma}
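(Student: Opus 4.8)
The plan is to compute $\ind_g(\bar\partial + \bar\partial^*)$ on $\C$ via the fixed point formula (Corollary~\ref{cor fin fixed pt hol}) and recognize the result as the character of the Bergman-type representation $\bigoplus_{k\ge 0} \C_{-k}$, where $\C_m$ denotes the one-dimensional $\bT^1$-representation on which $g = e^{\ii\alpha}$ acts by $g^m$. First I would observe that for $g$ with dense powers, the only fixed point of the $\bT^1$-action on $\C$ is the origin $0$, so $M^g = \{0\}$ is finite, and Corollary~\ref{cor fin fixed pt hol} applies with $F$ the trivial line bundle: since $g$ acts on $T_0\C \cong \C$ by multiplication by $g$ (in the holomorphic coordinate), one gets
\[
\ind_g(\bar\partial + \bar\partial^*) = \frac{1}{1 - g^{-1}}.
\]
Alternatively, and equivalently, one can use the $K$-theoretic Example~\ref{ex:Kas.ind}: the symbol class $[\sigma_D]$ corresponds to the Bott class $\beta$, and the $g$-symbol class localizes at the origin, where the cohomological formula of Theorem~\ref{thm fixed pt} again yields $1/(1-g^{-1})$.

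Next I would identify $1/(1-g^{-1})$ as a distribution on $\bT^1$. For $g = e^{\ii\alpha}$ with $\alpha \in \R \setminus 2\pi\Q$ we have $|g^{-1}| = 1 \not= $ the radius of convergence only in a boundary sense; the correct statement is that the geometric series $\sum_{k=0}^\infty g^{-k}$ does not converge pointwise, but converges in the sense of distributions on $\bT^1$ to the boundary value of the holomorphic function $z \mapsto 1/(1-z)$ on the unit disc. Concretely, writing $g = e^{\ii\alpha}$, the partial sums $\sum_{k=0}^{N} e^{-\ii k\alpha}$ are the Fourier partial sums of the distribution $\Theta$ whose Fourier coefficients are $\hat\Theta(k) = 1$ for $k \le 0$ and $0$ for $k > 0$; such a Fourier series converges in $\mathcal{D}'(\bT^1)$ because its coefficients are bounded (polynomial growth suffices). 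This is a standard fact about Fourier series of distributions, which I would invoke directly, perhaps recording that $\Theta = \tfrac12(\delta_1 + \mathrm{p.v.}\,\tfrac{1}{1-g^{-1}})$ if an explicit closed form is desired, though that precision is not needed for the statement.

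It remains to reconcile the two descriptions: the function $g \mapsto \ind_g(\bar\partial + \bar\partial^*)$, defined almost everywhere by $g \mapsto 1/(1-g^{-1})$, represents an $L^1_{\mathrm{loc}}$ function on $\bT^1$ (the only singularity is the integrable one at $g = 1$), hence a well-defined distribution, and this distribution has Fourier coefficients $1$ in nonpositive degrees and $0$ in positive degrees — this last computation is a one-line contour/residue argument, or simply follows from expanding $1/(1-z) = \sum_{k\ge 0} z^k$ inside the disc and taking radial limits. Therefore it coincides with the distributional limit of \eqref{eq sum char}. The main obstacle is purely a matter of care rather than depth: one must be precise about the mode of convergence (distributional, not pointwise) and about the fact that the almost-everywhere-defined index function genuinely determines a tempered distribution on the compact group $\bT^1$ whose partial Fourier sums are exactly the partial sums in \eqref{eq sum char}; once this bookkeeping is set up, the identification is immediate. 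I would also remark that this computation is the prototype for the linearisation theorem (Corollary~\ref{cor lin}): here the manifold $\C = T_0\C$ is already linear, so no deformation or excision is needed, illustrating the parenthetical remark in the introduction that "the operator does not even have to be deformed."
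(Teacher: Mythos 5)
There is a genuine gap at the analytic heart of your argument: the function $g\mapsto 1/(1-g^{-1})$ is \emph{not} locally integrable on $\bT^1$. Writing $g=e^{\ii\alpha}$, one has $|1-g^{-1}|=2|\sin(\alpha/2)|\sim|\alpha|$ near $g=1$, so the singularity is of type $1/|\alpha|$. Hence your claim that the almost-everywhere-defined index function ``represents an $L^1_{\mathrm{loc}}$ function, hence a well-defined distribution'' fails, and with it the step where you compute the Fourier coefficients of $\Theta$ by a contour/residue integral: those integrals diverge. To attach a distribution to $1/(1-g^{-1})$ you must choose a regularisation, and that choice is exactly where the content of the lemma lies: the principal-value regularisation and the radial (Abel) boundary value of $1/(1-z)$ differ by a multiple of the delta at $g=1$. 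Concretely, with respect to normalised Haar measure one has $\sum_{k\geq 0}g^{-k}=\tfrac12\delta_1+\mathrm{p.v.}\,\frac{1}{1-g^{-1}}$ in $\cD'(\bT^1)$, so the p.v.\ interpretation has Fourier coefficients $\pm\tfrac12$ rather than $1$ and $0$, and the series \eqref{eq sum char} would then miss a $\tfrac12\delta_1$; your parenthetical closed form $\tfrac12\bigl(\delta_1+\mathrm{p.v.}\,\frac{1}{1-g^{-1}}\bigr)$ is also off in this bookkeeping. Your mention of ``radial limits'' is the correct regularisation, but it is doing all the work and cannot be conflated with an $L^1$ pairing; ``once this bookkeeping is set up, the identification is immediate'' glosses over precisely the delicate point.

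The first half of your argument --- $M^g=\{0\}$ and $\ind_g(\bar\partial+\bar\partial^*)=1/(1-g^{-1})$ via Corollary~\ref{cor fin fixed pt hol} --- is exactly the paper's first step. For the convergence statement the paper avoids Fourier coefficients altogether: it uses the identity $\frac{1}{1-g^{-1}}-\sum_{k=0}^{n}g^{-k}=\frac{g^{-(n+1)}}{1-g^{-1}}$ and argues that the (oscillatory) pairing of the right-hand side with any $\varphi\in C^{\infty}(\bT^1)$ tends to zero as $n\to\infty$, a Riemann--Lebesgue-type statement once the singular factor has been absorbed into the pairing. If you prefer your Fourier-series route, you should first \emph{define} $\Theta$ as the distributional boundary value $\lim_{r\to 1^-}1/(1-rg^{-1})$ (equivalently, via the regularised pairing the paper uses), check that this is what is meant by ``the distribution given by the index function'', and only then invoke the standard fact that a Fourier series with bounded coefficients converges in $\cD'(\bT^1)$.
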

\begin{proof}
By Corollary \ref{cor fin fixed pt hol}, we have for $g = e^{i\alpha}$, with $\alpha\in \R\setminus 2\pi\Q$,
\[
\ind_g(\bar \partial + \bar\partial^*) = \frac{1}{1-g^{-1}}.
\]
So the function \eqref{eq dist ind} is given by $g\mapsto 1/(1-g^{-1})$ almost everywhere.

Note that for all $n\in \N$, and $g\in \bT^1$,
\[
\frac{1}{1-g^{-1}} - \sum_{k=0}^{n} g^{-k} = \frac{g^{-(n+1)}}{1-g^{-1}}.
\]
For any $\varphi \in C^{\infty}(\bT^1)$, the oscillatory integral
\[
\int_{\bT^1} \varphi(g)\frac{g^{-(n+1)}}{1-g^{-1}} \, dg
\]
tends to zero as $n\to \infty$. The claim follows.
\end{proof}
}
{
By Corollary \ref{cor fin fixed pt hol}, we have for 
\iftoggle{long}{
$g = e^{\ii\alpha}$, with $\alpha\in \R\setminus 2\pi\Q$,
}
{
such $g$,
}
\[
\ind_g(\bar \partial + \bar\partial^*) = \frac{1}{1-g^{-1}}.
\]
So the function \eqref{eq dist ind} is given by $g\mapsto 1/(1-g^{-1})$ almost everywhere. One can deduce that the sum of functions
\begin{equation} \label{eq sum char}
 \sum_{k=0}^{\infty} \bigl( g\mapsto  g^{-k} \bigr)
\end{equation}
converges as a distribution on $\bT^1$ to  $\Theta$.
}

\iftoggle{long}
{
Lemma \ref{lem circle plane} 
}
{This}
allows us to describe the $g$-index of $\bar \partial + \bar \partial ^*$ in terms of its kernel.
Indeed, consider the Euclidean density $dz = dx\, dy$ on $\C$, and the corresponding space  $L^2(\C)$. Let $\cO(\C)$ be the space of holomorphic functions on $\C$.
Let $\psi \in C^{\infty}(\C)$ be a positive, $\bT^1$-invariant function. Let  ${L^2}(\C, \psi)$ be the completion  of $C^{\infty}_c(\C)$ to a Hilbert space with respect to the inner product
\begin{equation}
\label{eq:psi.inner.product}
(f_1, f_2)_{\psi} := (\psi f_1, \psi f_2)_{L^2(\C)}.
\end{equation}
Let $\pi$ be the representation of $\bT^1$ in ${L^2}(\C, \psi)$ given by
\[
(\pi(g)f)(z) = f(g^{-1}z),
\]
for all $g\in \bT^1$, $f\in {L^2}(\C, \psi)$ and $z\in \C$.

Set
\[
\cO_{L^2}(\C, \psi) := \cO(\C) \cap {L^2}(\C, \psi).
\]
For $k\in \Z_{\geq 0}$, let $e^k\in \cO(\C)$ be the function $z\mapsto z^k$. 
Then for all $k\in \Z_{\geq 0}$ and $z\in \C$, 
\beq{eq pi ek}
\pi(g)e^k = g^{-k}e^k.
\eeq
Suppose $\psi$ was chosen so that $e^k\in  {L^2}(\C, \psi)$ for all $k$.  For example, one can take $\psi(z) = e^{-|z|^2/2}$.

Let $\Omega^{0,*}_{L^2}(\C)$ be the Hilbert space of square-integrable forms of type $(0,*)$. Let  $\Omega^{0,*}_{L^2}(\C, \psi)$ be the analogous Hilbert space with the inner product weighted by $\psi$ as in \eqref{eq:psi.inner.product}.
Set
\[
\ker_{L^2, \psi} (\bar \partial + \bar\partial^*)^{\pm} := \ker (\bar \partial + \bar\partial^*)^{\pm} \cap  \Omega^{0,*}_{L^2}(\C, \psi).
\]
We can realise the distribution $\Theta$ given by the $g$-indices of $\bar \partial + \bar\partial^*$ in terms of the representation of $\bT^1$ in this space.
\begin{proposition} \label{prop C psi}
The restriction of the representation $\pi$ of $\bT^1$ to $\ker_{L^2, \psi} (\bar \partial + \bar\partial^*)^{\pm}$ has a distributional character $\chi^{\pm}$, and we have
\[
\Theta = \chi^+ - \chi^- \quad \in \cD'(\bT^1).
\]
\end{proposition}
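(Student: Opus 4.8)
The plan is to compute the $\bar\partial + \bar\partial^*$ operator on $\C$ explicitly and identify its kernel as a weighted Hilbert space, then recognise the character of the $\bT^1$-action on that kernel as the distribution $\Theta$. First I would recall that on $\C$ with the standard complex structure, $\Omega^{0,*}_{L^2}(\C) = \Omega^{0,0}_{L^2}(\C)\oplus\Omega^{0,1}_{L^2}(\C)$, and $(\bar\partial + \bar\partial^*)^+ = \bar\partial\colon L^2(\C)\to \Omega^{0,1}_{L^2}(\C)$ while $(\bar\partial + \bar\partial^*)^- = \bar\partial^*$. In the $\psi$-weighted space $L^2(\C,\psi)$, conjugation by multiplication by $\psi$ identifies $\bar\partial$ on the weighted space with $\psi\,\bar\partial\,\psi^{-1}$ on the unweighted space, so $\ker_{L^2,\psi}(\bar\partial + \bar\partial^*)^+$ is identified with $\{\psi^{-1} h : h \in L^2(\C),\ \bar\partial(\psi^{-1}h) = 0\} = \psi\cdot\bigl(\cO(\C)\cap L^2(\C,\psi)\bigr) = \psi\cdot\cO_{L^2}(\C,\psi)$, and the latter is a closed subspace. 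The key point is that for the weight $\psi(z) = e^{-|z|^2/2}$ (or any $\bT^1$-invariant positive $\psi$ with all $e^k\in L^2(\C,\psi)$), the monomials $\{e^k\}_{k\geq 0}$ form an orthogonal basis of $\cO_{L^2}(\C,\psi)$ — this is the standard Fock/Bargmann space fact, proved by expanding a holomorphic $L^2$ function in its Taylor series and using rotational invariance of the weight to see the monomials are orthogonal and span. For the minus part, one shows $\ker_{L^2,\psi}(\bar\partial + \bar\partial^*)^- = 0$: a form $\alpha\,d\bar z$ with $\bar\partial^*\alpha = 0$ is, after the same weight conjugation, antiholomorphic and $L^2$, and the only such $L^2$ section (for these weights) is zero; alternatively use that $\bar\partial$ is surjective in this setting.

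Next I would check that $\pi(g)$ preserves each of these kernels (it does, being induced by the $\bT^1$-action which commutes with $\bar\partial$ and preserves $\psi$ since $\psi$ is $\bT^1$-invariant) and compute its character. On $\ker_{L^2,\psi}(\bar\partial + \bar\partial^*)^+ \cong \overline{\Span}\{e^k : k\geq 0\}$, equation \eqref{eq pi ek} gives $\pi(g)e^k = g^{-k}e^k$, so the representation decomposes as $\bigoplus_{k\geq 0}$ of the weight-$(-k)$ character of $\bT^1$. The formal character $\sum_{k=0}^\infty (g\mapsto g^{-k})$ is exactly the series \eqref{eq sum char}, which by the computation just above the Proposition converges as a distribution on $\bT^1$ to $\Theta$. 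The precise assertion that this series \emph{is} the distributional character $\chi^+$ of $\pi|_{\ker^+}$ is a general fact about representations of a compact group on a Hilbert space decomposing into a direct sum of irreducibles with multiplicities of polynomial growth (here the multiplicities are all $1$): the character is defined by $\chi^+(\varphi) = \Tr\bigl(\int_{\bT^1}\varphi(g)\pi(g)\,dg\bigr)$ for $\varphi\in C^\infty(\bT^1)$, and since $\int_{\bT^1}\varphi(g)g^{-k}\,dg = \hat\varphi(k)$ decays faster than any power of $k$, the trace is the absolutely convergent sum $\sum_{k\geq 0}\hat\varphi(k)$, which is precisely the pairing of the distribution $\sum_k(g\mapsto g^{-k})$ with $\varphi$. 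Since $\ker_{L^2,\psi}(\bar\partial + \bar\partial^*)^- = 0$, we have $\chi^- = 0$, so $\chi^+ - \chi^- = \Theta$, which is the claim.

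The main obstacle — really the only nontrivial analytic input — is establishing that the $L^2$-kernel of $(\bar\partial + \bar\partial^*)^+$ in the weighted space is precisely the closed span of the monomials $e^k$ and that the minus-kernel vanishes; everything else is either bookkeeping (weight conjugations, compatibility of $\pi$ with the kernels) or a standard character-theoretic argument for compact groups. Since $\C$ is noncompact and $\bar\partial$ is not Fredholm in the unweighted $L^2$-sense, one must be slightly careful: the right statement is about the \emph{weighted} space, where the Bargmann space picture makes the kernel explicit and finite-multiplicity in each weight. Once that identification is in hand, the passage from the orthogonal decomposition $\bigoplus_{k\geq 0}\C_{-k}$ to the distributional character $\sum_k g^{-k}$, and from there to $\Theta$ via the convergence established before the Proposition, is routine.
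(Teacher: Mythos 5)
Your proposal follows essentially the same route as the paper's proof: you identify $\ker_{L^2,\psi}(\bar\partial+\bar\partial^*)^+$ with $\cO_{L^2}(\C,\psi)$, use that the monomials $e^k$ form an orthogonal basis there (the Taylor-series/rotation-invariance argument is exactly the paper's), note the odd kernel vanishes so $\chi^-=0$, and conclude via \eqref{eq pi ek} that $\chi^+$ is the series \eqref{eq sum char}, which converges to $\Theta$. The extra steps you include — the conjugation by $\psi$ (harmless, since $\psi$ is $\bT^1$-invariant, though the paper's definition of $\ker_{L^2,\psi}$ as an intersection makes it unnecessary, and your factor of $\psi$ in front of $\cO_{L^2}(\C,\psi)$ is a slip of no consequence) and the verification via rapid decay of Fourier coefficients that the distributional character exists — only supply detail the paper leaves implicit.
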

\iftoggle{long}
{

In the proof of this proposition, we will use a natural Hilbert basis of the space $\cO_{L^2}(\C, \psi)$.
\begin{lemma}\label{lem basis}
The functions $\{e^k\}_{k\ge0}$ form an orthogonal basis of $\cO_{L^2}(\C, \psi)$.
\end{lemma}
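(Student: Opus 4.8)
The plan is to verify orthogonality by a direct computation in polar coordinates, and then obtain completeness by showing that taking inner products with the $e^k$ recovers, up to a nonzero constant, the Taylor coefficients of an element of $\cO_{L^2}(\C,\psi)$.

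First I would use that $\psi$, being $\bT^1$-invariant, is radial: $\psi(z)=\psi(|z|)$. Writing $z=re^{\ii\theta}$, for $j,k\ge 0$ we have
\[
(e^j,e^k)_\psi=\int_0^\infty\int_0^{2\pi} r^{j+k}e^{\ii(j-k)\theta}\psi(r)^2\,r\,d\theta\,dr,
\]
so the $\theta$-integral vanishes unless $j=k$, while $(e^k,e^k)_\psi=2\pi\int_0^\infty r^{2k+1}\psi(r)^2\,dr$, which is strictly positive and, by the standing hypothesis that $e^k\in L^2(\C,\psi)$, finite. Hence $\{e^k\}_{k\ge 0}$ is an orthogonal family of nonzero vectors.

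For completeness I would first note that $\cO_{L^2}(\C,\psi)$ is a closed subspace of the Hilbert space $L^2(\C,\psi)$: if $f_n\to f$ in $L^2(\C,\psi)$ with each $f_n$ holomorphic, then since $\psi$ is continuous and positive, hence bounded below on compact sets, $f_n\to f$ in $L^2_{\mathrm{loc}}$, and the mean-value property over small discs (Cauchy estimates) upgrades this to local uniform convergence, so $f$ is holomorphic. Thus $\cO_{L^2}(\C,\psi)$ is itself a Hilbert space, and it is enough to show that an $f\in\cO_{L^2}(\C,\psi)$ orthogonal to every $e^k$ vanishes. Given such an $f$, expand it in its globally convergent Taylor series $f(z)=\sum_{j\ge 0}a_jz^j$. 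For each $k$, Cauchy--Schwarz gives $f\,\overline{e^k}\,\psi^2\in L^1(\C)$, so Fubini lets us integrate over circles first; on each circle $|z|=r$ the series converges uniformly, the cross terms integrate to zero, and we obtain
\[
(f,e^k)_\psi=2\pi a_k\int_0^\infty r^{2k+1}\psi(r)^2\,dr=a_k\,(e^k,e^k)_\psi.
\]
Since $(e^k,e^k)_\psi\ne 0$ and the left-hand side is zero, $a_k=0$ for all $k$, so $f=0$.

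The only delicate points are the two interchanges of limiting operations: deducing local uniform convergence from $L^2$-convergence in the closedness argument, and integrating the Taylor series term by term over each circle. Both are routine --- the first is the standard Cauchy-estimate argument, the second is justified by uniform convergence on compact circles together with the Fubini step above. I do not expect any genuine obstacle; the one thing to keep in mind is that radiality of $\psi$ is precisely what makes both the orthogonality computation and the circle-by-circle integration succeed.
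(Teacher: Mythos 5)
Your proof is correct, but the completeness half runs along a genuinely different route from the paper's. For orthogonality the two arguments have the same content: the paper phrases it as unitarity of the $\bT^1$-action on $L^2(\C,\psi)$ together with $\pi(g)e^k=g^{-k}e^k$, so that $(e^k,e^l)_\psi=g^{k-l}(e^k,e^l)_\psi$ for all $g$, while you compute the angular integral in polar coordinates using radiality of $\psi$; this is only a difference of presentation. For completeness the paper argues directly that the Taylor partial sums $f_N$ of an $f\in\cO_{L^2}(\C,\psi)$ converge to $f$ in the weighted norm: it establishes the Parseval-type identity $\|f\|_\psi^2=\sum_k|a_k|^2\|e^k\|_\psi^2$ by exhausting $\C$ by compact sets and integrating the series term by term, and then uses the Pythagorean theorem to get $\|f-f_N\|_\psi\to 0$, so density of the span is proved constructively and no completeness of $\cO_{L^2}(\C,\psi)$ is ever needed. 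You instead prove that $\cO_{L^2}(\C,\psi)$ is closed in $L^2(\C,\psi)$ (via the Cauchy/sub-mean-value estimate, Bergman-space style), so that it is a Hilbert space, and then kill the orthogonal complement through the Fubini computation $(f,e^k)_\psi=a_k\|e^k\|_\psi^2$. What your route buys is the extra, cleanly justified, fact that $\cO_{L^2}(\C,\psi)$ is itself complete, and a coefficient identity whose limit interchanges (Fubini plus uniform convergence on circles) are easier to police than the double limit in the paper's Parseval computation; what it costs is the additional closedness lemma, which the paper's direct density argument does not require. Both versions ultimately rest on the same two ingredients: the $\bT^1$-invariance (radiality) of $\psi$ and local uniform convergence of the Taylor series.
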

\begin{proof}
For all $k, l \in \Z_{\geq 0}$ and $g\in \bT^1$, we have
\[
(e^k, e^l)_{\psi} = ( (gz)^k, (ge^l))_{\psi} = g^{l-k} (e^k, e^l)_{\psi}
\]
Since this holds for all $g\in \bT^1$, we find that
$\{e^k\}_{k\in \Z_{\geq 0}}$ is an orthogonal set in ${L^2}(\C, \psi)$.

Moreover, consider an holomorphic function $f\in \cO_{L^2}(\C, \psi)$. Then
\begin{equation}
\label{eq:norm.f}
\|f\|^2_{\psi}=\int_{\C}f(z)\overline{f(z)}\psi(z)^2 \, dz <\infty.
\end{equation}
The function $f$ can be represented by a power series $f(z)=\sum_{k=0}^{\infty}a_k z^k$ converging  for all $z\in\C$, hence uniformly converging on compact subsets of $\C$.
Let $\{K_n\}_{n\in\N}$ be an increasing sequence of compact subsets of $\C$ such that $\cup_{n\in \N} K_n=\C$.
Then 
we have
\begin{multline}
\label{eq:norm.f.series}
\|f\|^2_{\psi}=\lim_{n\to\infty}\int_{K_n}\lim_{N\to\infty}\Bigl(\sum_{k=0}^N a_kz^k \Bigr)\Bigl(\sum_{l=0}^{\infty}\overline{a_l}\bar z^l \Bigr)\psi(z)^2dz \\
=\lim_{N\to\infty}\sum_{k=0}^{N}|a_k|^2 \|e^k\|_{\psi}^2. 
\end{multline}
For $N\in \N$, consider the partial sum
 $f_N(z)=\sum_{k\le N}a_k z^k$. As $\{e^k\}_{k\ge 0}$ is an orthogonal set, by the Pythagorean theorem, and by~(\ref{eq:norm.f.series}),  we have
\[
\|f-f_N\|_{\psi}^2=\|f\|_{\psi}^2-\|f_N\|_{\psi}^2=\sum_{k=0}^{\infty}|a_k|^2 \|z_k\|_{\psi}^2-\sum_{k=0}^{N}|a_k|^2 \|z_k\|_{\psi}^2.
\]
This tends to zero as as $N\to\infty$, because of (\ref{eq:norm.f}) and \eqref{eq:norm.f.series}. 
Therefore, the span of $\{e^k\}_{k\ge0}$ is dense in  $\cO_{L^2}(\C, \psi)$. 
\end{proof}

\noindent \medskip \emph{Proof of Proposition \ref{prop C psi}.}
First note that 
\[
\begin{split}
\ker (\bar \partial + \bar\partial^*)^{+} &=  \cO(\C);\\
 \ker (\bar \partial + \bar\partial^*)^{-} &= 0.
\end{split}
\]
 so we only need to consider the even part of $\ker_{L^2, \psi} (\bar \partial + \bar\partial^*)$, which equals 
\beq{eq ker C plus}
\ker_{L^2, \psi} (\bar \partial + \bar\partial^*)^{+} =  \cO_{L^2}(\C, \psi).
\eeq
By Lemma \ref{lem basis}, the functions $\{e^k\}_{k\ge 0}$ form an orthogonal basis of $\cO_{L^2}(\C, \psi)$. By \eqref{eq pi ek}, the character of the representation $\pi$ on the space \eqref{eq ker C plus} equals the distribution \eqref{eq sum char}. 
So the claim follows from Lemma~\ref{lem circle plane}.
\hfill $\square$
}
{
\begin{proof}
First note that 
\[
\begin{split}
\ker (\bar \partial + \bar\partial^*)^{+} &=  \cO(\C);\\
 \ker (\bar \partial + \bar\partial^*)^{-} &= 0.
\end{split}
\]
So we only need to consider the even part of $\ker_{L^2, \psi} (\bar \partial + \bar\partial^*)$, which equals 
\beq{eq ker C plus}
\ker_{L^2, \psi} (\bar \partial + \bar\partial^*)^{+} =  \cO_{L^2}(\C, \psi).
\eeq
The functions $\{e^k\}_{k\ge 0}$ form an orthogonal basis of $\cO_{L^2}(\C, \psi)$. By \eqref{eq pi ek}, the character of the representation $\pi$ on the space \eqref{eq ker C plus} equals the series \eqref{eq sum char}, which converges to $\Theta$. 
\end{proof}
}

\begin{remark}
\label{rem:L^2.kernel}
The $L^2(\C, \psi)$-kernel of $\bar\partial+\bar\partial^*$ can be identified as the $L^2$-kernel of a deformed operator. For example, let $\psi(z) = e^{-|z|^2/2}$. Recall that $\bar\partial+\bar\partial^*$ is an operator on $\Omega^{0,*}(\C)$, given by
\[
\bar\partial+\bar\partial^*=c(dz)\frac{\partial}{\partial z}+c(d\bar z)\frac{\partial}{\partial\bar z},
\]
where now $c(d\bar z)=\frac{1}{\sqrt{2}}\ext(d\bar z)$ and $c(dz)=-\frac{1}{\sqrt{2}}\Int(dz).$ (See~\cite[Section~3.6]{BGV}.)
Set
\[
b:= \frac{1}{2} z c(d\bar z).
\]
Then $b^* = -\frac{1}{2} \bar z c(dz)$.
%
We have the deformed operator
\[
\begin{split}
\bar\partial +b=c(d\bar z)\left(\frac{\partial}{\partial\bar z}+\frac{z}{2} \right)\colon& \Omega^{0,0}(\C) \to \Omega^{0,1}(\C);\\
(\bar\partial +b)^*=c(dz)\left(\frac{\partial}{\partial z}-\frac{\bar z}{2} \right)\colon& \Omega^{0,1}(\C) \to \Omega^{0,0}(\C).
\end{split}
\]
The operator $U\colon \Omega^{0, *}(\C)\rightarrow\Omega^{0, *}(\C, \psi)$ given by $U(\alpha)=\psi^{-1}\alpha$ is a unitary isomorphism.
We have
\[
\bar\partial U(f)=\bar\partial(\psi^{-1}f)=\psi^{-1}\bigl(\bar\partial+\frac{z}{2}\bigr)f=U\left(\bigl(\bar\partial+\frac{z}{2}\bigr)f\right).
\]
Similarly, $U$ intertwines $\bar\partial^*$ and $(\bar\partial+b)^*$.
It then follows that
\[
\begin{split}
\ker_{L^2}(\bar\partial+b)&\cong\ker_{L^2, \psi}(\bar\partial); \\ 
\ker_{L^2}(\bar\partial+b)^*&\cong\ker_{L^2, \psi}(\bar\partial^*)=0.
\end{split}
\]
\end{remark}


\subsection{The circle acting on the two-sphere} \label{sec T S2}

As in Subsection \ref{sec:S^1.on.R^2}, we consider the circle group $\bT^1$, this time acting by rotations on the two-sphere $S^2$. In this compact setting, the usual index theory, and the Atiyah--Segal--Singer theorem apply. But we can use the $g$-index to decompose indices in this case.

We embed $\bT^1 \cong \SO(2)$ into $\SO(3)$ in the top-left corner. Then $S^2 = \SO(3)/\bT^1$. Identifying this space with $\bP^1(\C)$, we obtain a complex structure on it. Fix $n\in \Z_{\geq 0}$. Let $\C_n$ be the space of complex numbers, on which $\bT^1$ acts by
\[
g\cdot z = g^n z,
\]
for $g\in \bT^1$ and $z\in \C_n$. We have the line bundle
\[
L_n := \SO(3) \times_{\bT^1} \C_n \to S^2.
\]
Let $\bar \partial_n + \bar \partial_n^*$ be the Dolbeault--Dirac operator on $S^2$, coupled to $L_n$. Since $S^2$ is compact, we have the equivariant index
\[
\ind_{\SO(3)}(\bar \partial_n + \bar \partial_n^*) \in R(\SO(3)).
\]
By the Borel--Weil--Bott theorem, this index is the irreducible representation $V_n$ of $\SO(3)$ with highest weight $n$ (with respect to the positive root corresponding to the identification of $S^2$ with $\bP^1(\C)$).

Fix $g\in \bT^1$ with dense powers. By the Atiyah--Segal--Singer theorem, or Corollary \ref{cor fin fixed pt hol}, the character of $V_n$ evaluated at $g$ equals
\beq{eq ind T1 S2}
\ind_{\bT^1}(\bar \partial_n + \bar \partial_n^*)(g) = \frac{g^n}{1-g^{-1}} + \frac{g^{-n}}{1-g}.
\eeq
The two terms on the right hand side correspond to the two fixed points of the action by $\bT^1$. This expression can be rewritten as the finite sum
\[
\sum_{j=0}^{2n}g^{j-n}.
\]
This is the usual decomposition of $V_n|_{\bT^1}$ into irreducible representations of $\bT^1$.

So far, we have done nothing new in this example. But let $\bar \partial^{\C} + (\bar \partial^{\C})^*$ be the Dolbeault--Dirac operator on $\C$.
Then the linearisation theorem, Corollary \ref{cor lin}, implies that
\[
\ind_{\bT^1}(\bar \partial_n + \bar \partial_n^*)(g) = 
\ind_g \bigl( \bar \partial^{\C} + (\bar \partial^{\C})^* \bigr) g^n
+ \ind_{g^{-1}} \bigl( \bar \partial^{\C} + (\bar \partial^{\C})^* \bigr) g^{-n}.
\]
As we saw in Subsection \ref{sec:S^1.on.R^2}, Corollary \ref{cor fin fixed pt hol} implies that
\[
\ind_g \bigl( \bar \partial^{\C} + (\bar \partial^{\C})^* \bigr) = \frac{1}{1-g^{-1}}, 
\]
and similarly with $g$ replaced by $g^{-1}$. This agrees with \eqref{eq ind T1 S2}. Using Proposition~\ref{prop C psi}, we can realise the latter index as the character of the representation of $\bT^1$ in 
\[
\ker_{L^2, \psi} (\bar \partial + \bar\partial^*)^{+},
\]
with $\psi$ as in Subsection \ref{sec:S^1.on.R^2}.

\subsection{Discrete series characters} \label{sec ds}

Let $G$ be a connected, semisimple Lie group. Let $T<G$ be a maximal torus, and suppose it is a Cartan subgroup of $G$, i.e.\ $G$ has discrete series representations. Let $K<G$ be a maximal compact subgroup containing $T$. 
We denote the normalisers of $T$ in $G$ and $K$ by $N_G(T)$ and $N_K(T)$, respectively.
\begin{lemma} \label{lem fixed GT}
The fixed point set of the action by $T$ on $G/T$ is $N_K(T)/T$, the Weyl group  $W_c$ of $(\kk_{\C}, \kt_{\C})$.
\end{lemma}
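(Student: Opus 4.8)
The plan is to reduce the statement to two facts: that the $T$-fixed points in $G/T$ are exactly $N_G(T)/T$, and that $N_G(T)$ actually equals $N_K(T)$. The first is routine: a coset $gT$ is fixed by the left $T$-action precisely when $g^{-1}tg\in T$ for all $t\in T$, i.e.\ when $g^{-1}Tg\subseteq T$; since $g^{-1}Tg$ is a closed connected subgroup of $T$ of full dimension, this forces $g^{-1}Tg=T$, so $(G/T)^T=N_G(T)/T$. Thus everything comes down to showing $N_G(T)=N_K(T)$, and since $T\subseteq K$ the inclusion $N_K(T)\subseteq N_G(T)$ is automatic; the point is to prove $N_G(T)\subseteq K$.

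First I would show that $N_G(T)$ is \emph{compact}. Because $T$ is a Cartan subgroup, $\kt:=\Lie(T)$ is a Cartan subalgebra of $\g$ and $Z_G(\kt)=T$; hence conjugation gives an injection $N_G(T)/T\hookrightarrow \GL(\kt)$, $gT\mapsto \Ad(g)|_{\kt}$. Each $\Ad(g)|_{\kt}$ extends complex-linearly to an automorphism of $\kt_{\C}$ that permutes the finite root system $\Delta(\g_{\C},\kt_{\C})$, and since $\g$ is semisimple the roots span $\kt_{\C}^*$, so $\Ad(g)|_{\kt}$ is determined by its permutation of $\Delta$. Therefore $N_G(T)/T$ embeds in the finite group $\mathrm{Sym}(\Delta)$, hence is finite, and $N_G(T)$, being a finite union of translates of the compact group $T$, is compact.

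Next I would introduce a Cartan involution $\theta$ of $G$ with $K=G^\theta$ and associated decomposition $\g=\kk\oplus\kp$; since $T\subseteq K$, $\theta$ fixes $T$ pointwise, so $\theta(N_G(T))=N_G(\theta(T))=N_G(T)$. Given $g\in N_G(T)$, I would write $g=k\exp(X)$ with $k\in K$, $X\in\kp$ (Cartan decomposition $G\cong K\times\kp$). Then $\theta(g)=k\exp(-X)\in N_G(T)$, so $g\,\theta(g)^{-1}=k\exp(2X)k^{-1}=\exp\bigl(2\Ad(k)X\bigr)$ lies in $N_G(T)$, and so do all its integer powers $\exp\bigl(2n\,\Ad(k)X\bigr)$. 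If $X\neq 0$ these powers form an unbounded discrete subset of the closed submanifold $\exp(\kp)\cong\kp$ of $G$, contradicting compactness of $N_G(T)$; hence $X=0$ and $g=k\in K$. This proves $N_G(T)\subseteq K$, so $N_G(T)=N_K(T)$.

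Finally, $T$ is a maximal torus of the compact connected group $K$ (every torus of $K$ is a torus of $G$, and $T$ is maximal among these), so $N_K(T)/T$ is by definition the Weyl group $W_c$ of $(\kk_{\C},\kt_{\C})$. Combining the three steps gives $(G/T)^T=N_G(T)/T=N_K(T)/T=W_c$. The only step carrying real content is $N_G(T)\subseteq K$; the delicate point there is establishing compactness of $N_G(T)$ first, since that is exactly what lets the $\exp(\kp)$-argument produce a contradiction — this is supplied by the root-system finiteness argument in the second paragraph.
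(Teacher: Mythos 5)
Your proof is correct, and it reaches the key inclusion $N_G(T)\subseteq K$ by a genuinely different route than the paper. The paper's argument, after the same reduction $(G/T)^T=N_G(T)/T$, is infinitesimal: for $X\in\kp$ with $\exp(tX)\in N_G(T)$ for all $t$, it notes $[X,H]\in\kt\cap\kp=0$ for every $H\in\kt$, so $X$ centralises the maximal commutative subalgebra $\kt$, forcing $X\in\kt$ and hence $X=0$; it then concludes that every one-parameter subgroup of $N_G(T)$ lies in $\kk$-directions and appeals to connectedness of $G$. You instead argue globally: you first establish compactness of $N_G(T)$ (via the injection of $N_G(T)/Z_G(\kt)=N_G(T)/T$ into the permutations of the finite root system, using that the roots span $\kt_{\C}^*$), and then use the Cartan involution $\theta$ and the decomposition $g=k\exp(X)$ to show that any nonzero $\kp$-component would produce an unbounded sequence $\exp(2n\Ad(k)X)$ inside the compact group $N_G(T)$. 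What your approach buys is that it treats arbitrary elements of $N_G(T)$, not just one-parameter subgroups, so the non-identity components of the normaliser are handled explicitly rather than being absorbed into the paper's terse final appeal to connectedness; the cost is the extra preliminary step (finiteness of $N_G(T)/T$, which needs $Z_G(\kt)=T$, i.e.\ the Cartan subgroup hypothesis) that the paper's infinitesimal computation avoids. Both proofs ultimately rest on the same two facts: $\kt$ is maximal abelian (self-centralising) and $\kt\cap\kp=0$.
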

\begin{proof}
Since
\[
(G/T)^T = N_G(T)/T,
\]
it is enough to show that
\[
N_G(T) = N_K(T).
\]
To prove this, 
let $\kg = \kp \oplus \kk$ be the Cartan decomposition of $\kg$.
Suppose $X\in \kp$, such that $\exp(tX) \in N_G(T)$ for all $t \in \R$. Then for all $H\in \kt$,
\[
\exp(tX)\exp(H)\exp(-tX) = \exp(\Ad(\exp(tX))H) \in T.
\] 
So
$
[X,H]\in \kt.
$
Because $X\in \kp$ and $H\in \kt \subset \kk$, we have $[X,H] \in \kp$. Hence $[X,H]=0$. Since $\kt$ is maximal commutative, we find that $X\in \kt$, so that $X=0$.
Therefore, an element $Y \in \kg$ such that $\exp(tY) \in N_G(T)$ for all $t \in \R$ must lie in $\kk$. Since $G$ is connected, the claim follows.
\end{proof}
\begin{example}
If $G = SL(2,\R)$, then a strongly elliptic coadjoint orbit of $G$ is equivariantly diffeomorphic to $G/T$. This is now a hyperbolic plane, on which $T$ acts by rotations. This action has one fixed point, corresponding to the trivial Weyl group of $K=T$.
\end{example}

Let $\lambda \in i\kt^*$ be regular (in the sense that $(\alpha, \lambda) \not=0$ for all roots $\alpha$, for a Weyl group invariant inner product). Fix a set $R^+$ of positive roots
 for $(\kg_{\C}, \kt_{\C})$ by defining a root $\alpha$ to be positive if $(\alpha, \lambda)>0$. Let $\rho$ be half the sum of the positive roots. 
 The choice of positive roots determines a $G$-invariant complex structure on the manifold $G/T$, defined by
\beq{eq compl str}
T_{eT}^{0,1}(G/T) = (\kg/\kt)^{0,1} := \bigoplus_{\alpha \in R^+} (\kg_{\C})_{-\alpha}.
\eeq
 
  Suppose $\lambda + \rho$ is an integral weight. Then $\lambda - \rho$ is integral as well, and we have
 the holomorphic line bundle
\[
L_{\lambda - \rho} := G\times_T \C_{\lambda - \rho} \to G/T,
\]
where $T$ acts on $\C_{\lambda - \rho} := \C$ via the weight $e^{\lambda - \rho}$. Let 
\[
\bar\partial_{L_{\lambda- \rho}} + \bar\partial_{L_{\lambda - \rho}}^* 
\]
be the Dolbeault--Dirac operator on $G/T$, coupled to $L_{\lambda- \rho}$. Let $g\in T$ be such that the powers of $g$ are dense in $T$. (Then in particular, $g$ is a regular element.) 

Let $\Theta_{\lambda}$ be the distributional character of the discrete series representation of $G$ with infinitesimal character $\lambda$.
\begin{proposition} \label{prop fixed GT}
One has
\[
\ind_{g}(\bar\partial_{L_{\lambda - \rho}} + \bar\partial_{L_{\lambda - \rho}}^*) = (-1)^{\frac{\dim G/K}{2}} \Theta_{\lambda}(g).
\]
\end{proposition}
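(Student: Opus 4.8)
The plan is to reduce to the finite fixed point formula, \textbf{Corollary~\ref{cor fin fixed pt hol}}, and then to recognise the resulting finite sum as the Harish-Chandra formula for the discrete series character on the regular set of $T$. First I would note that, since the powers of $g$ are dense in $T$, the closure of $\langle g\rangle$ is $T$ itself, so the compact group attached to $g$ in Definition~\ref{g-index} is $T$ and $(G/T)^g=(G/T)^T$. By Lemma~\ref{lem fixed GT} this is the finite set $W_c=N_K(T)/T$, with fixed points the cosets $wT$ for $w$ ranging over representatives in $N_K(T)$. Since $M^g$ is finite, $\bar\partial_{L_{\lambda-\rho}}+\bar\partial_{L_{\lambda-\rho}}^*$ is a Dolbeault--Dirac operator coupled to a holomorphic bundle on the complete complex manifold $G/T$, and $T$ acts holomorphically (the complex structure \eqref{eq compl str} is $G$-invariant, hence $T$-invariant), Corollary~\ref{cor fin fixed pt hol} applies.

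Next I would compute the local contributions. Identifying $T_{wT}(G/T)$ with $\kg/\kt$ via left translation by a representative of $w$, the invariant complex structure at $wT$ becomes the one of \eqref{eq compl str}, so $T^{1,0}_{wT}(G/T)\cong\bigoplus_{\alpha\in R^+}(\kg_{\C})_{\alpha}$, and the derivative of the action of $g$ becomes $\Ad(w^{-1}gw)$, which acts on $(\kg_{\C})_{\alpha}$ by the scalar $g^{w\alpha}:=e^{w\alpha}(g)$; likewise $g$ acts on the line $(L_{\lambda-\rho})_{wT}$ by $g^{w(\lambda-\rho)}$. Hence Corollary~\ref{cor fin fixed pt hol} gives
\[
\ind_g(\bar\partial_{L_{\lambda-\rho}}+\bar\partial_{L_{\lambda-\rho}}^*)=\sum_{w\in W_c}\frac{g^{w(\lambda-\rho)}}{\prod_{\alpha\in R^+}\bigl(1-g^{-w\alpha}\bigr)}.
\]

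Then I would run the standard Weyl-group manipulation. Writing $R^+_w=\{\alpha\in R^+:w\alpha\in -R^+\}$, one has $|R^+_w|=\ell(w)$, $\det(w)=(-1)^{\ell(w)}$ and $\sum_{\alpha\in R^+_w}\alpha=\rho-w^{-1}\rho$; substituting $1-g^{-w\alpha}=-g^{-w\alpha}(1-g^{w\alpha})$ for $\alpha\in R^+_w$ and reindexing the product over $R^+$ yields $\prod_{\alpha\in R^+}(1-g^{-w\alpha})=(-1)^{\ell(w)}g^{\rho-w\rho}\prod_{\alpha\in R^+}(1-g^{-\alpha})$. Inserting this, and using $g^{\rho}\prod_{\alpha\in R^+}(1-g^{-\alpha})=\prod_{\alpha\in R^+}(g^{\alpha/2}-g^{-\alpha/2})=:\Delta(g)$, collapses the sum to
\[
\ind_g(\bar\partial_{L_{\lambda-\rho}}+\bar\partial_{L_{\lambda-\rho}}^*)=\frac{\sum_{w\in W_c}\det(w)\,g^{w\lambda}}{\Delta(g)}.
\]
Finally, Harish-Chandra's character formula says that on the regular set of the compact Cartan subgroup $T$ the discrete series character $\Theta_\lambda$ equals $(-1)^{\dim(G/K)/2}$ times exactly this quotient of a Weyl numerator by the Weyl denominator (see Harish-Chandra, or the discussion around Atiyah--Schmid's geometric construction of the discrete series), and comparing the two displays gives $\ind_g(\bar\partial_{L_{\lambda-\rho}}+\bar\partial_{L_{\lambda-\rho}}^*)=(-1)^{\dim(G/K)/2}\Theta_\lambda(g)$.

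I expect the main difficulty to be bookkeeping rather than anything deep: getting the sign $(-1)^{\dim(G/K)/2}$ to come out precisely right. Concretely, one must be careful that (i) the orientation of each $T_{wT}(G/T)$ implicit in Corollary~\ref{cor fin fixed pt hol} is the one induced by the complex structure \eqref{eq compl str} — the same choice flagged in the remark after Lemma~\ref{lem ind top cpt}; (ii) the $T$-weights of the tangent space and of $L_{\lambda-\rho}$ at $wT$ are genuinely the $w$-translates of those at $eT$; and (iii) the normalisation of the Harish-Chandra parameter is consistent with the choice of $R^+$ via $(\alpha,\lambda)>0$ and with the $\rho$-shift built into $L_{\lambda-\rho}$, so that no extra sign or translation by $\rho$ is introduced in the comparison. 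The Weyl-combinatorial identity $\prod_{\alpha\in R^+}(1-g^{-w\alpha})=\det(w)\,g^{\rho-w\rho}\prod_{\alpha\in R^+}(1-g^{-\alpha})$ itself is routine and could be relegated to a line of computation.
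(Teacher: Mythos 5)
Your proposal is correct and follows essentially the same route as the paper: reduce to the finite fixed point formula of Corollary~\ref{cor fin fixed pt hol} via Lemma~\ref{lem fixed GT}, collapse the sum over $W_c$ using the antisymmetry of the Weyl denominator (your explicit computation with $R^+_w$ is just an unpacking of the identity $w\cdot\Delta=\varepsilon(w)\Delta$ used in the paper), and conclude with Harish-Chandra's character formula. The only cosmetic difference is that the paper adds a parenthetical remark that the final quotient makes sense even when $\rho$ (hence $g^{\alpha/2}$) is not an integral weight, a point worth keeping in mind when you write $\Delta(g)=\prod_{\alpha\in R^+}\bigl(g^{\alpha/2}-g^{-\alpha/2}\bigr)$.
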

\begin{proof}
The proof is analogous to Atiyah and Bott's derivation of the Weyl character formula from their fixed point theorem in {\cite[Section~5]{AB2}}. By Corollary~\ref{cor fin fixed pt hol} and Lemma~\ref{lem fixed GT}, we have
\beq{eq fixed GT}
\ind_{g}(\bar\partial_{L_{\lambda- \rho}} + \bar\partial_{L_{\lambda- \rho}}^*) = \sum_{aT \in N_K(T)/T} \frac{e^{\lambda- \rho}(a^{-1}ga)}
{\det (1-\Ad^{0,1}_{\kg/\kt}(a^{-1}g a))}.
\eeq
Here $\Ad^{0,1}_{\kg/\kt}\colon T\to \GL\bigl((\kg/\kt)^{0,1}\bigr)$ is induced by the adjoint representation. 
Because of \eqref{eq compl str},
we have
\[
\det (1-\Ad^{0,1}_{\kg/\kt}(a^{-1}g a))=  \prod_{\alpha \in R^+} (1-e^{-\alpha}(a^{-1}g a)).
\]

Since in the identification $N_K(T)/T = W_c$, the normaliser $N_K(T)$ acts on $i\kt^*$ via the coadjoint action, we find that \eqref{eq fixed GT} equals
\beq{eq sum W}
\sum_{w\in W_c}\frac{e^{w\cdot (\lambda- \rho)}}{\prod_{\alpha \in R^+} (1-e^{-w\cdot \alpha})}(g).
\eeq
Consider the Weyl denominator
\[
\Delta := e^{\rho}\prod_{\alpha \in R^+} (1-e^{- \alpha}).
\]
One has for all $w\in W_c$,
\[
w\cdot \Delta := e^{w\cdot \rho}\prod_{\alpha \in R^+} (1-e^{- w\cdot \alpha}) = \varepsilon(w) \Delta,
\]
where $\varepsilon(w) = \det w$ is the sign of $w$. Hence
we find that \eqref{eq sum W} equals
\[
\frac{ \sum_{w\in W_c}\varepsilon(w) e^{w\cdot \lambda }}{\Delta}(g).
\]
(This expression still makes sense  if $\rho$ is not an integral weight.)
By Harish-Chandra's character formula for the discrete series (see \cite[Theorem 16]{HC} or \cite[Theorem~12.7]{Knapp}), this equals $(-1)^{\frac{\dim G/K}{2}} \Theta_{\lambda}(g)$.
\end{proof}

Note that Proposition \ref{prop fixed GT} only relates the value of the character $\Theta_{\lambda}$ at $g$ to the $g$-index of 
$\bar\partial_{L_{\lambda - \rho}} + \bar\partial_{L_{\lambda - \rho}}^*$ if $g$ is a regular element of a maximal torus. Such elements form an open subset of $G$, and characters are not determined by their restrictions to this set. However, we can still use Proposition \ref{prop fixed GT}
 to give a description of the $g$-index in terms of the kernel of $(\bar\partial_{L_{\lambda- \rho}} + \bar\partial_{L_{\lambda- \rho}}^*)$.
\begin{proposition}
Suppose $G$ is a linear group. Then
the representation of $G$ in the $L^2$-kernel of $(\bar\partial_{L_{\lambda- \rho}} + \bar\partial_{L_{\lambda- \rho}}^*)^{\pm}$ has a distributional character $\Theta^{\pm}$ that can be evaluated at $g$, and one has
\[
\ind_{g}(\bar\partial_{L_{\lambda- \rho}} + \bar\partial_{L_{\lambda- \rho}}^*) = \Theta^+(g) - \Theta^-(g).
\]
\end{proposition}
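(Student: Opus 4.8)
The plan is to deduce the statement from Proposition~\ref{prop fixed GT} together with Schmid's realisation of the discrete series in $L^2$-Dolbeault cohomology. First I would use Hodge theory on the complete homogeneous manifold $G/T$ to identify
\[
\ker_{L^2}\bigl(\bar\partial_{L_{\lambda - \rho}} + \bar\partial_{L_{\lambda - \rho}}^*\bigr)^{\pm} = \bigoplus_{q\ \mathrm{even/odd}} \cH^{0,q}_{(2)}\bigl(G/T, L_{\lambda - \rho}\bigr),
\]
where $\cH^{0,q}_{(2)}(G/T, L_{\lambda - \rho})$ is the space of $L^2$-harmonic $(0,q)$-forms with coefficients in $L_{\lambda - \rho}$, a unitary representation of $G$. (On a complete manifold the kernel of the Dolbeault--Dirac operator is the kernel of the $\bar\partial$-Laplacian, which decomposes by form-degree.) The essential input is then the theorem of Schmid on $L^2$-cohomology and the discrete series, together with the related work of Narasimhan--Okamoto, Parthasarathy and Atiyah--Schmid: for $\lambda$ regular and $\lambda - \rho$ a weight, with the invariant complex structure on $G/T$ fixed as above, the spaces $\cH^{0,q}_{(2)}(G/T, L_{\lambda - \rho})$ vanish except in a single degree $q_0$, whose parity is that of $\tfrac12\dim(G/K)$ (in fact $q_0$ is the number of positive noncompact roots), and in that degree $\cH^{0,q_0}_{(2)}(G/T, L_{\lambda - \rho})$ is isomorphic, as a representation of $G$, to the discrete series representation $\pi_\lambda$ whose distributional character is $\Theta_\lambda$.

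Granting this, the remainder is bookkeeping. Exactly one of the two $G$-representations $\ker_{L^2}(\bar\partial_{L_{\lambda - \rho}} + \bar\partial_{L_{\lambda - \rho}}^*)^{+}$ and $\ker_{L^2}(\bar\partial_{L_{\lambda - \rho}} + \bar\partial_{L_{\lambda - \rho}}^*)^{-}$ is $\pi_\lambda$, and the other is zero. Since $G$ is linear, it has finite centre and lies in Harish-Chandra's class, so $\pi_\lambda$ is admissible and hence has a distributional character; by Harish-Chandra's regularity theorem this character coincides on $G^{\mathrm{reg}}$ with a real-analytic function, and $g$ is a regular element (its powers are dense in the Cartan subgroup $T$, so its centraliser in $G$ is $T$). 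Therefore $\Theta^{+}$ and $\Theta^{-}$ are well defined and can be evaluated at $g$, and $\Theta^{+} - \Theta^{-} = (-1)^{q_0}\Theta_\lambda = (-1)^{\dim(G/K)/2}\Theta_\lambda$ as distributions on $G$. Evaluating at $g$ and applying Proposition~\ref{prop fixed GT} yields
\[
\Theta^{+}(g) - \Theta^{-}(g) = (-1)^{\dim(G/K)/2}\Theta_\lambda(g) = \ind_{g}\bigl(\bar\partial_{L_{\lambda - \rho}} + \bar\partial_{L_{\lambda - \rho}}^*\bigr),
\]
which is the assertion.

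The main difficulty is not in the $KK$-theoretic machinery of this paper at all, but is the representation-theoretic input: Schmid's theorem that the $L^2$-$\bar\partial$-cohomology of $L_{\lambda - \rho}$ over $G/T$ is concentrated in a single degree and realises precisely $\pi_\lambda$ there. Everything else --- the Hodge-theoretic description of the kernel, the admissibility and character theory of $\pi_\lambda$, and the matching of the sign $(-1)^{q_0}$ with $(-1)^{\dim(G/K)/2}$ --- is routine once that result is in hand. One point worth flagging is that the linearity hypothesis enters exactly to ensure that $G$ has finite centre, hence belongs to Harish-Chandra's class, so that the discrete series exists, is admissible, and has a locally integrable character, and so that Schmid's construction is available; without linearity one would have to restrict to the Harish-Chandra class by hand.
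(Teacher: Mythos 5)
Your argument is correct and is essentially the proof given in the paper: both deduce the statement from Proposition~\ref{prop fixed GT} together with Schmid's theorem that the $L^2$-kernel (equivalently the $L^2$-Dolbeault cohomology of $L_{\lambda-\rho}$ over $G/T$) vanishes in one parity and realises the discrete series with character $\Theta_\lambda$ in the parity of $\tfrac12\dim(G/K)$, giving $\Theta^+-\Theta^-=(-1)^{\dim(G/K)/2}\Theta_\lambda$. Your additional remarks on Hodge theory, Harish-Chandra's regularity theorem and the role of linearity are consistent elaborations of the same route.
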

\begin{proof}
This follows from Proposition \ref{prop fixed GT} and Schmid's realisation of the discrete series in the $L^2$-Dolbeault cohomology of $G/T$ with values in $L_{\lambda- \rho}$, in~\cite[Theorem~1.5]{Schmid}. Schmid's result implies that space
\[
\ker_{L^2}(\bar\partial_{L_{\lambda- \rho}} + \bar\partial_{L_{\lambda- \rho}}^*)^{\pm} 
\] 
equals zero if $\pm = -(-1)^{\frac{\dim G/K}{2}}$, and the representation of $G$ in this space is the discrete series representation with infinitesimal character $\lambda$ if  $\pm = (-1)^{\frac{\dim G/K}{2}}$. (The integer $k$ in Schmid's result now equals $\dim(G/K)/2$, and his $\lambda$ is our $\lambda - \rho$.) Hence
\[
\Theta^+- \Theta^- = (-1)^{\frac{\dim G/K}{2}} \Theta_{\lambda}.
\]
So the claim follows from Proposition \ref{prop fixed GT}.
\end{proof}

\subsection{Fredholm operators} \label{sec Fredholm}

For Fredholm operators, 
it is a natural question how the $g$-index of such an operator is related to the traces of $g$ acting on even and odd parts of its kernel. This depends on the behaviour of the operator `towards infinity'. To make this more explicit, let $M^+$ be the one-point compactification of $M$. The point at infinity is fixed by $g$. Let $U, V\subset M$ be as in Subsection \ref{sec g index}. Let $U', V' \subset M^+$ be $g$-invariant neighbourhoods of the point at infinity, such that $\overline{U'}\subset V'$, and $V\cap V' = \emptyset$. Then $U\sqcup U'$ and $V\sqcup V'$ are neighbourhoods of $(M^+)^g$ as in~\eqref{eq ind g}.
Lemma \ref{lem U V} therefore implies that for any $\sigma$-unital $G$-$C^*$ algebra $A$, the following diagram commutes:
\beq{eq diag Mplus}
\xymatrix{
KK_G(C(M^+), A)_g \ar[dd]_-{(p^{M^+}_*)_g}\ar[rrr]^-{(k^{M^+}_V)^*_g \oplus (k^{M^+}_{V'})^*_g} &&& KK_G(C_0(V), A)_g  \oplus KK_G(C_0(V'), A)_g \ar[d]_-{((j^V_{\Ubar})_*)_g^{-1} \oplus ((j^{V'}_{\overline{U'}})_*)_g^{-1}} \\
 &&& KK_G(C(\Ubar), A)_g \oplus KK_G(C(\overline{U'}), A)_g \ar[d]_-{(p^{\Ubar}_*)_g \oplus (p^{\overline{U'}}_*)_g} \\
 KK_G(\C, A)_g &&&\ar[lll]_-{+} KK_G(\C, A)_g \oplus KK_G(\C, A)_g.
}
\eeq
Indeed, since $M^+$ is compact, one can apply Lemma \ref{lem U V} to the pairs of neighbourhoods $U \sqcup U' \subset V \sqcup V'$ and $M^+ \subset M^+$ of $(M^+)^g$.

Now suppose that $(D^2+1)^{-1}$ is a compact operator. 
Then $F:= \frac{D}{\sqrt{D^2+1}}$ is Fredholm, so $\ker_{L^2}(D)$ is finite-dimensional. Let the representation $\pi_{M^+}\colon C(M^+)\to \cB(L^2(E))$ be defined by
\beq{eq def pi M+}
\pi_{M^+}(f+z) = \pi_M(f) + z,
\eeq
for $f\in C_0(M)$ and $z\in \C$. Then the triple $(L^2(E), F, \pi_{M^+})$ is a Kasparov $(C(M^+), \C)$-module. Let 
\beq{eq D Mplus}
_{M^+}[D] \in KK_G(M^+, \pt)
\eeq
be its class. In this case, we will write $\ind^{\infty}_g(D)$ for a version of the $g$-index of $D$ that captures the behaviour of $D$ at infinity:
\beq{eq inf index}
\ind^{\infty}_g(D) := (\ev_g)\circ (p^{\overline{U'}}_*)_g \circ ((j^{V'}_{\overline{U'}})_*)_g^{-1}\circ (k^{M^+}_{V'})^*_g (_{M^+}[D]_g).
\eeq
\begin{proposition}\label{prop Fredholm}
If $(D^2+1)^{-1}$ is compact, then 
\beq{eq Fredholm}
\Tr(g \, \mathrm{on}\, \ker_{L^2}(D^+)) - \Tr(g \, \mathrm{on}\,  \ker_{L^2}(D^-)) = \ind_g(D) + \ind_g^{\infty}(D).
\eeq
\end{proposition}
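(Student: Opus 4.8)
The plan is to apply the commutative diagram \eqref{eq diag Mplus}, with $A=\C$, to the localised class ${}_{M^+}[D]_g\in KK_G(M^+,\pt)_g$, and then to evaluate at $g$. Following the left-hand vertical composition of \eqref{eq diag Mplus} and then applying $\ev_g$ will produce the left-hand side of \eqref{eq Fredholm}; following the two branches on the right will produce $\ind_g(D)$ and $\ind^{\infty}_g(D)$; and commutativity of the diagram, together with the fact that its bottom horizontal arrow is addition, will give the identity.

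For the left-hand path: since $M^+$ is compact, $p^{M^+}\colon M^+\to\pt$ is proper, and compatibility of functorially induced maps with localisation at $g$ gives $(p^{M^+}_*)_g({}_{M^+}[D]_g)=\bigl(p^{M^+}_*({}_{M^+}[D])\bigr)_g$. The class $p^{M^+}_*({}_{M^+}[D])$ is represented by the cycle obtained from \eqref{eq D Mplus} by precomposing $\pi_{M^+}$ with $(p^{M^+})^*$; by \eqref{eq def pi M+} this is $\bigl(L^2(E),F\bigr)$ with $\C$ acting by scalars, where $F=D/\sqrt{D^2+1}$. Since $(D^2+1)^{-1}$ is compact, $F$ is an odd, self-adjoint, $G$-equivariant Fredholm operator, and this class is its equivariant index $[\ker_{L^2}(D^+)]-[\ker_{L^2}(D^-)]\in R(G)=KK_G(\pt,\pt)$. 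Applying $\ev_g$, which factors through localisation at $g$ as recorded in Subsection \ref{sec eval}, gives $\Tr(g\, \mathrm{on}\, \ker_{L^2}(D^+))-\Tr(g\, \mathrm{on}\, \ker_{L^2}(D^-))$.

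For the two right-hand branches: the map $(k^{M^+}_V)^*$ restricts the representation $\pi_{M^+}$ of \eqref{eq D Mplus} to $C_0(V)$, where by \eqref{eq def pi M+} it agrees with $\pi_M$; hence $(k^{M^+}_V)^*({}_{M^+}[D])=(k^M_V)^*[D]$ in $KK_G(V,\pt)$, and the composition of the $V$-branch with $\ev_g$ applied to ${}_{M^+}[D]_g$ is then, by Definition \ref{g-index} (and Lemma \ref{lem U V}, which makes the choice of $U,V$ irrelevant), precisely $\ind_g(D)$. Likewise, the composition of the $V'$-branch with $\ev_g$ applied to ${}_{M^+}[D]_g$ is, by its very definition \eqref{eq inf index}, $\ind^{\infty}_g(D)$. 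Since the bottom arrow of \eqref{eq diag Mplus} is $+$ and $\ev_g$ is additive, commutativity of \eqref{eq diag Mplus} yields
\[
\Tr(g\, \mathrm{on}\, \ker_{L^2}(D^+)) - \Tr(g\, \mathrm{on}\, \ker_{L^2}(D^-)) = \ind_g(D) + \ind^{\infty}_g(D),
\]
which is \eqref{eq Fredholm}.

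Everything except one step is functoriality of $KK$-theory and compatibility of localisation at $g$ with induced maps and with $\ev_g$. The one substantive point is the identification $p^{M^+}_*({}_{M^+}[D])=[\ker_{L^2}(D^+)]-[\ker_{L^2}(D^-)]$ in $R(G)$ — that pushing a $K$-homology class forward to a point recovers the equivariant Fredholm index of a representing operator. This is standard (see \cite{HR, Blackadar}), and it is precisely where the hypothesis that $(D^2+1)^{-1}$ is compact is used: it is what makes $F$ Fredholm, $\ker_{L^2}(D)$ finite-dimensional, and $(L^2(E),F,\pi_{M^+})$ a genuine Kasparov module over $C(M^+)$ in the first place.
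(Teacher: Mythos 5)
Your proposal is correct and is essentially the paper's own argument: both apply commutativity of diagram \eqref{eq diag Mplus} with $A=\C$ to ${}_{M^+}[D]$, identify $p^{M^+}_*({}_{M^+}[D])$ with the class $[\ker F]$ (the equivariant Fredholm index) and evaluate at $g$, while the two right-hand branches give $\ind_g(D)$ and $\ind_g^{\infty}(D)$ by definition. Your explicit check that $(k^{M^+}_V)^*({}_{M^+}[D])=(k^M_V)^*[D]$ is a small detail the paper leaves implicit, but it does not change the route.
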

\begin{proof}
By commutativity of \eqref{eq diag Mplus}, with $A = \C$, we have
\beq{eq Fredholm 1}
(\ev_g)_g \circ (p^{M^+}_*)_g [D] = \ind_g(D) + \ind_g^{\infty}(D).
\eeq
Now
\[
p^{M^+}_* [D] = [L^2(E), F] = [\ker F, 0]\quad \in KK_G(\pt, \pt),
\]
so the left hand sides of \eqref{eq Fredholm} and \eqref{eq Fredholm 1} are equal.
\end{proof}

In concrete situations, knowledge of $\ind_g^{\infty}(D)$ then allows one to use the fixed point formula in Theorem \ref{thm fixed pt} to compute the left hand side of~(\ref{eq Fredholm}).

This can be made more explicit in a situation relevant to
 the treatment of Callias-type deformations of Dirac operators in the context of $KK$-theory in \cite{Bunke, Kucerovsky}. Suppose that $\Phi \in \End(E)^G$ is an odd, self-adjoint vector bundle endomorphism. Suppose that $\Phi^2 - 1_{E}$ tends to zero at infinity, so that it is a compact operator on $\Gamma_0(E)$. Then $(\Gamma_0(E), \Phi, \pi_{M^+})$ is an equivariant Kasparov $(C(M^+), C_0(M))$-cycle. Let $[\Phi] \in KK_G(M^+, M)$ be its class. Now we do not assume that $(D^2+1)^{-1}$ itself is compact, but that 
\[
[D_{\Phi}] := [\Phi]\otimes_{M}[D] \quad \in KK_G(M^+, \pt)
\]
is the class of an elliptic operator $D_{\Phi}$ as in \eqref{eq D Mplus}. Then $(D_{\Phi}^2+1)^{-1}$ is compact. (The idea is that $D_{\Phi} = D + {\Phi}$ if $D{\Phi} + {\Phi}D$ is sufficiently small; Compare this with \cite[Proposition 2.18]{Bunke}.) By functoriality of the Kasparov product, we have for $U', V' \subset M$ as above,
\beq{eq inf index DA}
\ind_g^{\infty}(D_{\Phi}) = (\ev_g)_g \left( 
\left( (p^{\overline{U'}}_*)_g \circ ((j^{V'}_{\overline{U'}})_*)_g^{-1}\circ (k^{M^+}_{V'})^*_g [{\Phi}]_g
\right) \otimes_M[D]_g
\right).
\eeq
This expression has the advantage that ${\Phi}$ is a vector bundle endomorphism, which makes \eqref{eq inf index DA} easier to evaluate than \eqref{eq inf index}. In particular, if
 ${\Phi}^2 = 1_{E}$ on $V' \cap M$, then $(k^{M^+}_{V'})^* [{\Phi}] = 0$. In that case, Theorem \ref{thm fixed pt} and Proposition \ref{prop Fredholm} imply that
\beq{eq index DA}
\Tr(g \, \mathrm{on}\, \ker_{L^2}(D_{\Phi}^+)) - \Tr(g \, \mathrm{on}\,  \ker_{L^2}(D_{\Phi}^-)) = \int_{TM^g} \frac{\ch \bigl( [\sigma_{D_{\Phi}}|_{TM^g}](g)\bigr) \Todd(TM^g\otimes\C)}{\ch\bigl(\bigl[\Bigwedge N_{\C}\bigr](g)\bigr)}.
\eeq

\begin{example}
Let $M = \C^n$, and let $g$ be the diagonal action by $n$ nontrivial elements of $\U(1)$. Then $M^g = \{0\}$, and $N = \C^n$. Let $\beta_{\C^{n}} \in KK_G(\pt, \C^{2n})$ be the Bott element as in Definition \ref{def:fiberwise.Bott.ele}. Now the class $[D_1] \in KK_G(\C^{2n}, \pt)$ as in \eqref{eq:D_1.cycle} is the Dolbeault class of $\C^{2n}$. The Kasparov product
\[
\beta_{\C^n} \otimes_{\C^{2n}} [D_1] \quad \in KK_G(\pt, \pt)
\]  
is represented by the elliptic operator $D_B := B\otimes 1 + 1 \otimes D_1$ as in \eqref{eq def F0}. Hence $(D_B^2+1)^{-1}$ is a compact operator. In the proof of Lemma \ref{lem betaN D1}, we saw that the $L^2$-kernel of $D_B$ is spanned by the $g$-invariant function \eqref{eq ker DB}. So
\beq{eq ind DB}
\Tr(g \, \mathrm{on}\, \ker_{L^2}(D_B^+)) - \Tr(g \, \mathrm{on}\,  \ker_{L^2}(D_B^-)) = 1.
\eeq

On the other hand, let $b\in C^{\infty}(\R)$ be an odd function, with values in $[-1, 1]$, such that $b(x) = 1$ for all $x\geq 1$. If we replace $B(1+B^2)^{-1/2}$ by $b(B)$ in \eqref{eq def beta}, then the resulting class in $KK_G(\pt, \C^{2n})$ is the same class $\beta_{\C^n}$. But with this normalisation function, we have $b(B)^2 = 1$ outside the unit ball in $\C^n$. So 
\beq{eq infty ind DB}
\ind_g^{\infty}(D_B) = 0.
\eeq
Finally, by Corollary \ref{cor fin fixed pt hol}, with $F = \Bigwedge \widetilde{N}_{\C} = \Bigwedge \C^{2n}$, we have
\beq{eq g ind DB}
\ind_g(D_B) = 1.
\eeq
The equalities \eqref{eq ind DB}, \eqref{eq infty ind DB} and \eqref{eq g ind DB} illustrate Proposition \ref{prop Fredholm} in this case.
 \end{example}
 
 \begin{example}
 In the setting of Theorem \ref{thm index pair}, the index pairing $[F] \otimes_M [D] \in KK_G(\pt, \pt)$ is represented by a Fredholm operator $D_F$. Analogously to \eqref{eq inf index DA}, we have $\ind_g^{\infty}(D_F) = 0$, so that Proposition \ref{prop Fredholm} and Theorem \ref{thm fixed pt} yield an expression for $\Tr(g \, \mathrm{on}\, \ker_{L^2}(D_F^+)) - \Tr(g \, \mathrm{on}\,  \ker_{L^2}(D_F^-))$. But in this setting, the same expression follows directly from Theorem \ref{thm index pair}.
 \end{example}
 
See Remark \ref{rem Dfv} for the construction of a Fredholm operator $D_{fv}$ as a deformation of \emph{any} elliptic operator $D$, with $\ind_g^{\infty}(D_{fv})=0$.

\subsection{Braverman's index} \label{sec Braverman}

Suppose $X\in \kg$ such that $g = \exp X$. Let $X^M$ be the vector field on $M$ defined by $X$. Suppose $D$ is a Dirac-type operator, and consider the deformed operator
\[
D^f_X := D + \ii fc(X^M).
\]
Here $f\in C^{\infty}(M)^G$, and $c\colon TM \to \End(E)$ is a given Clifford action, used to define the Dirac operator $D$. Braverman obtained a fixed point theorem for such operators, in \cite[Theorem~7.5]{Braverman}. This implies that the $g$-index equals Braverman's index in this case. 
\begin{corollary} \label{cor Br}
If $f$ is admissible (Definition 2.6 in \cite{Braverman}), then the representation of $G$ in 
$
\ker_{L^2}(D_X^f)^{\pm}
$
has a character $\chi^{\pm}$ that can be evaluated at $g$, and one has
\[
\ind_g(D) = \chi^+(g) - \chi^-(g).
\] 
\end{corollary}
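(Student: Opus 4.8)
The plan is to deduce Corollary~\ref{cor Br} by matching our $g$-index with Braverman's index through their respective fixed point formulas. The key point is that both indices, when computed cohomologically, land on data supported at the zeroes of the vector field $X^M$, and these zeroes are precisely the fixed point set $M^g$ (since $g = \exp X$ lies in a compact group, so $X^M$ vanishes exactly where $g$ acts trivially). First I would recall that, under the admissibility hypothesis on $f$ from \cite[Definition~2.6]{Braverman}, the deformed Dirac operator $D^f_X$ has a well-defined $L^2$-index as a virtual representation of $G$, and Braverman's fixed point theorem \cite[Theorem~7.5]{Braverman} expresses its character at $g$ as an integral over $TM^g$ of exactly the integrand appearing in our Theorem~\ref{thm fixed pt}, namely
\[
\int_{TM^g} \frac{\ch\bigl([\sigma_D|_{TM^g}](g)\bigr)\,\Todd(TM^g\otimes \C)}{\ch\bigl(\bigl[\Bigwedge N_{\C}\bigr](g)\bigr)},
\]
where one uses that the principal symbol of $D^f_X$ equals that of $D$ (the deformation term $\ii fc(X^M)$ is order zero), and that near $M^g$ the relevant normal data agree. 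Comparing this with the formula in Theorem~\ref{thm fixed pt} for $\ind_g(D)$ gives the identity of the two numbers.

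Concretely, the steps are: (1) observe $\mathrm{Zero}(X^M) = M^g$ and that $M^g$ is compact, so both the hypotheses of Theorem~\ref{thm fixed pt} and of Braverman's theorem are satisfied; (2) quote Braverman's fixed point formula to write $\chi^+(g) - \chi^-(g)$ as the integral over $TM^g$ displayed above, being careful that Braverman's orientation and almost complex structure conventions match ours (there may be a sign $(-1)^{\dim M}$ or $(-1)^{\operatorname{rk} N}$ to reconcile, exactly as in the remark after \eqref{eq:AS.top.index} and in the normalisations of $\Todd$ versus $\widehat{A}$); (3) apply Theorem~\ref{thm fixed pt} to identify this integral with $\ind_g(D)$; (4) note that admissibility guarantees $D^f_X$ is Fredholm with $G$-invariant $L^2$-kernel, so $\ker_{L^2}(D^f_X)^{\pm}$ carries a genuine finite-dimensional or at worst trace-class representation whose character $\chi^\pm$ can be evaluated at $g$, and that $\chi^+ - \chi^-$ is exactly the index computed by Braverman's theorem.

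The main obstacle I expect is reconciling conventions: Braverman works with $\widehat{A}$-genus type integrands and a specific normalisation of the deformed operator and of the orientation on $TM$, whereas our formula is written with $\Todd(TM^g\otimes\C)$ and the almost complex structure $J$ on $T(TM)$ fixed in Subsection~\ref{sec index Kas}. One must check that after accounting for the identity $\ch(\bigl[\Bigwedge N_\C\bigr](g)) = \det_\C(1 - g^{-1}|_N)\cdot(\text{correction})$ in the split setting, and the relation between $\Todd$ and $\widehat{A}$ via $e^{c_1/2}$, the two integrands literally coincide as differential forms on each component of $TM^g$. A cleaner alternative, which I would actually prefer to present, avoids Braverman's formula entirely: combine the excision property (Lemma~\ref{lem excision}) to reduce to a neighbourhood of $M^g$, then use a homotopy of the operator $D^f_X$ to $D$ itself within the class of Fredholm operators (the deformation parameter scaling $f \mapsto tf$ gives such a homotopy away from $M^g$, where $|X^M|$ is bounded below), showing that $\ker_{L^2}(D^f_X)^{\pm}$ computes the same class in $KK_G(\pt,\pt)_g$ as the localised $K$-homology class $[D]_g$ pushed to a point; then invoke Theorem~\ref{thm fixed pt}. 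Either way, the heart of the matter is that the deformation changes the operator only at order zero and only away from $M^g$, hence changes neither the localised $K$-homology class at $g$ nor the fixed-point integrand.
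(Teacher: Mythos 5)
Your first route is essentially the paper's proof: the argument there is simply that the cohomological expression in Theorem~\ref{thm fixed pt} is precisely the evaluation at $g$ of the right hand side of Braverman's fixed point formula \cite[Theorem 7.5]{Braverman}, so the two numbers agree. But your step (4) misstates the analytic situation, and this matters for the first assertion of the corollary. Admissibility does \emph{not} make $D^f_X$ Fredholm: its $L^2$-kernel is in general infinite dimensional, and what Braverman proves is that each irreducible representation of $G$ occurs in $\ker_{L^2}(D^f_X)^{\pm}$ with finite multiplicity, so that $\chi^{\pm}$ is a priori only a distribution on $G$. The fact that $\chi^+-\chi^-$ can be evaluated at the specific element $g$ is not automatic; in the paper it is extracted from Braverman's fixed point formula itself (this is the parenthetical remark in the proof), since that formula identifies the distributional character near $g$ with the smooth function given by the fixed point integral. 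Your worry about reconciling $\widehat{A}$ versus Todd conventions is legitimate but is exactly the kind of bookkeeping the comparison of the two published formulas settles; it is not where the difficulty lies.

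The alternative route you say you would prefer has a genuine gap. Scaling $f\mapsto tf$ does not give a homotopy ``within the class of Fredholm operators'': neither $D$ nor $D^f_X$ is Fredholm in general, and on a noncompact manifold the index of a deformed operator genuinely depends on the behaviour of the deformation at infinity (this is the whole point of admissibility, and of the term $\ind_g^{\infty}$ in Subsection~\ref{sec Fredholm}), so one cannot argue that a deformation ``away from $M^g$'' changes nothing. Moreover $\ker_{L^2}(D^f_X)$ does not define a class in $KK_G(\pt,\pt)$ at all, since it is not a finite-dimensional virtual representation, so the claim that it ``computes the same class in $KK_G(\pt,\pt)_g$ as $[D]_g$ pushed to a point'' is not meaningful as stated; recall also that $p^M_*$ does not exist for noncompact $M$, which is why the $g$-index is defined through the localisation machinery in the first place. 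Stick with the first route, but replace step (4) by the observation that evaluability of $\chi^{\pm}$ at $g$ follows from Braverman's formula rather than from any Fredholmness of $D^f_X$.
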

\begin{proof}
The fixed point formula for $\ind_g(D)$ in Theorem \ref{thm fixed pt} is precisely the evaluation at $g$ of the right hand side of the formula in \cite[Theorem 7.5]{Braverman}. (This equality also shows that $\ker_{L^2}(D^f_X)$
has a character  that can be evaluated at~$g$.)
\end{proof}

\begin{remark}
In the above construction, the element $X \in \kg$, which represents the \emph{taming map} used in \cite{Braverman}, depends on the group element $g$. So the $g$-index of $D$ is not the character of the Braverman index of $D$ deformed by a single taming map, but the taming map depends on $g$.
\end{remark}

\iftoggle{long}
{
\begin{example}
The example in Subsection~\ref{sec:S^1.on.R^2} can be put in this context.
Let $\{ e_1, e_2 \}$ be the standard basis of $\R^2$. Consider the Clifford action by $T\R^2$ on $\R^2 \times \C^2$ defined by $c(e_1) :=\begin{bmatrix} 0 & -1\\ 1& 0\end{bmatrix}$ and $c(e_2) :=\begin{bmatrix}0 & \ii\\ \ii & 0\end{bmatrix}$. Consider the Dirac operator 
\[
D = c(e_1)\frac{\partial}{\partial x}+c(e_2)\frac{\partial}{\partial y} = 
\frac12\begin{bmatrix}0 & -\frac{\partial}{\partial z}\\ \frac{\partial}{\partial\bar z} & 0\end{bmatrix}
\]
on $\R^2 = \C$.
If $g = e^{\ii X}$, for $X \in \R = \Lie(\bT^1)$, then the vector field $X^M$ is given by 
\[
X^M_{(x, y)}=X \cdot (-y, x)\frac{1}{\sqrt{x^2+y^2}},
\]
for $(x, y) \in \R^2$.
 Choose the $\bT^1$-invariant function $f$ defined by $f(x, y) = -\frac{1}{4X}\sqrt{x^2+y^2}$. This function satisfies Braverman's admissibility criterion.\footnote{To check this, one can note that  the functions $X$ and $\|X^M\|$ are constant, and  compute that $\nabla^{LC}X^M$ is bounded. In (2.3) in \cite{Braverman}, the endomorphism $\mu^{E}$ is now zero, so that the function $\nu$ in (2.4) in \cite{Braverman} is bounded. Since $\|df\|$ is bounded and $f(x, y)\to \infty$ as $(x, y)\to \infty$, the admissibility criterion (2.5) in \cite{Braverman} follows.}
 For this choice of $f$, we have  
\[
-4fc(X^M)=c(-ye_1+xe_2)=\begin{bmatrix}0 & y+\ii x \\-y+\ii x & 0\end{bmatrix}.
\]
Therefore the deformed operator $D_X^f$ equals
\[
D+\ii fc(X^M)=\frac12\begin{bmatrix}0 & -\frac{\partial}{\partial z}+\frac{\bar z}{2} \\ \frac{\partial}{\partial\bar z}+\frac{z}{2} & 0\end{bmatrix}.
\]
In view of Remark~\ref{rem:L^2.kernel}, we have $\ker_{L^2}D_X^{\pm}\cong\ker_{L^2, \psi}(\bar\partial+\bar\partial^*)^{\pm}$. So the equality in Corollary \ref{cor Br} also follows from Proposition \ref{prop C psi} in this example. 
\end{example}
}{}

\subsection{A relative index theorem} \label{sec rel index}

In Theorem 4.18 of \cite{GL}, Gromov and Lawson obtain a relative index formula for pairs of elliptic operators that coincide outside compact sets. (See Theorem 2.18 in \cite{BrShi} for a version for Callias-type operators.) There is an analogue of this result for the $g$-index. 

For $j = 0,1$, let $M_j$ be a manifold with the same structure and properties as $M$. Let $E_j\to M_j$ be a vector bundle like $E\to M$, and let $D_j$ be an operator on $E_j$ like $D$ on $E$. Suppose there are relatively compact neighbourhoods $U_j$ of $M_j^g$ outside of which $M_j$, $E_j$ and $D_j$ can be identified. As on page 38 of \cite{GL}, we compactify $M_j$ to a manifold $\tilde M_j$, by taking a neighbourhood $V_j$ of $U_j$, and attaching a compact manifold $X$ to it. Since $M_0\setminus V_0 = M_1 \setminus V_1$, the same manifold $X$ can be used for $j = 0,1$. Extend the vector bundles $E_j$ and the operators $D_j$ to vector bundles $\tilde E_j\to \tilde M_j$ and elliptic operators $\tilde D_j$ on $\tilde E_j$. Suppose the map $g$ extends to $\tilde M_j$ and $\tilde E_j$, commuting with $\tilde D_j$.
\begin{proposition}[Relative index theorem]
We have
\[
\ind_g D_1 - \ind_g D_0 = \ind_G(\tilde D_1)(g) - \ind_G(\tilde D_0)(g). 
\]
\end{proposition}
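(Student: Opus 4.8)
The plan is to reduce the relative index statement to the excision property of the $g$-index (Lemma~\ref{lem excision}) together with additivity of the equivariant index on compact manifolds. The key observation is that both $\tilde M_0$ and $\tilde M_1$ are built from the common piece $X$ (with the common boundary identification) by gluing in the neighbourhoods $V_0$ and $V_1$ of $M_0^g$ and $M_1^g$ respectively. I would first make precise the decomposition: writing $Y := M_0\setminus V_0 = M_1\setminus V_1$, we have $\tilde M_j = V_j \cup_{\partial} X'$ where $X'$ is the fixed compact piece obtained from $X$, and $g$ acts on $\tilde M_j$ without fixed points on $X'$ (since the fixed points of $g$ on $\tilde M_j$ all lie in $U_j \subset V_j$, by construction). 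The bundles and operators $\tilde E_j$, $\tilde D_j$ restrict to the common data on $X'$, and to $E_j$, $D_j$ on $V_j$.

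The main step is then a "doubling/interchange" argument. Form the disjoint unions $M_0 \sqcup \tilde M_1$ and $\tilde M_0 \sqcup M_1$, each equipped with the evident bundles and operators. By additivity of the $g$-index under disjoint unions (which follows directly from the definition, as used in the proof of Corollary~\ref{cor lin}), it suffices to show
\[
\ind_g(D_0 \sqcup \tilde D_1) = \ind_g(\tilde D_0 \sqcup D_1).
\]
Here $\tilde D_1$ is an operator on the compact manifold $\tilde M_1$, so $\ind_g(\tilde D_1) = \ind_G(\tilde D_1)(g)$ by \eqref{eq ind g cpt}; similarly for $\tilde D_0$. To prove the displayed equality I would apply Lemma~\ref{lem excision}: the manifold $M_0 \sqcup \tilde M_1$ contains the $G$-invariant, relatively compact open set $V_0 \sqcup V_1$, which is a neighbourhood of the full fixed point set of $g$, and there is a $G$-equivariant open embedding of $V_0 \sqcup V_1$ into $\tilde M_0 \sqcup M_1$ (sending the $V_0$-part into $\tilde M_0$ and the $V_1$-part into $M_1$) under which the bundles and operators agree on $V_0 \sqcup V_1$, and such that $g$ has no fixed points on $(\tilde M_0 \sqcup M_1) \setminus (V_0 \sqcup V_1)$. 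Lemma~\ref{lem excision} then gives $\ind_g(D_0 \sqcup \tilde D_1) = \ind_g(\tilde D_0 \sqcup D_1)$. Rearranging, $\ind_g D_1 - \ind_g D_0 = \ind_g \tilde D_1 - \ind_g \tilde D_0 = \ind_G(\tilde D_1)(g) - \ind_G(\tilde D_0)(g)$.

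I expect the main obstacle to be purely bookkeeping: verifying that the gluing construction of \cite{GL} really does produce $\tilde M_j$ with a $G$-action (equivalently, a $g$-action generating a compact group) having all its fixed points inside $V_j$, and that the open embedding $V_0 \sqcup V_1 \hookrightarrow \tilde M_0 \sqcup M_1$ is $G$-equivariant with the required compatibility of $\tilde E_j$, $\tilde D_j$ on the overlap. This is where the hypothesis that $g$ extends to $\tilde M_j$ and $\tilde E_j$ commuting with $\tilde D_j$ is used, and where one must be slightly careful that the identifications on $Y = M_0\setminus V_0 = M_1\setminus V_1$ are $G$-equivariant so that the common compact piece $X'$ carries a well-defined $G$-action. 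Once these geometric compatibilities are in place, the index-theoretic part is immediate from Lemma~\ref{lem excision}, disjoint-union additivity, and \eqref{eq ind g cpt}. One should also note, as in the remark following the statement in \cite{GL}, that the answer is independent of the choices of $X$, $V_j$ and the extensions $\tilde D_j$, since any two choices differ by a common compact piece whose contributions cancel in the difference $\ind_G(\tilde D_1)(g) - \ind_G(\tilde D_0)(g)$, again by the additivity of the equivariant index.
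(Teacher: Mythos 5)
Your overall strategy (disjoint-union additivity, the compact-case identity \eqref{eq ind g cpt}, and an interchange argument via the excision property, Lemma~\ref{lem excision}) is different from the paper's proof, which simply applies Theorem~\ref{thm fixed pt} to $D_j$ on $M_j$ and the Atiyah--Segal--Singer formula to $\tilde D_j$ on the compact $\tilde M_j$, and observes that the contributions of the fixed points lying in the attached piece $X$ cancel in the difference because $\sigma_{\tilde D_0}$ and $\sigma_{\tilde D_1}$ agree there. However, as written your argument has a genuine gap: you assert that ``$g$ acts on $\tilde M_j$ without fixed points on $X'$, since the fixed points of $g$ on $\tilde M_j$ all lie in $U_j\subset V_j$ by construction.'' The hypotheses only place the fixed points of $g$ acting on $M_j$ inside $U_j$; nothing in the construction prevents the \emph{extension} of $g$ to the compactification $\tilde M_j$ from acquiring new fixed points in $X$, and generically it does (compare the fixed point at infinity in Subsection~\ref{sec Hodge Spin}, and note that the paper's own proof carries an integral over $TX^g$ precisely to account for this). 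With $X^g\neq\emptyset$, the set $V_0\sqcup V_1$ is not a neighbourhood of the full fixed point set of $g$ in $M_0\sqcup\tilde M_1$, and $g$ has fixed points of $\tilde M_0\sqcup M_1$ outside it, so the hypotheses of Lemma~\ref{lem excision} fail and your key identity $\ind_g(D_0\sqcup\tilde D_1)=\ind_g(\tilde D_0\sqcup D_1)$ is not justified at that step.

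The argument can be repaired, at the cost of making explicit an assumption the paper also uses implicitly, namely that the extensions over the common piece agree, $\tilde E_0|_X=\tilde E_1|_X$ and $\tilde D_0|_X=\tilde D_1|_X$ (this is what makes $\sigma_{\tilde D_0}|_{TX^g}=\sigma_{\tilde D_1}|_{TX^g}$ in the paper's proof). Granting this, choose a $G$-invariant, relatively compact open neighbourhood $W$ of $X^g$ inside the interior of $X$, disjoint from the $V_j$, and apply Lemma~\ref{lem excision} with the neighbourhood $V_0\sqcup V_1\sqcup W$ of the fixed point set of $g$ in $M_0\sqcup\tilde M_1$, embedded into $\tilde M_0\sqcup M_1$ by sending $V_0\cup W$ into $\tilde M_0$ and $V_1$ into $M_1$; all fixed points on the target then lie in the image, and the data agree on the overlap. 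With that modification your interchange argument goes through and, combined with additivity and \eqref{eq ind g cpt}, gives an alternative proof of the proposition that avoids the cohomological formulas entirely; but without handling $X^g$ the step as stated fails.
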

Since the manifolds $\tilde M_j$ are compact, the indices
 on the right hand side of this equality are given by the usual equivariant index.
\begin{proof}
By the Atiyah--Segal--Singer fixed point theorem (or Theorem~\ref{thm fixed pt}), we have for $j=0,1$,
\begin{multline*}
\ind_G(\tilde D_j)(g)=
\int_{TM_j^g}
 \frac{\ch \bigl( [\sigma_{\tilde D_j}|_{TM_j^g}](g)\bigr) \Todd(TM^g_j\otimes\C)}{\ch\bigl(\bigl[\Bigwedge (N_j)_{\C}\bigr](g)\bigr)}\\
+
\int_{TX^g}
 \frac{\ch \bigl( [\sigma_{\tilde D_j}|_{TX^g}](g)\bigr) \Todd(TX^g\otimes\C)}{\ch\bigl(\bigl[\Bigwedge (N_X)_{\C}\bigr](g)\bigr)}. 
\end{multline*}
Here $N_j\to M_j^g$ and $N_X \to X^g$ are normal bundles to fixed point sets.
Since $\sigma_{\tilde D_1}|_{TX^g} =  \sigma_{\tilde D_0}|_{TX^g}$, Theorem \ref{thm fixed pt} implies the claim.
\end{proof}

\subsection{Some geometric consequences} \label{sec Hodge Spin}

The vanishing or non-vanishing of the $g$-index has some geometric consequences in the cases of Hodge-de Rham and $\Spin$-Dirac operators.

Let $D=d+d^*\colon \Omega^{\even}_{\C}(M) \to \Omega^{\odd}_{\C}(M)$ be the Hodge--de Rham operator on $M$, acting on complex differential forms.
The symbol class of this operator is $\bigl[\tau_M^*\Bigwedge TM_{\C}\bigr]$, whose restriction to $TM^g$ equals
\beq{eq Hodge}
[\sigma_D|_{TM^g}] = \bigl[\tau_{M^g}^*\Bigwedge{N_{\C}}\bigr] \otimes \bigl[\tau_{M^g}^*\Bigwedge{TM^g_{\C}}\bigr].
\eeq
Let $D_{M^g}$ be the component-wise defined Hodge--de Rham operator on $M^g$. Then
Theorem~\ref{thm fixed pt} and \eqref{eq Hodge} imply that
\beq{eq de Rham}
\ind_g(d+d^*)=\int_{TM^g}\ch(\sigma_{D_{M^g}})\Todd(TM^g\otimes\C)\\=\ind(D_{M^g})=\chi(M^g),
\eeq
the Euler characteristic of $M^g$. (See also \cite[p.\ 262]{LM}.)
\begin{corollary}
If $\ind_g(d+d^*) \not= 0$, then every $g$-invariant vector field on $M$ has a zero on $M^g$.
\end{corollary}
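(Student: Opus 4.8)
The plan is to reduce everything to the Poincar\'e--Hopf theorem applied on $M^g$. By \eqref{eq de Rham} we already know that $\ind_g(d+d^*) = \chi(M^g)$, so the hypothesis is equivalent to $\chi(M^g)\neq 0$. It therefore suffices to prove that an arbitrary $g$-invariant vector field $v$ on $M$ restricts, along $M^g$, to a smooth vector field \emph{on} $M^g$: once this is known, the fact that $\chi(M^g)\neq 0$ forces $v|_{M^g}$ to vanish at some point of $M^g$, which is the desired zero of $v$.

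First I would establish the tangency claim. Fix $p\in M^g$. Since $g$ is an isometry fixing $p$, it satisfies $g\circ \exp_p = \exp_p\circ dg_p$ on a neighbourhood of $0\in T_pM$; hence near $p$ the fixed point set $M^g$ is exactly the image under $\exp_p$ of a neighbourhood of $0$ in the fixed subspace $V_p := \ker\bigl(dg_p - \mathrm{id}_{T_pM}\bigr)$, so that $T_p(M^g) = V_p$. (This also re-derives the fact, used in Subsection~\ref{sec fixed pt}, that the components of $M^g$ are smooth submanifolds.) On the other hand, $g$-invariance of $v$ means $dg_p(v_p) = v_{g(p)} = v_p$, i.e.\ $v_p\in V_p = T_p(M^g)$. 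Thus $v|_{M^g}$ is a smooth section of $TM^g$.

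To conclude, suppose $\ind_g(d+d^*)\neq 0$, so $\chi(M^g) = \sum_i \chi(Z_i)\neq 0$, where the $Z_i$ are the finitely many closed connected components of the compact manifold $M^g$. Then $\chi(Z)\neq 0$ for some component $Z$, and since $Z$ is a closed manifold the Poincar\'e--Hopf theorem guarantees that the vector field $v|_Z$ has a zero $q\in Z\subset M^g$; hence $v_q = 0$. The only nontrivial ingredient is the identification $T_p(M^g) = \ker(dg_p - \mathrm{id})$, which is the standard description of the fixed point set of an isometry as a totally geodesic submanifold with the expected tangent space; everything else is an immediate application of Poincar\'e--Hopf.
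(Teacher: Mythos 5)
Your proof is correct and follows essentially the same route as the paper: reduce via \eqref{eq de Rham} to $\chi(M^g)\neq 0$, observe that a $g$-invariant vector field restricts to a tangent vector field on $M^g$, and conclude by Poincar\'e--Hopf that it must vanish somewhere on the compact fixed point set. The only difference is that you spell out the tangency claim (via $T_p(M^g)=\ker(dg_p-\mathrm{id})$ and the exponential map), which the paper takes as immediate.
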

\begin{proof}
A $g$-invariant vector field on $M$ restricts to a vector field on $M^g$. If it does not vanish there, then $\chi(M^g)=0$. So the claim follows from \eqref{eq de Rham}.
\end{proof}

%

Next, suppose that $M$ is a $\Spin$ manifold, and that $g$ lifts to the spinor bundle. Let $D$ be the $\Spin$-Dirac operator.
\begin{corollary}
If $G$ is connected, $M$ is noncompact, and $\ind_g(D) = 0$, then the one-point compactification $M^+$ of $M$ is not a $G$-equivariant $\Spin$ manifold
\end{corollary}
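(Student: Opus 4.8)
The plan is to argue by contraposition: assuming the $\Spin$-structure of $M$ extends to a $G$-equivariant $\Spin$-structure on the (smooth) one-point compactification $M^+$, I will show that $\ind_g(D)\neq 0$. Write $\tilde D$ for the resulting $\Spin$-Dirac operator on $M^+$. Since $M^+$ is compact, \eqref{eq ind g cpt} gives $\ind_g(\tilde D)=\ind_G(\tilde D)(g)$. Because $M^g$ is compact while $M$ is noncompact, we have $M^g\neq M$, so $g\neq e$ and the group $G=\overline{\langle g\rangle}$ acts nontrivially on $M$, hence on $M^+$, preserving the $\Spin$-structure. The Atiyah--Hirzebruch vanishing theorem \cite{AH} — invoked exactly as in the Example following Lemma~\ref{lem excision} — then yields $\ind_G(\tilde D)=0\in R(G)$, so $\ind_g(\tilde D)=0$.

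Next I would apply the fixed point formula, Theorem~\ref{thm fixed pt} (which for the compact manifold $M^+$ is the Atiyah--Segal--Singer theorem), to $\tilde D$. Since $M^g$ is a compact subset of $M$, its closure in $M^+$ avoids the point at infinity, so $(M^+)^g=M^g\sqcup\{\infty\}$ with $\{\infty\}$ an isolated fixed point (a $0$-dimensional component of the fixed point submanifold). The integral \eqref{eq:main.thm.coho.formula} for $\tilde D$ therefore splits as a sum of the contributions of $M^g$ and of $\{\infty\}$. Over $M^g$, the normal bundle of $M^g$ in $M^+$ coincides with $N$, and $\sigma_{\tilde D}$ restricts to $\sigma_D$ on a neighbourhood of $M^g$, so the $M^g$-part of the sum is precisely $\ind_g(D)$ by Theorem~\ref{thm fixed pt} applied to $D$ on $M$. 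The $\{\infty\}$-part is, by the finite fixed point formula \eqref{eq finite fixed pt} evaluated at the single point $\infty$,
\[
c_\infty:=\frac{\Tr(g|_{\tilde E^+_\infty})-\Tr(g|_{\tilde E^-_\infty})}{\det_{\R}(1-g^{-1}|_{T_\infty M^+})},
\]
where $\tilde E^\pm_\infty$ are the half-spinor spaces of $T_\infty M^+$. Since $\infty$ is isolated, $g$ has no $+1$-eigenvalue on $T_\infty M^+$; writing its rotation angles as $\theta_1,\dots,\theta_n$ (all $\not\equiv 0\bmod 2\pi$), the numerator equals $\pm\prod_j(e^{\ii\theta_j/2}-e^{-\ii\theta_j/2})$ and the denominator $\prod_j(2-2\cos\theta_j)$, both of which are nonzero, so $c_\infty\neq 0$. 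Combining, $0=\ind_g(\tilde D)=\ind_g(D)+c_\infty$, whence $\ind_g(D)=-c_\infty\neq 0$, contradicting $\ind_g(D)=0$.

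The step needing the most care is the appeal to Atiyah--Hirzebruch: one must verify that $G$ genuinely acts nontrivially on $M^+$ — this is exactly where the hypotheses ``$M$ noncompact'' and ``$M^g$ compact'' enter — and that the given $\Spin$-structure extends, so that $\tilde D$ restricts to $D$ near $M^g$ and the $M^g$-contribution to $\ind_g(\tilde D)$ is literally $\ind_g(D)$. A minor point is the parity of $\dim M$: when $\dim M$ is odd, an orientation-preserving transformation (recall $G$ is connected) has no isolated fixed point, so $\infty$ could not be an isolated point of $(M^+)^g$ and $M^+$ already fails to be a $G$-manifold with $M^g$ compact; hence one may assume $\dim M$ even, which is in any case the setting in which the $\Spin$-Dirac operator carries the $\Z_2$-grading used throughout. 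Everything else is a direct application of results already established above.
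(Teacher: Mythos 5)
Your proof is correct and follows essentially the same route as the paper's: assume $M^+$ is a $G$-equivariant $\Spin$ manifold, invoke the Atiyah--Hirzebruch vanishing theorem on the compactification, and split the resulting zero index as $0=\ind_g(\tilde D)=\ind_g(D)+c_\infty$ with $c_\infty\neq 0$, forcing $\ind_g(D)\neq 0$. The only difference is that where the paper cites \cite[Theorem 8.35]{AB2} for the nonvanishing of the contribution at the isolated fixed point at infinity, you verify it directly from the finite fixed point formula \eqref{eq finite fixed pt} (and you additionally spell out the isolatedness of $\infty$ and the odd-dimensional case), which is a harmless elaboration of the same argument.
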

\begin{proof}
If $M^+$ is a $G$-equivariant $\Spin$ manifold, with Dirac operator $D_{M^+}$, then the vanishing result of Atiyah and Hirzebruch \cite{AH} implies that
\[
 0 = \ind_g(D_{M^+}) = \ind_g(D) + a_{\infty}.
\]
Here $a_{\infty}$ is the contribution of the fixed point at infinity, which is nonzero by \cite[Theorem 8.35]{AB2}.
\end{proof}

\section{A non-localised index formula} \label{sec non-loc}

In the compact case, the Kirillov formula is a non-localised expression for the equivariant index of an elliptic operator; see {\cite[Theorem~8.2]{BGV}}. This can be deduced from the fixed point formula in the compact case. In the case of noncompact manifolds, there is also a non-localised expression for the $g$-index, Proposition~\ref{prop: Nonlocal formula} below. This follows from Kasparov's index theorem and the properties of the $g$-symbol class introduced in Subsection~\ref{sec g symbol}, rather than from Theorem~\ref{thm fixed pt}. 

A potentially interesting feature of this non-localised formula is that it involves the same kind of deformed symbols as the ones used for Dirac operators on symplectic manifolds in \cite{Paradan02}. Those deformed symbols are \emph{transversally elliptic} rather than elliptic. Berline and Vergne obtained a generalisation of the Atiyah--Segal--Singer fixed point formula to transversally elliptic operators or symbols, see \cite[Main Theorem 1]{BV1} and \cite[Theorem 20]{BV2}. This formula involves a distribution on the group. It was pointed out to the authors by Vergne that this formula implies that for the deformed symbols we will consider, at points $g$ where this distribution is given by a function, it is given by the $g$-index.

The index of such a deformed symbol was shown to equal the index of a deformed Dirac operator in Theorem 5.5 in \cite{Braverman}. In Theorem 1.5 in \cite{MZ}, this index is proved to be equal to another index of deformed Dirac operators, defined using the Atiyah--Patodi--Singer index on manifolds with boundary. 
In contrast to \cite{Braverman, MZ, Paradan02}, the expression for
 the $g$-index in terms of deformed symbols is independent of the choices made in this deformation. Furthermore, it applies to more general elliptic operators than Dirac operators.


We shall describe the $g$-symbol class ${\sigDg}$ of Definition \ref{def g symbol} more explicitly, using a deformed symbol. 
Let $v$ be a $G$-invariant vector field on $M$ that does not vanish outside $V$. 
\begin{example}
If $X\in \kg$ such that $g = \exp(X)$, one can take the vector field $v$ induced by $X$. This vector field obviously depends on $g$. 
\end{example}
\begin{example}
If $H$ is a compact Lie group acting on $M$, containing $G$, then it can be possible to choose a single vector field $v$ that works for all elements of $H$. Indeed, suppose there is an $H$-equivariant map $\psi\colon M\to \kh$, and consider  the Kirwan vector field $v$, defined by
\[
v_{m} := \left. \frac{d}{dt}\right|_{t=0} \exp(t\psi(m))\cdot m,
\]
for $m \in M$. Suppose this vector field is nonzero outside a compact set. Then $\psi$ is a taming map as in 
Definition 2.4 in \cite{Braverman}. In this case, the vector field $v$ can be used for any element of $H$.
\end{example}


Let $f\colon V\rightarrow\R_{\ge0}$ be a $G$-invariant continuous function, such that $f(m)=0$ when $m\in U$ and 
$\lim_{m\to m'}f(m) = \infty$ if $m'\in\partial V$.
Consider the deformed symbol $\sigma_{D, fv} \in \End(\tau_V^*(E|_V))$, given by
\beq{eq sigma D fv}
\sigma_{D, fv} (u) := \sigma_D(u+f(m)v_m)
\eeq
for $m\in V$ and $u\in T_mM$. Set 
\[
\tilde \sigma_{D, fv}  := \frac{\sigma_{D,fv}}{\sqrt{\sigma_{D,fv}^2+1}}.
\]
This defines an odd, self-adjoint, bounded operator on the Hilbert $C_0(TV)$-module $\Gamma_0(\tau_V^*(E|_V))$. Furthermore, we have for every vector field $u$ on $M$, and every $m'\in \partial V$,
\[
\lim_{m\to m'} \tilde \sigma_{D, fv}  (u_m) = \sgn(\sigma_D(v_{m'})).
\]
We extend $\tilde \sigma_{D, fv}  $ to a continuous vector bundle endomorphism of $\tau_M^*E$ by setting
\[
\tilde \sigma_{D, fv}  (u) := \sgn(\sigma_D(v_{m}))
\]
for all $u\in T_mM$, where $m\in M\setminus V$. (Since $v_m \not=0$ if $m\in M\setminus V$, this operator is invertible outside $V$.)

Note that 
\beq{eq:KK.w}
\tilde \sigma_{D, fv}  (u)^2-1\to 0
\eeq
as $u\to \infty$ in $TM$. Indeed, let $m\in M$ and $u\in T_mM$ be given. If $m\not\in V$, then $v_m\not=0$ and $\tilde \sigma_{D, fv}  (u)^2 = 1$. And if $m\in V$, then
\[
\tilde \sigma_{D, fv}  (u)^2-1 = \bigl(\sigma_D(u+f(m)v_m)^2+1\bigr)^{-1}.
\]
Since $D$ is elliptic and has first order, this
goes to zero as $u\to \infty$ in $TV$. We therefore find that $(\Gamma_0(\tau_M^*E), \tilde \sigma_{D, fv}  )$ is a Kasparov $(\C, C_0(TM))$-cycle. Let 
\[
{}_{\pt}[\sigma_{D, fv} ] \in KK_G(\pt, TM)
\]
be its class, which will be called the deformed symbol class.



\begin{lemma}
\label{lem:form of sigma}
The localisation of the deformed symbol class at $g$ is $\sigma_g^D$, i.e., 
\[
_{\pt}[\sigma_{D, fv} ]_g=\sigma_g^D \quad \in KK_G(\pt, TM)_g.
\]
\end{lemma}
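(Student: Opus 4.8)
The statement compares two classes in $KK_G(\pt, TM)_g$: the localisation of the deformed symbol class ${}_{\pt}[\sigma_{D, fv}]$, and the $g$-symbol class $\sigma^D_g$ defined in \eqref{eq:sigma} as $(p^{\Ubar}_*)_g \circ ((j^V_{\Ubar})_*)_g^{-1} \circ (k^M_V)^*_g[\sigma_D]_g$. The idea is to show that ${}_{\pt}[\sigma_{D, fv}]$ is hit by the inverse localisation isomorphisms in the definition of $\sigma^D_g$. Concretely, I would produce a class in $KK_G(\Ubar, TM)_g$ that maps to both ${}_{\pt}[\sigma_{D, fv}]_g$ under $(p^{\Ubar}_*)_g$ and to $(k^M_V)^*_g[\sigma_D]_g$ under $((j^V_{\Ubar})_*)_g$, which by invertibility of $((j^V_{\Ubar})_*)_g$ (Theorem~\ref{thm loc 2}) forces the desired equality.

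\textbf{Step 1: A cycle on $V$ interpolating the two symbols.} First I would exploit the freedom in the definition of $[\sigma_D]_g$. Since the localisation $((j^V_{\Ubar})_*)_g$ is an isomorphism, and since on $U$ the deformation $fv$ is trivial (because $f$ vanishes on $U$, so $\sigma_{D,fv}=\sigma_D$ there), the restricted symbol $\tilde\sigma_{D, fv}|_{TU}$ agrees with $\tilde\sigma_D|_{TU}$. The key point is a homotopy: for $t\in[0,1]$ consider the family of endomorphisms obtained by replacing $f$ with $tf$ in \eqref{eq sigma D fv}, i.e.\ $\sigma_{D}(u + tf(m)v_m)$ on $V$, extended outside $V$ by $\sgn(\sigma_D(v_m))$ once $t>0$ and, at $t=0$, this is just $\tilde\sigma_D$ on $V$ extended as a degenerate piece outside $V$. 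This is a homotopy of Kasparov cycles over $TM$ between $[\sigma_D|_V]_{TM}$ (in the sense of Lemma~\ref{lem sigma D res V}, as a class in $KK_G(V,TM)$ after identifying $C_0(V)$-modules) and a cycle with underlying operator $\tilde\sigma_{D, fv}$. I need to check the compactness conditions hold uniformly in $t$: ellipticity and first order of $D$ give $(\sigma_D(u+tf(m)v_m)^2+1)^{-1}\to 0$ as $u\to\infty$ uniformly for $t$ in compact sets and $m$ in relatively compact subsets of $V$, exactly as in the paragraph around \eqref{eq:KK.w}. At the boundary $\partial V$ the limit is $\sgn(\sigma_D(v_{m'}))$ for all $t>0$, so the cycle extends continuously over $TM$; at $t=0$ the part over $TM\setminus TV$ is the zero operator, which is a degenerate (hence null) summand, matching Lemma~\ref{lem sigma D res V}.

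\textbf{Step 2: Identify the classes via the localisation diagrams.} Having the homotopy, I conclude $(k^M_V)^*[\sigma_D]$, as a class in $KK_G(V, TM)$ via Lemma~\ref{lem sigma D res V}, equals the class of the cycle built from $\tilde\sigma_{D,fv}|_{TV}$ over $TM$. Now restrict further: this $V$-cycle is of the form $(j^V_{\Ubar})_*$ applied to a cycle over $\Ubar$ — namely the one with operator $\tilde\sigma_{D,fv}|_{T\Ubar}$ — because on $\Ubar$ we have $\sigma_{D,fv}=\sigma_D$ and the same module-theoretic argument as in Lemma~\ref{lem res sigma} shows the restriction of the $V$-cycle to $C_0(\Ubar)$ reproduces it up to a degenerate summand. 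Call this $\Ubar$-class ${}_{\Ubar}[\sigma_{D,fv}]$. Then by definition $\sigma^D_g = (p^{\Ubar}_*)_g(({}_{\Ubar}[\sigma_{D,fv}])_g)$. On the other hand, pushing ${}_{\Ubar}[\sigma_{D,fv}]$ forward along $p^{\Ubar}\colon \Ubar\to\pt$ directly: since $\pi_{\Ubar}\circ (p^{\Ubar})^*$ is scalar multiplication, $p^{\Ubar}_*({}_{\Ubar}[\sigma_{D,fv}])$ is the class $[\Gamma_0(\tau^*_{M}E|_\Ubar), \tilde\sigma_{D,fv}|_{T\Ubar}]$ in $KK_G(\pt, TM)$. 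Finally, to see this equals ${}_{\pt}[\sigma_{D,fv}]$, I again use the homotopy that "turns on" the extension outside $\Ubar$: the cycle $(\Gamma_0(\tau_M^*E), \tilde\sigma_{D,fv})$ over all of $TM$ restricted (by the representation that is nonunital on $\Ubar$) to a module supported on $\Ubar$ plus the invertible-outside-$V$ part, which is degenerate because the operator squares to $1$ there and commutes with everything — so the "infinity" part contributes nothing, giving equality in $KK_G(\pt, TM)$, hence after localisation.

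\textbf{Main obstacle.} The technical heart is Step 1: checking that the one-parameter family of endomorphisms $\sigma_D(u+tf(m)v_m)$, suitably normalised and extended, genuinely defines a \emph{continuous} path of Kasparov $(\C, C_0(TM))$-cycles — i.e.\ an operator homotopy in the sense used to define $KK$ — with the compactness conditions holding uniformly in $t\in[0,1]$, including the delicate behaviour as $m\to\partial V$ where $f(m)\to\infty$. One has to be careful that the normalised symbol $\tilde\sigma_{D,tf v}$ depends continuously on $t$ in the strict topology on $C_b(TM)$-central multipliers; the argument is the same in spirit as the well-definedness argument for ${}_{\pt}[\sigma_{D,fv}]$ already given in the text before Lemma~\ref{lem:form of sigma}, but must be run with the parameter present. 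Once continuity and uniform compactness are in hand, everything else is bookkeeping with functoriality of Kasparov products and the localisation isomorphisms of Theorem~\ref{thm loc 2}, exactly as in the proof of Proposition~\ref{prop:symbol.M^g}.
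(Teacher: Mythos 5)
Your Step 1 (the homotopy $t\mapsto \sigma_D(u+tf(m)v_m)$) can be made to work, but only inside $KK_G(V,TM)$, where the representation $\pi_V$ of $C_0(V)$ supplies the decay needed for the compactness conditions at every $t$, including $t=0$. It cannot be run in $KK_G(\pt,TM)$: for noncompact $M$ the triple $(\Gamma_0(\tau_M^*E),\tilde\sigma_D)$ at $t=0$ is not a Kasparov $(\C,C_0(TM))$-cycle at all (its operator's square minus $1$ does not vanish as $m\to\infty$ with bounded fibre variable), and the extension by $\sgn(\sigma_D(v))$ outside $V$ is discontinuous at $t=0$. Moreover the two classes you say the homotopy connects live in different groups ($KK_G(V,TM)$ versus $KK_G(\pt,TM)$), so as stated the homotopy does not compare what needs to be compared. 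The paper needs no homotopy at all at this point: it simply arranges (using the independence of the pair $U\subset V$ from Lemma~\ref{lem U V}) that the deformation is switched off on the set used for the restriction, so that the restricted deformed cycle literally coincides with $(k^M_V)^*[\sigma_D]$.

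The genuine gap is in Step 2. There is no ``restriction of the $V$-cycle to $C_0(\Ubar)$'' in the first variable: the only map available is $(j^V_{\Ubar})_*$ (precomposition with $C_0(V)\to C(\Ubar)$), and your $V$-cycle is not of that form on the nose, since its $C_0(V)$-representation does not factor through $C(\Ubar)$ (your appeal to Lemma~\ref{lem res sigma} does not help, as that lemma concerns the second variable). More seriously, your final identification of $p^{\Ubar}_*$ of a $\Ubar$-class with ${}_{\pt}[\sigma_{D,fv}]$ by declaring the ``infinity part'' degenerate fails: the $(\C,C_0(TM))$-cycle $(\Gamma_0(\tau_M^*E),\tilde\sigma_{D,fv})$ carries no representation of $C_0(M)$ or $C(\Ubar)$ with which to cut it into a summand over $\Ubar$ plus a remainder, and over the interpolation region $TV\setminus T\Ubar$ the operator does \emph{not} square to $1$, so that part is not degenerate. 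The missing idea is the paper's device: because $\tilde\sigma_{D,fv}^2-1\in C_0(TM)$, the cycle extends to a class ${}_{M^+}[\sigma_{D,fv}]\in KK_G(M^+,TM)$ via the unitised representation $\pi_{M^+}$; since $M^+$ is compact one has ${}_{\pt}[\sigma_{D,fv}]=p^{M^+}_*({}_{M^+}[\sigma_{D,fv}])$, and Lemma~\ref{lem U V} applied to $M^+$ (diagram~\eqref{eq diag Mplus} with $A=C_0(TM)$) splits this localised pushforward into a term supported near $M^g$, which is exactly $\sigma^D_g$ because the deformation vanishes there, and a term supported near the point at infinity, which vanishes because there the restricted cycle has operator $\sgn(\sigma_D(v))$, an involution commuting with the representation, hence is degenerate. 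Without the compactification (note $p^M$ is not proper), there is no way to relate the genuinely global class ${}_{\pt}[\sigma_{D,fv}]$ to the zigzag $(p^{\Ubar}_*)_g\circ((j^V_{\Ubar})_*)_g^{-1}\circ(k^M_V)^*_g$, so your argument as proposed does not close.
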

\begin{proof}
As in Subsection \ref{sec Fredholm}, let $M^+$ be the one-point compactification of $M$. Let $U, V, U', V' \subset M^+$ be as in that subsection. Consider the class
\[
{}_{M^+}[\sigma_{D, fv}]:=[\Gamma_0(\tau_M^*E), \tilde\sigma_{D, fv}, \pi_{M^+}] \in KK_G(M^+, TM),
\]
where $\pi_{M^+}$ is as in \eqref{eq def pi M+}. Then by commutativity of \eqref{eq diag Mplus}, for $A = C_0(TM)$, we have
\begin{align}
_{\pt}[\sigma_{D, fv} ]_g &= (p^{M^+}_*)_g ({}_{M^+}[\sigma_{D, fv}]) \nonumber\\
	&= (p^{\Ubar}_*)_g \circ ((j^{V}_{\Ubar})_*)_g^{-1} \circ (k^{M^+}_V)^*_g ({}_{M^+}[\sigma_{D, fv}]_g) \label{eq sigma D g 1}\\
	&\qquad + (p^{\overline{U'}}_*)_g \circ ((j^{V'}_{\overline{U'}})_*)_g^{-1} \circ (k^{M^+}_{V'})^*_g ({}_{M^+}[\sigma_{D, fv}]_g). \nonumber
\end{align}
Now since $f = 0$ on $V$, we have
\[
 (k^{M^+}_V)^* ({}_{M^+}[\sigma_{D, fv}]) = (k^M_V)^*[\sigma_D].
\]
So the first term in \eqref{eq sigma D g 1} equals $\sigma^D_g$. Furthermore, 
\[
(k^{M^+}_{V'})^* ({}_{M^+}[\sigma_{D, fv}]) = \bigl[\Gamma^{\infty}(E|_{V'}), \sgn(\sigma_D(v)), \pi_{V'} \bigr]  = 0,
\]
since this class is represented by a degenerate cycle.
\end{proof}

\begin{remark}
Instead of 
\eqref{eq sigma D fv}, we could have used a more general deformed symbol of the form
\[
\sigma_{D, f\Phi}(u) := \sigma_D(u) + f(m)\Phi_m,
\]
for $m\in M$, $u\in T_mM$ and a $G$-equivariant, fibrewise self-adjoint, odd vector bundle endomorphism $\Phi$ of $E$, which is invertible outside $V$. We have used the natural choice $\Phi = \sigma_D(v)$.
\end{remark}

The realisation of the $g$-symbol class in Lemma \ref{lem:form of sigma} leads to the following non-local formula for the $g$-index. 
\begin{proposition}[Non-localised formula for the $g$-index]
\label{prop: Nonlocal formula}
The $g$-index of $D$ is calculated by 
\begin{equation}
\label{eq: Nonlocal formula}
\ind_g(D) 
=\bigl( _{\pt}[\sigma_{D, fv} ] \otimes_{TM}[D_{TM}]\bigr)(g).
\end{equation}
\end{proposition}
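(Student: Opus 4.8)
\emph{Proof strategy.} The plan is to show that both sides of \eqref{eq: Nonlocal formula} are equal to $(\ev_g)_g\bigl(\sigDg \otimes_{TM}[D_{TM}]_g\bigr)$, the evaluation at $g$ of the Kasparov product of the localised $g$-symbol class of Definition~\ref{def g symbol} with the localised Dolbeault class. The left-hand side is handled via Kasparov's index theorem (Theorem~\ref{thm index}), and the right-hand side via Lemma~\ref{lem:form of sigma}.

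First I would observe that, by Definition~\ref{g-index} and \eqref{def:g-index}, $\ind_g(D) = (\ev_g)_g\bigl(\Phi_g([D]_g)\bigr)$, where $\Phi_g := (p^{\Ubar}_*)_g \circ ((j^V_{\Ubar})_*)_g^{-1} \circ (k^M_V)^*_g$ and $KK_G(\pt,\pt)_g$ is identified with $R(G)_g$. Each of the three maps comprising $\Phi_g$ acts on the first variable of $KK_G$ and is induced (for the middle one, after localisation and inversion) by a $*$-homomorphism between the relevant algebras $C_0(-)$; by functoriality and associativity of the Kasparov product these maps commute with right multiplication by the fixed class $[D_{TM}]$ over $C_0(TM)$, and these compatibilities descend to localisations at $g$ because the $R(G)$-module structure on $KK_G$ is itself defined via the exterior product and is preserved by localisation. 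Combining this with Kasparov's index theorem, $[D]=[\sigma_D]\otimes_{TM}[D_{TM}]$, I get $\Phi_g([D]_g) = \Phi_g([\sigma_D]_g)\otimes_{TM}[D_{TM}]_g = \sigDg\otimes_{TM}[D_{TM}]_g$, the last equality being precisely Definition~\ref{def g symbol}. Applying $(\ev_g)_g$ yields $\ind_g(D) = (\ev_g)_g\bigl(\sigDg\otimes_{TM}[D_{TM}]_g\bigr)$.

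Next I would match this with the right-hand side of \eqref{eq: Nonlocal formula}. By Lemma~\ref{lem:form of sigma}, $\sigDg = {}_{\pt}[\sigma_{D, fv}]_g$, and since localisation at $I_g$ is a ring homomorphism on $R(G)$ compatible with Kasparov products (and $[D_{TM}]_g$ is the localisation of $[D_{TM}]$), one has $\sigDg\otimes_{TM}[D_{TM}]_g = \bigl({}_{\pt}[\sigma_{D, fv}]\otimes_{TM}[D_{TM}]\bigr)_g$ in $R(G)_g$. Finally, the evaluation $\ev_g\colon R(G)\to\C$ factors through $R(G)\to R(G)_g$ as $(\ev_g)_g$ — the commutative triangle recorded in Subsection~\ref{sec eval}, valid because $\ev_g$ is nonzero on every character outside $I_g$ — so applying $(\ev_g)_g$ to the displayed class gives $\ev_g\bigl({}_{\pt}[\sigma_{D, fv}]\otimes_{TM}[D_{TM}]\bigr) = \bigl({}_{\pt}[\sigma_{D, fv}]\otimes_{TM}[D_{TM}]\bigr)(g)$, which completes the argument.

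The only point that genuinely requires care — more a bookkeeping matter than a real difficulty — is justifying that the localised maps constituting $\Phi_g$, and in particular the \emph{inverse} $((j^V_{\Ubar})_*)_g^{-1}$ supplied by Theorem~\ref{thm loc 2}, commute with the product $\otimes_{TM}[D_{TM}]$. This holds because the underlying maps commute with the product before localisation (standard functoriality and associativity), localisation at $I_g$ is exact and respects the $R(G)$-module and product structures, and the inverse of a localised isomorphism that intertwines two module maps again intertwines them; alternatively, one can simply invoke the commutativity of diagrams \eqref{eq diag loc 2}–\eqref{eq diag loc 3}, where exactly these compatibilities were used in the proof of Theorem~\ref{thm fixed pt}.
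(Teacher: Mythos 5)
Your proposal is correct and follows essentially the same route as the paper's own proof: the paper likewise combines Definition~\ref{def:g-index}, Kasparov's index theorem (Theorem~\ref{thm index}) and Definition~\ref{def g symbol} to get $\ind_g(D)=\bigl(\sigma^D_g\otimes_{TM}[D_{TM}]_g\bigr)(g)$, and then concludes with Lemma~\ref{lem:form of sigma}. You merely make explicit the functoriality/localisation bookkeeping (that the maps forming $(p^{\Ubar}_*)_g \circ ((j^V_{\Ubar})_*)_g^{-1} \circ (k^M_V)^*_g$ commute with $\otimes_{TM}[D_{TM}]$, and that $\ev_g$ factors through localisation), which the paper leaves implicit or delegates to the diagrams used for Theorem~\ref{thm fixed pt}.
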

\begin{proof}
It follows from Definitions \ref{def:g-index} and \ref{def g symbol}, and Theorem~\ref{thm index}, that
\[
\ind_g(D) =\bigl({\sigDg}\otimes_{TM} [D_{TM}]_g\bigr)(g)
\]
The claim therefore follows from Lemma~\ref{lem:form of sigma}.
\end{proof}

\begin{remark} \label{rem Dfv}
Recall that when $M$ is noncompact, $\ind_g(D)$ is defined using $KK$-functorial maps. In Proposition \ref{prop: Nonlocal formula}, the class
\[
_{\pt}[\sigma_{D, fv} ]\otimes_{TM}[D_{TM}] \in KK_G(\pt, \pt)
\]
is represented by a Fredholm operator $D_{fv}$, defined in terms of the deformed symbol $\sigma_{D, fv}$ and the Dolbeault--Dirac operator $D_{TM}$.
Proposition~\ref{prop: Nonlocal formula} states that
\beq{eq:nonlocal&local}
\ind_g(D) = \Tr(g \, \mathrm{on}\, \ker_{L^2}(D_{fv}^+)) - \Tr(g \, \mathrm{on}\,  \ker_{L^2}(D_{fv}^-)).
\eeq
Theorem~\ref{thm fixed pt} then yields a cohomological expression for the right hand side of~(\ref{eq:nonlocal&local}).
(Note the analogy with \eqref{eq index DA}; we now have $\ind_g^{\infty}(D_{fv}) = 0$.) 
\end{remark}

\end{document}